\documentclass[10pt, english,oneside,reqno]{amsart}

\usepackage[utf8]{inputenc}
\usepackage[english]{babel}
\usepackage{hyperref}
\usepackage{color}
\usepackage{enumerate}
\usepackage{fancyhdr}

\usepackage{amsmath}
\usepackage{amscd}
\usepackage{amsfonts}
\usepackage{amsthm}
\usepackage{amstext}
\usepackage{amssymb}
\usepackage{amsbsy}
\usepackage{mathrsfs}
\usepackage{tikz-cd}
\usepackage{algorithm}
\usepackage{algpseudocode}
\usepackage{listings}

\usepackage{float}
\usepackage{caption}
\usepackage{subcaption}
\usepackage{graphicx}

\usepackage[alphabetic,abbrev]{amsrefs}

\usepackage{url}

\theoremstyle{plain}
\newtheorem{theorem}{Theorem}[section]

\newtheorem{definition}[theorem]{Definition}
\newtheorem{proposition}[theorem]{Proposition}
\newtheorem{corollary}[theorem]{Corollary}
\newtheorem{lemma}[theorem]{Lemma}

\theoremstyle{remark}

\setcounter{tocdepth}{1}

\numberwithin{equation}{section}
\numberwithin{figure}{section}


\newcommand{\eps}{\varepsilon}


\newcommand{\Z}{\mathbb{Z}}

\newcommand{\R}{\mathbb{R}}
\newcommand{\C}{\mathbb{C}}

\newcommand{\SL}{\mathrm{SL}}

\newcommand{\Ad}{\mathrm{Ad}}


\newcommand{\Schwartz}[1]{\mathscr{S}(#1)}

\newcommand{\DifferentialOps}[1]{\mathscr{D}(#1)}

\newcommand{\Haarof}[1]{m_{#1}}

\newcommand{\supp}[1]{\mathrm{supp}(#1)}
\newcommand{\SphericalPlancharelMeasure}{\nu_{\mathrm{sph}}}
\newcommand{\FurstenbergMeasure}{\nu_{\mathrm{F}}}
\newcommand{\FurstenbergDensity}{\psi_{\mathrm{F}}}





\begin{document}
	\title[Local Limit Theorem for Random Walks on Symmetric Spaces]{Local Limit Theorem for Random Walks on Symmetric Spaces}
	\author[C. Kogler]{Constantin Kogler}\thanks{The author gratefully acknowledges support from the European Research Council (ERC) grant No. 803711 as well as from the CCIMI at Cambridge}
	
	\address{Mathematical Institute \\ Radcliffe Observatory Quarter \\ Woodstock Road \\ Oxford OX2 6GG, United Kingdom} 
	\email{kogler@maths.ox.ac.uk}
	
	\begin{abstract}
		We reduce the local limit theorem for a non-compact semisimple Lie group acting on its symmetric space to establishing that a natural operator associated to the measure is quasicompact. Under strong Diophantine assumptions on the underlying measure, we deduce the necessary spectral results for the operator in question. We thereby give the first examples of finitely supported measures satisfying such a local limit theorem. Moreover, quantitative error rates for the local limit theorem are proved under additional assumptions. 
	\end{abstract}

\maketitle

\tableofcontents

\section{Introduction}

Let $G$ be a group and $\mu$ a probability measure on $G$. A fundamental problem in the theory of random walks is to describe the distribution  of the product of independent $\mu$-distributed random elements, in other words to study the measures $\mu^{*n}$. Local limit theorems, which establish the existence of a sequence $a_n \in \R$ such that $a_n\mu^{*n}$ converges to a limit measure, were studied by many authors. The case where $G$ is commutative or compact is classical (cf. for instance \cite{Stone1965}, \cite{ItoKawada1940}). Breuillard \cite{Breuillard2005GAFA} and Diaconis-Hough \cite{DiaconisHough2021} considered the Heisenberg group and a local limit theorem for the $\mathrm{Isom}(\R^d)$ action on $\R^d$ was proved by Varjú \cite{Varju2012}. For the latter case, under further assumptions on $\mu$, results with strong error terms were shown by Lindenstrauss-Varjú \cite{LindenstraussVarju2016}. The reader interested in discrete groups may consult Lalley's local limit theorem for the free group \cite{Lalley1993}, which was extended by Gouëzel \cite{Gouezel2014} to hyperbolic groups. 

The above results establish local limit theorems for the various mentioned settings under weak assumptions on $\mu$. In contrast, the understanding for non-compact semisimple Lie groups is less developed. The only case where a local limit theorem is known is by assuming that $\mu$ is spread out, i.e. a convolution power $\mu^{*n}$ for some $n \geq 1$ is not singular to the Haar measure. For spread out measures Bougerol \cite{Bougerol1981} proved in 1981 a local limit theorem that will be recalled in \eqref{Bougerol}. 

For a finitely supported measure whose support generates a dense subgroup, the convolutions $\mu^{*n}$ become increasingly well-distributed, more and more resembling a continuous measure. Therefore Bougerol's theorem is expected to hold. In this paper we give the first examples of finitely supported measures on semisimple Lie groups that satisfy Bougerol's theorem for the Lie group acting on the associated symmetric space. Indeed, we reduce the question at hand to understanding spectral properties of a natural operator $S_0 = S_0(\mu)$ associated to $\mu$. 

The operator $S_0$ may be viewed as the Fourier transform of the measure $\mu$ at $0$ and was studied by Bourgain \cite{Bourgain2012} in his construction of a finitely supported measure on $\SL_2(\R)$ with absolutely continuous Furstenberg measure. Further results on $S_0$ are due to \cite{BoutonnetIoanaSalehiGolsefidy2017}, generalizing \cite{Bourgain2012}, as well as \cite{BenoistQuint2018}. These results imply the necessary spectral properties for $S_0$ in order to establish local limit theorems and will be discussed after stating Theorem~\ref{StrongLCLT}. In certain cases, the necessary results for $S_0$ will also be proved in this paper following closely Bourgain's \cite{Bourgain2012} original ideas.

In addition, we deduce quantitative error rates for the local limit theorem (Theorem~\ref{LCLT} and Theorem~\ref{StrongLCLT}).

We proceed with stating Bougerol's theorem. Recall that a measure $\mu$ on $G$ is said to be non-degenerate whenever the semigroup generated by its support is dense in $G$. Let $G$ be a non-compact connected semisimple Lie group with finite center. For a probability measure $\mu$ on $G$, denote $\sigma = ||\lambda_G(\mu)||$, where $\lambda_G$ is the left regular representation and $\lambda_G(\mu) = \int \lambda_G(g) \, d\mu(g)$. Furthermore denote by $p$ the number of positive indivisible roots of $G$ and by $d$ the rank of $G$ (these notions are further discussed in Section~\ref{Notations}) and write $\ell = 2p + d$. For a non-degenerate and spread out probability measure $\mu$ with finite second moment (defined in \eqref{FiniteSecondMoment}), Bougerol \cite{Bougerol1981} showed that there is a continuous function $\psi_0$ on $G$ (depending on $\mu$) such that 
\begin{equation}\label{Bougerol}
	\lim_{n \to \infty} \frac{n^{\ell/2}}{\sigma^n} \int f(g) \, d\mu^{*n}(g) = \int f(g) \psi_0(g) \, d\Haarof{G}(g)
\end{equation}  for all $f \in C_c^{\infty}(G)$. The function $\psi_0$ satisfies $\mu * \psi_0 =  \psi_0 * \mu = \sigma \psi_0$.  

To introduce further notation, let $K$ be a maximal compact subgroup of $G$ and denote by $X = G/K$ the associated symmetric space. We recall the definition of the Furstenberg boundary. Let $G = KAN$ be an Iwasawa decomposition of $G$ as introduced in Section~\ref{Notations}. Let $M$ be the centralizer of $A$ in $K$ and write $P = MAN$. The Furstenberg boundary of $G$ is defined as $\Omega = G/P = K/M$.  The measure $\Haarof{\Omega}$ is the pushforward of the Haar probability measure $\Haarof{K}$ onto $\Omega$.

Denote by $\rho_0$ the Koopman unitary representation of the $G$ action on the measure space $(\Omega,\Haarof{\Omega})$, which is also called the $0$-principal series representation (see Section~\ref{Notations}). For a probability measure $\mu$ on $G$,  consider the operator $S_0 = \rho_0(\mu) = \int \rho_0(g) \, d\mu(g)$. In order to state the first theorem, recall that a bounded operator is called quasicompact if the essential spectral radius $\rho_{\mathrm{ess}}(A)$ (defined in \eqref{DefinitionEssentalSpectralRadius}) is strictly less than the spectral radius. 

Let $\mathfrak{a} = \mathrm{Lie}(A)$ and choose a closed Weyl chamber $\mathfrak{a}^{+}$. Then for every $g \in G$ denote by $\kappa(g) \in \mathfrak{a}^{+}$ the unique element such that $g \in K\exp(\kappa(g))K$. We say that $\mu$ has finite $k$-th moment for some $k \geq 1$ if 
\begin{equation}\label{FiniteSecondMoment}
	\int |\kappa(g)|^k \, d\mu(g) < \infty.
\end{equation}

\begin{theorem}(Local limit theorem)\label{QuasicompactnessImpliesLLT} Let $G$ be a non-compact connected semisimple Lie group with finite center. Choose a maximal compact subgroup $K$ and denote $X = G/K$. Let $\mu$ be a non-degenerate probability measure on $G$ with finite second moment and assume that $S_0 = \rho_0(\mu)$ is quasicompact. Write $\sigma = ||\lambda_G(\mu)|| = ||S_0||$ and $\ell = 2p + d$ for $p$ the number of indivisible positive roots of $G$ and $d$ the rank of $G$. 
	
	Then there is a continuous real-valued function $\psi_{0}$ on  $G$ satisfying $\mu* \psi_0 = \psi_0 * \mu = \sigma \psi_{0}$ such that for $x_0  \in X$ and $f \in C_c^{\infty}(X)$, 
	\begin{equation}\label{LLTSymmetricSpace}
		\lim_{n \to \infty} \frac{n^{\ell/2}}{\sigma^n} \int f(g.x_0) \, d\mu^{*n}(g) = \int f(g.x_0)\psi_{0}(g) \, d\Haarof{G}(g).
	\end{equation}
	Moreover, the operator $S_0$ has a unique $\sigma$-eigenfunction $\eta_0 \in L^2(\Omega)$ of unit norm and there exists a unique $\sigma$-eigenfunction  $\eta_0'$ of $S_0^{*}$ satisfying $\langle \eta_0, \eta_0' \rangle = 1$. Then $\eta_0$ and $\eta_0'$ are positive almost surely and $\psi_0$ is given as $\psi_0(g) = c_{\mu}\cdot \langle \eta_0,\rho_0(g)\eta_0' \rangle$ for $c_{\mu} > 0$ a constant depending on $\mu$. 
\end{theorem}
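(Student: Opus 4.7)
The plan is to combine a Helgason--Fourier decomposition of the test function with a Perron--Frobenius analysis of $S_0$ and an analytic perturbation of the principal series $\rho_\lambda(\mu)$ near $\lambda=0$, following the broad strategy of Bougerol but with the spread-out hypothesis replaced throughout by quasicompactness of $S_0$. The exponent $\ell = 2p + d$ will appear from the vanishing of the Plancherel density $|c(\lambda)|^{-2}$ to order $2p$ at the origin, together with a $d$-dimensional Gaussian integration on $\mathfrak{a}^*$.

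First I would establish the spectral structure of $S_0$ on $L^2(\Omega,\Haarof{\Omega})$. Since $\rho_0$ is the Koopman representation (a unitary twist of translation by the square root of the Radon--Nikodym cocycle), $S_0$ is conjugate to a positive operator whose transfer-operator form has the Furstenberg measure as its unique $\mu$-stationary distribution. Combined with quasicompactness, a Krein--Rutman / Ionescu-Tulcea--Marinescu argument gives that $\sigma$ is a simple eigenvalue of $S_0$ and the unique one on $|z|=\sigma$, with strictly positive eigenfunction $\eta_0$, and similarly for $S_0^*$ with $\eta_0'$. Normalising $\langle\eta_0,\eta_0'\rangle=1$, one has $\sigma^{-n}S_0^n \to \eta_0\otimes\eta_0'$ exponentially fast. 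The matrix coefficient $\psi_0(g)=c_\mu\langle\eta_0,\rho_0(g)\eta_0'\rangle$ is then continuous and satisfies $\mu*\psi_0=\psi_0*\mu=\sigma\psi_0$ by moving the convolutions onto one or the other factor of the inner product.

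Next, for $f\in C_c^\infty(X)$, Helgason's inversion formula expresses
\begin{equation*}
\int f(g.x_0)\,d\mu^{*n}(g) \;=\; \int_{i\mathfrak{a}^*/W}\bigl\langle v_\lambda,\,\rho_\lambda(\mu)^n\,w_\lambda(f)\bigr\rangle\,|c(\lambda)|^{-2}\,d\lambda,
\end{equation*}
with $v_\lambda, w_\lambda(f)\in L^2(\Omega)$ depending smoothly on $\lambda$ and $w_\lambda(f)$ encoding the Helgason transform of $f$. The family $\lambda\mapsto\rho_\lambda(\mu)$ is analytic in $\lambda$ and quasicompactness is stable under small perturbation, so Kato's theory produces for $\lambda$ near $0$ a simple top eigenvalue $\sigma(\lambda)=\sigma\bigl(1-\tfrac12 Q(\lambda)+o(|\lambda|^2)\bigr)$ with $Q$ a positive-definite quadratic form (the $C^2$ expansion uses the finite second-moment hypothesis; positive-definiteness uses non-degeneracy of $\mu$). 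Splitting the $\lambda$-integral at $\{|\lambda|\leq\eta\}$ and rescaling $\lambda=n^{-1/2}\lambda'$ in the central region, $|c(\lambda)|^{-2}\sim|\lambda|^{2p}$ contributes $n^{-p}$ and Lebesgue measure contributes $n^{-d/2}$; dominated convergence yields a Gaussian integral times $\langle v_0,(\eta_0\otimes\eta_0')w_0(f)\rangle$, which the Plancherel formula identifies with $\int f(g.x_0)\psi_0(g)\,d\Haarof{G}(g)$ after pinning down the positive constant $c_\mu$.

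The main obstacle is the tail region $\{|\lambda|>\eta\}$: without the spread-out hypothesis one cannot invoke Bougerol's bound $\|\rho_\lambda(\mu)\|<\sigma$ off the origin directly. The resolution is a Kesten-type dichotomy using non-degeneracy of $\mu$---an approximate eigenvector for $\rho_\lambda(\mu)$ of norm $\sigma$ would be asymptotically $\mu$-invariant in $\rho_\lambda$, forcing $\lambda=0$---which yields $\|\rho_\lambda(\mu)\|<\sigma$ uniformly on every compact subset of $i\mathfrak{a}^*\setminus\{0\}$; the tail as $|\lambda|\to\infty$ is controlled by the Schwartz decay of $w_\lambda(f)$ inherited from $f\in C_c^\infty(X)$. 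A secondary technicality is patching Kato's local expansion to a neighborhood of $0$ independent of $n$, which follows from operator-norm continuity of $\lambda\mapsto\rho_\lambda(\mu)$ and the spectral gap already established at $0$.
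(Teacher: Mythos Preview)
Your overall strategy matches the paper's almost exactly: Helgason--Fourier inversion, Krein--Rutman for $S_0$, Kato perturbation of $S_r$ near $r=0$ with a $C^2$ top eigenvalue, the substitution $r\mapsto r/\sqrt{n}$, and the asymptotic $|c(r)|^{-2}\sim c_G\prod_{j=1}^p|\langle r,r_j\rangle|^2$ producing the factor $n^{-\ell/2}$. The identification of the limit with the matrix coefficient $\psi_0$ is also handled the same way.

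There is, however, a genuine gap in your treatment of the high-frequency tail. Your Kesten-type argument gives $\rho(S_r)<\sigma$ for $r\neq 0$ (not $\|S_r\|<\sigma$; the paper only proves the spectral-radius inequality, and that is all one can expect), which after a compactness argument yields $\sup_{\delta_0\le|r|\le R}\|S_r^n\|^{1/n}<\sigma$ for $n$ large. This controls any \emph{bounded} annulus. But for the region $|r|>R$ you invoke Schwartz decay of $\widehat f$, and that does not suffice: on $\{|r|>R\}$ one only has $\|S_r^n\|\le\sigma^n$, so after multiplying by $n^{\ell/2}/\sigma^n$ the integrand contributes $n^{\ell/2}\int_{|r|>R}\|\widehat f(r,\cdot)\|_2\,|c(r)|^{-2}\,dr$, which is $n^{\ell/2}$ times a constant that is small in $R$ but not in $n$. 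Letting $R\to\infty$ with $n$ runs into the non-uniformity of the exponential gap on the growing annulus. The paper resolves this in two steps: first prove the limit for $f$ with \emph{compactly supported} Fourier transform (so the tail is empty), then establish a uniform bound $\limsup_n n^{\ell/2}\sigma^{-n}|\mu^{*n}(f)|\ll\|f\|_1$ via a Paley--Wiener majorant of indicator functions, and finally approximate a general $f$ by $f*\delta_\ell$ for a bi-$K$-invariant approximate identity $\delta_\ell$ with compactly supported spherical transform. Your sketch is missing this approximation-plus-uniform-bound mechanism, and without it the argument does not close.
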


The only difference between \eqref{Bougerol} and \eqref{LLTSymmetricSpace} is that the latter is only proved on $X$. Indeed, the limit function of Bougerol's theorem arises as in Theorem~\ref{QuasicompactnessImpliesLLT} and since a non-degenerate, spread out measure $\mu$ satisfies that $S_0$ is quasicompact (cf. Proposition 2.2.1 of \cite{Bougerol1981}), Theorem~\ref{QuasicompactnessImpliesLLT}  is a generalization of Bougerol's theorem on $X$. We furthermore mention that it is conjectured that \eqref{Bougerol} and therefore also \eqref{LLTSymmetricSpace} holds for every non-degenerate probability measure (with finite second moment) on $G$.

Having stated Theorem~\ref{QuasicompactnessImpliesLLT}, the question arises to give quantitative error rates for \eqref{LLTSymmetricSpace}. Towards this aim and in order to motivate Theorem~\ref{LCLT}, we discuss $G = \R$.  Let $\mu$ be a non-degenerate measure on $\R$ with mean zero and variance $\sigma^2 < \infty$. The local limit theorem on $\R$ (cf. \cite{BreimanProbabilityBook} Section 7.4) states that $\sqrt{n}\mu^{*n} \to \frac{\Haarof{\R}}{\sqrt{2\pi \sigma^2}}$. 

Denote $$\eta_{n}(x) = \frac{1}{\sqrt{2\pi \sigma^2}}\exp\left(-\frac{x^2}{2n\sigma^2}\right).$$
Using that $|\widehat{\mu}(r)| < 1$ for $r \neq 0$ and $\widehat{\mu}(r) = \int e^{irx} \, d\mu(x)$ the Fourier transform of $\mu$, one can show for $f \in C^{\infty}(\R)$ a smooth function whose Fourier transform is compactly supported that there is a constant $c_f = c_{f}(\mu)$ depending on $\mu$ and the support of $\widehat{f}$ such that
\begin{equation}\label{RealGeneralQuantLCLT}
	\sqrt{n}\mu^{*n}(f) = \int f(x) \eta_{n}(x) \, d\Haarof{\R}(x) + \left(O_{\mu}(n^{-1})  + O_{\mu, f}(e^{-c_fn})\right)||f||_1,
\end{equation} where the first implied constant depends on $\mu$ and the second on $\mu$ and  the support of $\widehat{f}$. The result \eqref{RealGeneralQuantLCLT} may be referred to as the local central limit theorem as it implies the local limit theorem as well as the central limit theorem. Using that $|\tfrac{1}{\sqrt{2\pi\sigma^2}} - \eta_{n}(x)| \ll_{\sigma} n^{-1}x^2$, it follows that
\begin{equation}\label{RealLCLT}
	\sqrt{n}\mu^{*n}(f) = \frac{1}{\sqrt{2 \pi \sigma^2}}\int f(x) \, d\Haarof{\R}(x) + O_{\mu}(n^{-1}||f||_{*}) + O_{\mu,f}(e^{-c_f n} ||f||_1)
\end{equation}
for $$||f||_{*} = \int |f(x)|(1 + x^2) \, d\Haarof{\R}(x).$$

We deduce the same behaviour as \eqref{RealLCLT} even with matching error terms for the $G$ action on its symmetric space under the assumption that $S_0$ is quasicompact. Choosing a maximal compact subgroup $K$ corresponds to fixing the origin $o = eK \in X$ of $X$. Denote by $d_X(\cdot,\cdot)$ the distance function induced by a Riemannian metric on $X$ (for which $X$ is a symmetric space, see~\eqref{SymmetricSpaceMetric}). In the theorem below we refer to the Fourier transform of a function $f \in C^{\infty}(X)$ as discussed in Section~\ref{Notations}. For the asymptotic notation used see also Section~\ref{Notations}.

\begin{theorem}\label{LCLT}(Local limit theorem with weak quantitative error rates) With the notation and assumptions from Theorem~\ref{QuasicompactnessImpliesLLT}, assume further that $\mu$ has finite fourth moment. Then for $f \in C^{\infty}(X)$ with compactly supported Fourier transform, there is a constant $c_f = c_f(\mu) > 0$ depending on $\mu$ and the support of $\widehat{f}$ such that for $n \geq 1$ and all $x_0 \in X$,
	\begin{align}\label{LCLTQuantitativeFourierCompact}
		\frac{n^{\ell/2}}{\sigma^n} \int f(g.x_0) \, d\mu^{*n}(g) &= \int f(g.x_0)\psi_{0}(g) \, d\Haarof{G}(g) \\ &+ O_{\mu}(n^{-1} ||f||_* + n^{-1}d_X(x_0,o)^2||f||_1)  + O_{\mu, f}(e^{-c_fn} ||f||_1), \nonumber
	\end{align} where the first implied constant depends on $\mu$, the second on $\mu$ and the support of $\widehat{f}$ and 
	\begin{equation}\label{StarNormDef}
		||f||_* = \int |f(x)| (1 + d_X(x,o)^2) \, d\Haarof{X}(x).
	\end{equation}
\end{theorem}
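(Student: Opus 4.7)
The plan is to adapt the Fourier-analytic proof of \eqref{RealGeneralQuantLCLT} to the Helgason--Fourier transform on $X=G/K$. First I would use Plancherel inversion to express $\int f(g.x_0)\,d\mu^{*n}(g)$ as an integral over the spectral parameter $\lambda\in\mathfrak{a}^{*}$ of matrix coefficients of $\rho_\lambda(\mu)^n$ acting on $L^{2}(\Omega)$:
\begin{equation*}
\int f(g.x_0)\,d\mu^{*n}(g) \;=\; |W|^{-1}\int_{\mathfrak{a}^{*}}\bigl\langle \rho_\lambda(\mu)^n v_\lambda^{x_0},\, w_\lambda^{f,x_0}\bigr\rangle\, |c(\lambda)|^{-2}\,d\lambda,
\end{equation*}
with $v_\lambda^{x_0},w_\lambda^{f,x_0}\in L^{2}(\Omega)$ built from $f$, the base point $x_0$, and the Iwasawa cocycle. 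Compact support of $\widehat{f}$ restricts the integration to a compact set $\Lambda\subset\mathfrak{a}^{*}$, and standard estimates show $\norm{v_\lambda^{x_0}}_{L^{2}(\Omega)}$ grows at most polynomially in $d_X(x_0,o)$, while $w_\lambda^{f,x_0}$ is controlled by $\norm{f}_1$ and $\norm{f}_{*}$.

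The core step is to split $\Lambda$ into a small ball $U_\delta=\{|\lambda|<\delta\}$ around $0$ and its complement. On $\Lambda\setminus U_\delta$ I would combine the strict inequality $\norm{\rho_\lambda(\mu)}<\sigma$ for real $\lambda\neq 0$ (which follows from the simplicity of the top eigenvalue of $S_0$ granted by Theorem~\ref{QuasicompactnessImpliesLLT} together with non-degeneracy, via a Perron--Frobenius style irreducibility argument on $\Omega$) with compactness of $\Lambda\setminus U_\delta$ to obtain a uniform bound $\sigma-c_f$, yielding the exponential error $O_{\mu,f}(e^{-c_f n}\norm{f}_1)$. On $U_\delta$ I would apply Kato's analytic perturbation theory: the finite fourth moment hypothesis makes $\lambda\mapsto\rho_\lambda(\mu)$ a $C^{2}$ family of bounded operators on $L^{2}(\Omega)$, and quasicompactness of $S_0$ isolates, for $\delta$ small, a simple eigenvalue $\sigma(\lambda)$ of $\rho_\lambda(\mu)$ with rank-one spectral projection $P_\lambda$, both $C^{2}$ in $\lambda$, with a strict gap to the remaining spectrum. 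Maximality of $|\sigma(\cdot)|$ at the origin forces $\nabla\sigma(0)=0$, giving
\begin{equation*}
\sigma(\lambda)^n \;=\; \sigma^n\exp\!\bigl(-\tfrac{n}{2}Q(\lambda) + O(n|\lambda|^{4})\bigr)
\end{equation*}
for a positive definite quadratic form $Q$ on $\mathfrak{a}^{*}$.

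After rescaling $\lambda=\tau/\sqrt{n}$, the Plancherel factor $|c(\lambda)|^{-2}\sim c_0|\lambda|^{2p}$ combined with the $d$-dimensional Lebesgue measure yields the normalisation $n^{-(2p+d)/2}=n^{-\ell/2}$, and the Gaussian integral together with the Taylor expansion of $P_\lambda$ reproduces $\int f(g.x_0)\psi_0(g)\,d\Haarof{G}(g)$ with $\psi_0$ as in Theorem~\ref{QuasicompactnessImpliesLLT}. The $O_{\mu}(n^{-1}\norm{f}_{*})$ term comes from the $O(n|\lambda|^{4})$ Taylor remainder, which contributes $n^{-1}$ after the $|\lambda|^{2}$ rescaling, while the $O_{\mu}(n^{-1}d_X(x_0,o)^{2}\norm{f}_{1})$ term arises from the second-order Taylor correction to the base-point dependent vector $v_\lambda^{x_0}$, whose second $\lambda$-derivative at $0$ grows like $d_X(x_0,o)^{2}$ via the Iwasawa cocycle.

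The main obstacle I expect is the analytic bookkeeping around $\lambda=0$: $|c(\lambda)|^{-2}$ is only asymptotically a polynomial of degree $2p$, so uniform Taylor-remainder estimates have to be carefully interleaved with the precise behaviour of $c(\lambda)$ in order to extract the $n^{-\ell/2}$ prefactor together with the sharp $n^{-1}$ error. A secondary delicate point is deriving the strict bound $\norm{\rho_\lambda(\mu)}<\sigma$ for real $\lambda\neq 0$ from non-degeneracy alone — the analogue of $|\widehat{\mu}(r)|<1$ for $r\neq 0$ on $\R$, which here requires an irreducibility argument on the Furstenberg boundary combined with quasicompactness — and the uniformity of all implicit constants in $x_0$, which is precisely what upgrades Theorem~\ref{QuasicompactnessImpliesLLT} to the quantitative form of Theorem~\ref{LCLT}.
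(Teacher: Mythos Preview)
Your overall strategy matches the paper's proof closely: Fourier inversion on $X$, a high/low frequency split at some $\delta_0$, Kato perturbation theory for $S_r=\rho_r(\mu)$ near $r=0$ giving $S_r=\lambda(r)E_r+D_r$, a Taylor expansion of $\lambda(r)$ using the fourth moment, and the asymptotics $|c(r)|^{-2}\sim c_G\prod_{\ell}|\langle r,r_\ell\rangle|^2$ to extract the $n^{\ell/2}$ normalisation. The identification of the two $n^{-1}$ error sources is also essentially right, though in the paper the $\|f\|_*$ and $d_X(x_0,o)^2\|f\|_1$ contributions both arise from approximating the matrix coefficient $\psi_{\mu,r/\sqrt n}(g)=\langle\eta_{r/\sqrt n},\rho_{r/\sqrt n}(g)\eta'_{r/\sqrt n}\rangle$ by $\psi_{\mu,0}(g)$, which picks up a factor $(1+\|g\|^2)$ from the cocycle, and then from translating by $h_0$.

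There is however one genuine gap. On $\Lambda\setminus U_\delta$ you assert $\|\rho_\lambda(\mu)\|<\sigma$ for real $\lambda\neq 0$ and then take a uniform infimum by compactness. This operator-norm inequality is precisely the extra hypothesis \eqref{StrongLCLTAssumption} of Theorem~\ref{StrongLCLT}; it is \emph{not} a consequence of quasicompactness of $S_0$ and non-degeneracy, and the paper does not claim it. What the paper proves from these hypotheses (Lemma~\ref{HighFreqeuncyNormLess}) is only the spectral-radius inequality $\rho(S_r)<\sigma$ for $r\neq 0$, via a rigidity argument: if $\rho(S_r)=\sigma$ one extracts an approximate eigenvector, compares with the positive eigenfunction $\eta_0$ of $S_0$, and derives a cocycle identity on $\Omega$ that contradicts density of the semigroup generated by $\supp\mu$. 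Since $\rho(S_r)<\sigma$ pointwise does not by itself give a uniform bound on $\|S_r^n\|$, a second compactness step is needed (equation~\eqref{SrPowerOperatorNormEstimate}): one shows $\sup_{c_1\leq|r|\leq c_2}\|S_r^n\|^{1/n}<\sigma$ for all large $n$, using continuity of $r\mapsto S_r$ together with the elementary monotonicity $\|T^k\|^{1/k}\leq(\|T^n\|^{1/n})^{1-j/k}\|T\|^{j/k}$. This is exactly the passage that your ``compactness of $\Lambda\setminus U_\delta$'' is hiding, and it cannot be replaced by a pointwise norm bound. A smaller point in the same spirit: maximality of $|\lambda(\cdot)|$ at $0$ gives $\nabla\lambda(0)=0$ and \emph{semi}-definiteness of the Hessian, but strict definiteness of $Q$ is not automatic; the paper obtains it (Proposition~\ref{s(r)behavioursmallr}) by recognising $t\mapsto\sigma^{-n}\langle S_{tr}^n\eta_0,\eta_0\rangle$ as positive definite, invoking Bochner, and using Chebyshev together with $\rho(S_r)<\sigma$ to force the variance to diverge.
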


For $G = \R$, it is only possible to give strong error rates for \eqref{RealLCLT} if one gains control over the behaviour of the function $|\widehat{\mu}(r)|$ as $r \to \infty$, which as is shown in \cite{Breuillard2005ProbRel} is equivalent to assuming certain Diophantine properties on the support of $\mu$. 

In similar vein, we give strong error rates for \eqref{LCLTQuantitativeFourierCompact} under a suitable Fourier decay assumption. The Schwartz space $\mathscr{S}(X)$ of below theorem is defined in Section~\ref{Notations}. For $r \in \mathfrak{a}^{*}$ denote by $\rho_r$ the $r$-principal series representation defined in \eqref{PrincipalSeries} and write $$S_r = \rho_r(\mu).$$ 

\begin{theorem}\label{StrongLCLT}(Local limit theorem with strong quantitative error rates) With the notation and assumptions from Theorem~\ref{QuasicompactnessImpliesLLT}, assume further that $\mu$ has finite fourth moment and that 
	\begin{equation}\label{StrongLCLTAssumption}
		\sup_{|r| \geq 1} ||S_r||  < ||S_0||.
	\end{equation} 
	Then for $f \in\Schwartz{X}$, $x_0 \in X$ and $n \geq 1$, 
	\begin{align}\label{StrongLCLTFormula}
		\frac{n^{\ell/2}}{\sigma^n}\int f(g.x_0) \, d\mu^{*n}(g) &= \int f(g.x_0) \psi_{0}(g) \, d\Haarof{G}(g) \\ &+ O_{\mu}(n^{-1}||f||_* + n^{-1}d_X(x_0,o)^2||f||_1 + e^{-cn}||f||_{H^s}), \nonumber
	\end{align}
	where $c = c(\mu)$ is a constant depending on $\mu$, $s = \frac{1}{2}(\dim X + 1)$, $||\cdot||_{H^s}$ is the Sobolev norm \eqref{SobolevSpaceSymmetricSpace} of degree $s$ and the implied constant depends only on $\mu$. Moreover, the assumption \eqref{StrongLCLTAssumption} holds whenever $\mu$ is spread out or bi-$K$-invariant (i.e. $\mu = \Haarof{K} * \mu * \Haarof{K}$). 
\end{theorem}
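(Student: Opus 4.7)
The plan is to decompose the Helgason--Fourier transform of $f$ into a low-frequency part (supported in $\{|r|\le 2\}$) and a high-frequency part (supported in $\{|r|\ge 1\}$), apply Theorem~\ref{LCLT} to the first, and exploit the spectral gap \eqref{StrongLCLTAssumption} for the second. The Sobolev exponent $s=(\dim X+1)/2$ appears as the minimal smoothness needed to dominate the polynomial growth of the Plancherel density $|c(r)|^{-2}$ via Cauchy--Schwarz.

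Fix a Weyl-invariant smooth cutoff $\chi\in C_c^\infty(\mathfrak{a}^*)$ with $\chi\equiv 1$ on $\{|r|\le 1\}$ and supported in $\{|r|\le 2\}$, and define $f_1,f_2\in\mathscr{S}(X)$ via $\widehat{f_1}=\chi\widehat{f}$ and $\widehat{f_2}=(1-\chi)\widehat{f}$ so that $f_1=f*\check\chi$ and $f=f_1+f_2$. Applying Theorem~\ref{LCLT} to $f_1$, whose Fourier support is uniformly bounded by $2$, produces the desired main term for $f_1$ plus an error $O_\mu(n^{-1}\|f\|_* + n^{-1}d_X(x_0,o)^2\|f\|_1 + e^{-cn}\|f\|_1)$, where the exponential rate $c=c(\mu)>0$ is now independent of $f$ and the convolution estimates $\|f_1\|_*,\|f_1\|_1\ll_\mu\|f\|_*,\|f\|_1$ have been absorbed.

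For the piece $f_2$, the Plancherel formula for the Helgason--Fourier transform yields
\begin{equation*}
\int f_2(g.x_0)\,d\mu^{*n}(g) = \int_{\mathfrak{a}^*}\langle S_r^n\widehat{f_2}(r,\cdot),\,e_r(x_0,\cdot)\rangle_{L^2(K/M)}\,|c(r)|^{-2}\,dr,
\end{equation*}
where $e_r(x_0,\cdot)\in L^2(K/M)$ is the unit vector in the $r$-principal series representing evaluation at $x_0$. Since $\widehat{f_2}$ is supported in $\{|r|\ge 1\}$, hypothesis \eqref{StrongLCLTAssumption} gives $\|S_r^n\|\le (\sigma e^{-c'})^n$ for some $c'=c'(\mu)>0$; combined with the polynomial growth of $|c(r)|^{-2}$ and Cauchy--Schwarz against $(1+|r|^2)^{-s}$, one obtains $\bigl|\int f_2(g.x_0)\,d\mu^{*n}(g)\bigr|\ll_\mu (\sigma e^{-c'})^n\|f\|_{H^s}$. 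Moreover, $\psi_0$ is a matrix coefficient of $\rho_0$, so after right-$K$ averaging it lies in the $r=0$ spectral slice of the Helgason--Fourier decomposition of $L^2(X)$ and is therefore orthogonal to the high-frequency part of $f$, giving $\int f_2(g.x_0)\psi_0(g)\,d\Haarof{G}(g)=0$. Adding the contributions of $f_1$ and $f_2$ and multiplying by $n^{\ell/2}/\sigma^n$ yields \eqref{StrongLCLTFormula}.

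Finally, to verify \eqref{StrongLCLTAssumption}: if $\mu$ is bi-$K$-invariant then $S_r$ is rank one on the $K$-fixed line of $\rho_r$ with norm equal to the spherical transform $|\widehat\mu(r)|$; non-degeneracy gives $|\widehat\mu(r)|<\sigma$ for $r\ne 0$, and the Harish-Chandra expansion $\varphi_r(g)\to 0$ as $|r|\to\infty$ for $g\notin K$ yields by dominated convergence $|\widehat\mu(r)|\to\mu(\{e\})<\sigma$, and continuity together with compactness of $\{1\le|r|\le R\}$ then produces the uniform gap. If $\mu$ is spread out, some $\mu^{*N}$ admits an $L^1$ density on $G$, so $\|S_r^N\|\to 0$ as $|r|\to\infty$ by Riemann--Lebesgue applied to matrix coefficients of $\rho_r$; combining with $\|S_r\|<\sigma$ for $r\ne 0$ (continuous in $r$) and taking $N$th roots yields \eqref{StrongLCLTAssumption}. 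The main obstacle is the rigorous justification of the two Plancherel-style assertions used in the high-frequency estimate---the operator identity for $\mu^{*n}$ against $e_r(x_0,\cdot)$ and the orthogonality of $\psi_0$ to the high-frequency part of $f$---both of which require care about the precise normalization of the Helgason--Fourier transform; the remaining spectral input is furnished directly by Theorem~\ref{LCLT} and the hypothesis.
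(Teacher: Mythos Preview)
Your proof is correct and follows essentially the same route as the paper: Fourier-decompose into low and high frequencies, use the perturbative spectral analysis of $S_r$ near $r=0$ for the low part, and use the hypothesis \eqref{StrongLCLTAssumption} together with Cauchy--Schwarz against $(1+|r|^2)^{-s}$ for the high part (this is exactly the paper's Lemma~\ref{HighFrequencyUnderS_rStrong}). The only organizational difference is that the paper splits the \emph{integral} at $|r|=\delta_0<1$ and redoes the low-frequency analysis alongside the proof of Theorem~\ref{LCLT}, whereas you split $f$ itself at $|r|\approx 1$ via a Fourier multiplier and invoke Theorem~\ref{LCLT} as a black box on $f_1$; your packaging is slightly cleaner since it avoids separately treating the strip $\delta_0\le|r|\le 1$ via \eqref{SrPowerOperatorNormEstimate}. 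Your orthogonality claim $\int f_2(g.x_0)\psi_0(g)\,d\Haarof{G}=0$ is precisely Lemma~\ref{MuStationaryMeasure} specialized to $r=0$ (since $\widehat{f_2}(0,\cdot)=0$), and the convolution bounds $\|f_1\|_1,\|f_1\|_*\ll\|f\|_1,\|f\|_*$ follow because $\check\chi$ is a fixed bi-$K$-invariant Schwartz function. One small slip: in the bi-$K$-invariant case the limit of $|\widehat\mu(r)|$ as $|r|\to\infty$ is $\mu(K)$, not $\mu(\{e\})$, since $\phi_r(k)=1$ for all $k\in K$; the conclusion $\mu(K)<\sigma=\int\phi_0\,d\mu$ still holds because $\phi_0>0$ on $G\setminus K$ and $\mu(G\setminus K)>0$ by non-degeneracy.
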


We proceed with discussing spectral properties of the operator $S_0$ and also related results on absolute continuity of the Furstenberg measure. In order to introduce convenient notation, we recall the definition of almost Diophantine measures introduced in \cite{BenoistdeSaxce2016}.

\begin{definition}
	Let $G$ be a connected Lie group, $\mu$ a probability measure on $G$ and let $c_1, c_2 > 0$. The measure $\mu$ is called \textbf{$(c_1, c_2)$-almost Diophantine} or simply \textbf{$(c_1,c_2)$-Diophantine} if $$\sup_{H < G} \mu^{*n}(B_{e^{-c_1 n}}(H)) \leq e^{-c_2 n}$$ for sufficiently large $n$, where $B_{e^{-c_1 n}}(H) = \{ g \in G \,:\, d(g,H) < e^{-c_1 n} \}$ and the supremum is taken over all closed connected subgroups $H$ of $G$.
\end{definition}

Almost Diophantine measures are useful in understanding random walks on compact groups. Generalizing the Bourgain-Gamburd method developed for $\mathrm{SU}(2)$ by \cite{BourgainGamburd2008Invent} and for $\mathrm{SU}(d)$ in \cite{BourgainGamburd2010}, it was shown in  \cite{BenoistdeSaxce2016} for $K$ a compact connected simple Lie group, that a symmetric measure $\mu$ is $(c_1,c_2)$-Diophantine for some $c_1, c_2 > 0$ if and only if $\lambda_K(\mu)$ has strong spectral gap (Definition~\ref{StrongSpectralGapDef}), in this setting being equivalent to $||\lambda_K(\mu)|_{L^2_0(K)}||_{\mathrm{op}} < 1$ for $L^2_0(K) = \{ f \in L^2(K) \,:\, \Haarof{K}(f) = 0 \}$. Indeed, the essential spectral radius of $\lambda_K(\mu)$ can be bounded in terms of $K,c_1$ and $c_2$. Strong spectral gap of $\lambda_K(\mu)$ can be used to deduce by using the Fourier inversion formula on $K$ that for $f \in C^{\infty}(K)$
\begin{equation}\label{CompactGroupsQuantitativeLLT}
	|\mu^{*n}(f) - \Haarof{K}(f)| \ll e^{-cn}||f||_{H^s}
\end{equation}
with $c > 0$ a constant depending on $\mu$ and $||\cdot||_{H^s}$ a Sobolev norm \eqref{CompactGroupSobolevSpace} on $K$ of high enough degree. Without assuming that $\mu$ is almost Diophantine, only weaker results than \eqref{CompactGroupsQuantitativeLLT} are known. Nonetheless, it is conjectured that every non-degenerate measure is almost Diophantine.  For finitely supported measures it is established in \cite{BenoistdeSaxce2016} that non-degenerate symmetric measures with matrices supported on algebraic entries are almost Diophantine. 

For finitely supported measures, most known spectral results for $S_0$ also rely on the Bourgain-Gamburd method. However one requires stronger Diophantine conditions. Indeed, as in contrast to compact groups it is necessary to control the exponential norm growth of the $\mu$-random walk on $G$, we have to demand that the measure is $(c_1, c_2)$-Diophantine while being close to the identity in terms of $c_1$ and $c_2$. We therefore introduce the following definition. 

\begin{definition}
	Let $G$ be a connected Lie group, $\mu$ a probability measure on $G$ and let $c_1, c_2, \varepsilon > 0$. The measure $\mu$ is called \textbf{$(c_1, c_2, \varepsilon)$-Diophantine} if 
	\begin{enumerate}[(i)]
		\item $\mu$ is $(c_1 \log \frac{1}{\varepsilon}, c_2 \log\frac{1}{\varepsilon})$-Diophantine, i.e. for $n$ large enough, $$\sup_{H < G} \mu^{*n}(B_{\varepsilon^{c_1 n}}(H)) \leq \varepsilon^{c_2 n}.$$
		\item $\supp \mu \subset B_{\varepsilon}(e)$.
	\end{enumerate}
\end{definition}

We state a result of \cite{BoutonnetIoanaSalehiGolsefidy2017} showing that there is an abundant collection of examples of $(c_1, c_2, \varepsilon)$- Diophantine measures for arbitrarily small $\varepsilon$.

\begin{theorem}(Theorem 3.1 of \cite{BoutonnetIoanaSalehiGolsefidy2017})\label{BIG17Theorem3.1}
	Let $G$ be a connected simple Lie group with finite center and adjoint representation $\mathrm{Ad} : G \to \mathrm{GL}(\mathfrak{g})$. Let $\Gamma < G$ be a countable dense subgroup and assume that there is a basis of $\mathfrak{g}$ such that $\mathrm{Ad}(\gamma)$ is algebraic with respect to that basis for every $\gamma \in \Gamma$.
	
	Then there exist $c_1, c_2 > 0$ such that for every $\varepsilon_0 > 0$ there is $0 < \varepsilon < \varepsilon_0$ and a finitely supported symmetric $(c_1, c_2, \varepsilon)$-Diophantine probability measure $\mu$ satisfying $\supp{\mu} \subset \Gamma \cap B_{\varepsilon}$.
\end{theorem}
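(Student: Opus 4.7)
The plan is to construct $\mu$ explicitly out of elements of $\Gamma$ very close to the identity, exploiting the algebraicity hypothesis to get Liouville-type lower bounds on how close products of these elements can lie to any proper closed connected subgroup.

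First I would fix a basis $X_1, \dots, X_k$ of $\mathfrak{g}$ with respect to which $\mathrm{Ad}(\gamma)$ has algebraic entries for all $\gamma \in \Gamma$. By density of $\Gamma$, for every sufficiently small $\varepsilon$ I can pick $\gamma_1(\varepsilon), \dots, \gamma_k(\varepsilon) \in \Gamma \cap B_\varepsilon(e)$ with $\log \gamma_i(\varepsilon) = \varepsilon X_i + O(\varepsilon^2)$. Setting
\[
\mu_\varepsilon = \frac{1}{2k}\sum_{i=1}^k \bigl(\delta_{\gamma_i(\varepsilon)} + \delta_{\gamma_i(\varepsilon)^{-1}}\bigr)
\]
gives a symmetric finitely supported measure in $B_\varepsilon(e)$, so condition (ii) is automatic. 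For (i), the point is that since $G$ is simple, any proper closed connected subgroup $H$ is determined by its Lie algebra $\mathfrak{h} \subsetneq \mathfrak{g}$, which is a proper $\mathrm{Ad}(H)$-invariant subspace; so being $\delta$-close to $H$ translates (via the exponential map and $\mathrm{Ad}$) into an approximate invariance condition on $\mathrm{Ad}(\gamma)$ with respect to some proper subspace of $\mathfrak{g}$. This is cut out by the vanishing of certain polynomial expressions in the entries of $\mathrm{Ad}(\gamma)$.

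Second, given an $n$-fold product $\gamma = \gamma_{i_1}(\varepsilon)^{\pm 1} \cdots \gamma_{i_n}(\varepsilon)^{\pm 1}$, the matrix $\mathrm{Ad}(\gamma)$ is a product of $n$ algebraic matrices drawn from a fixed finite set, hence lies in a number field of bounded degree and has (absolute multiplicative) height bounded by $H^{cn}$ for some constant $H = H(\Gamma, \varepsilon_0)$ and absolute $c$. A Liouville-type inequality then forces $\mathrm{Ad}(\gamma)$ to either exactly stabilize some proper subspace or to lie at distance at least $H^{-c' n} \geq \varepsilon^{c_1 n}$ from the variety of matrices doing so, provided $c_1$ is chosen in terms of $c, c'$ and $\log H / \log(1/\varepsilon)$. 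Combined with escape-from-subvarieties and the fact that the generic product does not exactly preserve any proper subalgebra (because $\mathrm{Ad}(\Gamma)$ is Zariski dense, being dense), the set of length-$n$ words whose product lies in $B_{\varepsilon^{c_1 n}}(H)$ for some $H$ is contained in a union of lower-dimensional algebraic sets of words, whose counting measure under $\mu_\varepsilon^{*n}$ can be bounded by $\varepsilon^{c_2 n}$ via a Bourgain-Gamburd style flattening / large deviation estimate for the random walk on the word tree.

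The main obstacle is the last point: the supremum over \emph{all} closed connected subgroups rather than a finite list. This is resolved by observing that closed connected subgroups of $G$ are parametrized by their Lie subalgebras, which form a closed subvariety of the Grassmannian of $\mathfrak{g}$, so up to a compactness argument one only has to quantify approximate stabilization of a bounded-complexity family of subspaces, reducing the problem to the height bound above. Passing from this generic estimate on products to the full Diophantine bound on $\mu_\varepsilon^{*n}(B_{\varepsilon^{c_1 n}}(H))$ and choosing $\varepsilon < \varepsilon_0$ small enough that the overhead from approximating $\exp(\varepsilon X_i)$ by $\gamma_i(\varepsilon)$ is absorbed into the constants $c_1, c_2$ then yields the theorem.
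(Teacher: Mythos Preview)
The paper does not contain a proof of this statement: it is quoted verbatim as Theorem~3.1 of \cite{BoutonnetIoanaSalehiGolsefidy2017} and used as a black box to supply examples of $(c_1,c_2,\varepsilon)$-Diophantine measures. So there is nothing in the present paper to compare your sketch against; for the actual argument you need to consult \cite{BoutonnetIoanaSalehiGolsefidy2017}.

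That said, your outline contains a genuine gap worth flagging. The crux of the theorem is the order of quantifiers: $c_1,c_2$ are fixed \emph{before} $\varepsilon_0$, so they must be uniform along a sequence $\varepsilon\to 0$. In your construction you pick arbitrary $\gamma_i(\varepsilon)\in\Gamma\cap B_\varepsilon$ approximating $\exp(\varepsilon X_i)$ and then invoke a Liouville bound $H^{-c'n}\geq \varepsilon^{c_1 n}$, where $H$ is the height of these generators. But nothing in ``$\Gamma$ dense with algebraic $\mathrm{Ad}$-image'' controls the height of an element of $\Gamma\cap B_\varepsilon$ in terms of $\varepsilon$; a generic such element can have height far larger than any fixed power of $1/\varepsilon$, which would force $c_1$ (and likewise your $c_2$, via the escape-from-subvarieties rate) to depend on $\varepsilon$. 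The construction in \cite{BoutonnetIoanaSalehiGolsefidy2017} does not take arbitrary nearby elements: one has to produce generators near the identity whose heights are \emph{polynomially} bounded in $1/\varepsilon$ (e.g.\ via a pigeonhole/word-length argument starting from a fixed finite generating set, so that word length $\asymp \log(1/\varepsilon)$ simultaneously bounds distance to $e$ and logarithmic height), and this is precisely what makes $c_1,c_2$ independent of $\varepsilon$. Your sketch would be fine once this height--distance comparison is inserted; as written it only yields $(c_1(\varepsilon),c_2(\varepsilon),\varepsilon)$-Diophantine measures, which is strictly weaker.
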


Using the above defined notion of Diophantine measures, one can establish the following result on quasicompactness of $S_0$. Together with Theorem~\ref{BIG17Theorem3.1}, numerous examples of finitely supported measures satisfying \eqref{LLTSymmetricSpace} are provided.

\begin{theorem}\label{FurstenbergMeasureLLTMainTheorem}
	Let $G$ be a non-compact connected simple Lie group with finite center. Let $c_1, c_2 > 0$. Then there is $\varepsilon_0 = \varepsilon_0(G,c_1,c_2) > 0$ depending on $G$ and $c_1,c_2 > 0$, such that every symmetric and $(c_1, c_2, \varepsilon)$-Diophantine probability measure $\mu$ with $\varepsilon \leq \varepsilon_0$ satisfies that $S_0 = \rho_0(\mu)$ is quasicompact.  In particular, Theorem~\ref{QuasicompactnessImpliesLLT} and Theorem~\ref{LCLT} holds for $\mu$.
\end{theorem}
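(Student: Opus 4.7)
My plan is to prove quasicompactness of $S_0=\rho_0(\mu)$ by producing, for each sufficiently large $n$, a finite-dimensional subspace $F_n\subset L^2(\Omega)$ such that
\[
\|S_0^n|_{F_n^\perp}\|_{\mathrm{op}} \leq \sigma^n e^{-cn}
\]
for a fixed $c=c(\mu)>0$. By Nussbaum's formula for the essential spectral radius, this gives $\rho_{\mathrm{ess}}(S_0)\leq \sigma e^{-c}<\sigma$, which is exactly quasicompactness. The overall structure follows the Bourgain--Gamburd strategy as deployed by Bourgain \cite{Bourgain2012} on $\SL_2(\R)$ and extended by \cite{BoutonnetIoanaSalehiGolsefidy2017}; the task is to convert their quantitative results about the $\mu$-random walk into a norm bound on $L^2(\Omega)$.

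The first main step is a Bourgain--Gamburd $L^2$-flattening statement for the convolution powers $\mu^{*n}$. Because $\supp\mu\subset B_\varepsilon(e)$ with $\varepsilon$ small, $\exp$ linearizes the random walk near the identity, and the $(c_1,c_2,\varepsilon)$-Diophantine hypothesis supplies the required non-concentration on proper connected subgroups when the Bourgain--Gamburd machine is run on $\mathfrak g$. The output I would invoke is the scale-by-scale flattening estimate of \cite{BoutonnetIoanaSalehiGolsefidy2017}: for some $c'>0$ and every $\delta\in(0,\varepsilon_0)$ one has $\|\mu^{*n}\ast\chi_\delta\|_2^2\leq \delta^{-\dim G+c'}$ once $n\asymp \log(1/\delta)$, where $\chi_\delta$ is a fixed $L^\infty$-normalized bump at scale $\delta$.

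The second step transfers this flattening into a spectral estimate on $L^2(\Omega)=L^2(K/M)$. Decomposing under the left $K$-action by Peter--Weyl gives an orthogonal sum of isotypic components indexed by $M$-spherical highest weights $\lambda$; I would take $F_n$ to be the span of all isotypes with $|\lambda|\leq e^{c''n}$ for a small $c''\ll c'$, and write $Q_n$ for the orthogonal projection onto $F_n^\perp$. Expressing $S_0^n$ as an integral operator whose kernel is built from the Iwasawa decomposition of $g^{-1}k$ for $g\sim\mu^{*n}$, I would estimate $\|Q_n S_0^n Q_n\|_{\mathrm{op}}\leq \|Q_n S_0^n Q_n\|_{\mathrm{HS}}$ by combining (i) flattening on the $K$-marginal of $\mu^{*n}$, pushed forward from $G$ via Iwasawa, which gains $e^{-c'n}$, with (ii) exponential smallness of the high-weight part of the kernel through the Weyl dimension formula and Sobolev-type bounds on the compact group $K$.

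The main obstacle is that the kernel of $S_0^n$ carries the Jacobian $e^{-2\rho(\kappa(g^{-1}k))}$ of the principal series (with $2\rho$ the half-sum of positive roots), which on $\supp\mu^{*n}\subset B_{n\varepsilon}(e)$ can fluctuate by a factor $e^{Cn\varepsilon}$ and threatens to destroy the flattening gain $e^{-c'n}$. This is exactly why the stronger hypothesis $\varepsilon\leq\varepsilon_0(G,c_1,c_2)$ is imposed rather than mere almost-Diophantine-ness: one picks $\varepsilon_0$ so that $C\varepsilon_0 < c'/2$, whence the Jacobian can at worst halve the flattening exponent. With this balance secured, the two-step estimate produces $\|S_0^n|_{F_n^\perp}\|\leq \sigma^n e^{-cn}$ for, say, $c=c'/4$, giving quasicompactness of $S_0$; the final ``In particular''-clause is then an immediate application of Theorems~\ref{QuasicompactnessImpliesLLT} and~\ref{LCLT}.
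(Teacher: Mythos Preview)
Your overall architecture is correct: flattening of $\mu^{*n}$ from the Bourgain--Gamburd machinery, a Peter--Weyl cutoff at some frequency threshold, and a balance between the Jacobian growth $e^{Cn\varepsilon}$ and the Diophantine scale. You have also correctly identified why one needs $\varepsilon\leq\varepsilon_0(c_1,c_2)$ rather than mere almost-Diophantine-ness. However, the core analytic step---bounding $\|Q_n S_0^n Q_n\|$---has a genuine gap.

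The problem is your step (ii): ``exponential smallness of the high-weight part of the kernel through the Weyl dimension formula and Sobolev-type bounds.'' Flattening gives you only $\|(\mu^{*n})_\delta\|_{L^2(G)}\leq\delta^{-\gamma}$; this is an $L^2$ bound on a density, not a smoothness bound, and it does \emph{not} by itself force $\rho_0((\mu^{*n})_\delta)$ to have small Hilbert--Schmidt norm on high $K$-types. The missing mechanism is a matrix-coefficient (equivalently, mixing) estimate: for $\varphi$ in a high $K$-isotype $V_\ell$, the function $g\mapsto\langle\rho_0(g)\varphi,\varphi\rangle$ oscillates, and averaging it over a ball against a bounded density produces genuine decay. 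Without this, your Hilbert--Schmidt bound is of order $\delta^{-\gamma}\gg 1$, not $e^{-c'n}$.

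The paper handles this in two ways. The primary route (Theorem~\ref{BIGConsequence}, which implies Corollary~\ref{CorS_0Quaiscompact} and hence Theorem~\ref{FurstenbergMeasureLLTMainTheorem}) quotes as a black box Theorem~6.7 of \cite{BoutonnetIoanaSalehiGolsefidy2017}, which already packages flattening together with a Littlewood--Paley decomposition and mixing inequality \emph{on $G$} to produce a finite-dimensional $V_B\subset L^2(B)$ with $\|\lambda_G(\mu)|_{V_B^\perp}\|_{L^2(B)}\leq\varepsilon^{O(1)}$; one then transfers this bound to $L^2(K)$ via the projection $\pi_K:G\to G/P^+=K$ and a straightforward norm-comparison lemma (Lemma~\ref{ComparisonBoundLemma}). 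The alternative Bourgain-style route (Theorem~\ref{BourgainMainResult}, requiring $K$ semisimple) makes the matrix-coefficient input explicit: Proposition~\ref{HighOscillatingAverage} shows $\Haarof{G}(B_R)^{-1}\int_{B_R}|\langle\rho_0^+(g)\varphi,\varphi\rangle|\,d\Haarof{G}\ll 2^{-\ell/2}\|\varphi\|_2^2$ for $\varphi\in V_\ell$, and this is combined with flattening via $\|\rho_0^+((\mu^{*n})_\delta)\varphi\|_2^2\leq\|(\mu^{*n})_\delta\|_\infty\int_{B_R}|\langle\rho_0^+(g)\varphi,\varphi\rangle|\,d\Haarof{G}$ to bound $\|S_0^+|_{V_\ell}\|$. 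An almost-orthogonality argument (Lemma~\ref{AlmostOrthogonalRho0}) then upgrades this to a bound on all of $\bigoplus_{\ell\geq L}V_\ell$.

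A secondary point: the paper finds a single $n$-independent cutoff $L=L(c_1,c_2)$ with $\|S_0^+|_{\bigoplus_{\ell\geq L}V_\ell}\|\leq\tfrac14$, and concludes quasicompactness immediately from Lemma~\ref{QuasicompactnessCharacterization}(i) together with $\|S_0\|\geq 1-\varepsilon^{O(1)}$. This is simpler than tracking $F_n$ growing with $n$ and invoking Nussbaum's formula.
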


Theorem~\ref{FurstenbergMeasureLLTMainTheorem} is a straightforward consequence of the techniques and results developed in \cite{BoutonnetIoanaSalehiGolsefidy2017} and will be deduced in section~\ref{BIGConsequence}. Under the additional assumption that the maximal compact subgroup is semisimple, we offer an alternative proof following more closely the method by Bourgain \cite{Bourgain2012}, leading to marginally stronger results (Theorem~\ref{BourgainMainResult}). Indeed, using an idea from \cite{LindenstraussVarju2016}, we simplify Bourgain's original approach by exploiting that the irreducible representations of $K$ have high dimension.

We proceed with discussing the Furstenberg measure. Let $\mu$ be a measure on $G$ whose support generates a Zariski dense subgroup. Then the Furstenberg measure of $\mu$ is the unique $\mu$-stationary Borel probability measure $\FurstenbergMeasure$ on the boundary $\Omega$ (cf. for example \cite{GoldsheidMargulis1989}). It was initially conjectured by Kaimanovich-Le Prince \cite{KaimanovichLePrince2011} that the Furstenberg measure of a finitely supported measure is singular to the Haar measure $\Haarof{\Omega}$. However Bourgain \cite{Bourgain2012} and B\'{a}r\'{a}ny-Pollicott-Simon \cite{BaranyPollicottSimon2012} disproved the latter conjecture, with Bourgain \cite{Bourgain2012} giving an explicit construction while \cite{BaranyPollicottSimon2012} exploiting probabilistic methods. 

\cite{BenoistQuint2018} also provide examples of finitely supported measures with absolutely continuous Furstenberg measure, yet their construction does not lead to results as versatile as Theorem~\ref{BIG17Theorem3.1}. It is apparent from their proof, that $S_0$ is also quasicompact for these examples. 

A further result of \cite{Bourgain2012} is the construction of finitely supported measures on $\SL_2(\R)$ satisfying $\frac{d\FurstenbergMeasure}{d\Haarof{\Omega}} \in C^k(\Omega)$ for any $k \in \Z_{\geq 1}$. Following Bourgain's technique, we also deduce smoothness results for the Furstenberg measure for arbitrary simple Lie groups.

\begin{theorem}\label{FurstenbergMeasureSmoothness}
	Let $G$ be a non-compact connected simple Lie group with finite center. Let $c_1, c_2 > 0$ and $m \in \Z_{\geq 1}$. Then there is $\eps_m = \eps_m(G,c_1,c_2) > 0$ depending on $G,c_1,c_2$ and $m$ such that every symmetric and $(c_1,c_2, \varepsilon)$-Diophantine probability measure $\mu$ with $\varepsilon \leq \varepsilon_m$ has absolutely continuous Furstenberg measure with density in $C^m(\Omega)$.
\end{theorem}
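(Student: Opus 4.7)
The strategy follows Bourgain's technique from \cite{Bourgain2012}, as indicated in the paper, with the simplification from the high dimensionality of $K$-types used in Theorem~\ref{BourgainMainResult}. The starting point is Theorem~\ref{FurstenbergMeasureLLTMainTheorem}: for $\varepsilon \leq \varepsilon_0(G, c_1, c_2)$ the operator $S_0$ is quasicompact, and as noted in the discussion following \cite{BenoistQuint2018}, this already ensures that the Furstenberg measure $\FurstenbergMeasure$ is absolutely continuous with some density $\FurstenbergDensity \in L^2(\Omega)$. By the Sobolev embedding on the compact manifold $\Omega$, the desired $C^m$ smoothness will follow from $\FurstenbergDensity \in H^s(\Omega)$ for any prescribed $s > m + \tfrac{1}{2}\dim\Omega$, and I plan to achieve this by choosing $\varepsilon_m$ sufficiently small depending on $s$.

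The $H^s$ regularity is detected via the Peter-Weyl decomposition $L^2(\Omega) = \bigoplus_\pi V_\pi \otimes (V_\pi^{*})^M$ over irreducible unitary $K$-representations $\pi$ admitting an $M$-invariant vector, with norm equivalent to $\|\FurstenbergDensity\|_{H^s}^2 \asymp \sum_\pi (1 + \|\lambda_\pi\|^2)^s \|P_\pi \FurstenbergDensity\|_2^2$, where $P_\pi$ is the $\pi$-isotypic projection and $\lambda_\pi$ the highest weight. Since $\dim V_\pi$ grows polynomially in $\|\lambda_\pi\|$ by Weyl's dimension formula, it suffices to establish a $K$-type Fourier decay of the form $\|P_\pi \FurstenbergDensity\|_2 \leq C_\varepsilon (1 + \|\lambda_\pi\|)^{-\alpha(\varepsilon)}$ in which the exponent $\alpha(\varepsilon)$ tends to infinity as $\varepsilon \to 0$.

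To produce such decay, I plan to exploit the stationarity identity $\FurstenbergMeasure = \mu^{*n} * \FurstenbergMeasure$, which allows $P_\pi \FurstenbergDensity$ to be expressed as a $\mu^{*n}$-average of $G$-rotates of $\FurstenbergDensity$ within the $\pi$-isotypic component. For $n$ chosen so that the Diophantine scale $\varepsilon^{c_1 n}$ matches $\|\lambda_\pi\|^{-1}$, the $(c_1\log\tfrac{1}{\varepsilon}, c_2\log\tfrac{1}{\varepsilon})$-Diophantine hypothesis supplies the non-concentration input needed to run the Bourgain-Gamburd $L^2$-flattening lemma. Following the simplification indicated in the discussion preceding Theorem~\ref{BourgainMainResult} and inspired by Lindenstrauss-Varj\'u \cite{LindenstraussVarju2016}, the large dimension of $\pi$ amplifies flattening into an operator-norm contraction on the $\pi$-isotypic component whose strength is controlled by $\varepsilon$. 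Iterating this contraction and optimizing the iteration count yields the sought polynomial decay of $\|P_\pi \FurstenbergDensity\|_2$, with an exponent growing as $\varepsilon \to 0$.

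The main obstacle is the quantitative tracking of the iterated Bourgain-Gamburd step: one must verify not only that flattening occurs (which already underlies quasicompactness in Theorem~\ref{FurstenbergMeasureLLTMainTheorem}), but that its exponent depends on $\varepsilon$ in such a way that making $\varepsilon$ small in terms of $m$ produces $\alpha(\varepsilon) > m + \tfrac{1}{2}\dim\Omega + O(\dim K)$, thereby closing the Sobolev summation. This is the direct analogue for general simple $G$ of Bourgain's $C^k$-regularity theorem for $\SL_2(\R)$, and the key technical point will be the careful bookkeeping of how the flattening rate depends jointly on the Diophantine parameters $c_1, c_2, \varepsilon$ and on the $K$-type scale $\|\lambda_\pi\|$.
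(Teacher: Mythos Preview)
Your overall strategy---stationarity, a spectral contraction coming from flattening, then Sobolev embedding---is the right one and matches the paper. But the step ``iterating this contraction'' hides the main difficulty, and as written it would fail. The contraction you obtain (essentially Proposition~\ref{NormEstimateonVell} or Corollary~\ref{BourgainT_0MainResult}) holds only on the high-frequency subspace $\bigoplus_{\ell\geq L}V_\ell$, and neither $T_0$ nor $T_0^{*}$ preserves that subspace: the $G$-action on $\Omega$ mixes $K$-types. So applying $T_0$ to $\varphi\in V_\ell$ produces a low-frequency leak that, if uncontrolled, prevents any iteration. The paper handles this by an explicit high/low splitting at each step (the recursion $\|\Phi_n\|_2\leq\|\Phi_{n-1}^{(1)}\|_\infty+\tfrac12\|\Phi_{n-1}^{(2)}\|_2$) together with an almost-orthogonality estimate (Lemma~\ref{Phi_nFourierEstimate}) obtained by $r$-fold partial integration against the Laplacian, which shows the leak into low frequencies is of order $2^{-r\ell}(1+\varepsilon)^{O(nr)}$. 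Your proposal does not mention this leakage or how to control it.

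Relatedly, the mechanism by which $\alpha(\varepsilon)\to\infty$ is not the one you describe. The per-step contraction rate coming from flattening is essentially fixed (Corollary~\ref{BourgainT_0MainResult} gives $\tfrac12$, and even the $\varepsilon^{O_{c_1,c_2}(1)}$ of Proposition~\ref{NormEstimateonVell} has a fixed exponent). What smaller $\varepsilon$ actually buys is twofold: it lets you take $n\asymp\ell/\varepsilon$ iterations of $T_0$ before $\mathrm{supp}(\mu^{*n})$ spreads, making the $2^{-n}$ term as small as $2^{-c\ell/\varepsilon}$; and it tames the derivative blow-up $(1+\varepsilon)^{O(nr)}$ so that the partial-integration parameter $r$ can be taken large in terms of $m$. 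Your choice $n\sim\log\|\lambda_\pi\|/(c_1\log\tfrac1\varepsilon)$ is the scale at which flattening is \emph{proved}, not the number of iterations to be run afterwards; with that $n$ the decay exponent would not grow. Finally, the paper pairs $\varphi$ with $\nu_{\mathrm F}$ as a probability measure, giving $|\langle\psi_{\mathrm F},\varphi\rangle|\leq\|T_0^n\varphi\|_\infty$, and then bridges $L^2$ to $L^\infty$ via Agmon's inequality (Lemma~\ref{AgmonsInequality}); this is how the $L^2$ iteration is converted into pointwise Fourier decay.
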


While writing this paper, the author became aware of \cite{Lequen2022} who establishes a similar yet less general result to Theorem~\ref{FurstenbergMeasureSmoothness}. Since our proof is short and differs from \cite{Lequen2022} for instance in introducing Agmon's inequality (Lemma~\ref{AgmonsInequality}) for compact Lie groups, it is included in this paper.  

We comment on the organization of this paper. After reviewing the necessary notation and giving an outline of proofs in Section~\ref{Notation}, we discuss some preliminary results in Section~\ref{SectionPreliminary}. Then the local limit theorems Theorem~\ref{QuasicompactnessImpliesLLT}, Theorem~\ref{LCLT} and Theorem~\ref{StrongLCLT} are proved in Section~\ref{SectionProofLocalLimitTheorem}. Finally, quasicompactness of $S_0$ and the Furstenberg measure are discussed in Section~\ref{SectionQuasicompactnessS0}, establishing Theorem~\ref{FurstenbergMeasureLLTMainTheorem} and Theorem~\ref{FurstenbergMeasureSmoothness}.

\subsection*{Acknowledgment} I am grateful to my advisors Emmanuel Breuillard and Péter Varjú for introducing me to this topic and for their patient and thoughtful guidance. Furthermore, I am indebted to Timothée Bénard, Emmanuel Breuillard, Amitay Kamber and Péter Varjú and the anonymous referee for comments on a preliminary draft. The author gratefully acknowledges support from the European Research Council (ERC) grant No. 803711 as well as from the CCIMI at Cambridge. This work is part of a PhD thesis conducted at the University of Cambridge and the University of Oxford. 

\section{Notation and Outline} \label{Notation}

\subsection{Notation} \label{Notations}

In this section we collect the notations used in this paper. 

Throughout this paper, we denote by $G$ a non-compact connected semisimple Lie group with finite center, by $K$ a maximal compact subgroup of $G$ and write $X = G/K$ for the associated symmetric space. 

We use the asymptotic notation $X \ll Y$ or $X = O(Y)$ to denote that $|X| \leq CY$ for a constant $C > 0$ and for a sequences $X_n$ and $Y_n$ we write $X_n = o(Y_n)$ to symbolize $|\frac{X_n}{Y_n}| \to 0$ as $n \to \infty$. If the constant $C$ or the speed of convergence depends on additional parameters we add subscripts, unless the quantity depends on the fixed group $G$ in which case we omit addition subscripts for convenience.  

Let $\mathscr{B}$ be a Banach space and let $A: \mathscr{B} \to \mathscr{B}$ be a bounded operator. Recall that $A$ is called a Fredholm operator if there exists a bounded operator $T$ such that $TA - \mathrm{Id}$ and $AT - \mathrm{Id}$ are compact operators. Denote by $\mathrm{spec}(A)$ the spectrum of $A$. The essential spectrum $\mathrm{spec}_{\mathrm{ess}}(A)$ is defined as the set of complex numbers $\lambda$ such that $A - \lambda\cdot \mathrm{Id}$ is not Fredholm. The spectral radius is defined as $\rho(A) = \max_{\lambda \in \mathrm{spec}(A)} |\lambda|$ and the essential spectral radius as 
\begin{equation}\label{DefinitionEssentalSpectralRadius}
	\rho_{\mathrm{ess}}(A) = \max_{\lambda \in \mathrm{spec}_{\mathrm{ess}}(A)} |\lambda|,
\end{equation}  if $\rho_{\mathrm{ess}}(A) \neq \emptyset$ and otherwise $\rho_{\mathrm{ess}}(A) = 0$.

For a locally compact Hausdorff group $H$, write $\Haarof{H}$ for a fixed choice of Haar measure. Whenever $H$ is compact, $\Haarof{H}$ is the Haar probability measure. The left-regular representation is denoted $\lambda_H$ while we write $\rho_H$ for the right regular representation.

If $\mu$ is a finite measure on $H$ and $\pi : H \to \mathscr{U}(\mathscr{H})$ is a unitary representation, where $\mathscr{H}$ is a Hilbert space and $\mathscr{U}(\mathscr{H})$ the space of unitary operators $\mathscr{H} \to \mathscr{H}$, then 
\begin{equation}\label{pimudef}
	\pi(\mu) = \int \pi_g \, d\mu(g)
\end{equation}
is the operator uniquely characterized by $\langle \pi(\mu) v,w \rangle = \int \langle \pi_g v,w \rangle \, d\mu(g)$ for $v,w \in \mathscr{H}$.

For a group $H$ with metric $d_H$, for $R > 0$ and $x \in H$ we will denote by $B_{R}(x) = \{ y \in H \,:\, d_H(y,x) < R \}$ and abbreviate $B_R = B_R(e)$ for $e \in H$ the identity element. On $G$ we fix a left invariant metric such that $B_R(g) = gB_R(e)$. For a closed subset $H' \subset H$ we define $B_{R}(H') = \{ h \in H \,:\, d(h,H') < R \}$, where $d(h,H') = \sup_{h' \in H'} d(h,h')$. 

We first fix notation for structure theory on $K$. Write $T$ for a maximal torus in $K$ with Lie algebra $\mathfrak{t}$ and real dual Lie algebra $\mathfrak{t}^{*}$. Let $W_K$ be the Weyl group and we fix a $W_K$-invariant inner product on $\mathfrak{t}$, inducing an $W_K$-invariant inner product on $\mathfrak{t}^{*}$.  The set of real roots is denoted as $R$ and we choose a fundamental Weyl chamber $C$ which we consider as a subset of $\mathfrak{t}^{*}$. The fundamental Weyl chamber determines a basis $S$ of the real roots and the set of positive roots $R^{+}$. We denote by $I^{*} \subset \mathfrak{t}^{*}$ the set of integral forms. Then (cf. \cite{BrocknertomDieckRepresentationBook} Section 6) the set $\overline{C} \cap I^{*}$ parametrizes the irreducible representations of $K$. 

For $\gamma \in \overline{C} \cap I^{*}$ denote by $\pi_{\gamma}$ the associated irreducible unitary representation of $K$ and by $M_{\gamma}$ the span of matrix coefficients of $\pi_{\gamma}$. By the Peter-Weyl Theorem it holds that 
\begin{equation}\label{PeterWeyl}
	L^2(K) = \bigoplus_{\gamma \in \overline{C} \cap I^{*}} M_{\gamma},
\end{equation} where we used the convention applied throughout this paper that by a direct sum we denote the closure of the algebraic direct sum of the involved vector spaces. For any $\gamma \in \overline{C} \cap I^{*}$ and an orthonormal basis $v_1, \ldots , v_{d_{\gamma}}$ of $\pi_{\gamma}$, we set $\chi_{ij}^{\gamma}(k) = \langle \pi_{\gamma}(k) v_i, v_j \rangle$. Then the set of functions $d_{\gamma}^{1/2}\chi_{ij}^{\gamma}$ forms an orthonormal basis of $L^2(K)$. For $\varphi \in L^2(K)$, we set $\widehat{\varphi}^{\gamma}_{ij} = a_{ij}^{\gamma} = \langle \varphi, d_{\gamma}^{1/2}\chi_{ij}^{\gamma}\rangle$. For $\varphi \in C^{\infty}(K)$ and all $k \in K$, 
\begin{equation}\label{CompactGroupFourierInversion}
	\varphi(k) = \sum_{\gamma \in \overline{C}\cap I^{*}} \sum_{i,j = 1}^{d_{\gamma}} d_{\gamma}^{1/2}a_{ij}^{\gamma} \chi_{ij}^{\gamma}(k).
\end{equation}

We want to group together functions on $K$ that oscillate at roughly the same rate. Therefore, one defines 
\begin{equation}\label{LittlewoodPayleySpaces}
	V_0 = \bigoplus_{\substack{\gamma \in \overline{C} \cap I^{*}  \\ 0 \leq ||\gamma|| < 1}} M_{\gamma} \quad\quad\quad\quad\text{and}\quad\quad\quad\quad  V_{\ell} = \bigoplus_{\substack{\gamma \in \overline{C} \cap I^{*}  \\ 2^{\ell - 1} \leq ||\gamma|| < 2^{\ell}}} M_{\gamma}
\end{equation}
for $\ell \geq 1$. The decomposition
\begin{equation}\label{LittlewoodPayleyDecomposition}
	L^2(K) = \bigoplus_{\ell \geq 0} V_{\ell}
\end{equation} is referred to as the Littlewood-Paley decomposition of $L^2(K)$. For $\ell \geq 0$ we denote by $P_{\ell}$ the orthogonal projection from $L^2(K)$ to $V_{\ell}$. Therefore any $\varphi \in L^2(K)$ can be decomposed as $\varphi = \sum_{\ell \geq 0} P_{\ell} \varphi$. For Littlewood-Paley decompositions on groups in more general contexts we refer the reader to \cite{Mallahi-KaraiMohammdiSalehiGolsefidy2022}.

We finally define Sobolev spaces and Sobolev norms on $K$. Denote by $\mathfrak{k}$ the Lie algebra of $K$ and fix an orthonormal basis $X_1, \ldots , X_n$ of $\mathfrak{k}$. Then the Casimir operator given by $\triangle = - \sum_{i = 1}^n X_i \circ X_i$ is a central element of the universal enveloping algebra $U(\mathfrak{k})$. For $\gamma \in \overline{C} \cap I^{*}$ denote by $\lambda_{\gamma}$ the eigenvalue of $\triangle$ acting on $\pi_{\gamma}$. For $s \in \Z_{\geq 0}$, we define 
\begin{align}
	H^s(K) &= \{ \varphi \in L^2(K) \,:\, \lambda_K(\triangle)^{s/2} \varphi \in L^2(K) \} \label{CompactGroupSobolevSpace} \\ &= \left\{  \varphi = \sum_{\gamma \in \overline{C} \cap I^{*}} \varphi_{\gamma} \in \bigoplus_{\gamma \in \overline{C} \cap I^{*}} M_{\gamma} \,:\, ||\varphi||_{H^s}^2 = \sum_{\gamma \in \overline{C} \cap I^{*}}  \lambda_{\gamma}^s ||\varphi_{\gamma}||_2^2 < \infty   \right\}. \nonumber 
\end{align}

We also need structure theory for $G$. We take care not to confuse the notation introduced for the structure theory of $K$. The Lie algebra of $G$ is denoted as $\mathfrak{g}$ and we choose a Cartan decomposition $\mathfrak{g} = \mathfrak{k} \oplus \mathfrak{a}\oplus \mathfrak{n}$ for $\mathfrak{k}$ the Lie algebra of $K$. Denote by $\mathfrak{a}^{*}$ the real dual of $\mathfrak{a}$. Let $\Sigma$ be the sets of roots, choose a closed Weyl chamber $\mathfrak{a}^{+}$ and let $\Sigma^{+} = \{ r_1, \ldots , r_k \} \subset \mathfrak{a}^{*}$ be the system of positive roots. For a root $r \in \Sigma$ write $m(r)$ for the multiplicity of $r$ and denote by $\delta = \frac{1}{2} \sum_{r \in \Sigma^{+}} m(r)r$ the half sum of the positive roots counted with multiplicities. We fix a norm $|\cdot|$ on $\mathfrak{g}$ arising from an $\Ad$-invariant inner product. The latter norm restricts to $\mathfrak{a}$ and induces the operator norm on $\mathfrak{a}^{*}$.

Denote $A = \exp(\mathfrak{a})$, $N = \exp(\mathfrak{n})$ and $P^{+} = AN$. Then (cf. \cite{KnappLieBook} chapter VI) the multiplication map $K \times A \times N \to G$ is a diffeomorphism, giving rise to the Iwasawa decomposition $G = KAN$. Write further $K: G \to K$, $A: G \to A$ and $N : G \to N$ for the maps induced from the Iwasawa decomposition and the map $H: G \to \mathfrak{a}$ is defined for $g \in G$ as 
\begin{equation}\label{DefH}
	H(g) = \log A(g).
\end{equation}

Set $A^{+} = \exp(\mathfrak{a}^{+})$. Then the Cartan decomposition $G = KA^{+}K$ holds and denote by $\kappa : G \to \mathfrak{a}^{+}$ the map uniquely characterized by $g \in K\exp(\kappa(g))K$. We furthermore define 
\begin{equation}\label{NormG}
	||g|| = |\kappa(g)|.
\end{equation} 

On the symmetric space $X = G/K$, one defines the metric $d_X$ as 
\begin{equation}\label{SymmetricSpaceMetric}
	d_X(g.o,o) = |\kappa(g)|
\end{equation}
for the origin $o = K \in X$ and all $g \in G$. Then for $g \in KA$ it holds that $|H(g)| = d_X(g.o,o) = |\kappa(g)|$. Recall  Exercise B2 (iv) from Chapter VI of \cite{HelgasonDifferentialBook} stating that $d(a.o,o) \leq d(an.o,o)$ for all $a \in A$ and $n \in N$, which follows by applying suitably that the manifolds $A.o$ and $N.o$ are perpendicular at their unique intersection point $o\in X$. It therefore holds for all $g \in G$ that
\begin{equation}\label{Hkappainequality}
	|H(g)| \leq |\kappa(g)| = ||g||.
\end{equation}

For each $g \in G$ consider the diffeomorphism $$\alpha_g : K \to K, \quad\quad  k \mapsto \alpha_g(k) = K(gk).$$ The map $G \to \mathrm{Diff}(K), g \mapsto \alpha_g$ defines an action of $G$ on $K$. Denote by $\alpha_g'$ the Radon-Nikodym derivative of $(\alpha_g)_{*}\Haarof{K}$ with respect to $\Haarof{K}$. Then by I Lemma 5.19 of \cite{HelgasonGroupsBook},
\begin{equation}\label{GactiononKRN}
	\alpha'_g(k) = \frac{d(\alpha_g)_{*}\Haarof{K}}{d\Haarof{K}}(k) = e^{-2\delta H(g^{-1}k)}.
\end{equation}
For $r \in \mathfrak{a}^{*}$, we consider the unitary representation $\rho_r^{+}: G \to L^2(K)$ defined for $g \in G$ and $\varphi \in L^2(K)$ as
\begin{equation}\label{rhorplus}
	(\rho_r^{+}(g)\varphi)(k) = e^{-(\delta + ir)H(g^{-1}k)}\varphi(K(g^{-1}k))
\end{equation}
with $k \in K$.

The representation \eqref{rhorplus} is not irreducible in general. In order to make it irreducible, denote by $M$ the centralizer of $A$ in $K$ and write $P = MAN$ for the associated minimal parabolic subgroup. The Furstenberg boundary $\Omega = G/P$ can be identified with $K/M$ and we therefore view functions on $\Omega$ as $M$-invariant functions on $K$. The probability measure $\Haarof{\Omega}$ is the pushforward of $\Haarof{K}$ under the projection map.  For $r \in \mathfrak{a}^{*}$ we consider the $r$-principal series $\rho_r: G \to \mathscr{U}(L^2(\Omega))$ defined for $g \in G$ and $\varphi \in L^2(\Omega)$, 
\begin{equation}\label{PrincipalSeries}
	(\rho_r(g)\varphi)(\omega) = e^{-(\delta + ir)H(g^{-1}\omega)}\varphi(g^{-1}\omega)
\end{equation}
for $\omega \in \Omega$ where we denote by $g^{-1}\omega$ the element $K(g^{-1}k)M$ for any representative $\omega = kM$ with $k \in K$ and note that $H(g^{-1}\omega)$ does not depend on the representative of $\omega$ (cf. \cite{WarnerHarmonic1Book} section 5.5). The principal series is irreducible.

The Weyl group $W_G$ of $G$ is defined as the group quotient $N_K(\mathfrak{a})/Z_K(\mathfrak{a})$, where $N_K(\mathfrak{a}) = \{ k \in K \,:\, \mathrm{Ad}(k) \mathfrak{a} \subset \mathfrak{a} \}$ and $Z_K(\mathfrak{a}) = M = \{ k \in K \,:\,  ka = ak \text{ for all } a\in A \}$.

We call a root $r \in \Sigma$ indivisible if $\frac{1}{2}r$ is not a root and we order the positive roots in such a way that $r_1, \ldots , r_p$ are the indivisible roots. For any complex linear form $r$ on $\mathfrak{a}$ denote $$I(r) = \left(\prod_{\ell = 1}^p B\left( \frac{m(r_\ell)}{2}, \frac{\langle r, r_\ell \rangle}{\langle r_\ell , r_\ell \rangle} \right) \right) \cdot \left(\prod_{\ell = p + 1}^k B\left( \frac{m(r_\ell)}{2}, \frac{m(r_\ell/2)}{4}  +  \frac{\langle r, r_\ell \rangle}{\langle r_\ell , r_\ell \rangle} \right) \right),$$ where $B(x,y) = \int_0^1 t^{x-1}(1- t)^{y-1} \, dt$ is the Beta function. We further set for $r \in \mathfrak{a}^{*}$, $$c(r) = \frac{I(ir)}{I(\delta)}.$$

The spherical function of parameter $r \in \mathfrak{a}^{*}$ is defined as $\phi_r(g) = \langle \rho_{r}(g)1,1 \rangle.$ Denote by $\DifferentialOps{G}$ the set of differential operators on $G$ (see \cite{HelgasonGroupsBook} chapter 2). The Harish-Chandra Schwartz space introduced in \cite{Harish-Chandra1958} (see further page 230 of \cite{WallachReductive1Book}) is defined as 
\begin{equation}\label{SchwartzSpaceG}
	\Schwartz{G} = \{ f \in C^{\infty}(G) \,:\,  (1 + |H(g)|)^{\ell}|Df|(g) \ll_{f,D,\ell} \phi_0(g) \text{ for all } D \in \DifferentialOps{G}, \ell \geq 0  \}.
\end{equation}
The Schwartz space on $X$, denoted $\Schwartz{X}$, is defined as the set of right $K$-invariant functions in $\Schwartz{G}$. 

Recall that a function $f$ on $G$ is called bi-$K$-invariant or radial if $f(k_1gk_2) = f(g)$ for all $g \in G$ and $k_1,k_2 \in K$. For a radial function $f \in \Schwartz{G}$ we denote by $\rho_r(f)$ as in \eqref{pimudef} the operator $\int f(g) \rho_r(g)  \, d\Haarof{G}(g)$. We then define the spherical Fourier transform as 
$$\widehat{f}(r) = \langle 1, \rho_{r}(f) 1 \rangle   = \langle \rho_{-r}(f) 1,1 \rangle = \int_G f(g)\phi_{-r}(g) \, d\Haarof{G}(g).$$ Note that for all $\omega \in \Omega$ it holds that $\widehat{f}(r) = (\rho_{-r}(f)1)(\omega)$. For all $g \in G$, the spherical Fourier inversion formula holds
\begin{equation}\label{SphericalFourierInversion}
	f(g) = \int_{\mathfrak{a}^{*}} \widehat{f}(r)\phi_r(g) \, d\SphericalPlancharelMeasure(r),
\end{equation} where $d\SphericalPlancharelMeasure(r) = |c(r)|^{-2}d\Haarof{\mathfrak{a}^{*}}(r)$ is the spherical Plancharel measure. 

We furthermore define for $f \in \Schwartz{X}$, $r \in \mathfrak{a}^{*}$ and $\omega \in \Omega$, $$\widehat{f}(r,\omega) = (\rho_{-r}(f)1)(\omega) = \int_G f(g) (\rho_{-r}(g)1)(\omega) \, d\Haarof{G}(g).$$ Then it follows by a brief calculation from \eqref{SphericalFourierInversion}, for $f \in \Schwartz{X}$ and $g\in G$,
\begin{equation}\label{FourierInversionSymmetricSpace}
	f(g) = \int_{\mathfrak{a}^{*}}\int_{\Omega} \widehat{f}(r,\omega) (\rho_{r}(g)1)(\omega) \, d\Haarof{\Omega}(\omega)d\SphericalPlancharelMeasure(r).
\end{equation} We say that $f \in \Schwartz{X}$ has compactly supported Fourier transform if there is a constant $R > 0$ such that $\widehat{f}(r,\omega) = 0$ for $|r| \geq R$ and $\omega \in \Omega$.

We will further need Sobolev spaces and Sobolev norms on $X$, defined for $s \geq 0$ as 
\begin{align}\label{SobolevSpaceSymmetricSpace}
	H^s(X) = \left\{ f\in L^2(X) \,:\,||f||_{H^s}^2 =  \int_{\mathfrak{a}^{*}} ||\widehat{f}(r,\cdot)||_{L^2(\Omega)}^2 (1 + |r|^2)^{s} \, d\SphericalPlancharelMeasure(r) < \infty \right\}.
\end{align} It holds that $C_c^{\infty}(X) \subset \Schwartz{X} \subset H^s(X)$ for all $s\geq 0$ (c.f. \cite{HelgasonGroupsBook} chapter IV). 

For a probability measure $\mu$ on $G$, we write for $r \in \mathfrak{a}^{*}$ 
\begin{equation}\label{S_rDefinition}
	S_r^{+} = \rho_r^{+}(\mu) \quad\text{ and }\quad S_r = \rho_r(\mu),
\end{equation} using the definition \eqref{pimudef} for the unitary representations $\rho_r^{+}$ and $\rho_r$.

We further use the notation $\sigma = ||S_0||$. Since $MAN$ is an amenable group, it holds by Section D of \cite{Guivarch1980} that $\sigma = ||\lambda_G(\mu)||$. If $\lambda(r) \in \C$ satisfying $|\lambda(r)| = \rho(S_r)$ is in the discrete spectrum of $S_r$, has geometric multiplicity one and is the unique element of $\mathrm{spec}(S_r)$ on the circle of radius $\rho(S_r)$, then we denote by $\eta_r \in L^2(\Omega)$ the $\lambda(r)$-eigenfunction of $S_r$ with unit norm. Furthermore, if the same properties hold for $S_r^{*}$ and $\overline{\lambda(r)}$, choose $\eta_r'$ the $S_r^{*}$-eigenfunction with eigenvalue $\overline{\lambda(r)}$ satisfying $\langle \eta_r',\eta_r \rangle = 1$, provided there exists such an $\eta_r'$. Then we denote 
\begin{equation}\label{psimudef}
	\psi_{\mu,r}(g) = \langle \eta_{r},   \rho_{r}(g)\eta_{r}' \rangle
\end{equation}
for $g \in G$.

The operator $T_0: L^2(\Omega) \to L^2(\Omega)$ is defined as $$T_0 \varphi = \int \varphi \circ \alpha_g \, d\mu(g)$$ for $\varphi \in L^2(\Omega)$, where we equally denote by $\alpha_g: \Omega \to \Omega$ the map on $\Omega$ induced by $\alpha_g : K \to K$, and $$ T_0^+: L^2(K) \to L^2(K) \quad \text{ defined as }\quad  T_0^+ \varphi = \int \varphi \circ \alpha_g \, d\mu(g)$$ for $\varphi \in L^2(K)$.

\subsection{Outline of Proofs}\label{SectionOutline}

For the proof of Theorem~\ref{QuasicompactnessImpliesLLT}, Theorem~\ref{LCLT} and Theorem~\ref{StrongLCLT} one uses the Fourier inversion formula on $X$ to reduce the question at hand to spectral problems about the operators $S_r$. Indeed, by \eqref{FourierInversionSymmetricSpace} it holds for $x_0 = h_0K \in X$ with $h_0 \in G$ and $f \in \Schwartz{X}$,  
\begin{equation}\label{LLTFourierSide}
	\frac{n^{\ell/2}}{\sigma^n}\int f(g.x_0) \, d\mu^{*n}(g) = \frac{n^{\ell/2}}{\sigma^n} \int_{\mathfrak{a}^{*}} \int_{\Omega}   \widehat{f}(r,\omega) (S_r^n \rho_r(h_{0})1)(\omega) \, d\Haarof{\Omega}(k)d\SphericalPlancharelMeasure(r). 
\end{equation}
One then decomposes \eqref{LLTFourierSide} into high and low frequencies. Namely for $\delta_0 \in (0,1)$ small enough depending on $\mu$ and for $f \in \Schwartz{X}$,
\begin{align}
	\eqref{LLTFourierSide} &= \frac{n^{\ell/2}}{\sigma^n} \int_{|r| > \delta_0} \int_{\Omega}   \widehat{f}(r,\omega) (S_r^n \rho_r(h_{0})1)(\omega) \, d\Haarof{\Omega}(k)d\SphericalPlancharelMeasure(r) \label{HighFrequency}
	\\ &+ \frac{n^{\ell/2}}{\sigma^n}\int_{|r| \leq \delta_0}\int_{\Omega}   \widehat{f}(r,\omega) (S_r^n \rho_r(h_{0})1)(\omega) \, d\Haarof{\Omega}(\omega)d\SphericalPlancharelMeasure(r). \label{LowFrequency}  
\end{align} 

The following spectral properties of $S_r$ are used to deal with the arising terms:
\begin{enumerate}
	\item There are operators $E_0$ and $D_0$ such that
	\begin{equation}\label{StrongSpectralGap}
		S_0 = \sigma E_0 + D_0, 
	\end{equation}
	where $E_0$ is a projection to a one-dimensional subspace, $E_0 \circ D_0 = D_0 \circ E_0 = 0$ and $D_0$ satisfies $\rho(D_0) < \sigma = ||S_0||$. In Section~\ref{SectionStrongSpectralGap} we refer to the property \eqref{StrongSpectralGap} as strong spectral gap.
	\item For $|r| \leq \delta_0$, the operator $S_r$ has a decomposition as \eqref{StrongSpectralGap}, i.e.
	\begin{equation}\label{SrStrongSpectralGap}
		S_r = \lambda(r)E_r + D_r,
	\end{equation}
	for $E_r$ and $D_r$ as in \eqref{StrongSpectralGap}.
	\item For any $r \neq 0$, $\rho(S_r) < \sigma = ||S_0||$.
\end{enumerate}

One deduces (1) from quasicompactness of $S_0$ and by using that $S_0$ is a positive operator in the sense of Banach lattices (c.f. Section \ref{SectionStrongSpectralGap}). (2) will follow as quasicompactness is an open property under certain assumptions (Corollary~\ref{QuasicompactnessOpenProperty}) and (3) by a convexity argument similar to an argument of Conze-Guivarc'h \cite{ConzeGuivarch2013}. The necessary spectral properties are proved in Section~\ref{SectionSpectralPropertiesS(r)}.

Properties (1) and (2) will be necessary to deal with low frequencies \eqref{LowFrequency}, whereas (3) is used for high frequencies \eqref{HighFrequency}. However, (3) only allows to prove a decay for \eqref{HighFrequency} either by assuming that $f$ has compactly supported Fourier transform or by imposing the stronger assumption $(\sup_{|r| \geq 1} ||S_r||) < ||S_0||$ of Theorem~\ref{StrongLCLT}. One then deduces Theorem~\ref{QuasicompactnessImpliesLLT} and Theorem~\ref{LCLT} by approximating a given function $f \in \Schwartz{X}$ with functions whose Fourier transform is compactly supported. 

A novel contribution is the observation that the functions $\psi_{\mu,r}$ as defined in \eqref{psimudef}, where $|r| \leq \delta_0$ such that \eqref{SrStrongSpectralGap} holds, satisfy
\begin{equation}\label{FourierBacktransform}
	\int f \cdot \psi_{\mu,r} \, d\Haarof{G} = \int_{\Omega} \widehat{f}(r,\omega) (E_r1)(\omega) \, d\Haarof{\Omega}(\omega)
\end{equation}
for $f \in \Schwartz{X}$ (see Lemma~\ref{MuStationaryMeasure}). We further mention that \eqref{FourierBacktransform} may be viewed as an analogue of the formula 
\begin{equation}\label{RealFourierBackTransform}
	\int f(x)e^{-\sigma^2\frac{x^2}{2}} \, d\Haarof{\R}(x) = \frac{1}{\sqrt{2 \pi \sigma^2}} \int \widehat{f}(r)e^{-\frac{r^2}{2\sigma^2}} \, d\Haarof{\R}(r)
\end{equation}
on $\R$, where $f \in \Schwartz{\R}$ and $\sigma > 0$, which is used in the proof of the local limit theorem on $\R$. 

The outline of the proof of the local limit theorem is concluded. We next discuss quasicompactness of $S_0$. As in \cite{Bourgain2012} and \cite{BoutonnetIoanaSalehiGolsefidy2017}, the main tool are flattening statements for $\mu$. These results, which will be recalled in Section~\ref{Flattening}, have as consequence that for any $\gamma > 0$ and $x \in G$,
\begin{equation}\label{HighDimension}
	\mu^{*n}(B_{\delta}(x)) \ll \delta^{\dim G - \gamma}
\end{equation} for $\delta$ small enough depending on $\mu$ and $\gamma$ and $n \asymp_{\mu,\gamma} \log \frac{1}{\delta}$. The property \eqref{HighDimension} may be referred to as high dimension, since an absolutely continuous measure $\nu$ satisfies $\nu(B_{\delta}(x)) \asymp_{\nu} \delta^{\dim G}$. 

The proof of quasicompactness of $S_0$ comprises two steps. First we will show that the restricted operator $S_0|_{V_{\ell}}$ has small norm for all $\ell$ large enough, where $V_{\ell}$ is the Littlewood-Paley space introduced in \eqref{LittlewoodPayleySpaces}. The second step is to use the latter to deduce that $S_0$ restricted to $\bigoplus_{\ell \geq L} V_{\ell}$ has small norm for a suitable $L > 0$ and therefore is quasicompact. This exploits the first step and that the spaces $V_{\ell}$ are mutually orthogonal. Indeed, since the measure $\mu$ in question is supported close to the identity, the spaces $S_0 V_{\ell}$ and $V_{\ell'}$ are almost orthogonal too. 

For the first step, one uses that for $\varphi \in V_{\ell}$ the matrix coefficients $|\langle \rho_0(g) \varphi, \varphi \rangle|$ are small on average. Indeed, it is shown in Section~\ref{AverageMatrixCoeffSemisimple}, following \cite{LindenstraussVarju2016}, that
\begin{equation}\label{SmallAverageMatrixCoeff}
	\frac{1}{\Haarof{G}(B_R)} \int_{B_R} |\langle \rho_0(g) \varphi, \varphi \rangle| \, d\Haarof{G}(g) \ll 2^{-\ell/2} ||\varphi||_2.
\end{equation}
Since $\mu$ has high dimension, we are able to use \eqref{SmallAverageMatrixCoeff} to give strong estimates for $\langle S_0 \varphi, \varphi\rangle$ and therefore conclude a bound on the operator norm of $S_0|_{V_{\ell}}$.

In order to use \eqref{SmallAverageMatrixCoeff}, we ought to control the size of the support of $\mu^{*n}$ while ensuring that $\mu^{*n}$ has high dimension \eqref{HighDimension} quickly enough. Analogous to \cite{Bourgain2012} and \cite{BoutonnetIoanaSalehiGolsefidy2017}, this is where the $(c_1, c_2, \varepsilon)$-Diophantine property comes into play. Indeed, as $\varepsilon$ becomes smaller, a $(c_1, c_2, \varepsilon)$-Diophantine measure is increasingly rapidly non-concentrated on subgroups and therefore a strong flattening lemma applies (Lemma~\ref{SuperFlatteningLemma}). The latter holds while the measure is still close to the identity, which will allow us to conclude the claimed properties for $S_0$.

\subsection{Relation to Other Work} \label{OtherWork}
As mentioned in the introduction, the necessary results for $S_0$ are also proved in \cite{BoutonnetIoanaSalehiGolsefidy2017}. The main difference between  \cite{BoutonnetIoanaSalehiGolsefidy2017} and our proof is in the use of a different Littlewood-Paley decomposition. \cite{BoutonnetIoanaSalehiGolsefidy2017} develop a Littlewood-Paley decomposition on $G$, which leads to more general results as they are able to deal with all possible quotients of $G$, while we work with the Littlewood-Paley decomposition on $K$, leading to marginally stronger results.

For the $\mathrm{Isom}(\R^d)$ action on $\R^d$, a similar representation theoretic decomposition to \eqref{FourierInversionSymmetricSpace} holds for a suitable family of unitary representations $\rho_r : \mathrm{Isom}(\R^d) \to \mathscr{U}(L^2(\mathbb{S}^{d-1}))$ for $r \in \R$. In \cite{LindenstraussVarju2016}, a local limit theorem with strong error terms as in Theorem~\ref{StrongLCLT} is proved by just assuming that $S_0 = \rho_0(\mu)$ is quasicompact. Indeed they establish \eqref{StrongLCLTAssumption} for their setting by solely assuming that $S_0$ is quasicompact. It seems reasonable to believe that the same result may hold for a semisimple Lie group acting on its symmetric space, yet the proof of  \cite{LindenstraussVarju2016} is not transferable as several properties only applicable to $\mathrm{Isom}(\R^d)$ are used. 

We further mention that in \cite{Tolli2000} a Berry-Essen result is shown on $G$ for a probability measure with a smooth density of compact support.  

 \section{Preliminary Results}\label{SectionPreliminary}

\subsection{Quasicompact Operators}\label{SectionQuasicompactOperators} Throughout this section we denote by $\mathscr{B}$ a separable Banach space and the reader may recall the notations introduced in Section~\ref{Notations}. A bounded operator $A: \mathscr{B} \to \mathscr{B}$ is called quasicompact if $\rho_{\mathrm{ess}}(A) < \rho(A)$. In this secton we show that being quasicompact is an open property. We first state a useful lemma. 

\begin{lemma}\label{QuasicompactnessCharacterization}
	For any bounded operator $A: \mathscr{B} \to \mathscr{B}$ the following properties hold:
	\begin{enumerate}
		\item[(i)] 	$$\rho_{\mathrm{ess}}(A) = \inf_{U \text{ compact}} \rho(A - U).$$ 
		\item[(ii)] $A$ is quasicompact whenever $A^{*}$ is.  Moreover, $$\rho_{\mathrm{ess}}(A^{*}) = \rho_{\mathrm{ess}}(A).$$
		\item[(ii)] The set of spectral values of $A$ with modulus $> \rho_{\mathrm{ess}}(A)$ is at most countable and all of its accumulation points have modulus $\rho_{\mathrm{ess}}(A)$. 
	\end{enumerate}
\end{lemma}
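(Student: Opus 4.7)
\smallskip

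The plan is to verify each assertion by reducing to standard facts about Fredholm operators and the stability of the essential spectrum under compact perturbations.

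For part (i), I would prove the two inequalities separately. The direction $\rho_{\mathrm{ess}}(A) \leq \inf_{U} \rho(A - U)$ is the easier one: for any compact operator $U$, Atkinson's theorem (together with the Calkin algebra being closed under compact perturbations) implies that $A - \lambda\cdot\mathrm{Id}$ is Fredholm if and only if $(A - U) - \lambda\cdot\mathrm{Id}$ is Fredholm. Consequently $\mathrm{spec}_{\mathrm{ess}}(A) = \mathrm{spec}_{\mathrm{ess}}(A - U) \subset \mathrm{spec}(A - U)$, giving $\rho_{\mathrm{ess}}(A) \leq \rho(A-U)$. The reverse inequality is the classical Nussbaum formula: given $\varepsilon > 0$, there is a finite rank operator $U$ with $\rho(A - U) \leq \rho_{\mathrm{ess}}(A) + \varepsilon$, obtained by projecting out the (finitely many) generalized eigenspaces associated with spectral values of modulus in $(\rho_{\mathrm{ess}}(A) + \varepsilon, \rho(A)]$; I would cite Nussbaum's theorem for this.

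For part (ii), the key point is that $T \colon \mathscr{B} \to \mathscr{B}$ is Fredholm if and only if its Banach adjoint $T^{*} \colon \mathscr{B}^{*} \to \mathscr{B}^{*}$ is Fredholm (this is a standard consequence of the characterization of Fredholm operators via closed range and finite-dimensional kernel/cokernel, together with the identification $(\mathrm{coker}\, T)^{*} \cong \ker T^{*}$). Applying this to $T = A - \lambda\cdot\mathrm{Id}$ and noting that $(A - \lambda\cdot\mathrm{Id})^{*} = A^{*} - \overline{\lambda}\cdot\mathrm{Id}$, we deduce $\mathrm{spec}_{\mathrm{ess}}(A^{*}) = \overline{\mathrm{spec}_{\mathrm{ess}}(A)}$, from which the equality of essential spectral radii, and in particular the transfer of quasicompactness, is immediate.

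For part (iii), I would use part (i) together with the analytic Fredholm theorem. On the open set $\mathbb{C} \setminus \mathrm{spec}_{\mathrm{ess}}(A)$, the operator $A - \lambda\cdot\mathrm{Id}$ is Fredholm, and by continuity of the Fredholm index together with the fact that the index vanishes for $|\lambda|$ large (where $A - \lambda\cdot\mathrm{Id}$ is invertible), the index is identically zero on the unbounded component, in particular on $\{|\lambda| > \rho_{\mathrm{ess}}(A)\}$. Points of $\mathrm{spec}(A)$ in this set are therefore isolated eigenvalues of finite algebraic multiplicity; the resolvent is meromorphic there, hence its poles form a discrete subset of $\{|\lambda| > \rho_{\mathrm{ess}}(A)\}$, which is at most countable and can only accumulate on the boundary circle $\{|\lambda| = \rho_{\mathrm{ess}}(A)\}$.

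The only non-routine step is the lower bound in (i) (Nussbaum's formula); I will simply invoke it by reference, since the lemma is used here purely as a black box for the later arguments.
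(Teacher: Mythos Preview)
Your proposal is correct and follows essentially the same route as the paper, which simply cites the appendix of Benoist--Quint for all three parts (Calkin algebra characterization for (i), Fredholm duality for (ii), and the Riesz--Fredholm structure of the spectrum outside the essential radius for (iii)). Your version is more explicit about the underlying mechanisms; the one minor remark is that your proof of (iii) does not actually need (i) despite your saying so---the analytic Fredholm argument on $\{|\lambda| > \rho_{\mathrm{ess}}(A)\}$ proceeds directly from the definition of the essential spectrum.
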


\begin{proof}\label{SelfadjointQuasicompactnessCharacterization}
	(i) follows as the essential spectral radius is spectral radius of the image of $A$ in the Calkin algebra (c.f section 2.4 in the appendix of \cite{BenoistQuintRandomBook}) and (ii) as a bounded operator is Fredholm whenever its adjoint is (Corollary 2.12 of appendix B in \cite{BenoistQuintRandomBook}). Finally (iii) is contained in Proposition 2.14 of appendix B in \cite{BenoistQuintRandomBook}.
\end{proof}

\begin{corollary}\label{QuasicompactnessOpenProperty}
	Let $A_n : \mathscr{H} \to \mathscr{H}$ be a sequence of bounded operators on a Hilbert space $\mathscr{H}$ converging in operator norm to a bounded operator $A: \mathscr{H} \to \mathscr{H}$. If $A$ is quasicompact then so is $A_n$ for $n$ large enough and there is $\varepsilon > 0$ such that for $n$ large enough $\rho_{\mathrm{ess}}(A_n) < \rho_{\mathrm{ess}}(A) + \varepsilon < \rho(A) - \varepsilon < \rho(A_n)$.
\end{corollary}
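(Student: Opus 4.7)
The plan is to use Lemma~\ref{QuasicompactnessCharacterization} together with upper semi-continuity of the spectrum under norm perturbation for the essential part, and the standard perturbation theory of isolated spectral points for the spectral radius. Fix $\varepsilon > 0$ small enough that $\rho_{\mathrm{ess}}(A) + 4\varepsilon < \rho(A) - 4\varepsilon$, which is possible since $A$ is quasicompact. The goal is to show that, for $n$ large, both $\rho_{\mathrm{ess}}(A_n) < \rho_{\mathrm{ess}}(A) + \varepsilon$ and $\rho(A_n) > \rho(A) - \varepsilon$.

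For the upper bound on $\rho_{\mathrm{ess}}(A_n)$, I would first invoke Lemma~\ref{QuasicompactnessCharacterization}(i) to pick a compact operator $U : \mathscr{H} \to \mathscr{H}$ with $\rho(A - U) < \rho_{\mathrm{ess}}(A) + \varepsilon/2$. Since $A_n - U \to A - U$ in operator norm and the spectrum is upper semi-continuous with respect to norm perturbations of bounded operators (every open neighbourhood of $\sigma(A - U)$ contains $\sigma(A_n - U)$ for $n$ large), one has $\rho(A_n - U) < \rho_{\mathrm{ess}}(A) + \varepsilon$ for all sufficiently large $n$. Because $U$ is compact, Lemma~\ref{QuasicompactnessCharacterization}(i) applied to $A_n$ yields $\rho_{\mathrm{ess}}(A_n) \leq \rho(A_n - U) < \rho_{\mathrm{ess}}(A) + \varepsilon$.

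For the lower bound on $\rho(A_n)$, note that $\sigma(A)$ is compact, so there exists $\lambda_0 \in \sigma(A)$ with $|\lambda_0| = \rho(A)$, and quasicompactness forces $|\lambda_0| > \rho_{\mathrm{ess}}(A)$. By Lemma~\ref{QuasicompactnessCharacterization}(iii), $\lambda_0$ is therefore an isolated point of $\sigma(A)$ (any accumulation point would sit on the circle of radius $\rho_{\mathrm{ess}}(A)$). Choose a circle $\Gamma$ around $\lambda_0$ of radius less than $\varepsilon$, small enough to separate $\lambda_0$ from the rest of $\sigma(A)$, and consider the Riesz spectral projection
\[
P = \frac{1}{2\pi i} \oint_\Gamma (zI - A)^{-1} \, dz, \qquad P_n = \frac{1}{2\pi i} \oint_\Gamma (zI - A_n)^{-1} \, dz.
\]
Uniform boundedness of $(zI - A)^{-1}$ on the compact set $\Gamma$ together with the identity
\[
(zI - A_n)^{-1} - (zI - A)^{-1} = (zI - A_n)^{-1}(A_n - A)(zI - A)^{-1}
\]
shows that $\Gamma$ avoids $\sigma(A_n)$ for $n$ large and that $P_n \to P$ in operator norm. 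Since $\operatorname{rank} P_n = \operatorname{rank} P \neq 0$ once the projections are close, $\sigma(A_n)$ contains a point inside $\Gamma$, hence $\rho(A_n) \geq |\lambda_0| - \varepsilon = \rho(A) - \varepsilon$.

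Combining the two estimates yields the chain
\[
\rho_{\mathrm{ess}}(A_n) < \rho_{\mathrm{ess}}(A) + \varepsilon < \rho(A) - \varepsilon < \rho(A_n)
\]
for $n$ large, and in particular $A_n$ is quasicompact. The only subtle point is the lower bound on $\rho(A_n)$: the spectral radius is merely upper semi-continuous in general and can collapse under norm perturbation, so one genuinely needs the quasicompactness of $A$ in order to isolate $\lambda_0$ from the rest of the spectrum and invoke the Riesz projection argument. Everything else reduces to a direct application of Lemma~\ref{QuasicompactnessCharacterization} and the upper semi-continuity of the spectrum.
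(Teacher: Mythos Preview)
Your proof is correct. The bound $\rho_{\mathrm{ess}}(A_n) < \rho_{\mathrm{ess}}(A) + \varepsilon$ is handled exactly as in the paper: pick a compact $U$ via Lemma~\ref{QuasicompactnessCharacterization}(i) and use upper semi-continuity of the spectral radius. The difference is in the lower bound $\rho(A_n) > \rho(A) - \varepsilon$. The paper simply cites Newburgh's theorem that a quasicompact operator is a continuity point of the spectral radius, whereas you prove this directly by isolating a spectral value $\lambda_0$ with $|\lambda_0| = \rho(A)$ (using Lemma~\ref{QuasicompactnessCharacterization}(iii) to guarantee it is isolated) and tracking it under perturbation via the Riesz projection. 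Your route is more self-contained and makes explicit why quasicompactness is needed---it is precisely what allows you to separate $\lambda_0$ from the rest of the spectrum by a contour. One cosmetic point: since you take the radius of $\Gamma$ strictly less than $\varepsilon$, the point of $\sigma(A_n)$ you produce inside $\Gamma$ gives $\rho(A_n) > \rho(A) - \varepsilon$ strictly, matching the chain of strict inequalities in the statement.
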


\begin{proof}
	By Lemma~\ref{QuasicompactnessCharacterization} (i) for any $\varepsilon > 0$ there is a compact operator $U$ (depending on $\varepsilon$) such that $\rho(A - U) < \rho_{\mathrm{ess}}(A) + \varepsilon$. We choose a small $\varepsilon > 0$ such that  $\rho_{\mathrm{ess}}(A) + 2\varepsilon  < \rho(A) - 2\varepsilon$. Recall that the spectral radius is upper semi-continuous and since $A$ is quasicompact, $A$ is a continuity point for the spectral radius (cf. \cite{Newburgh1951}). Thus for large enough $n$ it holds that $\rho(A_n - U) < \rho(A - U) + \varepsilon$ and $\rho(A) - 2\varepsilon < \rho(A_n)$. Then for the above compact operator $U$,
	\begin{align*}
		\rho_{\mathrm{ess}}(A_n) \leq \rho(A_n - U) < \rho(A - U) + \varepsilon   < \rho_{\mathrm{ess}}(A) + 2\varepsilon  < \rho(A) - 2\varepsilon < \rho(A_n),
	\end{align*} showing the claim upon replacing $2\eps$ by $\eps$.
\end{proof}

\subsection{Strong Spectral Gap and Quasicompact Positive Operators}\label{SectionStrongSpectralGap}

We introduce the following definition of strong spectral gap.

\begin{definition}\label{StrongSpectralGapDef}
	Let $S : \mathscr{B} \to \mathscr{B}$ be a bounded operator on a Banach space $\mathscr{B}$. We say that $S$ has \textbf{strong spectral gap} if there are two operators $E,D : \mathscr{B} \to \mathscr{B}$ and a decomposition $S = \lambda E + D$ with $\lambda \in \C$ satisfying $|\lambda| = ||S||$ such that the following properties are satisfied:
	\begin{enumerate}
		\item[(i)] The operator $E$ is a projection onto its image and $\dim(\mathrm{Im}(E)) = 1$. 
		\item[(ii)] $E \circ D = D \circ E = 0$.
		\item[(iii)] $\rho(D) < ||S||$.
	\end{enumerate}
\end{definition}

The aim of this section is to prove Corollary~\ref{HilbertLatticeStrongSpectralGap} below on quasicompact operators which are positive in the sense of Banach lattices. We refer to the book \cite{SchaeferBanachBook} for the definition of a Banach lattice. For the convenience of the reader, we recall some further definitions from \cite{SchaeferBanachBook}.

Let $\mathscr{B}$ be a Banach lattice and denote by $\mathscr{B}_{+}$ the set of positive elements. We write $x \geq y$ whenever $x - y \in \mathscr{B}_+$ and further $x > y$ if and only if $x - y \in \mathscr{B}_+$ and $x\neq y$. We say that the bounded operator $A: \mathscr{B} \to \mathscr{B}$ is positive if $A(\mathscr{B}_{+}) \subset \mathscr{B}_{+}$, in notation $A \geq 0$. We write $A > 0$ if $A x > 0$ for $x > 0$.

We furthermore say that the operator $A$ has a strictly positive invariant form if there is a linear form $\eta$ that maps vectors  $> 0$ to real numbers $>0$ and that is invariant under $A$, i.e. $\eta \circ A = \eta$.

For an element $u \in \mathscr{B}_{+}$ we denote by $$I_u = \{ x \in \mathscr{B} \,:\, 0 \leq |x| \leq \lambda u \text{ for some } \lambda > 0  \}$$ the principal ideal generated by $u$, where as in \cite[II Definition 1.3]{SchaeferBanachBook} we write $|x| = \max(x,-x)$. The element $u$ is called quasi-interior if $I_u$ is dense in $\mathscr{B}$. 

A subspace $I$ of $\mathscr{B}$ is called an ideal if $I_u \subset I$ for all $u \in I$. An operator $A : \mathscr{B} \to \mathscr{B}$ is referred to as irreducible if the only $A$-invariant ideals are the trivial ideals $\{ 0 \}$ and $\mathscr{B}$.

\begin{theorem}(V 5.2 of \cite{SchaeferBanachBook}) \label{SchaeferStronglyIrreducible}
	Let $\mathscr{B}$ be a Banach lattice and let $A : \mathscr{B} \to \mathscr{B}$ be a positive irreducible bounded operator $> 0$  satisfying $\rho(A) = 1$ and with a strictly positive invariant form. Then the following properties hold:
	\begin{enumerate}
		\item[(i)] $1$ is an eigenvalue. The eigenspace of $1$ is one-dimensional and spanned by a quasi-interior element of $\mathscr{B}_{+}$.
		\item[(ii)] Every eigenvalue $\lambda$ of $A$ with $|\lambda| = 1$ is a root of unity and has a one dimensional eigenspace. Moreover, the latter eigenvalues form a group. 
		\item[(iii)] $1$ is the unique eigenvalue of $A$ with a positive eigenvector. 
	\end{enumerate}
\end{theorem}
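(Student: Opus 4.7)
The plan is to run a Perron--Frobenius--Krein--Rutman argument adapted to the Banach-lattice setting, using the strictly positive $A^{*}$-invariant form $\eta$ (the hypothesis $\eta \circ A = \eta$ says exactly that $\eta$ is a $1$-eigenvector of $A^{*}$) as the main quantitative device. First I would produce a positive $1$-eigenvector $u$, then extract all three conclusions from the interplay between $u$, $\eta$ and the irreducibility hypothesis. For existence, $\rho(A)=1$ automatically lies in $\mathrm{spec}(A)$ for a positive operator on a Banach lattice, and I would upgrade this to an actual eigenvector either via the Riesz projection onto the peripheral spectrum (when implicit compactness or quasicompactness is in play, as in the applications of this theorem elsewhere in the paper) or via Cesàro averages $\tfrac{1}{n}\sum_{k<n} A^{k} x$ of a positive vector $x$, using $\eta(A^{k} x) = \eta(x)$ to keep the averages bounded away from $0$. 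Once $Au=u$ with $u>0$, the closure $\overline{I_{u}}$ of the principal ideal is an $A$-invariant closed ideal, and irreducibility forces $\overline{I_{u}} = \mathscr{B}$, so $u$ is quasi-interior.

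For the one-dimensional character of the $1$-eigenspace in (i), I would take another real vector $v$ with $Av=v$ and apply $A$ to $|v|$: the lattice inequality $A|v| \geq |Av| = |v|$ combined with $\eta(A|v|-|v|) = 0$ and strict positivity of $\eta$ forces $A|v| = |v|$, and the same argument gives $Av^{\pm} = v^{\pm}$. Irreducibility excludes two nonzero disjoint positive fixed vectors, since the closed ideals they generate would both be proper and $A$-invariant; so $v$ has constant sign. Writing $w = v_{+} - \tfrac{\eta(v_{+})}{\eta(u)}\, u$ we then have $Aw=w$ and $\eta(w)=0$; rerunning the disjoint-ideal argument on $w^{\pm}$ together with strict positivity of $\eta$ forces $w=0$, hence $v \in \R u$.

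For the peripheral spectrum in (ii), if $Av = \lambda v$ with $|\lambda|=1$, the same two-line argument gives $A|v| = |v|$ and hence $|v| = c u$ by (i). Products of eigenvectors of peripheral eigenvalues stay inside the principal ideal $I_{u}$ and supply eigenvectors for the product of the eigenvalues, so the peripheral point spectrum is closed under multiplication and is therefore a subgroup of the unit circle; each peripheral eigenspace is one-dimensional by applying the argument of the previous paragraph to $A^{N}$; and the isolation of peripheral eigenvalues, either from quasicompactness in the applications at hand or from one-dimensionality together with the positivity structure, precludes accumulation and forces every element of the peripheral spectrum to be a root of unity. Claim (iii) is then immediate: if $Av = \mu v$ with $v>0$, pairing against $\eta$ gives $\mu\,\eta(v) = \eta(Av) = \eta(v)$ and $\eta(v)>0$ forces $\mu=1$.

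The most delicate step is the existence argument for $u$: on a general Banach lattice $\rho(A) \in \mathrm{spec}(A)$ is not enough to produce an eigenvector, and it is precisely the combination of the strictly positive $A^{*}$-invariant form with irreducibility that upgrades the spectral statement to a quasi-interior fixed point. The other point that demands genuine care is making the multiplicative structure on the peripheral spectrum rigorous inside $I_{u}$ without invoking a Kakutani-type representation of $I_{u}$ as a space of continuous functions on a compact space.
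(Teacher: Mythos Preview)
The paper does not prove this theorem: it is quoted verbatim as Theorem V 5.2 of Schaefer's \emph{Banach Lattices and Positive Operators} and used as a black box, so there is no ``paper's own proof'' to compare against. Your sketch is therefore an attempt to reconstruct a classical result rather than a result of this paper.

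On the substance of your sketch: parts (i) (quasi-interiority from irreducibility), (iii) (pairing with $\eta$), and the reduction $A|v|=|v|$ via $\eta(A|v|-|v|)=0$ are standard and correct. However, two steps are genuinely incomplete in a general Banach lattice. First, existence of a positive $1$-eigenvector: your Ces\`aro-average argument gives a bounded net $\tfrac{1}{n}\sum_{k<n}A^k x$ with $\eta$-value bounded below, but without weak compactness of order intervals (or a quasicompactness/pole hypothesis) there is no reason a limit point exists in $\mathscr{B}$; Schaefer's actual argument handles this via the resolvent and the assumption that $\rho(A)$ is a pole, not via Ces\`aro means. Second, the group structure of the peripheral spectrum: your phrase ``products of eigenvectors'' presupposes a multiplication on $\mathscr{B}$, which is exactly what a general Banach lattice lacks; the rigorous route (as you half-acknowledge) is to pass to the AM-space $I_u$ with unit $u$, represent it as $C(\Omega)$ via Kakutani, and argue there---this is not a cosmetic step but the heart of the proof of (ii), and invoking it as ``demanding genuine care'' without executing it leaves a real gap.
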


Using this Theorem~\ref{SchaeferStronglyIrreducible}, we can draw the following corollary.

\begin{corollary}\label{HilbertLatticeStrongSpectralGap}
	Let $\mathscr{H}$ be a Hilbert lattice (i.e. a Hilbert space endowed with the structure of a Banach lattice) and let $A : \mathscr{H} \to \mathscr{H}$ be a positive quasicompact bounded operator $> 0$ and assume that $A^n$ are irreducible for every $n \geq 1$. Then $A$ has strong spectral gap. 
\end{corollary}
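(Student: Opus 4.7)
The strategy is to apply Theorem~\ref{SchaeferStronglyIrreducible} to $A$ and to all its powers, and then to read off the strong spectral gap decomposition from the Riesz projection at the peripheral eigenvalue.

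After rescaling $A$ by $1/\rho(A)$, I may assume $\rho(A) = 1$. Since $A$ is quasicompact, $\rho_{\mathrm{ess}}(A) < 1$, and Lemma~\ref{QuasicompactnessCharacterization}(iii) implies that $\sigma(A) \cap \{|\lambda| = 1\}$ is a finite set of isolated eigenvalues of finite algebraic multiplicity (via the Riesz decomposition at spectral values lying outside the essential spectrum). To apply Theorem~\ref{SchaeferStronglyIrreducible} to $A$ one needs a strictly positive $A$-invariant form. I would produce such an $\eta$ by passing to the adjoint $A^*$, which is quasicompact by Lemma~\ref{QuasicompactnessCharacterization}(ii) and positive on the dual Hilbert lattice; a Krein-Rutman type argument applied to the finite-dimensional peripheral spectral projection of $A^*$ yields a positive eigenfunctional of $A^*$ at eigenvalue $1$, and this functional is automatically $A$-invariant. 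The null ideal $\{x : \eta(|x|) = 0\}$ is a closed $A$-invariant ideal (using $\eta(|Ax|) \leq \eta(A|x|) = \eta(|x|)$), so irreducibility of $A$ forces it to be trivial and $\eta$ to be strictly positive.

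With this invariant form in hand, Theorem~\ref{SchaeferStronglyIrreducible}(i)-(ii) applied to $A$ gives that the peripheral eigenvalues of $A$ form a cyclic group $\{1, \omega, \ldots, \omega^{k-1}\}$ of $k$-th roots of unity for some $k \geq 1$, each with a one-dimensional eigenspace. The hypothesis that every power $A^n$ is irreducible is then used precisely to rule out $k > 1$: by the spectral mapping theorem, an eigenvector of $A$ associated with $\omega^j$ is an eigenvector of $A^k$ associated with $1$, producing a $k$-dimensional $1$-eigenspace for $A^k$. But $A^k$ satisfies the hypotheses of Theorem~\ref{SchaeferStronglyIrreducible} as well, so its $1$-eigenspace is one-dimensional, forcing $k = 1$. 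Hence $1$ is the only peripheral eigenvalue of $A$, with a one-dimensional eigenspace.

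It remains to convert this spectral information into the decomposition $A = E + D$. Let $E$ be the Riesz projection associated with the isolated eigenvalue $1$; its range is the generalized eigenspace of $1$. The key point is semi-simplicity, i.e. that this generalized eigenspace equals the eigenspace, so that $E$ is rank-one and $AE = E$. I would invoke this as a standard consequence of positivity on a Banach lattice (a Jordan block would give $(A - \mathrm{Id})v = u$ for some positive eigenvector $u$, so $A^n v = v + nu$ would grow linearly in norm, which is incompatible with norm-boundedness of $A^n$ on the finite-dimensional peripheral subspace that is controlled by the positive eigenvector produced in the previous step). Setting $D := A - E = A(I - E)$ then gives $ED = DE = 0$, $\dim \mathrm{Im}(E) = 1$, and $\sigma(D) = (\sigma(A) \setminus \{1\}) \cup \{0\}$, whence $\rho(D) < 1 = \|A\|$, which is exactly the strong spectral gap condition. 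The main obstacle in the argument will be the combination of producing the strictly positive invariant form and verifying semi-simplicity of the peripheral eigenvalue, both of which lean on Banach-lattice positivity beyond the purely operator-theoretic content of quasicompactness.
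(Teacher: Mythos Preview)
Your approach is essentially the same as the paper's: rescale to $\rho(A)=1$, produce a strictly positive invariant form, invoke Theorem~\ref{SchaeferStronglyIrreducible} to analyse the peripheral spectrum, use irreducibility of the powers $A^n$ to show $1$ is the only peripheral eigenvalue, and take $E$ to be the Riesz projection at $1$ with $D=A-E$.

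The main difference is packaging. The paper obtains both the invariant form and the semi-simplicity of the eigenvalue $1$ in a single stroke by citing the corollary to Theorem~V~5.2 in \cite{SchaeferBanachBook}, which asserts directly that the residue $E=\lim_{\lambda\to 1}(\lambda-1)R(\lambda,A)$ is a strictly positive projection onto the (one-dimensional) eigenspace; the invariant form is then simply $\eta(v)=\langle Ev,v_0\rangle$. Your route to $\eta$ via $A^*$ and the null-ideal argument is a perfectly valid alternative, and your dimension-count for eliminating nontrivial peripheral roots of unity is equivalent to the paper's eigenvector argument.

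One point to tighten: your semi-simplicity argument via norm growth is not complete as stated, since there is no a priori uniform bound on $\|A^n\|$ (positive in the lattice sense does not give $\|A\|=\rho(A)$, and quasi-interiority of $u$ only gives density of $I_u$, not $|v|\leq c\,u$ for every $v$). The clean fix is already in your hands once you have $\eta$: if $(A-\mathrm{Id})v$ lies in the one-dimensional eigenspace, write $(A-\mathrm{Id})v=c\,u$ with $u$ the positive eigenvector and apply the invariant form to obtain $0=\eta(Av)-\eta(v)=c\,\eta(u)$, whence $c=0$. This forces the generalized eigenspace to coincide with the eigenspace, so $E$ is rank one with $AE=E$, and the decomposition $A=E+D$ has the required properties.
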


\begin{proof}
	Without loss of generality, we may replace $A$ by $A/\rho(A)$ and assume that $\rho(A) = 1$. The map $$E = \lim_{\lambda \to 1} (\lambda - 1) R(\lambda, A) $$ is the strictly positive projection to the one-dimensional eigenspace of $1$, where $R(\lambda, A)$ is the resolvent of $A$ at $\lambda$  (see corollary to Theorem V 5.2 of \cite{SchaeferBanachBook}). As the resolvent $R(\lambda, A)$ commutes with $A$, so does $E$. Note moreover that $E$ gives rise to a strictly positive invariant form. Indeed denote by $v_0 \in \mathscr{H}$ the by Theorem~\ref{SchaeferStronglyIrreducible} (i) unique eigenvector of $1$ with norm $||v_0|| = 1$ and consider $\eta(v) = \langle Ev, v_0 \rangle$ for $v\in \mathscr{H}$. Since $v_0$ is positive by Theorem~\ref{SchaeferStronglyIrreducible} (iii) it follows that $\eta$ is a strictly positive invariant form.
	
	Set $D = A -  E$. Then $E \circ D = D \circ E= 0$ as $A$ commutes with $E$ and we claim that $\rho(D) < 1$, which follows if we show that $1$ is the unique eigenvalue of $A$ on the circle of radius $1$. To show the latter, if $\lambda$ is an eigenvalue of $A$ with $|\lambda| = 1$ and eigenvector $v_{\lambda}$, then by Theorem~\ref{SchaeferStronglyIrreducible} (ii) $\lambda$ is a root of unity and hence $T^n v_{\lambda} = v_{\lambda}$ for some $n > 0$. Therefore by Theorem~\ref{SchaeferStronglyIrreducible} (i) applied to $T$ and $T^n$, it follows that $v_{\lambda}$ must be the unique positive $1$-eigenvector of $T$ and hence $\lambda = 1$. 
\end{proof}

We return to the operators $S_0$ and $S_0^{+}$ defined in \eqref{S_rDefinition}.

\begin{lemma}\label{S_rIrreducible}
	Let $G$ be a connected semisimple Lie group with finite center and let $\mu$ be a non-degenerate probability measure on $G$. Then $S_0$ and $S_0^{+}$ are positive bounded operators and $S_0^{n}$ and $(S_0^{+})^{n}$ are irreducible for all $n \geq 1$.
\end{lemma}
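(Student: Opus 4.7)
The plan is to dispose of positivity by direct inspection and then to establish irreducibility of $S_0^n=\rho_0(\mu^{*n})$ and $(S_0^+)^n=\rho_0^+(\mu^{*n})$ in two steps: first, show that $\mu^{*n}$ remains non-degenerate for every $n\geq 1$, and second, prove that for any non-degenerate $\nu$ on $G$, both $\rho_0(\nu)$ and $\rho_0^+(\nu)$ are irreducible. Positivity is immediate: for each $g\in G$, $(\rho_0(g)\varphi)(\omega)=e^{-\delta H(g^{-1}\omega)}\varphi(g^{-1}\omega)$ is nonnegative whenever $\varphi$ is, since $e^{-\delta H}>0$ and $\omega\mapsto g^{-1}\omega$ is a homeomorphism of $\Omega$; the same applies to $\rho_0^+(g)$ on $L^2(K)$, and integrating against the positive measure $\mu$ preserves positivity.

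For non-degeneracy of $\mu^{*n}$, let $\Sigma=\supp{\mu}$ and let $H_n$ be the closed semigroup generated by $\Sigma^n=\supp{\mu^{*n}}$. Given $g\in G$, non-degeneracy of $\mu$ produces $s_i\in\Sigma^{k_i}$ with $s_i\to g$; then $s_i^n\to g^n$ lies in $\Sigma^{nk_i}\subset H_n$, so $H_n$ contains every $n$-th power in $G$. The inverse function theorem applied to the $n$-th power map at $e$ (whose differential is $n\cdot\Id$) then produces a symmetric open neighborhood $U$ of $e$ contained in $H_n$. Since $U=U^{-1}\ni e$ and $H_n$ is a semigroup, it contains $\bigcup_{k\geq 1}U^k$, which coincides with the subgroup generated by $U$ and hence with all of $G$ by connectedness. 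Thus $H_n=G$, so $\mu^{*n}$ is non-degenerate.

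Now, given non-degenerate $\nu$, suppose $L^2(E)\subset L^2(\Omega)$ is a closed $\rho_0(\nu)$-invariant ideal for some measurable $E\subset\Omega$. Evaluating $\rho_0(\nu)\mathbf{1}_E$ on $\Omega\setminus E$ and applying Fubini, together with $e^{-\delta H}>0$, yields $\Haarof{\Omega}(gE\setminus E)=0$ for $\nu$-a.e.\ $g$. The set $A_E=\{g\in G: \Haarof{\Omega}(gE\setminus E)=0\}$ is closed (by $L^1$-continuity of $g\mapsto\mathbf{1}_{gE}$, which uses quasi-invariance of $\Haarof{\Omega}$ with continuous Radon--Nikodym derivative) and is a semigroup (since $g_1g_2E\subset g_1E\subset E$ modulo null sets). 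Hence $A_E$ contains the closed semigroup generated by $\supp{\nu}$, which equals $G$ by non-degeneracy of $\nu$. Applying this to $g$ and to $g^{-1}$ forces $gE=E$ modulo null sets for every $g\in G$, and the transitive action of $G$ on $(\Omega,\Haarof{\Omega})$ is ergodic, so $E$ is either null or conull. The argument for $(S_0^+)^n$ is identical, working on $L^2(K)$ with the transitive action $g\cdot k=K(gk)$ and the quasi-invariance of $\Haarof{K}$ recorded in \eqref{GactiononKRN}.

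The main obstacle is the non-degeneracy step: one has no a priori description of how words of length divisible by $n$ distribute in $G$, and the key idea is to take $n$-th powers of approximations (rather than concatenate words directly), which produces an open neighborhood of $e$ inside $H_n$ and lets connectedness of $G$ close the argument.
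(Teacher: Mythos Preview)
Your proof is correct and takes a genuinely different route from the paper. The paper invokes Schaefer's irreducibility criterion (III Proposition 8.3 of \cite{SchaeferBanachBook}): to show $S_0$ is irreducible it suffices that $\langle S_0^{\ell}\varphi_1,\varphi_2\rangle>0$ for some $\ell$ whenever $\varphi_1,\varphi_2>0$; reducing to indicators $1_{U_1},1_{U_2}$, density of the semigroup generated by $\supp\mu$ lets one choose $\ell$ large enough that the support of $S_0^{\ell}1_{U_1}$ has measure exceeding $1-\Haarof{\Omega}(U_2)/2$, forcing a positive inner product. The passage to $S_0^n$ is handled by the remark that the same argument applies ``since $G$ is connected'', the point being that the spreading works for \emph{all} sufficiently large $\ell$, hence in particular for multiples of $n$. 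By contrast, you separate the problem into (a) proving $\mu^{*n}$ is itself non-degenerate via the $n$-th power map and connectedness, and (b) showing directly that $\rho_0(\nu)$ has no nontrivial invariant ideal $L^2(E)$ for any non-degenerate $\nu$, by propagating $gE\subset E$ from $\supp\nu$ to all of $G$ and invoking ergodicity of the transitive action. Your argument is longer but more self-contained (no external lattice criterion) and makes the role of connectedness completely explicit; the paper's argument is shorter but leans on Schaefer and leaves the $n>1$ case as a one-line remark.
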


\begin{proof}
	We show that $S_0$ is irreducible and the same argument will apply to $S_0^n$ and $(S_0^+)^n$ for all $n \geq 1$ since $G$ is connected. By III Proposition 8.3 of \cite{SchaeferBanachBook}, it suffices to show for any $\varphi_1, \varphi_2 \in L^2(\Omega)$ with $\varphi_1 > 0$ and $\varphi_2 > 0$ that $\langle S_0^{\ell} \varphi_1, \varphi_2 \rangle$ is $> 0$ for some $\ell \geq 1$. Indeed, we may reduce to the case where $\varphi_1 = 1_{U_1}$ and $\varphi_2 = 1_{U_2}$ for $U_1$ and $U_2$ two sets of positive measure. Using that the support of $\mu$ generates a dense subgroup, we may choose $\ell$ large enough such that the support of $S_0^{\ell} 1_{U_1}$ has measure larger that $1 - \Haarof{\Omega}(U_2)/2$ and therefore $\langle S_0^{\ell} 1_{U_1}, 1_{U_2} \rangle > 0$. 
\end{proof} 

\subsection{Preliminaries on Representation Theory of Compact Lie Groups}
\label{ReptheoryCompacyLieGroups}

Recall the notation introduced in Section~\ref{Notations}. 

For $\gamma \in \overline{C} \cap I^{*}$, by Schur's Lemma, the operator $\pi_{\gamma}(\triangle)$ acts as a scalar. For functions on $K$, the operator $\lambda_G(\triangle)$ can be understood as the Laplacian. Therefore \eqref{PeterWeyl} is a decomposition into eigenfunctions of the Laplacian and on $M_{\gamma}$ the Laplacian has eigenvalue $\lambda_{\gamma} = \pi_{\gamma}(\triangle)$.

\begin{lemma}\label{DimensionLaplacianBound}
	For $\gamma \in \overline{C} \cap I^{*}$ denote $d_{\gamma} := \dim \pi_{\gamma}$ and $\lambda_{\gamma} := \pi_{\gamma}(\triangle)$. Then for $\gamma$ large enough it holds that $\lambda_{\gamma} \asymp ||\gamma||^2$ and $d_{\gamma} \ll ||\gamma||^{|R^+|}$ . Moreover, assuming that $K$ is semisimple, $ ||\gamma|| \ll d_{\gamma}.$
\end{lemma}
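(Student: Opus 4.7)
The plan is to invoke two classical results from the representation theory of compact connected Lie groups: Freudenthal's formula for the Casimir eigenvalue and Weyl's dimension formula. Both are polynomial in $\gamma$ and make the three assertions into routine estimates, so the main task is to pin down the lower bound in the semisimple case.

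First I would handle the Casimir eigenvalue. Freudenthal's formula states that $\lambda_\gamma = \langle \gamma + 2\delta_K, \gamma \rangle = \|\gamma + \delta_K\|^2 - \|\delta_K\|^2$, where $\delta_K$ is the half-sum of the positive roots of $K$. Since $\gamma \in \overline{C}$ and $\delta_K \in C$, one has $\langle \gamma, \delta_K \rangle \geq 0$, giving the lower bound $\lambda_\gamma \geq \|\gamma\|^2$. For the upper bound, Cauchy--Schwarz yields $\lambda_\gamma \leq \|\gamma\|^2 + 2\|\delta_K\|\,\|\gamma\| + \|\delta_K\|^2$, which for $\|\gamma\|$ large is bounded by $2\|\gamma\|^2$. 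Thus $\lambda_\gamma \asymp \|\gamma\|^2$.

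Next I would apply the Weyl dimension formula
\[
d_\gamma = \prod_{\alpha \in R^+} \frac{\langle \gamma + \delta_K, \alpha \rangle}{\langle \delta_K, \alpha \rangle}.
\]
Each factor is bounded above by $\|\gamma + \delta_K\|\,\|\alpha\| / \langle \delta_K, \alpha \rangle \ll \|\gamma\|$ by Cauchy--Schwarz (for $\gamma$ large), and there are $|R^+|$ such factors. This gives $d_\gamma \ll \|\gamma\|^{|R^+|}$.

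For the final semisimple statement, note that in the Weyl formula each factor is $\geq 1$ because $\gamma + \delta_K$ lies in $C$ and $\alpha \in R^+$, so it suffices to produce a single factor that is $\gg \|\gamma\|$. Assuming $K$ semisimple, the simple roots form a basis of $\mathfrak{t}^*$, hence the positive roots span, and by compactness of the unit sphere intersected with $\overline{C}$ there is a constant $c = c(K) > 0$ such that $\max_{\alpha \in R^+} \langle \gamma, \alpha \rangle \geq c\|\gamma\|$ for every $\gamma \in \overline{C}$. Picking such an $\alpha$, the corresponding factor in the Weyl formula is $\gg \|\gamma\|$, yielding $\|\gamma\| \ll d_\gamma$. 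The mildest obstacle is verifying this lower bound on $\max_\alpha \langle \gamma, \alpha \rangle$, but it is a direct consequence of semisimplicity (equivalently, that $0$ is not in the convex hull of $-R^+$ restricted to $\overline{C}$ minus the origin).
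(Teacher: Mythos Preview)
Your proof is correct and follows the same route as the paper: Freudenthal's formula for $\lambda_\gamma$ and the Weyl dimension formula for both bounds on $d_\gamma$. The only difference is that for the lower bound $\|\gamma\| \ll d_\gamma$ the paper cites de~Saxc\'e's sharper estimate $d_\gamma \gg \|\gamma\|^{|R^+|-p}$ and then observes $|R^+|-p \geq 1$ under semisimplicity, whereas your direct compactness argument (each Weyl factor is $\geq 1$, and one factor is $\gg \|\gamma\|$ since the positive roots span $\mathfrak{t}^*$) is self-contained and entirely adequate for the claim as stated.
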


\begin{proof}	
	By Lemma 10.6 of \cite{HallLieBook}, $\lambda_{\gamma} := \pi_{\gamma}(\triangle) = \langle \gamma + \rho, \gamma + \rho \rangle - \langle \rho, \rho \rangle$, where $\rho = \frac{1}{2}\sum_{\alpha \in R^{+}} \alpha$ is the sum of positive half roots (notice that the multiplicity of each root is one cf. Theorem 7.23 of \cite{HallLieBook}).
	This easily implies $\lambda_{\gamma} \asymp ||\gamma||^2$. The upper bound on $d_{\gamma}$ follows by Weyl's dimension formula: $$d_{\gamma} = \prod_{\alpha \in R^{+}}  \frac{\langle \alpha, \gamma + \rho \rangle}{\langle \alpha, \rho \rangle} \leq \left(\prod_{\alpha \in R^{+}}  \frac{||\alpha||}{|\langle \alpha, \rho \rangle|} \right) ||\gamma + \rho||^{|R^+|} \ll_G ||\gamma||^{|R^+|}$$ for $||\gamma||$ large enough. For the lower bound we recall that in \cite{deSaxce2013}, also using the Weyl dimension formula, it is proved that $||\gamma||^{|R^+| - p} \ll d_{\gamma}$, where $p$ is the number of maximal elements of $R^{+}$ that are contained in one hyperplane. If $K$ is semisimple, the roots span the vector space $\mathfrak{t}^{*}$ and therefore $(|R^{+}| - p) \geq 1$.
\end{proof}

Recall the Sobolev spaces defined in \eqref{CompactGroupSobolevSpace}. We deduce a condition for a function being in $C^m(K)$ under an assumption on the decay of $||P_{\ell}\varphi||_2$. 
\begin{lemma}\label{SmoothnessCharacterization}
	Let $m \in \Z_{\geq 0}$, $s > m + \frac{1}{2}\dim K$ and let $\varphi 
	\in L^2(K)$. Assume that for all $\ell \in \Z_{\geq 0}$ large enough, $$||P_{\ell} \varphi||_2 \leq 2^{-(s+ 1)\ell}.$$ Then $\varphi \in H^s(K) \subset C^m(K)$.
\end{lemma}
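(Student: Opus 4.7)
The plan is to prove the statement in two stages corresponding to its two assertions: (1) the decay hypothesis $\|P_\ell \varphi\|_2 \leq 2^{-(s+1)\ell}$ implies $\varphi \in H^s(K)$, and (2) the Sobolev embedding $H^s(K) \subset C^m(K)$ holds whenever $s > m + \frac{1}{2}\dim K$.

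For the first stage I would use the Littlewood–Paley decomposition \eqref{LittlewoodPayleyDecomposition} together with Lemma~\ref{DimensionLaplacianBound}, which gives $\lambda_\gamma \asymp \|\gamma\|^2$ for all sufficiently large $\gamma$. Since every $\gamma$ contributing to $V_\ell$ satisfies $\|\gamma\| < 2^\ell$, we have $\lambda_\gamma \ll 2^{2\ell}$ on $V_\ell$, so that
\[
\|\varphi\|_{H^s}^2 = \sum_{\ell \geq 0} \sum_{\gamma \in V_\ell} \lambda_\gamma^s \|\varphi_\gamma\|_2^2 \ll \sum_{\ell \geq 0} 2^{2s\ell} \|P_\ell \varphi\|_2^2 \ll \sum_{\ell \geq 0} 2^{2s\ell} \cdot 2^{-2(s+1)\ell} = \sum_{\ell \geq 0} 2^{-2\ell} < \infty,
\]
with the finitely many small $\ell$'s handled by the a priori bound $\|P_\ell \varphi\|_2 \leq \|\varphi\|_2$. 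This is the straightforward part.

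For the Sobolev embedding, write $\varphi_\gamma = d_\gamma^{1/2}\sum_{ij} a_{ij}^\gamma \chi_{ij}^\gamma = d_\gamma^{1/2} \mathrm{tr}(A_\gamma \pi_\gamma(\cdot))$, so that $\|\varphi_\gamma\|_2 = \|A_\gamma\|_{HS}$. By Cauchy–Schwarz for the Hilbert–Schmidt inner product and $\|\pi_\gamma(k)\|_{HS} = d_\gamma^{1/2}$, we obtain the pointwise bound $|\varphi_\gamma(k)| \leq d_\gamma \|\varphi_\gamma\|_2$. To control derivatives, note that because $-\sum_i \pi_\gamma(X_i)^2 = \lambda_\gamma \cdot \mathrm{Id}$ and each $\pi_\gamma(X_i)$ is skew-Hermitian, $\|\pi_\gamma(X_i)\| \leq \sqrt{\lambda_\gamma}$, so $\|\pi_\gamma(X_{i_1}\cdots X_{i_m})\| \leq \lambda_\gamma^{m/2}$. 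Applying such a differential operator $D$ of order $\leq m$ to $\varphi_\gamma$ yields $|D\varphi_\gamma(k)| \ll d_\gamma \lambda_\gamma^{m/2} \|\varphi_\gamma\|_2$. Summing over $\gamma$ and applying Cauchy–Schwarz gives
\[
|D\varphi(k)|^2 \leq \Bigl(\sum_{\gamma} d_\gamma^2 \lambda_\gamma^{m-s}\Bigr) \|\varphi\|_{H^s}^2.
\]
The main technical point is showing the series on the right converges. Using Lemma~\ref{DimensionLaplacianBound} ($d_\gamma \ll \|\gamma\|^{|R^+|}$, $\lambda_\gamma \asymp \|\gamma\|^2$) together with the fact that the weight lattice $I^*$ has rank $\dim \mathfrak{t}$ and $\dim K = \dim \mathfrak{t} + 2|R^+|$, a dyadic count yields
\[
\sum_{\gamma : \|\gamma\| \asymp 2^\ell} d_\gamma^2 \lambda_\gamma^{m-s} \ll 2^{\ell(2|R^+| + \dim\mathfrak{t})} \cdot 2^{2\ell(m-s)} = 2^{\ell(\dim K + 2m - 2s)},
\]
which is summable precisely when $s > m + \tfrac{1}{2}\dim K$. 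Since the convergence is uniform in $k$, it follows that $D\varphi$ is a uniform limit of continuous functions for every $D$ of order $\leq m$, giving $\varphi \in C^m(K)$.

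The main obstacle is the derivative bound for the Sobolev embedding step, where one must simultaneously control the Hilbert–Schmidt norm of $\pi_\gamma(k)$, the operator norm of $\pi_\gamma$ applied to an $m$-th order monomial in the $X_i$ (via the Casimir), and then sum the resulting series using the correct asymptotics for $d_\gamma^2$ against the Plancherel-type counting of weights; getting the exponents to match the sharp threshold $s > m + \tfrac{1}{2}\dim K$ is the delicate part, whereas the first stage is essentially a geometric series computation.
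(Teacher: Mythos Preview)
Your first stage matches the paper's argument essentially verbatim: both use Lemma~\ref{DimensionLaplacianBound} to bound $\lambda_\gamma^s \ll 2^{2s\ell}$ on $V_\ell$ and then sum the resulting geometric series $\sum_\ell 2^{-2\ell}$.

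Your second stage differs from the paper. The paper simply invokes the classical Sobolev embedding theorem on compact manifolds (citing \cite{AubinSomeBook}, Theorem~2.10) for the inclusion $H^s(K)\subset C^m(K)$. You instead give a self-contained representation-theoretic proof: the pointwise bound $|\varphi_\gamma(k)|\le d_\gamma\|\varphi_\gamma\|_2$, the derivative bound $\|\pi_\gamma(X_{i_1}\cdots X_{i_m})\|\le\lambda_\gamma^{m/2}$ via the Casimir identity, and the dyadic weight-count $\dim K=\dim\mathfrak{t}+2|R^+|$ combine correctly to make $\sum_\gamma d_\gamma^2\lambda_\gamma^{m-s}$ converge exactly when $s>m+\tfrac12\dim K$. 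This is a legitimate and sharp argument. What your route buys is self-containment and an explicit use of the Peter--Weyl machinery already set up in the paper; what the paper's citation buys is brevity and the observation that nothing group-specific is needed for this step---the embedding holds on any compact Riemannian manifold.
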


\begin{proof}
	If $\varphi = \sum_{\gamma \in \overline{C} \cap I^{*}} \varphi_{\gamma}$, by the assumption for large enough $\ell$, $2^{2s\ell}||P_{\ell}\varphi||_2^2 = 2^{2s\ell}\sum_{2^{\ell - 1} \leq ||\gamma|| < 2^{\ell}} ||\varphi_{\gamma}||_2^2 \leq 2^{-2\ell}$ and hence using Lemma~\ref{DimensionLaplacianBound},
	$$\sum_{\gamma \in \overline{C} \cap I^{*}}  \lambda_{\gamma}^s ||\varphi_{\gamma}||_2^2 \ll \sum_{\ell \geq 0}  2^{2s\ell}\sum_{2^{\ell - 1} \leq ||\gamma|| < 2^{\ell}} ||\varphi_{\gamma}||_2^2 \ll \sum_{\ell \geq 0} 2^{-2\ell} < \infty,$$ showing that $\varphi \in H^s(K)$. The inclusion $H^s(K) \subset C^m(K)$ follows from the Sobolev embedding theorem (cf. \cite{AubinSomeBook} Theorem 2.10).
\end{proof}

\subsection{Flattening of $\mu^{*n}$}\label{Flattening}

In this section we state strong flattening results from \cite{BoutonnetIoanaSalehiGolsefidy2017} for $(c_1, c_2, \varepsilon)$-Diophantine measures. To introduce notation, denote $$P_{\delta} = \frac{1_{B_{\delta}}}{\Haarof{G}(B_{\delta})}$$ and for a measure $\nu$ and $g \in G$, we note that $(\nu * P_{\delta})(g) = \frac{\nu(B_{\delta}(g))}{\Haarof{G}(B_{\delta})}.$ We also use the notation $\nu_{\delta}  = (\nu)_{\delta} = \nu * P_{\delta}.$

We first relate the condition that a measure is $(c_1, c_2, \eps)$-Diophantine to the assumptions of several theorems in \cite{BoutonnetIoanaSalehiGolsefidy2017}. 

\begin{lemma}\label{ConditionLink}
	Let $c_1, c_2, \varepsilon > 0$ and let $\mu$ be a probability measure on $G$ satisfying $\mathrm{supp}(\mu) \subset B_{\varepsilon}$. Then $\mu$ is $(c_1,c_2, \varepsilon)$-Diophantine if and only if for $\delta$ small enough and $n = \frac{\log \frac{1}{\delta}}{c_1 \log \frac{1}{\varepsilon}}$, $$\sup_{H < G} \mu^{*n}(B_{\delta}(H)) \leq \delta^{\frac{c_2}{c_1}},$$ where $B_{\delta}(H) = \{ g \in G \, : \, d(g,H) < \delta \}$ and the supremum is taken over all closed subgroups of $G$.
\end{lemma}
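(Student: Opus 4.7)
\medskip

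The plan is to recognize the lemma as a reparametrization statement: the change of variables $\delta = \varepsilon^{c_1 n}$ is designed to convert condition (i) of being $(c_1,c_2,\varepsilon)$-Diophantine into the assertion of the lemma. Under this substitution, $\log(1/\delta) = c_1 n \log(1/\varepsilon)$, so $n = \frac{\log(1/\delta)}{c_1 \log(1/\varepsilon)}$, and $\varepsilon^{c_2 n} = \delta^{c_2/c_1}$. Hence the two bounds exchange with each other term by term, and the equivalence is conceptually immediate; only the integrality of $n$ requires some care.

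For the reverse implication, I would fix a positive integer $n$ sufficiently large and set $\delta := \varepsilon^{c_1 n}$; then $n$ coincides exactly with $\frac{\log(1/\delta)}{c_1 \log(1/\varepsilon)}$, so the hypothesis of the lemma gives
$$\mu^{*n}(B_{\varepsilon^{c_1 n}}(H)) = \mu^{*n}(B_{\delta}(H)) \leq \delta^{c_2/c_1} = \varepsilon^{c_2 n}$$
uniformly in $H$. Combined with the standing hypothesis $\supp(\mu) \subset B_\varepsilon$, which is part (ii) of being $(c_1,c_2,\varepsilon)$-Diophantine, this establishes condition (i).

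For the forward implication, I would fix a small $\delta$ and let $n$ be the integer nearest to $\frac{\log(1/\delta)}{c_1 \log(1/\varepsilon)}$, so that $\varepsilon^{c_1 n} \geq \delta$ (choosing the rounding in the appropriate direction). Then $B_\delta(H) \subset B_{\varepsilon^{c_1 n}}(H)$, and the Diophantine bound gives $\mu^{*n}(B_\delta(H)) \leq \varepsilon^{c_2 n}$; comparing $\varepsilon^{c_2 n}$ to $\delta^{c_2/c_1}$ via the relation $\delta \leq \varepsilon^{c_1 n} \leq \varepsilon \cdot \delta$ yields the desired inequality up to an $\varepsilon^{c_2}$ factor which can be absorbed by slightly shrinking $c_2$ or by taking $\delta$ smaller than a threshold depending on $c_1,c_2,\varepsilon$.

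The only real subtlety is the integer-rounding step above and a minor mismatch in the definition: the definition of almost Diophantine takes the supremum over closed connected subgroups of $G$, while the lemma's statement takes it over all closed subgroups. This is harmless because in a Lie group every closed subgroup is a Lie subgroup whose identity component is a closed connected subgroup, and the $\delta$-neighborhood of a subgroup contains the $\delta$-neighborhood of the identity component, so restricting the supremum to connected subgroups loses nothing of interest. I do not anticipate any deeper obstacle; the entire argument is essentially bookkeeping around the exponent identity $\varepsilon^{c_2 n} = (\varepsilon^{c_1 n})^{c_2/c_1}$.
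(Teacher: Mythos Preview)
Your proposal is correct and takes the same approach as the paper: both recognize the lemma as a direct reparametrization via $\delta = \varepsilon^{c_1 n}$, with the paper's proof being a single sentence that asserts exactly this substitution. Your treatment is in fact more detailed than the paper's (which does not discuss integer rounding or the connected-versus-closed subgroup discrepancy); note, however, that your argument for the latter point goes in the wrong direction---the containment $B_\delta(H_0)\subset B_\delta(H)$ shows the supremum over all closed subgroups dominates the one over connected subgroups, not the reverse---so this is best read as an imprecision in the lemma's wording that the paper also leaves unaddressed.
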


\begin{proof}
	This follows from the fact that $\mu$ is $(c_1,c_2)$-Diophantine if and only if $\sup_{H < G}\mu^{*n}(B_{\delta}(H)) \leq \delta^{\frac{c_2}{c_1}}$ for $n = \frac{1}{c_1} \log \frac{1}{\delta}$.
\end{proof}

We state Corollary 4.2 from \cite{BoutonnetIoanaSalehiGolsefidy2017}.

\begin{theorem}(Flattening Lemma, Corollary 4.2 of \cite{BoutonnetIoanaSalehiGolsefidy2017})\label{SuperFlatteningLemma}
	Let $G$ be a connected simple Lie groups with finite center. Let $c_1, c_2 > 0$. Then for every $\gamma > 0$ there is $\varepsilon_0 = \eps_0(c_1,c_2, \gamma)  > 0$ and $C_0 = C_0(c_1,c_2, \gamma) > 0$ such that the following holds:
	
	If $0 < \varepsilon < \varepsilon_0$ and $\mu$ is a symmetric and $(c_1,c_2, \varepsilon)$-Diophantine probability measure on $G$, then for $\delta > 0$ small enough, $$||(\mu^{*n})_{\delta}||_{2} \leq \delta^{- \gamma} \quad\quad \text{ for any integer } \quad\quad n \geq C_0 \frac{\log \frac{1}{\delta}}{\log \frac{1}{\varepsilon}}.$$ 
\end{theorem}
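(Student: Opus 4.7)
The plan is to follow the Bourgain--Gamburd flattening strategy in the Lie group setting, establishing the statement by an iterative scheme that at each step either reduces $||(\mu^{*n})_\delta||_2$ by a definite multiplicative factor, or else produces a contradiction with the $(c_1, c_2, \varepsilon)$-Diophantine assumption. I shall think of $\delta$ as an exponentially decaying function of $n$, tuned so that $n \asymp \log(1/\delta)/\log(1/\varepsilon)$ throughout the argument.

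First I would establish an $L^2$-flattening alternative: for every $\tau > 0$ there is $c_\tau > 0$ such that whenever $\nu = \mu^{*k}$ satisfies $||\nu_\delta||_2 > \delta^{-\tau}$, either $||(\mu * \nu)_\delta||_2 \le \delta^{c_\tau}||\nu_\delta||_2$, which is genuine flattening at the next convolution, or else a non-commutative Balog--Szemer\'edi--Gowers type argument produces an approximate subgroup $A \subset G$ at scale $\delta$, of bounded doubling, on which a non-negligible portion of the mass of both $\mu$ and $\nu$ concentrates. This is the heart of the argument and transports the $L^2$-energy increment scheme into the Lie group setting.

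Second I would invoke the quantitative product theorem for simple connected Lie groups with finite center (de Saxc\'e, refining Bourgain--Gamburd): any $\delta$-scale approximate subgroup of $G$ of bounded doubling is either essentially a full unit ball, which contradicts the lower bound $||\nu_\delta||_2 > \delta^{-\tau}$, or is contained in a $\delta^{c'}$-neighbourhood of a proper closed connected subgroup $H$ of $G$. Combining this with the first step, whenever flattening fails at some stage one concludes $\mu^{*n}(B_{\delta^{c'}}(H)) \geq \delta^{c''}$ for exponents $c', c'' > 0$ depending only on $\gamma$, $c_1$, $c_2$ and $G$.

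Finally, I would compare this with Lemma~\ref{ConditionLink}: the Diophantine hypothesis gives $\mu^{*n}(B_{\delta^{c'}}(H)) \leq \delta^{c' c_2/c_1}$ as soon as $n \asymp \log(1/\delta)/(c_1 \log(1/\varepsilon))$. Choosing $\varepsilon$ smaller than a threshold $\varepsilon_0(c_1, c_2, \gamma)$ forces $c' c_2/c_1 > c''$, producing the desired contradiction; the flattening alternative therefore always succeeds, and iterating it $O(\log(1/\delta)/\log(1/\varepsilon))$ times drives $||(\mu^{*n})_\delta||_2$ below $\delta^{-\gamma}$. The hardest step will be the combined Balog--Szemer\'edi--Gowers and product theorem input, in particular tracking how the concentration exponent $c''$ depends on the failure-of-flattening parameter and on the Lie-theoretic geometry of $G$; these precise dependencies are exactly what force the smallness threshold on $\varepsilon$ and determine the constant $C_0$ appearing in the conclusion.
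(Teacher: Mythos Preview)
The paper does not prove this statement: it is quoted verbatim as Corollary~4.2 of \cite{BoutonnetIoanaSalehiGolsefidy2017} and used as a black box. Your sketch is a faithful outline of the Bourgain--Gamburd strategy as implemented in that reference (an $L^2$-flattening alternative via non-commutative Balog--Szemer\'edi--Gowers, de Saxc\'e's product theorem to force approximate subgroups near genuine subgroups, and the Diophantine hypothesis to rule out the concentration branch), so there is nothing to compare against in the present paper; your proposal is simply a correct summary of the proof in the cited source.
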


\subsection{Estimate of Averages of Matrix Coefficients for Oscillating Functions}\label{AverageMatrixCoeffSemisimple}

In this subsection we prove the following proposition, which is inspired by \cite{LindenstraussVarju2016}. We denote $B_R = \{ g \in G \,:\, d(g,e) < R \}$.

\begin{proposition}\label{HighOscillatingAverage}
	Let $G$ be a non-compact semisimple Lie groups with finite center and maximal compact subgroup $K$. Recall the Littlewood-Paley decomposition \eqref{LittlewoodPayleyDecomposition} $L^2(K) = \bigoplus_{\ell \geq 0} V_{\ell}$ and assume further that $K$ is a semisimle Lie group. Then for any $r \in \R$ and $\ell \in \Z_{\geq 1}$, for $\varphi_1, \varphi_2 \in V_{\ell} \subset L^2(K)$, $$\frac{1}{\Haarof{G}(B_R)} \int_{B_R}  |\langle \rho_r^{+}(g)\varphi_1, \varphi_2 \rangle| \, d\Haarof{G}(g) \ll 2^{-\ell/2}||\varphi_1||_2 ||\varphi_2||_2,$$ where the representation $\rho_r^{+}$ is defined in \eqref{rhorplus}.
\end{proposition}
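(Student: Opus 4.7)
The approach is to pass via Cauchy--Schwarz to an $L^{2}$ average over $B_{R}$, apply the Cartan decomposition $G = KA^{+}K$, and exploit that the restriction $\rho_{r}^{+}|_{K}$ equals the left regular representation $\lambda_{K}$ of $K$. The gain of $2^{-\ell/2}$ will then come from Schur orthogonality together with the lower bound $d_{\gamma} \gg ||\gamma||$ from Lemma~\ref{DimensionLaplacianBound}, which crucially uses that $K$ is semisimple.

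First I would apply Cauchy--Schwarz to reduce the problem to bounding $\frac{1}{\Haarof{G}(B_{R})} \int_{B_{R}} |\langle \rho_{r}^{+}(g)\varphi_{1},\varphi_{2}\rangle|^{2}\,d\Haarof{G}(g)$ by $\ll 2^{-\ell}||\varphi_{1}||_{2}^{2}||\varphi_{2}||_{2}^{2}$. The $KAK$ integration formula $d\Haarof{G}(g) = J(a)\,dk_{1}\,da\,dk_{2}$ (with $B_{R}$ bi-$K$-invariant for the norm $||g|| = |\kappa(g)|$) combined with the identity $\rho_{r}^{+}(k)\varphi(k') = \varphi(k^{-1}k')$ for $k \in K$, read off from \eqref{rhorplus}, gives
$$
\langle \rho_{r}^{+}(k_{1}ak_{2})\varphi_{1}, \varphi_{2}\rangle = \langle \rho_{r}^{+}(a)\lambda_{K}(k_{2})\varphi_{1},\, \lambda_{K}(k_{1}^{-1})\varphi_{2}\rangle.
$$
Setting $\psi := \rho_{r}^{+}(a)\lambda_{K}(k_{2})\varphi_{1}$, whose $L^{2}(K)$-norm is $||\varphi_{1}||_{2}$ by unitarity of $\rho_{r}^{+}$, the inner $k_{1}$-integral reduces, after the substitution $k_{1} \mapsto k_{1}^{-1}$ and unitarity of $\lambda_{K}$, to $\int_{K} |\langle \lambda_{K}(k_{1})\psi, \varphi_{2}\rangle|^{2}\,dk_{1}$.

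Writing the Peter--Weyl decomposition $\psi = \sum_{\gamma} \psi_{\gamma}$, only indices $\gamma$ with $2^{\ell - 1} \leq ||\gamma|| < 2^{\ell}$ contribute to the pairing with $\varphi_{2} \in V_{\ell}$, and Schur orthogonality forces the cross terms between distinct $\gamma$'s to vanish in the $L^{2}$ square. For a single $\gamma$, the representation $\lambda_{K}|_{M_{\gamma}}$ decomposes as $d_{\gamma}$ orthogonal copies of $\overline{\pi_{\gamma}}$; expanding in the orthonormal basis $\{d_{\gamma}^{1/2}\chi_{ij}^{\gamma}\}$ and applying Schur orthogonality entry-wise reduces the resulting sum to an instance of the Hilbert--Schmidt inequality $||B^{*}A||_{HS} \leq ||A||_{HS}||B||_{HS}$, yielding
$$
\int_{K} |\langle \lambda_{K}(k_{1})\psi_{\gamma}, (\varphi_{2})_{\gamma}\rangle|^{2}\, dk_{1} \leq \frac{1}{d_{\gamma}}||\psi_{\gamma}||_{2}^{2}\,||(\varphi_{2})_{\gamma}||_{2}^{2}.
$$

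Summing over $\gamma$ with $||\gamma|| \sim 2^{\ell}$ and invoking Lemma~\ref{DimensionLaplacianBound} to substitute $1/d_{\gamma} \ll 2^{-\ell}$, together with the inequality $\sum_{\gamma} ||\psi_{\gamma}||_{2}^{2}||(\varphi_{2})_{\gamma}||_{2}^{2} \leq ||\psi||_{2}^{2}||\varphi_{2}||_{2}^{2}$, shows that the $k_{1}$-integral is $\ll 2^{-\ell}||\varphi_{1}||_{2}^{2}||\varphi_{2}||_{2}^{2}$ uniformly in $a \in A^{+}$ and $k_{2} \in K$. Integrating this uniform estimate against $J(a)\,da\,dk_{2}$ over $(A^{+} \cap \kappa^{-1}(B_{R}^{\mathfrak{a}})) \times K$ restores the factor $\Haarof{G}(B_{R})$, and Cauchy--Schwarz closes the argument. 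The main obstacle is the matrix-coefficient identity for the $d_{\gamma}$ orthogonal copies of $\overline{\pi_{\gamma}}$ in $M_{\gamma}$, which requires careful but routine bookkeeping; everything else is Cartan-decomposition calculus, with the crucial input that semisimplicity of $K$ supplies $d_{\gamma} \gg ||\gamma||$.
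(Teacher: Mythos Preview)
Your proof is correct and follows essentially the same approach as the paper's: both arguments exploit that $\rho_r^{+}|_K$ is the left regular representation and apply Schur orthogonality (packaged in the paper as Lemma~\ref{LindenstraussVarjuRepresentationLemma}) to extract the factor $d_\gamma^{-1/2} \ll 2^{-\ell/2}$ via Lemma~\ref{DimensionLaplacianBound}. The only cosmetic differences are that the paper enlarges $B_R$ to the right-$K$-invariant set $B_R' = B_R \cdot K$ rather than invoking the full Cartan decomposition (so no bi-$K$-invariance of $B_R$ is needed), and applies Cauchy--Schwarz in the $K$-variable for each fixed $g$ rather than globally over $B_R$.
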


We recall the following lemma from \cite{LindenstraussVarju2016}.

\begin{lemma}(Proposition 5.1 of \cite{LindenstraussVarju2016})\label{LindenstraussVarjuRepresentationLemma}
	Let $(\pi, \mathscr{H})$ be a unitary representation of a compact group $K$ and let $D$ be the minimum of the dimension of all irreducible representations contained in $\pi$. Then for any vectors $u,v \in \mathscr{H}$, $$\left(  \int |\langle\pi(g)u,v \rangle|^2 \, d\Haarof{K}(k) \right)^{1/2} \leq \frac{||u||\, ||v||}{D^{1/2}}.$$
\end{lemma}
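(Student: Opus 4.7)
The plan is to reduce to the case of irreducible subrepresentations using the isotypic decomposition of $\pi$ and then apply Schur orthogonality. Write $\mathscr{H} = \bigoplus_{\gamma} \mathscr{H}_{\gamma}$, where the sum ranges over isomorphism classes of irreducible representations appearing in $\pi$ and $\mathscr{H}_{\gamma}$ denotes the $\gamma$-isotypic component. Each $\mathscr{H}_{\gamma}$ is $\pi$-invariant and admits a canonical identification $\mathscr{H}_{\gamma} \cong V_{\gamma} \otimes W_{\gamma}$, where $V_{\gamma}$ is the model irreducible of dimension $d_{\gamma} \geq D$ and $W_{\gamma}$ is a multiplicity space carrying the trivial action. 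For $u, v \in \mathscr{H}$, write $u = \sum_{\gamma} u_{\gamma}$ and $v = \sum_{\gamma} v_{\gamma}$ with components in the isotypic pieces.

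The first step is to observe that by Schur orthogonality, matrix coefficients associated with inequivalent irreducibles are orthogonal in $L^2(K)$. Since $\pi(k)$ preserves each $\mathscr{H}_{\gamma}$ and different $\mathscr{H}_{\gamma}$ are mutually orthogonal in $\mathscr{H}$, we obtain
\begin{equation*}
	\int_K |\langle \pi(k) u, v \rangle|^2 \, d\Haarof{K}(k) = \sum_{\gamma} \int_K |\langle \pi(k) u_{\gamma}, v_{\gamma} \rangle|^2 \, d\Haarof{K}(k).
\end{equation*}

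Next, for a fixed $\gamma$, fix an orthonormal basis $\{e_i\}_{i=1}^{d_{\gamma}}$ of $V_{\gamma}$ and write $u_{\gamma} = \sum_i e_i \otimes w_i$ and $v_{\gamma} = \sum_j e_j \otimes w'_j$ with $w_i, w'_j \in W_{\gamma}$, so that $\|u_{\gamma}\|^2 = \sum_i \|w_i\|^2$ and analogously for $v_{\gamma}$. Expanding the inner product and invoking the Schur orthogonality identity
\begin{equation*}
	\int_K \langle \pi_{\gamma}(k) e_i, e_j \rangle \overline{\langle \pi_{\gamma}(k) e_{i'}, e_{j'} \rangle} \, d\Haarof{K}(k) = \frac{1}{d_{\gamma}} \delta_{i i'} \delta_{j j'},
\end{equation*}
I obtain
\begin{equation*}
	\int_K |\langle \pi(k) u_{\gamma}, v_{\gamma} \rangle|^2 \, d\Haarof{K}(k) = \frac{1}{d_{\gamma}} \sum_{i, j} |\langle w_i, w'_j \rangle|^2 \leq \frac{\|u_{\gamma}\|^2 \|v_{\gamma}\|^2}{d_{\gamma}} \leq \frac{\|u_{\gamma}\|^2 \|v_{\gamma}\|^2}{D},
\end{equation*}
where the first inequality uses Cauchy--Schwarz on the multiplicity space.

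To conclude, I sum over $\gamma$ and use the elementary inequality $\sum_{\gamma} a_{\gamma} b_{\gamma} \leq \bigl(\sum_{\gamma} a_{\gamma}\bigr)\bigl(\sum_{\gamma} b_{\gamma}\bigr)$ valid for non-negative sequences, applied to $a_{\gamma} = \|u_{\gamma}\|^2$ and $b_{\gamma} = \|v_{\gamma}\|^2$, which yields $\sum_{\gamma} \|u_{\gamma}\|^2 \|v_{\gamma}\|^2 \leq \|u\|^2 \|v\|^2$. Taking square roots gives the desired bound. The main thing to take care of is bookkeeping of the isotypic decomposition; the core mechanism is just Schur orthogonality, and no single step presents a real obstacle.
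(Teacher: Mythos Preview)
Your proof is correct and follows essentially the same approach as the paper's: the paper simply remarks that the irreducible case is Schur orthogonality and that the general case follows by decomposing $\pi$ into irreducibles. Your argument is a careful fleshed-out version of this sketch, carried out via the isotypic decomposition rather than a plain direct sum of irreducibles, which is a cosmetic difference.
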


If $\pi$ is irreducible, then Lemma~\ref{LindenstraussVarjuRepresentationLemma} follows from Schur's Lemma (see \cite{KnappLieBook} Section I.5). For the general case one decomposes $\pi$ as a direct sum of irreducible representations.

\begin{proof}(of Proposition~\ref{HighOscillatingAverage})
	Denote $B_R' = B_R \cdot K$. By left invariance of the metric, it follows that $B_{R'} \subset B_{R + C}$ for $C$ an absolute constant and therefore $\Haarof{G}(B_{R'})\ll \Haarof{G}(B_R)$. Using Cauchy-Schwarz and that for $k \in K$ the operator $\rho_r^{+}(k)$ acts as the regular representation, it follows by Lemma~\ref{LindenstraussVarjuRepresentationLemma},
	\begin{align*}
		\int_{B_R}  |\langle \rho_r^{+}(g)\varphi_1, \varphi_2 \rangle| \, d\Haarof{G}(g) &\leq \int_{B_R'}  |\langle \rho_r^{+}(g)\varphi_1, \varphi_2 \rangle| \, d\Haarof{G}(g) \\ &= \int _{B_{R'}} \left(  \int_K  |\langle \rho_r^{+}(k)\varphi_1, \rho_r^{+}(g^{-1})\varphi_2 \rangle| \, d\Haarof{K}(k) \,  \right) d\Haarof{G}(g) \\
		&\leq \int _{B_{R'}} \left(  \int_K  |\langle \rho_r^{+}(k)\varphi_1, \rho_r^{+}(g^{-1})\varphi_2 \rangle|^2 \, d\Haarof{K}(k) \,  \right)^{1/2} d\Haarof{G}(g) \\
		&\leq \Haarof{G}(B_{R'}) \left( \min_{2^{\ell -1} \leq ||\gamma|| < 2^{\ell}} d_{\gamma} \right)^{-1/2} ||\varphi_1||\, ||\varphi_2|| \\
		&\ll  \Haarof{G}(B_R) 2^{-\ell/2} ||\varphi_1||\, ||\varphi_2||,
	\end{align*} having used in the last line that $||\gamma|| < d_{\gamma}$ from Lemma~\ref{DimensionLaplacianBound} under the assumption that $K$ is semisimple.
\end{proof}

\section{Proof of Local Limit Theorem}\label{SectionProofLocalLimitTheorem}

We fix throughout a non-compact semisimple Lie group $G$ with finite center. In this section we prove Theorem~\ref{QuasicompactnessImpliesLLT}, Theorem~\ref{LCLT} and Theorem~\ref{StrongLCLT}. The reader may recall the outline given in Section~\ref{SectionOutline}.

In Section~\ref{SectionSpectralPropertiesS(r)} we prove the necessary spectral properties for the operators $S_r$. Then in Section~\ref{ExistenceLimitMeasure} we prove the claimed properties of the limit measure as well as deduce \eqref{FourierBacktransform}. In Section~\ref{SectionHighFrequency} we deal with the high frequency term \eqref{HighFrequency} while in Section~\ref{SectionLowFrequency} we establish most of the necessary results to deal with the low frequency term \eqref{LowFrequency}. The proof of Theorem~\ref{LCLT} and Theorem~\ref{StrongLCLT} is then completed in Section~\ref{SectionStrongLCLT}, while Theorem~\ref{QuasicompactnessImpliesLLT} is deduced in Section~\ref{SectionLLT}.

\subsection{Spectral Properties of $S_r$}\label{SectionSpectralPropertiesS(r)}

In this section we discuss spectral results for the operators $S_0$ and $S_r$ and the function $r \mapsto \rho(S_r)$ under the assumption that $S_0$ is quasicompact and using the results developed in Section~\ref{SectionQuasicompactOperators} and Section~\ref{SectionStrongSpectralGap}. Notice that if $\mu$ is non-degenerate and $S_0$ is quasicompact, then by Lemma~\ref{S_rIrreducible} and Corollary~\ref{HilbertLatticeStrongSpectralGap} the operator $S_0$ has strong spectral gap.  

Before stating the first lemma, we mention that $|S_r \eta| \leq S_0 |\eta|$ for all $r \in \mathfrak{a}^{*}$ and $\eta \in L^2(\Omega)$, which implies $\rho(S_r) \leq ||S_0||$. Lemma~\ref{HighFreqeuncyNormLess} is concerned with improving the latter inequality to $\rho(S_r) < ||S_0||$ under suitable assumptions on $\mu$. 

\begin{lemma}\label{HighFreqeuncyNormLess}
	Let $\mu$ be a non-degenerate probability measure and assume that $S_0$ is quasicompact. Then for any non-zero $r\in \mathfrak{a}^{*}$, 
	\begin{equation}\label{HighFrequencyGap}
		\rho(S_r) < \rho(S_0) = ||S_0||.
	\end{equation}
	Moreover, for any $c_2 > c_1 > 0$ and $n$ large enough depending on $c_1$ and $c_2$, 
	\begin{equation}\label{SrPowerOperatorNormEstimate}
		\sup_{c_1\leq|r| \leq c_2} ||S_r^n||^{\frac{1}{n}} < ||S_0||.
	\end{equation}
\end{lemma}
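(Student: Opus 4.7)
The plan is to prove (1) and (2) sequentially; (1) is the heart while (2) is a compactness/submultiplicativity upgrade. For (1), observe that $(\rho_r(g)\varphi)(\omega) = e^{-irH(g^{-1}\omega)}(\rho_0(g)\varphi)(\omega)$ differs from $\rho_0(g)\varphi$ only by a unimodular phase, so $|\rho_r(g)\varphi| = \rho_0(g)|\varphi|$ pointwise on $\Omega$ and, after integrating against $\mu$, $|S_r\varphi| \leq S_0|\varphi|$ pointwise. This immediately gives $||S_r|| \leq ||S_0|| = \sigma$ and $\rho(S_r) \leq \sigma$; the strict inequality is the content of the contradiction argument below.

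Suppose $r \neq 0$ and $\rho(S_r) = \sigma$. By Lemma~\ref{S_rIrreducible} and Corollary~\ref{HilbertLatticeStrongSpectralGap}, $S_0$ has strong spectral gap, so $S_0 = \sigma E_0 + D_0$ with $E_0\varphi = \langle\varphi,\eta_0'\rangle\eta_0$, $\eta_0 > 0$ a.e., and $\rho(D_0) < \sigma$. The pointwise domination $|S_r^n\varphi| \leq S_0^n|\varphi|$ combined with quasicompactness of $S_0$ transfers quasicompactness to $S_r$ via the Banach-lattice peripheral-spectrum theory for dominated operators, and forces the existence of an eigenvector $S_r\psi = \lambda\psi$ with $|\lambda|=\sigma$. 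Then $\sigma|\psi| = |S_r\psi| \leq S_0|\psi|$; pairing with $\eta_0'$ and using $S_0^*\eta_0' = \sigma\eta_0'$ gives $\sigma\langle|\psi|,\eta_0'\rangle \leq \langle S_0|\psi|,\eta_0'\rangle = \sigma\langle|\psi|,\eta_0'\rangle$, so $S_0|\psi| = \sigma|\psi|$ a.e., whence $|\psi| = c\eta_0$ by uniqueness. Write $\psi = c\eta_0 e^{i\theta}$; substituting into $S_r\psi = \lambda\psi$ and dividing by $\sigma\eta_0(\omega)e^{i\theta(\omega)}$ expresses $\lambda/\sigma$ as a probability average of the unit complex numbers $\exp(i(\theta(g^{-1}\omega) - \theta(\omega) - rH(g^{-1}\omega)))$. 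Since $|\lambda/\sigma| = 1$, equality in the triangle inequality (the ``convexity'' step of Conze-Guivarc'h) forces these phases to coincide for a.e. $\omega$ and $\mu$-a.e. $g$, yielding the cohomological equation
\begin{equation*}
\theta(g^{-1}\omega) - \theta(\omega) - rH(g^{-1}\omega) \equiv \alpha \pmod{2\pi}
\end{equation*}
for a constant $\alpha$. Non-arithmeticity of the Iwasawa cocycle for non-degenerate $\mu$ on a non-compact semisimple Lie group (a standard consequence of Zariski density) rules out any such equation with $r \neq 0$, giving the contradiction.

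For (2), continuity of $r \mapsto S_r$ in operator norm (from $|e^{-irt}-e^{-ir't}| \leq |r-r'|\,|t|$ combined with the finite second moment of $\mu$) makes each $r \mapsto ||S_r^n||^{1/n}$ continuous, so $\rho(S_r) = \inf_n ||S_r^n||^{1/n}$ is upper semi-continuous. Combined with (1) on the compact annulus $\{c_1 \leq |r| \leq c_2\}$, this gives $\rho^* := \sup_{c_1 \leq |r| \leq c_2} \rho(S_r) < \sigma$. Fix $\eps > 0$ with $\rho^* + \eps < \sigma$: for each $r_0$ pick $n_0(r_0)$ with $||S_{r_0}^{n_0}||^{1/n_0} < \rho^* + \eps/2$, which by continuity of $r \mapsto S_r^{n_0}$ persists on a neighborhood of $r_0$. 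Extracting a finite subcover yields a single $n_0$ with $||S_r^{n_0}||^{1/n_0} < \rho^* + \eps$ uniformly on the annulus. Writing $n = kn_0 + m$ with $0 \leq m < n_0$ and using the submultiplicative bound $||S_r^n|| \leq ||S_r^{n_0}||^k ||S_r||^m \leq (\rho^*+\eps)^{kn_0}\sigma^m$, the $n$-th root tends to $\rho^* + \eps < \sigma$ uniformly in $r$ as $n \to \infty$, establishing the claim.

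The main obstacle is the Banach-lattice step in (1) that extracts a genuine peripheral eigenvector of $S_r$: the hypothesis $\rho(S_r) = \sigma$ combined with domination by the quasicompact $S_0$ alone bounds $\rho(S_r) \leq \sigma$, but producing an actual eigenfunction (rather than mere approximate eigenvectors) at the peripheral spectrum requires invoking inheritance of quasicompactness and peripheral-spectrum structure for operators dominated by a quasicompact positive operator on a Hilbert lattice. Once this is in place, the ``convexity''/triangle-inequality rigidity and cohomological vanishing for the Iwasawa cocycle proceed in a standard way.
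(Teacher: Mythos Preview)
Your proposal has a genuine gap precisely where you flag it: the claim that domination $|S_r\varphi|\le S_0|\varphi|$ by a quasicompact positive operator forces $S_r$ to be quasicompact (and hence to have a peripheral \emph{eigenvector}) is not a standard result, and the paper does not rely on any such inheritance theorem. Instead the paper splits into two cases according to whether the peripheral spectral value $\lambda$ of $S_r$ is an eigenvalue or merely an approximate eigenvalue, and treats the approximate case directly. The key additional idea is this: given approximate eigenvectors $\eta_\ell$ with $\|S_r\eta_\ell-\lambda\eta_\ell\|\to 0$, one shows $\|S_0|\eta_\ell|-\sigma|\eta_\ell|\|\to 0$ via the domination inequality, and then uses the \emph{strong spectral gap} of $S_0$ (not just quasicompactness) to conclude that $S_0-\sigma$ is invertible on $\langle\eta_0\rangle^\perp$, which forces $\||\eta_\ell|-\eta_0\|_2\to 0$. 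Writing $\eta_\ell=e^{i\theta_\ell}|\eta_\ell|$ and passing to a subsequence with pointwise convergence, one then compares $\langle S_r^n\eta_\ell,\lambda^n\eta_\ell\rangle\to\sigma^{2n}$ with the explicit integral formula to obtain the cohomological relation in the limit. This is the substantive missing piece in your argument.

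A second, smaller point: your resolution of the cohomological equation by an appeal to ``non-arithmeticity of the Iwasawa cocycle (a standard consequence of Zariski density)'' is too vague in this setting. The paper gives an explicit construction: fixing $\omega=kM$, one takes $h_n=ka_nk^{-1}$ with $a_n\in A$ chosen so that $e^{-irH(a_n^{-1})}$ hits a prescribed value while $h_n^{-1}\omega=\omega$ (so the $\theta$-terms cancel), and then approximates $h_n$ by elements of $\mathrm{supp}(\mu^{*n})$ using density. This direct argument uses only that $A$ is nontrivial and $r\neq 0$, and avoids any black-box cocycle rigidity statement.

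Your argument for \eqref{SrPowerOperatorNormEstimate} is essentially the same as the paper's: both use continuity of $r\mapsto S_r$, compactness of the annulus, and the submultiplicative bound $\|S_r^{kn_0+m}\|^{1/(kn_0+m)}\le (\|S_r^{n_0}\|^{1/n_0})^{1-m/n}\|S_r\|^{m/n}$ to upgrade pointwise control to uniform control. One minor slip: extracting a finite subcover does not literally produce a \emph{single} $n_0$, but finitely many; the division-with-remainder bound then lets you take $n$ large enough to work simultaneously for all of them, which is presumably what you meant.
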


\begin{proof}
	To prove \eqref{HighFrequencyGap}, we follow ideas from the proof of Theorem 3.9 of \cite{ConzeGuivarch2013}. Fix  a non-zero $r\in \mathfrak{a}^{*}$. We assume for a contradiction that $\rho(S_r) = \rho(S_0)$ and therefore there is $\lambda = e^{i \gamma} \rho(S_0) \in \mathrm{spec}(S_r)$ for $\gamma \in \R$. Then (cf. Section 12.1 of \cite{EinsiedlerWardFunctionalBook}) either $\lambda$ is in the discrete spectrum or in the approximate spectrum, i.e. there is a sequence $\eta_\ell \in \ker(S_r - \lambda \cdot \mathrm{Id})^{\perp}$ with $||\eta_\ell|| = 1$ and 
	\begin{equation}\label{ApproximateSpectrum}
		\lim_{\ell \to \infty} ||S_r\eta_\ell - \lambda \eta_\ell|| = 0.
	\end{equation}
	
	Note that as $S_0$ is quasicompact, $\rho(S_0) = ||S_0||$. We first treat the case where $\lambda$ is in the discrete spectrum, i.e. that there exists $\eta \in L^2(\Omega)$ such that $S_r \eta = \lambda \eta$. Then $||S_0||\, |\eta| = |S_r \eta| \leq S_0 |\eta|$ and thus $||\, S_0|\eta|\,|| = ||S_0||\, ||\eta||$. Denote by $\eta_0$ the $||S_0||$-eigenfunction of $S_0$ with unit norm. As $S_0$ has strong spectral gap (by Lemma~\ref{S_rIrreducible} and Corollary~\ref{HilbertLatticeStrongSpectralGap}), it follows that $\eta(\omega) = e^{i \theta(\omega)} \eta_0(\omega),$ for $\theta: \Omega \to \R$ a measurable function and $\omega \in \Omega$. 
	
	Then for almost all $\omega \in \Omega$ and $n \geq 1$, 
	\begin{align*}
		\int e^{-(\delta + ir)H(g^{-1}\omega) + i\theta(g^{-1}\omega)} \eta_0(g^{-1}\omega) \, d\mu^{*n}(g) &= (S_r^n \eta)(\omega) \\
		&= \lambda^n \eta(\omega) \\
		&= e^{in\gamma} ||S_0||^n e^{i \theta(\omega)} \eta_0(\omega) \\
		&= e^{i(n\gamma + \theta(\omega))} \int e^{-\delta H(g^{-1}\omega)} \eta_0(g^{-1}\omega) \, d\mu^{*n}(g).
	\end{align*}
	As $\eta_0$ is a quasi-interior element by Theorem~\ref{SchaeferStronglyIrreducible}, it must hold that $\eta_0(\omega) > 0$ for almost all $\omega \in \Omega$. Hence for almost all $\omega \in \Omega$ and $g \in \mathrm{supp}(\mu^{*n})$, $$e^{- i(rH(g^{-1}\omega) - \theta(g^{-1}\omega) + \theta(\omega) + n\gamma)}= 1.$$ 
	
	If $r \neq 0$, for a fixed $\omega \in \Omega$ and $n \geq 1$, we can choose $h_n \in G$ such that $e^{-irH(h_n^{-1}\omega)} = e^{i(n\gamma + \pi)}$ yet $e^{i(\theta(h_n^{-1}\omega) - \theta(\omega))} = 1$. Indeed, for a representative $\omega = k M$ for $k \in K$, we may choose $h_n = ka_nk^{-1}$ for an element $a_n \in A$ satisfying $e^{-irH(a_n^{-1})} = e^{i(n\gamma + \pi)}$ as then $H(h_n^{-1}k) = H(a_n^{-1})$ and $\theta(h^{-1}\omega) = \theta(\omega)$. We may choose the $h_n$ within a bounded region of $G$ and therefore upon replacing $h_n$ with a subsequence we may assume that $h_n$ converges to some element $h \in G$. Since $\mu$ is non-degenerate we can find some $n$ and $g \in \mathrm{supp}(\mu^{*n})$ such that $g$ becomes arbitrarily close to $h$ and hence for $n$ large enough also to $h_n$. This is a contradiction.
	
	It remains to assume that $\lambda$ is in the approximate spectrum. Let $\eta_{\ell}$ as in \eqref{ApproximateSpectrum}. Since $\langle S_r \eta_{\ell}, \lambda \eta_{\ell} \rangle = \langle S_r \eta_{\ell} - \lambda \eta_{\ell} , \lambda \eta_{\ell} \rangle + ||S_0||^2$, it follows that $\langle S_r \eta_{\ell}, \lambda \eta_{\ell} \rangle \xrightarrow{\ell \to \infty} ||S_0||^2$ and furthermore exploiting $|\langle S_r \eta_{\ell}, \lambda \eta_{\ell} \rangle| \leq \langle S_0 |\eta_{\ell}|, ||S_0|| \, |\eta_{\ell}| \rangle$ one concludes $$\lim_{\ell \to \infty} \langle S_0 |\eta_{\ell}|, ||S_0|| \, |\eta_{\ell}| \rangle = ||S_0||^2$$ and hence $||S_0 |\eta_{\ell}| - ||S_0|| \, |\eta_{\ell}| \, ||^2 \leq  2||S_0||^2 - 2 \langle S_0 |\eta_{\ell}|, ||S_0|| \, |\eta_{\ell}| \rangle \xrightarrow{\ell \to \infty}  0.$
	
	Denote $\psi_ {\ell}= |\eta_{\ell}| - \langle |\eta_{\ell}|, \eta_0 \rangle \eta_0 \in \langle \eta_0 \rangle^{\perp} \subset L^2(\Omega)$. Then it holds that $$||(S_0 - ||S_0||) \psi_{\ell}||_2 = ||S_0 |\eta_{\ell}| - ||S_0|| \, |\eta_{\ell}| \, ||_2 \xrightarrow{\ell \to \infty}  0.$$ Since $\psi_{\ell}$ in $\langle \eta_0 \rangle^{\perp}$ and $S_0 - ||S_0||$ is invertible on $\langle \eta_0 \rangle^{\perp}$ it follows that $||\psi_\ell||_2 \to 0$. Notice that $||\psi_{\ell}||_2^2 = 1 - \langle |\eta_{\ell}|, \eta_0 \rangle^2$ and hence $\langle |\eta_{\ell}| , \eta_0 \rangle \to 1$ and further $|| \, |\eta_{\ell}| - \eta_0 ||_2 \to 0.$ Upon replacing $\ell$ by a subsequence, we can assume that $|\eta_{\ell}|$ converges pointwise to $\eta_0$ almost everywhere.
	
	We further note that for all $n \geq 1$, $\langle S_r^n \eta_{\ell}, \lambda^{n}\eta_{\ell} \rangle \to ||S_0||^{2n}$ as $\ell \to \infty$. Indeed this follows by induction as 
	\begin{align*}
		&\langle S_r^n \eta_{\ell} - S_r^{n-1} \lambda \eta_{\ell} + S_r^{n-1}\lambda \eta_{\ell}, \lambda^{n}\eta_{\ell} \rangle \\ &= \langle S_r^{n-1}(S_r \eta_{\ell} - \lambda \eta_{\ell}), \lambda^n \eta_{\ell} \rangle + ||S_0||^2 \langle S_r^{n-1}\eta_{\ell}, \lambda^{n-1} \eta_{\ell} \rangle \to ||S_0||^{2n}.
	\end{align*}
	
	Write $\lambda = e^{i\gamma}||S_0||$ and $\eta_{\ell}(\omega) = e^{i\theta_{\ell}(\omega)} |\eta_{\ell}|(\omega)$ for $\theta_\ell: \Omega \to \R$ a measurable function and $\omega \in \Omega$. Notice that $\langle S_r^n \eta_{\ell} , \lambda^n \eta_{\ell} \rangle$ equals $$\int \int e^{-(\delta + ir)H(g^{-1}\omega) + i(\theta_\ell(g^{-1}\omega) - \theta_\ell(\omega) - n\gamma)} ||S_0||^n \, |\eta_{\ell}|(g^{-1}\omega) |\eta_{\ell}|(\omega) \, d\mu^{*n}(g)d\Haarof{\Omega}(\omega)$$ and on the other hand $$\langle S_0^n \eta_0, ||S_0||^n \eta_0 \rangle = \int \int e^{-\delta H(g^{-1}\omega)} ||S_0||^n\eta_0(g^{-1}\omega)\eta_0(\omega) \, d\mu^{*n}(g)d\Haarof{\Omega}(\omega).$$ As $\langle S_r^n \eta_{\ell}, \lambda^n \eta_{\ell} \rangle \xrightarrow{\ell \to \infty} ||S_0||^{2n} = \langle S_0^n\eta_0, ||S_0||^n \eta_0 \rangle$ and since almost surely $|\eta_{\ell}| \to \eta_0$, we conclude that for almost all $g \in \mathrm{supp}(\mu^{*n})$ and $\omega \in \Omega$, $$ \lim_{\ell \to \infty} e^{i(rH(g^{-1}\omega) - \theta_{\ell}(g^{-1}\omega) + \theta_{\ell}(\omega) + \gamma)} = 1.$$ This leads to a contradiction by a similar argument to the case of the discrete spectrum.
	
	To prove \eqref{SrPowerOperatorNormEstimate}, we notice that for an operator $T$ on a Hilbert space $\mathscr{H}$ with $||T|| \leq 1$, the value of $||T^n||^{\frac{1}{n}}$ for a given $n$ controls $||T^k||^{\frac{1}{k}}$ for any $k \geq n$. Indeed (cf. \cite{RemlingMathOverflow}) if $k = \ell n + j$ for $0 \leq j \leq n-1$ then it holds that 
	\begin{equation}\label{highkcontrolled}
		||T^k||^{\frac{1}{k}} \leq (||T^{\ell n}||^{\frac{1}{\ell n}})^{\frac{\ell n}{k}} ||T||^{\frac{j}{k}} \leq (||T^n||^{\frac{1}{n}})^{1 - \frac{j}{k}}  ||T||^{\frac{j}{k}}.
	\end{equation}
	Therefore for $k$ large enough in terms of $n$, $||T^k||^{\frac{1}{k}}$ is at most slightly larger than $||T^n||^{\frac{1}{n}}$. Assume now for a contradiction that \eqref{SrPowerOperatorNormEstimate} does not hold. Then there is a sequence $(n_i)_{i \geq 1}$ with $n_i \to \infty$ and for each $i$ there is $r_i$ with $ ||S_{r_i}^{n_i}||^{\frac{1}{n_i}} = ||S_0||$. As the set $\{c_1 \leq |r| \leq c_2\}$ is compact, we may choose a subsequence of the $i$ such that $r_i$ converges to $r \in \mathfrak{a}^{*}$ with $c_1 \leq |r| \leq c_2$. We arrive at a contradiction as by \eqref{highkcontrolled}, $||S_{r_i}^{n_i}||^{\frac{1}{n_i}}$ is at most marginally larger than $||S_r^n||^{\frac{1}{n}}$ for $r_i$ close enough to $r$. Indeed, choose $\varepsilon > 0$ small enough such that $\rho(S_r) + 3 \varepsilon < ||S_0||$ and fix $n$ large enough such that $||S_r^{n}||^{\frac{1}{n}} \leq \rho(S_r) + \varepsilon$. Then for $r_i$ close enough to $r$,  $||S_{r_i}^n||^{\frac{1}{n}} \leq \rho(S_r) + 2\varepsilon$ and hence by \eqref{highkcontrolled}, choosing $i$ sufficiently large, $||S^{n_i}_{r_i}||^{\frac{1}{n_i}} \leq \rho(S_r) + 3\varepsilon < ||S_0||$, a contradiction to the assumption. 
\end{proof}

\begin{proposition}\label{S_rStrongSpectralGap}
	Let $\mu$ be a  non-degenerate probability measure with finite second moment and assume that $S_0$ is quasicompact. Then there is $\delta_0 = \delta_0(\mu) > 0$ such that for any $r \in \mathfrak{a}^{*}$ with $|r| \leq \delta_0$ the operators $S_r$ and $S_r^{*}$ have strong spectral gap. 
	
	More precisely there is $0 < \delta_0 < 1$ small enough satisfying the following properties. For $|r| \leq \delta_0$ we can write
	\begin{equation} \label{StrongSpectralGapsmallr}
		S_r = \lambda(r)E_r + D_r \quad\quad \text{and} \quad\quad S_{r}^{*} = \overline{\lambda(r)} E_r^{*} + D_r^{*}
	\end{equation} where $\lambda(r)$, $E_r$ and $D_r$ and equally $\overline{\lambda(r)}, E_r^{*}$ and $D_r^{*}$ satisfy the assumptions of Definition~\ref{StrongSpectralGapDef}, and the following properties hold:
	\begin{enumerate}
		\item[(i)] $\sup_{|r| \leq \delta_0} ||D_r|| \leq (1 - c)||S_0||$ for $c = c(\mu) > 0$.
		\item[(ii)] $||E_r - E_0|| \ll_{\mu} |r|^2$ and $||E_r^{*} - E_0^{*}|| \ll_{\mu} |r|^2$ for $|r| \leq \delta_0$. 
		\item[(iii)] Let $\eta_r$ be the unique $\lambda(r)$-eigenfunction of $S_r$ with unit norm. Then for small enough $r$ there exists a unique $\overline{\lambda(r)}$-eigenfunction $\eta_r'$ of $S_r^{*}$ satisfying $\langle \eta_r', \eta_r \rangle = 1$. Then for $\varphi \in L^2(\Omega),$ $$E_r\varphi = \langle \varphi, \eta'_r \rangle \eta_r.$$
		\item[(iv)] Moreover, $$||\eta_r - \eta_0||_2 \ll_\mu |r|^2, \quad\text{ and }\quad  ||\eta_r' - \eta_0'|| \ll_{\mu} |r|^2$$ for $|r| \leq \delta_0$.
	\end{enumerate}
\end{proposition}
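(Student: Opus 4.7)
The approach is classical perturbation theory around $r=0$, combining three inputs: smoothness of the family $r\mapsto S_r$ coming from the finite second moment hypothesis, openness of quasicompactness (Corollary~\ref{QuasicompactnessOpenProperty}), and the Riesz projector formalism, available because $S_0$ has strong spectral gap by Lemma~\ref{S_rIrreducible} and Corollary~\ref{HilbertLatticeStrongSpectralGap}. First, using the explicit form \eqref{PrincipalSeries} of $\rho_r$, the bound $|H(g^{-1}\omega)|\le\|g\|$ from \eqref{Hkappainequality}, and the Taylor expansion $e^{-irH}-1=-irH-\tfrac12(rH)^2+O((rH)^3)$, I would produce bounded operators $L$ (linear in $r\in\mathfrak a^{*}$) and $M$ (quadratic in $r$) such that the finite second moment of $\mu$ gives
\[
\|S_r-S_0\|\ll_\mu|r|\qquad\text{and}\qquad\|S_r-S_0-iL(r)\|\ll_\mu|r|^2,
\]
so that $r\mapsto S_r$ is operator-norm continuous and differentiable at $0$.

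By strong spectral gap, $\sigma$ is an isolated simple eigenvalue of $S_0$, separated from the rest of the spectrum by a gap $c_0=c_0(\mu)>0$. Fix a small circle $\gamma\subset\C$ around $\sigma$ disjoint from $\mathrm{spec}(D_0)$. By Corollary~\ref{QuasicompactnessOpenProperty} applied to the continuous family $r\mapsto S_r$, there exists $\delta_0>0$ such that for all $|r|\le\delta_0$ the operator $S_r$ remains quasicompact with $\rho_{\mathrm{ess}}(S_r)\le\sigma-c_0/2$, and $\gamma$ still encloses exactly one eigenvalue $\lambda(r)$ of $S_r$. Defining
\[
E_r=-\frac{1}{2\pi i}\oint_\gamma (z-S_r)^{-1}\,dz
\]
yields a projector which is rank-one because the rank of the Riesz projector is preserved under small perturbation. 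Setting $D_r=S_r-\lambda(r)E_r$ gives the decomposition \eqref{StrongSpectralGapsmallr}, with $\|D_r\|\le(1-c)\sigma$ uniformly in $r$ from the fixed radius of $\gamma$, proving (i). The same construction applied to $S_r^{*}$, which is quasicompact by Lemma~\ref{QuasicompactnessCharacterization}(ii), gives the adjoint decomposition and the projector $E_r^{*}$. Item (iii) then follows: $E_r$ has one-dimensional image $\mathrm{span}(\eta_r)$ and $E_r^{*}$ has image $\mathrm{span}(\eta_r')$; the normalization $\langle\eta_r,\eta_r'\rangle=1$ is achievable because $\langle\eta_0,\eta_0'\rangle\ne 0$ (as $E_0\ne 0$), whereupon $E_r\varphi=\langle\varphi,\eta_r'\rangle\eta_r$ is forced by the characterization of a rank-one projector in terms of its range and cokernel.

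For the quantitative estimates (ii) and (iv), I would expand the resolvent as a Neumann series in $S_r-S_0$,
\[
(z-S_r)^{-1}=(z-S_0)^{-1}+\sum_{k\ge 1}(z-S_0)^{-1}\bigl[(S_r-S_0)(z-S_0)^{-1}\bigr]^k,
\]
substitute the Taylor expansion of $S_r-S_0$ from the first step, and integrate term by term along $\gamma$, using the spectral decomposition $(z-S_0)^{-1}=E_0/(z-\sigma)+(z-D_0)^{-1}(I-E_0)$ available because $S_0$ has strong spectral gap. This produces a Taylor expansion $E_r=E_0+rP_1+r^2P_2+O(|r|^3)$ with $P_1,P_2$ given by explicit contour residues. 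The main technical obstacle will be establishing item (ii), i.e.\ showing that the first-order coefficient $P_1$ vanishes so that $\|E_r-E_0\|\ll_\mu|r|^2$ rather than merely $\ll_\mu|r|$: the formula for $P_1$ is an explicit expression involving $L$, $E_0$, and $(\sigma-D_0)^{-1}$, and its vanishing should follow from structural cancellations tied to the positivity of $\eta_0,\eta_0'$ guaranteed by Theorem~\ref{SchaeferStronglyIrreducible} together with an appropriate normalization (e.g.\ fixing the phase of $\eta_r$ so that $\langle\eta_r,\eta_0'\rangle=1$, forcing $\eta_r-\eta_0\in\ker E_0$ to leading order). Given (ii), item (iv) is a direct consequence of (iii): from $E_r\varphi-E_0\varphi=\langle\varphi,\eta_r'-\eta_0'\rangle\eta_r+\langle\varphi,\eta_0'\rangle(\eta_r-\eta_0)$ and the normalization $\langle\eta_r,\eta_r'\rangle=1$, the $|r|^2$ bound on $E_r-E_0$ transfers to the eigenfunctions themselves.
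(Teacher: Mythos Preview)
Your overall scaffolding matches the paper's: both use Riesz projectors and analytic perturbation theory, with $E_r$ defined by the same contour integral and $D_r=S_rP_r$ for $P_r=I-E_r$.

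The substantive divergence is in (ii), which you correctly flag as the crux. The paper does \emph{not} try to show $P_1=0$; instead it argues directly that already $\|S_r-S_0\|\ll_\mu|r|^2$ (not merely $\ll|r|$), invoking the conjugation symmetry $\overline{S_r\varphi}=S_{-r}\overline{\varphi}$ together with the $C^2$ smoothness in $r$ coming from the second-moment hypothesis. Once $\|S_r-S_0\|\ll|r|^2$ is in hand, the Neumann series for the resolvent gives $\|R(z,S_r)-R(z,S_0)\|\ll|r|^2$ uniformly on $\gamma$, and integrating yields $\|E_r-E_0\|\ll|r|^2$ with no need to analyze $P_1$ at all. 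Your proposed mechanism for killing $P_1$---``structural cancellations'' from positivity of $\eta_0,\eta_0'$ plus a convenient normalization of $\eta_r$---would not work: the Riesz projector $E_r$ is canonically determined by $S_r$ and $\gamma$ and is insensitive to how you normalize the eigenvectors, and positivity of $\eta_0$ alone imposes no relation between $L$ and $E_0$. So the missing idea is the conjugation symmetry, which pushes the $|r|^2$ improvement upstream to $S_r-S_0$ itself rather than trying to extract it from a second-order expansion of $E_r$.

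A smaller gap: you derive (i) ``from the fixed radius of $\gamma$'', but that radius only controls the \emph{spectral radius} of $D_r$, not its operator norm (the projections $E_r,P_r$ are not orthogonal). The paper gives a separate direct argument: if two orthogonal unit vectors $\varphi_1,\varphi_2$ both satisfied $\|S_r\varphi_i\|>(1-c)\sigma$, then since $\|S_r-S_0\|$ is small the same would hold for $S_0$ with a slightly smaller constant, contradicting the strong spectral gap of $S_0$. Applying this with $\varphi_1=\eta_r$ shows $\|S_r\varphi\|\le(1-c)\sigma$ for every unit $\varphi\perp\eta_r$, which is what (i) needs.
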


\begin{proof}
	As $\mu$ has finite second moment, the directional derivatives of second order of the family of operators $S_r$ and $S_r^{*}$ exist. Therefore the function $r \mapsto ||S_r - S_0||$ is $C^2$. Since $\overline{S_r \varphi} = S_{-r}\overline{\varphi}$ for $\varphi \in L^2(\Omega)$, it follows by Taylor's theorem that $||S_r - S_0|| \ll_{\mu} |r|^2$ for small $r$. By Corollary~\ref{QuasicompactnessOpenProperty} and Corollary~\ref{HilbertLatticeStrongSpectralGap}, $S_0$ has strong spectral gap and $S_r$ is quasicompact for small $r$. Equally by Lemma~\ref{QuasicompactnessCharacterization} (ii) and since $S_0^{*} = \int \rho_0(g^{-1}) \, d\mu(g)$ is a positive operator too, it follows that $S_0^{*}$ has strong spectral gap and $S_r^{*}$ is quasicompact for small $r$. 
	
	We show that there is $\delta_0, c > 0$ small enough such that for $|r| \leq \delta_0$ and two orthogonal functions of unit norm $\varphi_1, \varphi_2 \in L^2(\Omega)$ it must hold for either $i = 1$ or $i = 2$ that 
	\begin{equation}\label{SrGapMainEstiamte}
		||S_r \varphi_i||_2 \leq (1 - c)||S_0||.
	\end{equation} Indeed, assume for a contradiction that \eqref{SrGapMainEstiamte} does not hold. Then $||S_0 \varphi_i||_2 \geq ||S_r \varphi_i||_2 - ||(S_r - S_0)\varphi_i||_2 \geq (1 - c)\lambda(0) + O_{\mu}(|r|^2) \geq (1 - 2c)||S_0||$ for $r$ small enough. For $c$ small enough, as $S_0$ has strong spectral gap and $\langle \varphi_1, \varphi_2 \rangle = 0$, the latter is a contradiction.  
	
	Therefore we have shown for $|r| \leq \delta_0$ that the $\lambda(r)$-eigenspace of $S_r$ is one dimensional and on its complement the norm of $S_r$ is bounded by $(1-c)||S_0||$. Choose $\delta_0 > 0$ in addition small enough such $||S_0||(1 - \frac{c}{2}) < \inf_{|r| \leq \delta_0} \lambda(r)$. Denote by $\gamma_1 : \mathbb{S}^1 \to \C$ a smooth parametrization of the closed circle of radius $||S_0||(1 - \frac{c}{2})$ around zero and by $\gamma_2: \mathbb{S}^1 \to \C$ a smooth parametrization of the circle of radius $\frac{||S_0||c}{2}$ around $||S_0||$. Consider the operators
	\begin{equation}
		P_r = -\frac{1}{2\pi i}\int_{\gamma_1} R(z,S_r) \, dz, \quad\quad \text{and} \quad\quad E_r = -\frac{1}{2\pi i}\int_{\gamma_2} R(z,S_r) \, dz,
		\label{Erexpression}
	\end{equation} for $R(z,S_r) = (S_r - z \cdot \mathrm{Id})^{-1}$ the resolvent of $S_r$ at $z$.  Then by Theorem 6.17 of Chapter 3 in \cite{KatoPertubationBook}, the operators
	$E_r$ and $P_r$ are commuting projections with $\mathrm{Id} = E_r + P_r$ and where  $\mathrm{ker}(P_r) = \mathrm{Im}(P_r)$ is the one dimensional eigenspace of $S_r$ with eigenvalue $\lambda(r)$. By setting $D_r = S_r P_r$ , we therefore have shown that $S_r = S_r(E_r + P_r) = \lambda(r)E_r + D_r$ has strong spectral gap and that (i) holds.
	
	We claim that the operators $E_r$ and $P_r$ are also $C^2$. Indeed by Lemma 3 of Chapter VII.6 of \cite{DunfordSchwartzLinearBook}, it holds that whenever $||S_r - S_0|| < ||R(z,S_0)||^{-1}$, then for any $z$ in the resolvent set of $S_0$ that $z$ is also in the resolvent set for $S_r$ and that $$R(z,S_r) = R(z,S_0)\sum_{n = 0}^{\infty} (S_r - S_0)^n R(z,S_0)^n. $$ Since $S_r$ is $C^2$ it therefore follows that for $r$ small enough $R(z,S_r)$ is also $C^2$ on $\gamma_1$ and $\gamma_2$. Thus $||P_r - P_0|| \ll_{\mu} |r|^2$ and $||E_r - E_0||  \ll_{\mu} |r|^2$ and the claim for $E_r^{*}$ is established similarly. 
	
	To show (iii), first assume that such an $\eta_r'$ exists. Then as $E_r \varphi = \langle \varphi, \psi \rangle \eta_r$ for some $\psi \in L^2(\Omega)$ with $S_r E_r = E_r S_r$ and $E_r^2 = E_r$ it follows that $S_r^*\psi = \overline{\lambda(r)}\psi$ and that $\langle \eta_r, \psi \rangle = 1$, which implies that $\psi = \eta_r'$. By the above, it follows that there is a unique $\overline{\lambda(r)}$-eigenfunction of $S_r^{*}$ with unit norm for $|r| \leq \delta_0$, yet we need to show that there exists one with $\langle \eta_r, \eta_r' \rangle = 1$. For $r = 0$ this holds as both eigenfunctions are positive almost surely and for small $r$ we apply (iv) (for $\eta_r'$ with a fixed norm) to show that there is a $\overline{\lambda(r)}$-eigenfunction $\eta_r'$ of $S_r^{*}$ satisfying $\langle \eta_r, \eta_r' \rangle \neq 0$ and therefore upon normalizing $\eta_r'$ the claim follows. 
	
	To conclude, we show (iv) for $||\eta_r - \eta_0||_2$ and note that the same argument applies to $||\eta_r' - \eta_0'||_2$. The claim is deduced from (ii) by noticing that for $\delta_0$ small enough, $\eta_r = \frac{E_r \eta_0}{||E_r \eta_0||}$. Indeed, $||E_r \eta_0|| = |||E_r \eta_0 - \eta_0 + \eta_0|| \geq ||\eta_0|| - ||(E_r - E_0) \eta_0|| > \frac{1}{2}$ for $\delta$ small enough. To prove (iv), notice 
	\begin{align*}
		||\eta_r - \eta_0||_2 &\leq ||\tfrac{E_r \eta_0}{||E_r\eta_0||} - \tfrac{E_0 \eta_0}{||E_r\eta_0||}||_2 + ||\tfrac{E_0 \eta_0}{||E_r\eta_0||} - \eta_0||_2 \\
		&\ll_{\mu} ||E_r - E_0|| + | \tfrac{1}{||E_r\eta_0||} - 1| \ll_{\mu} |r|^2,
	\end{align*} using that $1 = ||E_0 \eta_0||$ and $|\tfrac{1}{||E_r\eta_0||} - 1| \leq |\tfrac{||E_0\eta_0|| - ||E_r\eta_0||}{||E_r\eta_0||}| \ll_{\mu} ||E_r - E_0|| \ll_{\mu} |r|^2$.
\end{proof}

\begin{proposition}\label{s(r)behavioursmallr}
	Let $\mu$ be a  non-degenerate probability measure with finite second moment and assume that $S_0$ is quasicompact. Then $\lambda(r)$ is a $C^2$-function and the Hessian $H_{\lambda,0}$ of $\lambda$ at $0$ is a negative definite sesquilinear form. 
\end{proposition}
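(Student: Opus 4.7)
The plan is to first establish $C^2$ regularity of $\lambda(r)$, then use the complex-conjugation symmetry of $S_r$ together with the bound $|\lambda(r)| \leq \sigma$ to reduce to showing that a real symmetric quadratic form on $\mathfrak{a}^{*}$ is strictly negative, and finally to identify this form (up to sign) with the asymptotic covariance of the Iwasawa cocycle driven by $\mu$, whose non-degeneracy is a consequence of non-degeneracy of $\mu$.

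For $C^2$ regularity I would recall from the proof of Proposition~\ref{S_rStrongSpectralGap} that both $r \mapsto S_r$ and $r \mapsto E_r$ are $C^2$-valued in operator norm, the former via the finite-second-moment hypothesis applied to the formula~\eqref{PrincipalSeries} for $\rho_r$, the latter by differentiating the contour integral~\eqref{Erexpression} twice inside the fixed resolvent set. Since $E_r$ is a rank-one idempotent and $S_r E_r = \lambda(r) E_r$, taking the trace gives $\lambda(r) = \mathrm{tr}(S_r E_r)$, which is therefore $C^2$.

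Next, the identity $\overline{S_r\varphi} = S_{-r}\overline{\varphi}$ combined with uniqueness of the dominant eigenvalue (Proposition~\ref{S_rStrongSpectralGap}) yields $\overline{\lambda(r)} = \lambda(-r)$. Writing the Taylor expansion $\lambda(r) = \sigma + i\ell(r) + Q(r) + O(|r|^3)$ with $\ell$ linear and $Q$ quadratic on $\mathfrak{a}^{*}$, this symmetry forces both $\ell$ and $Q$ to be real-valued, so $H_{\lambda,0} = 2Q$ is a real symmetric bilinear form and extends canonically to a Hermitian sesquilinear form. Expanding $|\lambda(r)|^2 \leq \sigma^2$ to second order gives $2\sigma Q(r) + \ell(r)^2 \leq O(|r|^3)$, and hence $Q(r) \leq -\ell(r)^2/(2\sigma) \leq 0$ for all small $r$. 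This already proves that $H_{\lambda,0}$ is negative semidefinite, with the quadratic rate of approach of $|\lambda(r)|$ to $\sigma$ controlled by both $\ell$ and $Q$.

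The main obstacle is the strict negativity. Suppose for contradiction that $Q(v) = 0$ for some $v \neq 0$; the previous inequality then also forces $\ell(v) = 0$, so $\lambda(tv) = \sigma + O(t^3)$. I would compute $Q(v)$ explicitly by a second-order perturbation of $S_r \eta_r = \lambda(r) \eta_r$, expanding $(S_r\eta)(\omega) = \int e^{-ir H(g^{-1}\omega)} e^{-\delta H(g^{-1}\omega)} \eta(g^{-1}\omega)\, d\mu(g)$ in $r$ and inverting $S_0 - \sigma \cdot \mathrm{Id}$ on the complement $\eta_0^{\perp}$, where its inverse is bounded thanks to Proposition~\ref{S_rStrongSpectralGap}. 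This identifies $Q(v)$, up to a positive normalisation built from $\eta_0, \eta_0'$, with minus half of the asymptotic variance $\lim_{n \to \infty} \frac{1}{n} \int \int \bigl(\langle v, H(g^{-1}\omega)\rangle - n \bar h(v)\bigr)^2\, d\mu^{*n}(g)\, d\nu(\omega)$ of the Iwasawa cocycle in direction $v$, for a suitable twisted stationary probability measure $\nu$ on $\Omega$. Vanishing of this asymptotic variance would force $\langle v, H(\cdot)\rangle$ to be cohomologous to a constant under the skew-product dynamics on $G^{\N} \times \Omega$, and by the classical cohomological non-degeneracy argument for the Iwasawa cocycle (due to Guivarc'h--Raugi, cf. also Benoist--Quint), this would force the image of $\mathrm{supp}\,\mu$ under a suitable projection to lie in a proper closed subspace of $\mathfrak{a}$, contradicting the non-degeneracy of $\mu$ in $G$. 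This cohomological rigidity is the technical heart of the proposition.
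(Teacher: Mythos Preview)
Your $C^2$ argument is essentially the same as the paper's (which writes $\lambda(r)=\langle S_rE_r\eta_0,\eta_0'\rangle/\langle E_r\eta_0,\eta_0'\rangle$ instead of $\tr(S_rE_r)$, but this is cosmetic). One small slip: with only a $C^2$ hypothesis you should write $o(|r|^2)$ rather than $O(|r|^3)$ in the Taylor expansion; this does not affect your semidefiniteness argument.

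For strict negativity, however, you take a genuinely different route from the paper. Your strategy is structural: carry out second-order Kato perturbation, identify $-Q(v)$ with the asymptotic variance of the Iwasawa cocycle $\omega\mapsto\langle v,H(g^{-1}\omega)\rangle$ with respect to a twisted stationary measure built from $\eta_0,\eta_0'$, and then invoke the Guivarc'h--Raugi/Benoist--Quint cohomological rigidity to rule out degeneracy. This is a valid and well-travelled path, but it imports a substantial black box, and the identification step (pinning down the correct reference measure on $\Omega$ and matching the Green--Kubo type formula coming from $(S_0-\sigma)^{-1}|_{\eta_0^\perp}$ to the cocycle variance) is real work that you gloss over; one should also check that the cohomological argument you cite goes through under only a second-moment assumption. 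The paper instead argues entirely within what has already been set up: it shows that $f_n(t)=\sigma^{-n}\langle S_{tr}^n\eta_0,\eta_0\rangle$ is a positive-definite function of $t$ (a short computation using $\eta_0\geq 0$), hence by Bochner the characteristic function of a random variable $X_n$; a direct differentiation relates $-f_n''(0)=\mathrm{Var}(X_n)$ to $n\sigma^{-1}\xi''(0)$ plus bounded terms; and finally Lemma~\ref{HighFreqeuncyNormLess} (already proved) gives $f_n(t)\to 0$ for $t\neq 0$, so Chebyshev forces $\mathrm{Var}(X_n)\to\infty$ and hence $\xi''(0)<0$. This is more elementary and self-contained, trading your appeal to deep rigidity results for a clever use of Bochner's theorem together with the spectral inequality $\rho(S_r)<\sigma$ for $r\neq 0$.
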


\begin{proof}
	Using the notation of the proof of Proposition~\ref{S_rStrongSpectralGap}, it holds that $\lambda(r) = \frac{\langle S_r E_r \eta_0, \eta_0' \rangle}{\langle E_r \eta_0, \eta_0' \rangle}$ and therefore for $r$ small enough it follows that $r \mapsto \lambda(r)$ is a $C^2$-function. 
	
	For the remainder we follow roughly the proof of Proposition 2.2.7 of \cite{Bougerol1981}. To show that $H_{\lambda,0}$ is negative definite, we fix a non-zero element $r \in \mathfrak{a}^{*}$ and prove that the function $\xi(t) = \lambda(tr)$ has strictly negative second derivative at zero. Consider the function $h_n(t) = \langle D_{tr}^n \eta_0, \eta_0' \rangle$. As $D_{tr}^n = (\mathrm{Id} - E_{tr})D_{tr}^n(\mathrm{Id} - E_{tr})$ it holds that 
	\begin{align*}
		|h_n(t)| &= |\langle D_{tr}^n(\mathrm{Id} - E_{tr})\eta_0, (\mathrm{Id} - E_{tr})^{*} \eta_0' \rangle| \\
		&\leq ||D_{tr}||^n ||(\mathrm{Id} - E_{tr}) \eta_0|| \, ||(\mathrm{Id} - E_{tr})^{*} \eta_0'|| \\
		&\leq ||D_{tr}||^n ||(E_0 - E_{tr}) \eta_0|| \, ||(E_0 - E_{tr})^{*} \eta_0'||  \\ &\leq ||D_{tr}||^n ||(E_0 - E_{tr})||\, ||(E_0^{*} - E_{tr}^{*})|| \ll ||D_{rt}^n|| t^2,
	\end{align*} using Proposition~\ref{S_rStrongSpectralGap} (ii). In particular, using Proposition~\ref{S_rStrongSpectralGap} (i), $\lambda(0)^{-n}|h_n(t)| \ll_{\mu,r} t^2$ for all $n \geq 1$ and small $t$ and therefore $\lambda(0)^{-n}h_n''(0)$ is bounded for all $n \geq 1$ as otherwise Taylor's theorem would yield a contradiction. 
	
	As $\xi(0) = \lambda(0)$ and $\xi'(0) = 0$, it follows that
	\begin{align}\nonumber
		\frac{d^2}{d^2 t}\bigg|_{t = 0} \left( \lambda(0)^{-n} \langle S_{tr}^n \eta_0, \eta_0 \rangle \right) &= 	\frac{d^2}{d^2 t}\bigg|_{t = 0} \left( \lambda(0)^{-n} \xi(t)^n\langle E_{tr} \eta_0, \eta_0 \rangle + \lambda(0)^{-n}h_n(t) \right) \\
		&=n\lambda(0)^{-1}\xi''(0) + \frac{d^2}{d^2 t}\bigg|_{t = 0} \langle E_{tr} \eta_0, \eta_0 \rangle + \lambda(0)^{-n}h_n''(0). \label{DerivativeExpression}
	\end{align} Note that $ \frac{d^2}{d^2 t}|_{t = 0} \langle E_{tr} \eta_0, \eta_0 \rangle$ is also bounded as by Proposition~\ref{S_rStrongSpectralGap},  $| \langle E_{tr} \eta_0, \eta_0 \rangle| \ll_{\mu,r} 1 + t^2$.
	
	We finally consider the functions $f_n(t) = \lambda(0)^{-n}\langle S_{tr}^n \eta_0, \eta_0 \rangle$ for $n \geq 1$. We claim that the function $f_n(t)$ is positive definite. Indeed,  for $t_1, \ldots , t_m \in \R$ and $\alpha_1, \ldots , \alpha_m \in \C$, 
	\begin{align*}
		&\lambda(0)^n \sum_{k, \ell} \alpha_k \overline{\alpha_{\ell}} f_{n}(t_k - t_\ell) = \sum_{k, \ell} \langle S_{(t_k - t_\ell)r}^{n} \alpha_k \eta_0 , \alpha_\ell \eta_0\rangle \\ &= \sum_{k, \ell}\int \alpha_k \overline{\alpha_\ell} e^{-i(t_k - t_\ell)rH(g^{-1}k)} e^{-\delta H(g^{-1}k)} \eta_0(g^{-1}.k) \eta_0(k) \, d\mu(g)d\Haarof{\Omega}(k) \\
		&= \int \bigg|\sum_{k} e^{-i t_k r H(g^{-1}k)}\alpha_k  \bigg|^2 e^{-\delta H(g^{-1}k)} \eta_0(g^{-1}.k) \eta_0(k) \, d\mu(g)d\Haarof{\Omega}(k),
	\end{align*} which is positive as $\eta_0 \geq 0$.
	Therefore by Bochner's theorem and since $f_n(0) = 1$ one may expresses $f_n$ as the Fourier transform of a real valued random variable $X_n$, i.e. $f_n(t) = \int e^{itx} \, d\mu_{X_n}(x)$. Denote by $v_n = -i f_n'(0)$ the expected value of $X_n$ and by $\sigma_n^2 = -f_n''(0)$ its variance. For any given $c > 0$ we notice that $P[|X_n - v_n| < c] \to 0$ as $n \to \infty$ since by Lemma~\ref{HighFreqeuncyNormLess} it holds that $f_n(t) \to 0$ for $t \neq 0$ as $n \to \infty$ and therefore $\mu_n$ weakly converges to the zero measure. Applying Chebyschev's inequality, $$ 1 - \frac{\sigma_n^2}{c^2} \leq 1 - P[|X_n - v_n| \geq c]  = P[|X_n - v_n| < c] \to 0$$ and hence $\sigma_n^2 \geq c^2/2$ for any large enough $n$. Thus $f_n''(0) \to - \infty$ which by \eqref{DerivativeExpression} can only happen if $\xi''(0) < 0$. This concludes the proof.  
\end{proof}

\subsection{The Limit Measure}
\label{ExistenceLimitMeasure}

In this section we establish the claimed properties of the functions $\psi_{\mu,r}$ as stated in \eqref{FourierBacktransform}. A multiple of $\psi_{\mu,0}$ is the limit function of Theorem~\ref{QuasicompactnessImpliesLLT}. 

The main lemma of this section may be viewed as a Lie group analogue of \eqref{RealFourierBackTransform}.

\begin{lemma}\label{MuStationaryMeasure}
	Let $\mu$ and $\delta_0 \in (0,1)$ be as in Proposition~\ref{S_rStrongSpectralGap}. Denote for $|r| \leq \delta_0$ by $\eta_r$ the unique $\lambda(r)$-eigenfunction of $S_r$ with unit norm and by $\eta_r'$ the $S_r^{*}$-eigenfunction with eigenvalue $\overline{\lambda(r)}$ satisfying $\langle \eta_r', \eta_r \rangle = 1$. Then the continuous function 
	\begin{equation}\label{DefPsiMuR}
		\psi_{\mu,r}(g) = \langle \eta_{r},   \rho_{r}(g)\eta_{r}' \rangle
	\end{equation}
	satisfies  $\mu * \psi_{\mu,r} = \psi_{\mu,r} * \mu  = \lambda(r) \psi_{\mu,r}$. Moreover, for any $f\in \Schwartz{X}$ and $h \in G$, \begin{equation}
		\int f  \cdot \rho_G(h) \psi_{\mu,r} \, d\Haarof{G} = \int_{\Omega}  \widehat{f}(r,\omega) (E_r\rho_r(h^{-1})1)(\omega) \, d\Haarof{\Omega}(\omega),
		\label{SymmSpaceIntLimitMeas}
	\end{equation} where $\rho_G$ is the right regular representation of $G$ and we view $f$ as a right $K$-invariant eigenfunction on $G$. 
\end{lemma}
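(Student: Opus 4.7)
The lemma splits into two pieces: the convolution eigenfunction relations and the integral identity \eqref{SymmSpaceIntLimitMeas}. Continuity of $\psi_{\mu,r}$ is immediate from strong continuity of $\rho_r$ together with boundedness of $\eta_r, \eta_r'$, so I would focus on the algebraic identities.

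For $\mu * \psi_{\mu,r} = \lambda(r)\psi_{\mu,r}$, my plan is to expand $\psi_{\mu,r}(h^{-1}g) = \langle \eta_r, \rho_r(h)^*\rho_r(g)\eta_r'\rangle = \langle \rho_r(h)\eta_r, \rho_r(g)\eta_r'\rangle$ using unitarity of $\rho_r$, and then pull $\int d\mu(h)$ into the linear first slot to obtain $\langle S_r \eta_r, \rho_r(g)\eta_r'\rangle$; the eigenvalue equation $S_r \eta_r = \lambda(r)\eta_r$ from Proposition~\ref{S_rStrongSpectralGap} then gives $\lambda(r)\psi_{\mu,r}(g)$. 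For $\psi_{\mu,r}*\mu$, the analogous manipulation moves $h$ into the conjugate-linear second slot instead, producing $\langle \eta_r, \rho_r(g) S_r^* \eta_r'\rangle$; the relation $S_r^*\eta_r' = \overline{\lambda(r)}\eta_r'$ combined with the conjugate from the second slot again restores $\lambda(r)$.

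The integral identity is the main content. The central manoeuvre is right-$K$-averaging: since $f \in \Schwartz{X}$ is right-$K$-invariant and $G$ is unimodular, $\int_G f(g)\psi_{\mu,r}(gh)\,d\Haarof{G}(g) = \int_G f(g)\bigl[\int_K \psi_{\mu,r}(gkh)\,dk\bigr]\,d\Haarof{G}(g)$. Two structural facts cause the inner $K$-integral to collapse: $H$ vanishes on $K$, so for $k \in K$ the operator $\rho_r(k)$ acts on $\Omega = K/M$ simply by translation (independently of $r$), and consequently $\int_K \rho_r(k)\,dk$ is the orthogonal projection onto the constants, i.e.\ $\varphi \mapsto \langle\varphi, 1\rangle \cdot 1$. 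Inserting this projection between $\rho_r(g)$ and $\rho_r(h)$ in the expanded matrix coefficient factorizes the inner integral as $\langle 1, \rho_r(h)\eta_r'\rangle\cdot\langle\eta_r, \rho_r(g)1\rangle$, with the scalar $\langle 1, \rho_r(h)\eta_r'\rangle$ emerging from conjugate-linearity in the second slot.

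Finally I would reconcile the two sides. Integrating the factored form against $f$ gives the LHS as $\langle 1,\rho_r(h)\eta_r'\rangle\cdot\langle\eta_r,\rho_r(\bar f)1\rangle$, and the definition $\widehat{f}(r,\omega) = (\rho_{-r}(f)1)(\omega)$ immediately implies $\rho_r(\bar f) 1 = \overline{\widehat{f}(r,\cdot)}$, converting this into $\langle 1,\rho_r(h)\eta_r'\rangle\cdot\int_\Omega \widehat{f}(r,\omega)\eta_r(\omega)\,d\Haarof{\Omega}(\omega)$. For the RHS, the rank-one formula $E_r\varphi = \langle \varphi,\eta_r'\rangle\eta_r$ from Proposition~\ref{S_rStrongSpectralGap}(iii) pulls the scalar $\langle\rho_r(h^{-1})1,\eta_r'\rangle$ outside the integral, and unitarity gives $\langle\rho_r(h^{-1})1,\eta_r'\rangle = \langle 1,\rho_r(h)\eta_r'\rangle$, so the two expressions agree. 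No individual step is genuinely difficult; the main source of friction is keeping the sesquilinearity straight through the $K$-averaging, and the key structural inputs are $\Omega = K/M$ (so $K$-invariants are constants) together with $H|_K \equiv 0$ (so the $K$-action in $\rho_r$ is independent of $r$).
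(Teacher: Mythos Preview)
Your proposal is correct and matches the paper's approach in every essential respect: the paper also proves the convolution identities by pushing the $\mu$-integral into the first (respectively second) slot of the matrix coefficient, and establishes \eqref{SymmSpaceIntLimitMeas} by inserting $\rho_r(\Haarof{K})$ --- exactly your right-$K$-average --- between $\rho_r(f)$ and $\rho_r(h)$, using that this operator projects onto the constants to factorize, and then invoking the rank-one formula $E_r\varphi = \langle\varphi,\eta_r'\rangle\eta_r$ from Proposition~\ref{S_rStrongSpectralGap}(iii). The only difference is cosmetic: the paper writes the $K$-averaging in compact operator notation (using $f = f * \Haarof{K}$) rather than as an explicit change of variables.
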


\begin{proof}
	The relation \eqref{SymmSpaceIntLimitMeas} follows as for $f \in \Schwartz{X}$ and $h \in G$,
	\begin{align*}
		\int f  \cdot \rho_G(h)\psi_{\mu,r} \, d\Haarof{G}  &= \langle \eta_{r}, \rho_{r}(f) \rho_{r}(h)\eta_{r}'  \rangle \\
		&= \langle \eta_{r},  \rho_{r}(f) \rho_{r}(\Haarof{K}) \rho_{r}(h)\eta_{r}' \rangle \\
		&= \langle \eta_{r},  \rho_{r}(f) \langle\eta_{r}',\rho_{r}(h^{-1})1 \rangle 1 \rangle \\
		&= \langle \langle \rho_{r}(h^{-1}) 1,\eta_{r}' \rangle\eta_{r},  \rho_{r}(f) 1 \rangle \\
		&=\int_{\Omega} \widehat{f}(r,\omega)(E_{r}\rho_{r}(h^{-1}) 1)(\omega) \, d\Haarof{\Omega}(\omega),
	\end{align*}
	having used in the last line that $\widehat{f}(r,k) = \rho_{-r}(f)(1) = \overline{\rho_r(f)(1)}.$
	
	To show that $\mu * \psi_{\mu,r} = \lambda(r) \psi_{\mu,r}$, we calculate for $g \in G$
	\begin{align*}
		(\mu  * \psi_{\mu,r})(g)  &= \int \psi_{\mu,r}(h^{-1}g) \, d\mu(h) \\ &= \langle \eta_{r},  S_{r}^* \rho_{r}(g) \eta_{r}' \rangle \\&= \langle S_{r}\eta_{r},  \rho_{r}(g) \eta_{r}' \rangle= \lambda(r) \psi_{\mu,r}(g).
	\end{align*} A similar argument shows that $\psi_{\mu,r} * \mu = \lambda(r) \psi_{\mu,r}$.
\end{proof}

For later reference we show the following lemma.

\begin{lemma}\label{PsimurEstimate}
	Let $\mu$ be a non-degenerate probability measure on $G$ with finite second moment and assume that $S_0$ is quasicompact. Denote by $\delta_0$ the constant obtained from Proposition~\ref{S_rStrongSpectralGap}. Then for $|r| \leq \delta_0$ with $\delta_0$ small enough, and $g \in G$,
	\begin{equation}\label{PsimuDistance}
		|\psi_{\mu,r}(g) - \psi_{\mu,0}(g)| \ll |r|(1 + ||g||).
	\end{equation}
	Moreover, for $|r| \leq \delta_0$  and $g \in G$,
	\begin{equation}\label{PsimuTrickDistance}
		\bigg|\frac{\psi_{\mu,r}(g) +\psi_{\mu,-r}(g) }{2} - \psi_{\mu,0}(g)\bigg| \ll |r|^2(1 + ||g||^2).
	\end{equation} 
\end{lemma}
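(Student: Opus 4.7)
The plan is to telescope $\psi_{\mu,r}(g) - \psi_{\mu,0}(g)$ as a sum of three pieces that isolate the dependence on $r$:
\[
\psi_{\mu,r}(g) - \psi_{\mu,0}(g) = A_1(r) + A_2(r) + A_3(r),
\]
where $A_1(r) = \langle \eta_r - \eta_0, \rho_r(g)\eta_r'\rangle$, $A_2(r) = \langle \eta_0, \rho_r(g)(\eta_r' - \eta_0')\rangle$ and $A_3(r) = \langle \eta_0, (\rho_r(g) - \rho_0(g))\eta_0'\rangle$. Since $\rho_r$ is unitary, Cauchy--Schwarz together with Proposition~\ref{S_rStrongSpectralGap}(iv) immediately yields $|A_1(r)|,|A_2(r)| \ll_\mu |r|^2$, with implied constants independent of $g$.

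For $A_3(r)$ I would use the pointwise identity
\[
\bigl((\rho_r(g) - \rho_0(g))\eta_0'\bigr)(\omega) = e^{-\delta H(g^{-1}\omega)}\bigl(e^{-irH(g^{-1}\omega)}-1\bigr)\eta_0'(g^{-1}\omega).
\]
The bound $|e^{ix}-1|\leq |x|$ combined with \eqref{Hkappainequality}, the subadditivity of $\kappa$ under products, and $\kappa(k) = 0$ for $k \in K$ gives $|H(g^{-1}k)| \leq \|g^{-1}k\| \leq \|g\|$. Pairing with $\eta_0$ and using unitarity of $\rho_0$ then produces $|A_3(r)| \ll |r|\,\|g\|$. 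Combining the three estimates, and using $|r|^2 \leq |r|$ for $|r|\leq \delta_0 < 1$, proves \eqref{PsimuDistance}.

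For the refined bound \eqref{PsimuTrickDistance} I would apply the same telescoping decomposition to both $r$ and $-r$ and sum. The $A_1(\pm r)$ and $A_2(\pm r)$ contributions are each $O(|r|^2)$, hence absorbed into $|r|^2(1+\|g\|^2)$, so the improvement must come entirely from $A_3$. The key observation is that
\[
A_3(r) + A_3(-r) = 2\int_{\Omega} \eta_0(\omega)\,e^{-\delta H(g^{-1}\omega)}\bigl(\cos(rH(g^{-1}\omega))-1\bigr)\eta_0'(g^{-1}\omega)\,d\Haarof{\Omega}(\omega),
\]
and the quadratic estimate $|\cos x - 1|\leq x^2/2$ together with $|H(g^{-1}\omega)|\leq \|g\|$ and $\langle\eta_0,\rho_0(g)\eta_0'\rangle \leq \|\eta_0\|\|\eta_0'\|$ yields $|A_3(r)+A_3(-r)| \ll |r|^2\|g\|^2$, which after dividing by $2$ gives \eqref{PsimuTrickDistance}.

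The main obstacle is precisely this last step: a naive triangle inequality would only bound $|\tfrac12(\psi_{\mu,r}+\psi_{\mu,-r})-\psi_{\mu,0}|$ by $|\psi_{\mu,r}-\psi_{\mu,0}|+|\psi_{\mu,-r}-\psi_{\mu,0}|\ll |r|(1+\|g\|)$, which is too weak by a factor of $|r|(1+\|g\|)$. Capturing the symmetric cancellation genuinely requires recognising that $e^{irH}+e^{-irH}-2 = 2(\cos(rH)-1)$ vanishes to second order, and is the reason the statement of the lemma insists on the averaged combination rather than the individual differences.
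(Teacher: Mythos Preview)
Your proposal is correct and follows essentially the same approach as the paper. The only cosmetic difference is that the paper first rewrites $\langle \eta_r,\rho_r(g)\eta_r'\rangle = \langle \rho_r(g^{-1})\eta_r,\eta_r'\rangle$ before telescoping, so that the operator difference $\rho_r(g^{-1})-\rho_0(g^{-1})$ lands on $\eta_0$ rather than on $\eta_0'$; both versions conclude via unitarity of $\rho_0$ and the same pointwise estimate, and the $\cos(rH)-1$ cancellation for \eqref{PsimuTrickDistance} is handled identically.
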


\begin{proof}
	Observe that
	\begin{align*}
		|\psi_{\mu,r}(g) - \psi_{\mu,0}(g)| &=  |\langle \eta_{r},   \rho_{r}(g)\eta_{r}' \rangle - \langle \eta_{0},   \rho_{0}(g)\eta_{0}' \rangle | \\ &= |\langle \rho_{r}(g^{-1})\eta_{r},   \eta_{r}' \rangle - \langle \rho_{0}(g^{-1})\eta_{0},   \eta_{0}' \rangle |   \\&\leq  |\langle \rho_r(g^{-1})\eta_r, \eta_r' - \eta_0' \rangle| + |\langle \rho_r(g^{-1})\eta_r - \rho_0(g^{-1})\eta_0, \eta_0' \rangle | \\ &\ll_{\mu} ||\eta_r' - \eta_0'||_2 + ||(\rho_r(g^{-1}) - \rho_0(g^{-1})) \eta_0||_2.
	\end{align*}
	Thus in order to prove \eqref{PsimuDistance}, by using Proposition~\ref{S_rStrongSpectralGap} (iii) it suffices to deal with $||(\rho_r(g^{-1}) - \rho_0(g^{-1})) \eta_0||_2$. One calculates that for $g \in G$ and $\omega \in \Omega$,
	\begin{align}
		|(\rho_r(g^{-1}) - \rho_0(g^{-1})) \eta_0(\omega)| &= |(e^{-irH(g\omega)} - 1)|  |e^{-\delta H(g\omega)} \eta_0(g\omega)| \nonumber \\
		&\ll |r| \, ||g|| \, |e^{-\delta H(g\omega)} \eta_0(g\omega)|. \label{rho_rDistanceCoreEstimate}
	\end{align} Equation \eqref{PsimuDistance} therefore follows by squaring the latter term, integrating over $\Omega$ and using that $||\rho_0(g) \eta_0||_2 = ||\eta_0||_2 = 1$. For \eqref{PsimuTrickDistance} one performs the same calculation and notices that $$\bigg|\left(\frac{\rho_r(g) + \rho_{-r}(g)}{2} - \rho_0(g)\right) \eta_0(\omega)\bigg| = |(\cos(rH(g^{-1}\omega)) - 1)|  |e^{-\delta H(g^{-1}\omega)} \eta_0(g^{-1}\omega)|.$$ Then \eqref{PsimuTrickDistance} follows by using that $|(\cos(rH(g^{-1}\omega)) - 1)| \ll |r|^2 ||g||^2$.
\end{proof}

\subsection{High Frequency Estimate}\label{SectionHighFrequency}

For a Schwartz function $f \in \Schwartz{X}$, we say that the Fourier transform $\widehat{f}: \mathfrak{a} \times K \to \C$ has compact support if there is $R \geq 0$ such that $\widehat{f}(r,\omega) = 0$ for $r \geq R$ and all $\omega \in \Omega$. In this section with make no notational difference between a function $f \in \Schwartz{X}$ and its $G$-lift. We first prove a preliminary lemma on the Fourier transform.

\begin{lemma}\label{FourierL1Bound}
	For $f \in \Schwartz{X}$, $$||\widehat{f}(r,\cdot)||_{L^2(\Omega)} \leq ||f||_1$$
\end{lemma}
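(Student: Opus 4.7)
My plan is to unwind the definition of $\widehat{f}(r,\omega)$ and then invoke Minkowski's integral inequality together with unitarity of $\rho_{-r}$.

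Recall from the definition right after \eqref{FourierInversionSymmetricSpace} that
\[
\widehat{f}(r,\omega) = (\rho_{-r}(f)1)(\omega) = \int_{G} f(g)\bigl(\rho_{-r}(g)1\bigr)(\omega) \, d\Haarof{G}(g).
\]
So I would first take $L^{2}(\Omega)$ norms in $\omega$ and apply Minkowski's integral inequality to bring the norm inside the integral over $G$:
\[
\|\widehat{f}(r,\cdot)\|_{L^{2}(\Omega)} \le \int_{G} |f(g)| \, \|\rho_{-r}(g)1\|_{L^{2}(\Omega)} \, d\Haarof{G}(g).
\]

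Then the key point is that $\rho_{-r}$ is a unitary representation on $L^{2}(\Omega)$ (see \eqref{PrincipalSeries}), so $\|\rho_{-r}(g)1\|_{L^{2}(\Omega)} = \|1\|_{L^{2}(\Omega)} = 1$ because $\Haarof{\Omega}$ is a probability measure (it is the pushforward of $\Haarof{K}$, as noted in Section~\ref{Notations}). Substituting this back gives the desired bound $\|\widehat{f}(r,\cdot)\|_{L^{2}(\Omega)} \le \|f\|_{1}$.

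There is no real obstacle here: the statement is essentially the standard Hausdorff--Young type bound in the degenerate endpoint $L^{1} \to L^{\infty}$ exported through the orthogonality of $\Omega$, and the only ingredient needed beyond Minkowski is the unitarity of the principal series, which is built into \eqref{PrincipalSeries}. The proof does not use that $f \in \Schwartz{X}$ beyond absolute integrability of $f$, which justifies applying Minkowski's integral inequality (or equivalently Fubini after squaring and Cauchy--Schwarz against an $L^{2}$ test function on $\Omega$).
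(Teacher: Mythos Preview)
Your proof is correct and in fact more direct than the paper's. The paper argues pointwise: it bounds $|\widehat f(r,\omega)|^2$ by replacing $|(\rho_{-r}(g)1)(\omega)|$ with $|(\rho_0(g)1)(\omega)|$, normalizes $|f|$ to a probability density, applies Jensen's inequality to pull the square inside, and then recognizes $(\rho_0(g)1)(\omega)^2 = \frac{d(\alpha_g)_*\Haarof{\Omega}}{d\Haarof{\Omega}}(\omega)$ so that integrating over $\Omega$ gives $1$. Your route via Minkowski's integral inequality and unitarity of $\rho_{-r}$ bypasses all of this: unitarity already packages the Radon--Nikodym identity and the $\Omega$-integration into the single statement $\|\rho_{-r}(g)1\|_{L^2(\Omega)}=1$. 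Both arguments ultimately rest on the same fact (the Jacobian relation \eqref{GactiononKRN}), but yours invokes it at the level of the representation rather than unpacking it by hand, which is cleaner and makes the estimate transparent as the trivial $L^1\to L^\infty_r L^2_\omega$ bound for an operator-valued Fourier transform.
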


\begin{proof}
	We calculate for $r\in \mathfrak{a}$ and $\omega \in \Omega$ that
	\begin{align*}
		|\widehat{f}(r,\omega)|^2 &= \bigg| \int_G f(g) (\rho_{-r}(g)1)(\omega) \, d\Haarof{G}(g) \bigg|^2 \\ &\leq \bigg| \int_G |f(g)| \, |(\rho_{-r}(g)1)(\omega)| \, d\Haarof{G}(g)  \bigg|^2  \\
		&\leq \bigg| \int_G |f(g)| \, |(\rho_0(g)1)(\omega)| \, d\Haarof{G}(g)  \bigg|^2. 
	\end{align*}
	Set $f_1  = \frac{|f|}{||f||_1}$ so that it follows that $$|\widehat{f}(r,\omega)|^2 \leq ||f||_1^2 \cdot  \bigg|  \int_G  (\rho_0(g)1)(\omega)   \, f_1(g) \, d\Haarof{G}(g)   \bigg|^2.$$ Recall that if $X$ is a random variable on a probability space then by Jensen's inequality $E[X]^2 \leq E[X^2]$. By construction $f_1 d\Haarof{G}$ is a probability measure and hence it follows that 
	\begin{align*}
		|\widehat{f}(r,\omega)|^2 &\leq ||f||_1^2 \int_G (\rho_0(g)1)(\omega)^2 \,  f_1(g)\, d\Haarof{G}(g) \\
		&\leq ||f||_1^2 \int_G \frac{d(\alpha_g)_*\Haarof{\Omega}}{d\Haarof{\Omega}}(\omega) \,  f_1(g)\, d\Haarof{G}(g)
	\end{align*}
	Thus we conclude that 
	\begin{align*}
		||\widehat{f}(r,\cdot)||_2^2 &\leq ||f||_1^2 \int_G \left( \int_{\Omega} \frac{d(\alpha_g)_*\Haarof{\Omega}}{d\Haarof{\Omega}}(\omega) \, d\Haarof{\Omega}(\omega) \right) \, f_1(g) \, d\Haarof{G}(g) \\
		&\leq ||f||_1^2.
	\end{align*}
\end{proof}

\begin{lemma}\label{HighFrequencyFourierCompactlySupported}
	Let $\mu$ be a non-degenerate probability measure on $G$ assume that $S_0$ is quasicompact and let $\delta_0 \in (0,1)$ be the constant from Proposition~\ref{S_rStrongSpectralGap}. Let $R \geq 1$ and let $f\in \Schwartz{X}$ be a Schwartz function whose Fourier transform satisfies $\widehat{f}(r,\omega) = 0$ for all $|r| \geq R$ and $\omega \in \Omega$. Then there is $c_R = c_R(\mu) >0$ depending on $\mu$ and $R$ such that for $n \geq 1$, $$\bigg|\frac{n^{\ell/2}}{\sigma^n} \int_{|r| \geq \delta_0} \int_{\Omega} \widehat{f}(r, \omega) \, (S_r^n\rho_r(h_0)1)(\omega) \, d\Haarof{\Omega}(\omega)d\SphericalPlancharelMeasure(r)\bigg| \ll_{\mu} R^{\dim X} e^{-c_R n}||f||_1.$$
\end{lemma}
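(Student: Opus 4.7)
The plan is to bound the integrand pointwise by combining a Cauchy--Schwarz estimate on $\Omega$ with the uniform spectral gap \eqref{SrPowerOperatorNormEstimate} from Lemma~\ref{HighFreqeuncyNormLess} on the compact annulus $\{\delta_0 \leq |r| \leq R\}$, and then absorb the polynomial factors using the spherical Plancherel measure estimate on a ball of radius $R$. Since $\widehat{f}(r,\omega)=0$ for $|r|\geq R$, only the range $\delta_0\leq |r|\leq R$ contributes, so I may restrict the $r$-integration to this compact set.

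Fix $r$ in this range. Because $\rho_r$ is unitary, $\|\rho_r(h_0)1\|_{L^2(\Omega)} = 1$, so Cauchy--Schwarz on $\Omega$ and Lemma~\ref{FourierL1Bound} yield
\begin{equation*}
\Bigl|\int_\Omega \widehat f(r,\omega)\,(S_r^n\rho_r(h_0)1)(\omega)\,d\Haarof{\Omega}(\omega)\Bigr| \leq \|\widehat f(r,\cdot)\|_{L^2(\Omega)}\,\|S_r^n\| \leq \|f\|_1\,\|S_r^n\|.
\end{equation*}
Next, by \eqref{SrPowerOperatorNormEstimate} applied with $c_1 = \delta_0$ and $c_2 = R$, there exist $n_0 = n_0(\mu,R)$ and $\kappa = \kappa(\mu,R) > 0$ such that for all $n \geq n_0$,
\begin{equation*}
\sup_{\delta_0 \leq |r| \leq R} \|S_r^n\|^{1/n} \leq \sigma\,e^{-2\kappa}, \qquad \text{i.e.}\qquad \|S_r^n\| \leq \sigma^n e^{-2\kappa n}.
\end{equation*}

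Inserting these bounds and integrating in $r$ against $d\SphericalPlancharelMeasure(r) = |c(r)|^{-2}\,d\Haarof{\mathfrak{a}^*}(r)$, I obtain for $n\geq n_0$
\begin{equation*}
\Bigl|\tfrac{n^{\ell/2}}{\sigma^n}\int_{\delta_0 \leq |r| \leq R}\!\!\int_\Omega \widehat f(r,\omega)(S_r^n\rho_r(h_0)1)(\omega)\,d\Haarof{\Omega}(\omega)\,d\SphericalPlancharelMeasure(r)\Bigr| \leq n^{\ell/2} e^{-2\kappa n}\|f\|_1 \int_{|r|\leq R}\!\! d\SphericalPlancharelMeasure(r).
\end{equation*}
The standard asymptotic $|c(r)|^{-2}\ll (1+|r|)^{\dim X - d}$ gives $\int_{|r|\leq R}d\SphericalPlancharelMeasure(r) \ll R^{\dim X}$. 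Choosing $c_R = \kappa$, the polynomial $n^{\ell/2}$ is absorbed into $e^{-\kappa n}$ (after possibly shrinking $\kappa$ slightly and enlarging $n_0$), yielding the claimed inequality for $n\geq n_0$. For $1\leq n < n_0$, the left-hand side is trivially bounded by a constant multiple of $\|f\|_1 R^{\dim X}$ (using the same Cauchy--Schwarz step with $\|S_r^n\|\leq \sigma^n$), and this is absorbed by enlarging the implicit constant since $e^{-c_R n}$ is bounded below on $[1,n_0]$.

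The main obstacle is ensuring that $c_R$ and the implied constant can be taken uniformly in $f$ (not just $\mu,R$); this is handled since the spectral gap \eqref{SrPowerOperatorNormEstimate} is uniform on $\{\delta_0 \leq |r|\leq R\}$ and all other quantities depend only on $\mu$ and the Plancherel density, and since the restriction of $\widehat f$ enters only through the $\|f\|_1$ bound of Lemma~\ref{FourierL1Bound}.
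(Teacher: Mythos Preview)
Your argument is correct and follows essentially the same route as the paper: restrict to the compact annulus $\{\delta_0\le|r|\le R\}$, apply Cauchy--Schwarz on $\Omega$ together with Lemma~\ref{FourierL1Bound}, invoke the uniform bound \eqref{SrPowerOperatorNormEstimate} from Lemma~\ref{HighFreqeuncyNormLess} to control $\sup\|S_r^n\|$, and finish with the Plancherel density estimate $|c(r)|^{-2}\ll 1+|r|^{\dim N}$ to produce the factor $R^{\dim X}$. Your explicit handling of the range $1\le n<n_0$ via the trivial bound is a small clarification the paper leaves implicit.
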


\begin{proof}
	Choose $R$ such that $\widehat{f}(r, \omega) = 0$ for $r \geq R$ and $\omega \in \Omega$. Then using Cauchy-Schwarz and Lemma~\ref{HighFreqeuncyNormLess},
	\begin{align*}
		&\bigg|\frac{n^{\ell/2}}{\sigma^n} \int_{\delta_0 \leq |r| \leq R} \int_{\Omega} \widehat{f}(r, \omega) \, (S_r^n\rho_r(h_0)1)(\omega) \, d\Haarof{\Omega}(\omega)d\SphericalPlancharelMeasure(r)\bigg| \\
		&\leq \frac{n^{\ell/2}}{\sigma^n} \int_{\delta_0 \leq |r| \leq R} ||\widehat{f}(r, \cdot)||_{L^2(\Omega)} ||S_r^n\rho_r(h_0)1||_2 \, d\SphericalPlancharelMeasure(r) \\
		&\leq \frac{n^{\ell/2}}{\sigma^n} \sup_{\delta_0 \leq |r| \leq R} ||S_r^n|| \int_{1 \leq |r| \leq R} ||\widehat{f}(r, \cdot)||_{L^2(\Omega)} \, d\SphericalPlancharelMeasure(r) \\
		&\leq e^{-c_Rn} \int_{\delta_0 \leq |r| \leq R} ||\widehat{f}(r, \cdot)||_{L^2(\Omega)} \, d\SphericalPlancharelMeasure(r)  \\
		&\ll_{\mu} e^{-c_R n}||f||_1  \int_{|r| \leq R} |c(r)|^{-2} \, d\Haarof{\mathfrak{a}^{*}}(r) \\
		&\ll_{\mu} e^{-c_R n}||f||_1  \int_{|r| \leq R} (1 + |r|^{\dim N}) \, d\Haarof{\mathfrak{a}^{*}}(r) \ll_{\mu} R^{\dim X} e^{-c_R n}||f||_1,
	\end{align*} using \eqref{SrPowerOperatorNormEstimate} in order to choose a constant $c_R > 0$ depending on $\mu$ and $R$ such that $\left( \frac{n^{\ell/2}}{\sigma^n} \sup_{\delta_0 \leq |r| \leq R} ||S_r^n||\right) \leq e^{-c_Rn}$ for $n$ large enough and Proposition 7.2 of chapter IV in \cite{HelgasonGroupsBook}, asserting that $|c(r)|^{-2} \ll 1 + |r|^{\dim N}$ for any $r \in \mathfrak{a}^{*}$
\end{proof}

Towards proving Theorem~\ref{StrongLCLT}, we strengthen Lemma~\ref{HighFrequencyFourierCompactlySupported} under strong assumptions on $||S_r||$.

\begin{lemma}\label{HighFrequencyUnderS_rStrong}
	Let $\mu$ be a non-degenerate probability measure on $G$. Assume that $S_0$ is quasicompact and that $\left(\sup_{|r| \geq 1} ||S_r||\right) < ||S_0||.$ Let $\delta_0$ be the constant from Proposition~\ref{S_rStrongSpectralGap}. Then for $f \in \Schwartz{X}$, $s > \frac{1}{2}\dim X$  and $n \geq 1$,
	$$\bigg|\frac{n^{\ell/2}}{\sigma^n} \int_{|r| \geq \delta_0} \int_{\Omega} \widehat{f}(r, \omega) \, (S_r^n\rho_r(h_0)1)(\omega) \, d\Haarof{\Omega}(\omega)d\SphericalPlancharelMeasure(r)\bigg| \ll_{\mu,s} e^{-cn}||f||_{H^s}.$$
\end{lemma}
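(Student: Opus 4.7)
The plan is to split $\{|r|\geq \delta_0\}$ into the compact annulus $\{\delta_0 \leq |r| \leq 1\}$ and the unbounded tail $\{|r|\geq 1\}$, establish a uniform exponential decay of $\sigma^{-n}||S_r^n||$ on both pieces, and then conclude by a single Cauchy--Schwarz step tuned against the weight that defines $||\cdot||_{H^s}$.

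To obtain the uniform decay, I would invoke two inputs of different nature. On the annulus $\{\delta_0 \leq |r| \leq 1\}$, estimate \eqref{SrPowerOperatorNormEstimate} of Lemma~\ref{HighFreqeuncyNormLess} produces some $n_0\geq 1$ and $c_1>0$ with $\sup_{\delta_0 \leq |r| \leq 1}||S_r^{n_0}||^{1/n_0} < (1-c_1)\sigma$, and the iteration estimate \eqref{highkcontrolled} applied to $T = S_r/\sigma$ then upgrades this to $\sup_{\delta_0 \leq |r| \leq 1}||S_r^n|| \leq e^{-cn}\sigma^n$ for some $c=c(\mu)>0$ and all sufficiently large $n$. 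On the tail $\{|r|\geq 1\}$, the hypothesis \eqref{StrongLCLTAssumption} directly yields $\sup_{|r|\geq 1}||S_r^n|| \leq (1-c_2)^n\sigma^n$. Taking the weaker of the two constants and absorbing the polynomial factor $n^{\ell/2}$ into a slightly smaller exponent, I get a single $c=c(\mu)>0$ with
\begin{equation*}
    \frac{n^{\ell/2}}{\sigma^n}\sup_{|r|\geq \delta_0} ||S_r^n|| \;\leq\; e^{-cn} \qquad (n\geq 1).
\end{equation*}

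The bound then follows from one Cauchy--Schwarz step. Using $||\rho_r(h_0)1||_{L^2(\Omega)}=1$ from unitarity of $\rho_r$, and pairing against the weight $(1+|r|^2)^s$ that appears in the definition \eqref{SobolevSpaceSymmetricSpace} of $||\cdot||_{H^s}$, one obtains
\begin{align*}
    &\bigg|\frac{n^{\ell/2}}{\sigma^n}\int_{|r|\geq \delta_0}\int_{\Omega}\widehat{f}(r,\omega)\,(S_r^n\rho_r(h_0)1)(\omega)\, d\Haarof{\Omega}(\omega)\,d\SphericalPlancharelMeasure(r)\bigg| \\
    &\leq e^{-cn}\int_{|r|\geq \delta_0} ||\widehat{f}(r,\cdot)||_{L^2(\Omega)}\, d\SphericalPlancharelMeasure(r) \\
    &\leq e^{-cn}\,||f||_{H^s}\left(\int_{|r|\geq \delta_0} (1+|r|^2)^{-s}\, d\SphericalPlancharelMeasure(r)\right)^{1/2}.
\end{align*}

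It remains to check that the weight integral converges precisely under the stated Sobolev exponent. Combining the classical bound $|c(r)|^{-2}\ll 1+|r|^{\dim N}$ (Proposition~7.2 of Chapter~IV in \cite{HelgasonGroupsBook}) with the identity $\dim\mathfrak{a}^{*} + \dim N = \dim X$, polar coordinates in $\mathfrak{a}^{*}$ reduce the weight integral to one controlled by $\int_{\delta_0}^{\infty} t^{\dim X - 1}(1+t^2)^{-s}\, dt$, which converges exactly when $s > \tfrac{1}{2}\dim X$. The only mildly delicate point in the whole argument is the merging of the two operator-norm regimes into a single exponential rate valid for every $n$ (not only along a subsequence); this is handled by the iteration trick \eqref{highkcontrolled} already used in the proof of Lemma~\ref{HighFreqeuncyNormLess}, so no further analytic input is needed.
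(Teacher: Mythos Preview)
Your proof is correct and follows essentially the same approach as the paper: both obtain a uniform bound $\frac{n^{\ell/2}}{\sigma^n}\sup_{|r|\geq\delta_0}||S_r^n||\leq e^{-cn}$ (the paper leaves the splitting into $\{\delta_0\leq|r|\leq 1\}$ and $\{|r|\geq 1\}$ implicit, while you spell it out), and then apply Cauchy--Schwarz in $r$ against a Sobolev weight, checking convergence via the Plancherel density estimate $|c(r)|^{-2}\ll 1+|r|^{\dim N}$. The only cosmetic difference is that the paper uses the weight $|r|^{-s}$ rather than $(1+|r|^2)^{-s/2}$, which is equivalent on $\{|r|\geq\delta_0\}$.
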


\begin{proof}
	The left hand side of the claimed equation is bounded by
	\begin{align*}
		&\leq \frac{n^{\ell/2}}{\sigma^n} \int_{|r| \geq \delta_0} ||\widehat{f}(r, \cdot)||_{L^2(\Omega)} ||S_r^n\rho_r(h_0)1||_2 \, d\SphericalPlancharelMeasure(r) \\
		&\leq e^{-cn} \int_{|r| \geq \delta_0} ||\widehat{f}(r, \cdot)||_{L^2(\Omega)} |r|^s |r|^{-s} \, d\SphericalPlancharelMeasure(r) \\
		&\leq  e^{-cn}\sqrt{\int_{|r| \geq \delta_0} |r|^{-2s} \, d\SphericalPlancharelMeasure(r)}  \sqrt{\int_{|r| \geq \delta_0} ||\widehat{f}(r, \cdot)||_{L^2(\Omega)}^2 |r|^{2s}  \, d\SphericalPlancharelMeasure(r)} \\ &\ll_{\delta_0,s} e^{-cn}||f||_{H^s},
	\end{align*} for $n$ large enough and choosing $s$ sufficiently large such that $\int_{|r| \geq 1} |r|^{-2s} \, d\SphericalPlancharelMeasure(r)$ is bounded. Indeed, by Proposition 7.2 of chapter IV in \cite{HelgasonGroupsBook}, it holds that $|c(r)|^{-2} \ll 1 + |r|^{\dim N}$ for any $r \in \mathfrak{a}^{*}$ and therefore $|c(r)|^{-2} \ll_{\delta_0} |r|^{\dim N}$ for $|r| \geq \delta_0$. Thus $\int_{|r| \geq \delta_0} |r|^{-2s} \, d\SphericalPlancharelMeasure(r) \ll_{\delta_0} \int_{|r| \geq \delta_0} |r|^{\dim N - 2s} \, d\Haarof{\mathfrak{a}^{*}}(r)$ and the latter term is $< \infty$ whenever $\dim N - 2s < -\dim A$. 
\end{proof}

\subsection{Low Frequency Estimate}\label{SectionLowFrequency}

Throughout this section we assume that $S_0$ is quasicompact and denote by $\delta_0 \in (0,1)$ the constant from Proposition~\ref{S_rStrongSpectralGap}. In this section we deal with the some preliminary estimates for the frequency range $|r|\leq \delta_0$. We recall that by Proposition~\ref{S_rStrongSpectralGap} for $|r| \leq \delta_0$ we have a decomposition $$S_r = \lambda(r)E_r + D_r,$$ where $E_r$ and $D_r$ satisfy the properties of Definition~\ref{StrongSpectralGapDef}. We first show that we can ignore the contribution of $D_r$.

\begin{lemma}
	Let $\mu$ be a non-degenerate probability measure on $G$ and assume that $S_0$ is quasicompact. There exists a constant $c > 0$ depending on $\mu$ such that for all $f\in \Schwartz{X}$ and $h_0 \in G$,  
	$$\bigg|\frac{n^{\ell/2}}{\sigma^n}\int_{|r|\leq \delta_0} \int_{\Omega}  \widehat{f}(r,\omega) (D_{r}^n \rho_r(h_0) 1)(\omega) \, d\Haarof{\Omega}(\omega)d\SphericalPlancharelMeasure(r)\bigg| \ll ||f||_{1} e^{-cn}.$$
	\label{LowFreqProj}
\end{lemma}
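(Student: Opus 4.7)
The plan is to reduce the claim, via Cauchy-Schwarz on the inner integral over $\Omega$, to a product of $||\widehat{f}(r,\cdot)||_{L^2(\Omega)}$ (which is bounded by $||f||_1$ via Lemma~\ref{FourierL1Bound}) and the operator norm of $D_r^n$ (which, by Proposition~\ref{S_rStrongSpectralGap}(i), is uniformly geometrically small relative to $\sigma^n$ for $|r| \leq \delta_0$). The polynomial prefactor $n^{\ell/2}$ is absorbed into any slightly weaker geometric rate.

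More precisely, first I would write
\[
\bigg|\int_{\Omega}\widehat{f}(r,\omega)(D_r^n\rho_r(h_0)1)(\omega)\,d\Haarof{\Omega}(\omega)\bigg|
\leq ||\widehat{f}(r,\cdot)||_{L^2(\Omega)}\,||D_r^n\rho_r(h_0)1||_2.
\]
Since $\rho_r$ is unitary and the constant function $1$ has $||1||_{L^2(\Omega)}=1$, I have $||\rho_r(h_0)1||_2=1$, hence $||D_r^n\rho_r(h_0)1||_2 \leq ||D_r||^n$. Applying Proposition~\ref{S_rStrongSpectralGap}(i), there exists $c=c(\mu)>0$ such that $||D_r||^n \leq (1-c)^n\sigma^n$ uniformly in $|r|\leq \delta_0$, and Lemma~\ref{FourierL1Bound} gives $||\widehat{f}(r,\cdot)||_{L^2(\Omega)}\leq ||f||_1$.

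Combining these estimates and integrating over the \emph{compact} region $\{|r|\leq \delta_0\}\subset\mathfrak{a}^{*}$, I obtain
\[
\bigg|\frac{n^{\ell/2}}{\sigma^n}\int_{|r|\leq\delta_0}\int_{\Omega}\widehat{f}(r,\omega)(D_r^n\rho_r(h_0)1)(\omega)\,d\Haarof{\Omega}(\omega)\,d\SphericalPlancharelMeasure(r)\bigg|
\leq n^{\ell/2}(1-c)^n ||f||_1 \cdot \SphericalPlancharelMeasure(\{|r|\leq \delta_0\}).
\]
The last factor is a finite constant depending only on $\delta_0$ (and hence on $\mu$) since $|c(r)|^{-2}\ll 1+|r|^{\dim N}$ is locally integrable. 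Finally, choosing any $0<c'<-\log(1-c)$, the factor $n^{\ell/2}(1-c)^n$ is absorbed into $e^{-c'n}$ for $n$ large (and one enlarges the implied constant to handle the finitely many small $n$). This yields the desired bound $\ll ||f||_1 e^{-c'n}$ with $c'=c'(\mu)>0$. There is no substantial obstacle: once the strong spectral gap of Proposition~\ref{S_rStrongSpectralGap}(i) is available and $\widehat{f}(r,\cdot)$ is controlled in $L^2(\Omega)$, the estimate is a direct Cauchy-Schwarz and integration argument.
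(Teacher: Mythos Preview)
Your proof is correct and follows essentially the same approach as the paper's own proof: Cauchy--Schwarz on the inner integral, the uniform bound $\sup_{|r|\leq\delta_0}||D_r||\leq(1-c)\sigma$ from Proposition~\ref{S_rStrongSpectralGap}(i), the estimate $||\widehat{f}(r,\cdot)||_{L^2(\Omega)}\leq||f||_1$ from Lemma~\ref{FourierL1Bound}, and finiteness of $\SphericalPlancharelMeasure(\{|r|\leq\delta_0\})$. The paper's version is only slightly terser, but the ingredients and their assembly are identical.
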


\begin{proof}
	Using Proposition~\ref{S_rStrongSpectralGap}, we deduce $\frac{n^{\ell/2}}{\sigma^n} \sup_{|r|\leq \delta_0}  ||D_r^n \rho_r(h_0) 1|| \ll e^{-cn}$ for $c > 0$ a constant depending on $\mu$. Using Cauchy-Schwarz the term in question is bounded by $$ \frac{n^{\ell/2}}{\sigma^n}\int_{|r|\leq \delta_0} ||\widehat{f}(r,\cdot)||_{L^2(\Omega)} \, ||D_r^n\rho_r(h_0) 1||_2 \, d\nu_{\mathrm{sph}}(r).$$ The lemma follows as $||\widehat{f}(r,\cdot)||_{L^2(\Omega)} \leq ||f||_{1}$ by Lemma~\ref{FourierL1Bound} and by estimating $\int_{|r|\leq \delta_0} 1 \, d\nu_{\mathrm{sph}}(r) \ll 1$ since $\delta_0 \leq 1$.
\end{proof}

Therefore, up to an exponential error term, we only need to deal with
\begin{equation} \label{RemainingTerm}
	\frac{n^{\ell/2}}{\sigma^n}\int_{|r|\leq \delta_0} \lambda(r)^n \int_{\Omega}  \widehat{f}(r,\omega) (E_{r} \rho_r(h_0) 1)(\omega) \, d\Haarof{\Omega}(\omega)d\SphericalPlancharelMeasure(r).
\end{equation} 

Recall that $\ell = 2p + d$ for $d$ the rank of $G$, where the rank is defined as the real dimension of $\mathfrak{a}$.  We therefore may rewrite \eqref{RemainingTerm} by replacing $r$ by $\frac{r}{\sqrt{n}}$ as
\begin{equation}\label{RemainingTermFubini}
	\frac{n^{p}}{\sigma^n} \int_{|r| \leq \delta_0 \sqrt{n}} \lambda(\tfrac{r}{\sqrt{n}})^n|c(\tfrac{r}{\sqrt{n}})|^{-2} \int_{\Omega}  \widehat{f}(\tfrac{r}{\sqrt{n}},\omega) (E_{\frac{r}{\sqrt{n}}} \rho_{\tfrac{r}{\sqrt{n}}}(h_0) 1)(\omega) \, d\Haarof{\Omega}(\omega)d\Haarof{\mathfrak{a}^{*}}(r).
\end{equation}

Towards proving the local limit theorem, we first replace $\frac{\lambda(r/\sqrt{n})^n}{\sigma^n}$ by a suitable function. Before doing so we give some elementary calculative results.

\begin{lemma}
	The following inequalities hold:
	\begin{enumerate}
		\item[(i)] For any $A,B \in \mathbb{R}$,  $$|e^A - e^B|\leq |A-B|\max\{ e^A, e^B \}$$
		\item[(ii)] For any $c > 0, r \neq 0$ and $n \geq 1$, $$ne^{-cnr^2} \leq \frac{2}{c} e^{-cnr^2/2}r^{-2}.$$
	\end{enumerate}
	\label{CalcLemma}
\end{lemma}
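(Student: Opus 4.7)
Both parts are elementary calculus facts, and the plan is to reduce each to a single standard inequality.

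For part (i), the plan is to use the fundamental theorem of calculus (or equivalently the mean value theorem). Without loss of generality assume $A \geq B$, so that $\max\{e^A,e^B\} = e^A$. Then write
\[
e^A - e^B = \int_B^A e^t \, dt \leq (A-B) \sup_{t \in [B,A]} e^t = (A-B) e^A,
\]
which gives the claim after taking absolute values and noting the symmetry in $A, B$. The main (minor) thing to be careful about is the WLOG step and making sure the $\max$ is handled symmetrically.

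For part (ii), the plan is to substitute $y = cnr^2/2 > 0$ and reduce the desired inequality to the universal bound $y \leq e^y$. Concretely, multiplying the target inequality $ne^{-cnr^2} \leq \tfrac{2}{c} e^{-cnr^2/2} r^{-2}$ through by $\tfrac{c r^2}{2}$ turns it into
\[
\frac{cnr^2}{2} e^{-cnr^2/2} \leq 1,
\]
i.e. $y e^{-y} \leq 1$, which follows from $e^y \geq 1 + y \geq y$ for $y \geq 0$. I do not anticipate any real obstacle here; both bounds are one-line applications of standard convexity/monotonicity facts, and the only thing worth double-checking is the bookkeeping of the exponent $cnr^2/2$ in the substitution.
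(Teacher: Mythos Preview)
Your proof is correct and essentially the same as the paper's. For (i) the paper factors out $e^A$ and uses $e^x \geq 1+x$ where you use the integral bound, and for (ii) the paper's inequality $e^{-x}\leq 1/x$ is exactly your $ye^{-y}\leq 1$ after the same substitution; the only slip is that ``multiplying through by $\tfrac{cr^2}{2}$'' should really be ``multiplying by $\tfrac{cr^2}{2}e^{cnr^2/2}$'' to reach $\tfrac{cnr^2}{2}e^{-cnr^2/2}\leq 1$, but the resulting inequality and its verification are correct.
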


\begin{proof}
	For the first inequality by assuming without loss of generality that $A\geq B$ we deduce that $|e^A - e^B|\leq e^A |1 - e^{B-A}|$ and hence reduce to showing that $|1 - e^{B-A}| \leq |A-B|$. For this we use that $e^{x} \geq 1 + x$ and hence as $B-A$ is negative, $|1 - e^{B-A}| = 1 - e^{B-A} \leq -(B-A) = |A-B|$. 
	
	For the second inequality we apply the observation that $e^{-x} \leq \frac{1}{x}$ to deduce that $ne^{-cnr^2/2} \leq n\frac{2}{cnr^2} = \frac{2}{cr^2}$ which implies the claim by multiplication with $e^{-cnr^2/2}$. 
\end{proof}

\begin{lemma}\label{s(r)Estimate}
	Assume that $\mu$ has finite fourth moment. There are constants $c_2, c^{*} > 0$ and a positive definite sesquilinear form $Q$ on $\mathfrak{a}$ such that for $|r| \leq \delta_0$,  $$\bigg| \frac{\lambda(r)^n}{\sigma^n} - e^{-c_2nQ(r,r)}  \bigg| \ll_{\mu}  e^{-c^{*}n|r|^2} |r|^2.$$ In particular, for $|r| \leq \delta_0 \sqrt{n}$,  $$\bigg| \frac{\lambda(r/\sqrt{n})^n}{\sigma^n} - e^{-c_2Q(r,r)}  \bigg| \ll_{\mu}  n^{-1}e^{-c^{*}|r|^2} |r|^2.$$
\end{lemma}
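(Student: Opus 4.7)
The plan is to derive this estimate via a careful Taylor expansion of $\log(\lambda(r)/\sigma)$ around $r=0$ followed by two applications of the calculus inequalities in Lemma~\ref{CalcLemma}. First I would verify that $r \mapsto \lambda(r)$ has enough regularity: the fourth-moment assumption on $\mu$ makes the family $r \mapsto S_r$ of class $C^4$ in operator norm (differentiating under the integral brings down factors of $H(g^{-1}\omega)$ whose $L^k$ norms are controlled by the $k$-th moment through \eqref{Hkappainequality}). Combined with the strong spectral gap of $S_0$ from Proposition~\ref{S_rStrongSpectralGap} and analytic perturbation theory, both $\lambda(r)$ and $E_r$ are $C^4$ on a neighborhood of $0$. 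From Proposition~\ref{s(r)behavioursmallr} one has $\xi'(0)=0$ and the Hessian $H_{\lambda,0}$ is negative definite; setting $c_2 Q(r,r) := -\tfrac{1}{2\sigma} H_{\lambda,0}(r,r)$ produces a positive definite sesquilinear form on $\mathfrak{a}^{*}$. Using in addition the symmetry $\overline{\lambda(r)}=\lambda(-r)$ (inherited from $\overline{S_r\varphi}=S_{-r}\overline{\varphi}$), which forces the cubic term in the real part of $\log(\lambda/\sigma)$ to vanish, after possibly shrinking $\delta_0$ I would obtain
\[
\log(\lambda(r)/\sigma) \;=\; -c_2 Q(r,r) + O_\mu(|r|^4), \qquad |r|\le \delta_0.
\]

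Next, to compare the two exponentials I would write $\tfrac{\lambda(r)^n}{\sigma^n} - e^{-c_2 nQ(r,r)} = e^A - e^B$ with $A = -c_2 nQ(r,r) + n\cdot O_\mu(|r|^4)$ and $B = -c_2 nQ(r,r)$, and shrink $\delta_0$ once more so that $|A-B|\le \tfrac{1}{2} c_2 nQ(r,r)$ for $|r|\le \delta_0$ (uniform in $n$ since $Q(r,r)\ge c_Q |r|^2$ absorbs the $|r|^4$ error). Then Lemma~\ref{CalcLemma}(i) yields $|\tfrac{\lambda(r)^n}{\sigma^n} - e^{-c_2 nQ(r,r)}| \le |A-B|\,\max(e^A, e^B) \ll_\mu n|r|^4\, e^{-c_2 nQ(r,r)/2}$. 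Finally, to convert the $n|r|^4$ into $|r|^2$ with the correct Gaussian weight, I would use $Q(r,r)\ge c_Q|r|^2$ and Lemma~\ref{CalcLemma}(ii) with $c=c_2 c_Q/2$ to get $n\, e^{-c_2 c_Q n|r|^2/2} \le \tfrac{4}{c_2 c_Q}\,|r|^{-2}\, e^{-c_2 c_Q n|r|^2/4}$, so
\[
n|r|^4\, e^{-c_2 nQ(r,r)/2} \;\le\; n|r|^4\, e^{-c_2 c_Q n|r|^2/2} \;\ll\; |r|^2\, e^{-c^* n|r|^2}
\]
with $c^* := c_2 c_Q/4$, establishing the main bound. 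The \emph{in particular} claim then follows immediately by substituting $r \mapsto r/\sqrt{n}$ and using $Q(r/\sqrt{n},r/\sqrt{n})=Q(r,r)/n$.

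The step I expect to be most delicate is upgrading the Taylor remainder from $O(|r|^3)$ to $O(|r|^4)$ in the first step. With merely $C^3$ regularity one only gets an $O(|r|^3)$ remainder, which after the final step produces only a $|r|\cdot e^{-c^* n|r|^2}$ bound, weaker than the claim by a factor of $|r|^{-1}$. The finite fourth moment together with the vanishing $\xi'(0)=0$ from Proposition~\ref{s(r)behavioursmallr} and the parity constraint $\overline{\lambda(r)}=\lambda(-r)$ remove these obstructions and supply the required $O(|r|^4)$ remainder; once this is in hand the remaining calculus reduces to a direct combination of the two parts of Lemma~\ref{CalcLemma}.
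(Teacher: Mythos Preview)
Your proposal is correct and follows essentially the same route as the paper: establish $C^4$ regularity of $\lambda(r)$ from the fourth moment, Taylor-expand to get $\log(\lambda(r)/\sigma)=-c_2Q(r,r)+O_\mu(|r|^4)$, then apply Lemma~\ref{CalcLemma}(i) followed by Lemma~\ref{CalcLemma}(ii). Your explicit appeal to the parity $\overline{\lambda(r)}=\lambda(-r)$ to suppress odd-order terms is more detailed than the paper, which simply writes $\lambda(r)=\lambda(0)-Q(r,r)+O(|r|^4)$ and proceeds; otherwise the arguments coincide line by line.
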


\begin{proof}
	As in the proof of Proposition~\ref{s(r)behavioursmallr} one shows that $\lambda(r)$ is $C^4$ if $\mu$ has finite fourth moment. Indeed, by conducting a Taylor expansion of $\lambda$, for small $r$, $$\lambda(r) = \lambda(0) - Q(r,r)  + O_G(|r|^4),$$ where $Q(r,r) = -H_{\lambda,0}(r,r)/2$ for $H_{\lambda,0}$ the Hessian of $\lambda$ at $0$. By Proposition~\ref{s(r)behavioursmallr} the sesquilinear form $Q$ is positive definite. Moreover, we may choose for small enough $r$ a constant $c_{*} > 0$ such that $|\lambda(r)| \leq \lambda(0)(1 - c^{*}|r|^2).$  Using that $\ln(1 + x) \leq x$, it therefore follows that $$n\ln(\tfrac{\lambda(r)}{\lambda(0)}) \leq -c^{*}n|r|^2.$$
	Throughout set $c_2 = \frac{1}{\lambda(0)}$ and choose $c^{*} \leq c_2$. Then $$\max\{ e^{-c_2nQ(r,r)}, e^{n\ln(\frac{\lambda(r)}{\lambda(0)}) } \} \leq e^{-c^{*}n|r|^2}.$$ Using Lemma~\ref{CalcLemma} (i) it follows that 
	\begin{align*}
		| \tfrac{\lambda(r)^n}{\lambda(0)^n}   - e^{-c_2nQ(r,r)}|  &= |e^{n\ln(\frac{\lambda(r)}{\lambda(0)})}  - e^{-c_2nQ(r,r)}| \\ &\leq \max\{ e^{-c_2nQ(r,r)}, e^{n\ln(\frac{\lambda(r)}{\lambda(0)})} \} |n\ln(\tfrac{\lambda(r)}{\lambda(0)}) + c_2nQ(r,r)| \\&\ll e^{-c^{*}nQ(r,r)} n|r|^4 \\
		&\ll e^{-c^{*}nQ(r,r)}|r|^2,
	\end{align*} by using Lemma~\ref{CalcLemma} (ii) in the last line by changing the constant $c^{*}$.
\end{proof}

Recall by the definition of the $c$-function:
\begin{align}
	|c(r)|^{-2}
	&= \frac{1}{I(\delta)} \left(\prod_{\ell = 1}^p \bigg| B\left( \frac{m(r_i)}{2}, \frac{i \langle r, r_\ell \rangle}{\langle r_\ell , r_\ell \rangle} \right)  \bigg| \right)^{-2} \cdot \left(\prod_{\ell = p + 1}^k  \bigg| B\left( \frac{m(r_\ell)}{2}, \frac{m(r_\ell/2)}{4}  +  \frac{i\langle r, r_\ell \rangle}{\langle r_\ell , r_\ell \rangle} \right) \bigg| \right)^{-2} \nonumber \\
	&= \frac{1}{I(\delta)} \left(\prod_{\ell = 1}^p \frac{|\Gamma(\tfrac{m(r_\ell)}{2} + \tfrac{i\langle r, r_\ell \rangle}{\langle r_\ell, r_\ell \rangle})|^2}{|\Gamma(\tfrac{m(r_\ell)}{2})|^2|\Gamma(\tfrac{i\langle r, r_\ell \rangle}{\langle r_\ell, r_\ell \rangle})|^2} \right) \cdot \left(\prod_{\ell = p + 1}^k  \frac{|\Gamma(\tfrac{m(r_\ell)}{2} + \tfrac{m(r_\ell/2)}{4} + \tfrac{i\langle r, r_\ell \rangle}{\langle r_\ell, r_\ell \rangle})|^2}{|\Gamma(\tfrac{m(r_\ell)}{2})|^2|\Gamma(\tfrac{m(r_\ell/2)}{4} + \tfrac{i\langle r, r_\ell \rangle}{\langle r_\ell, r_\ell \rangle})|^2}   \right), \label{c(r)GammaExpression}
\end{align}where $B(x,y) = \int_0^1 t^{x-1}(1- t)^{y-1} \, dt$ is the Beta function satisfying $B(x,y) = \frac{\Gamma(x)\Gamma(y)}{\Gamma(x+y)}$.

\begin{lemma}\label{c(r)Estimate}
	There is a constant $c_G$ depending only on $G$ such that for $|r| \leq \delta_0$, $$|c(r)|^{-2} = c_G \prod_{\ell = 1}^p |\langle r, r_\ell \rangle|^2 + O\left( |r|^{2} \right).$$ In particular, for $|r| \leq \delta_0 \sqrt{n}$, $$\bigg|n^p|c(\tfrac{r}{\sqrt{n}})|^{-2} - c_G \prod_{\ell = 1}^p |\langle r, r_\ell \rangle|^2\bigg| \ll n^{-1}|r|^{2}.$$
\end{lemma}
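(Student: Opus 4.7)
The plan is to Taylor expand each Gamma factor appearing in the product representation \eqref{c(r)GammaExpression}. The key analytic input is the behavior of $\Gamma$ near zero: since $\Gamma(z) = 1/z + \gamma_0 + O(z)$ for $z \to 0$, one obtains for small real $t$ that
\[
|\Gamma(it)|^{-2} = t^2 + O(t^4),
\]
whereas $|\Gamma(a + it)|^{-2}$ is real analytic in $t$ near $0$ whenever $a > 0$. I would first record these two elementary expansions as a preparatory step.

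Next, I would apply them factor by factor in \eqref{c(r)GammaExpression}. For each \emph{indivisible} root $r_\ell$ with $1 \leq \ell \leq p$, the only singular contribution comes from the denominator factor $|\Gamma(i \langle r, r_\ell\rangle/\langle r_\ell,r_\ell\rangle)|^{-2}$, which expands as
\[
\frac{|\langle r, r_\ell\rangle|^2}{|\langle r_\ell,r_\ell\rangle|^2}\bigl(1 + O(|r|^2)\bigr).
\]
For each \emph{divisible} root, i.e. $p+1 \leq \ell \leq k$, one has $m(r_\ell/2) \geq 1$, so the shift $m(r_\ell/2)/4$ is strictly positive and the corresponding denominator factor $|\Gamma(m(r_\ell/2)/4 + i\langle r, r_\ell\rangle/\langle r_\ell,r_\ell\rangle)|^{-2}$ is real analytic in $r$ near $0$. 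The numerator Gamma factors all have strictly positive real parts ($m(r_\ell)/2 \geq 1/2$), hence are also real analytic at $r=0$. Collecting these observations, one can factor
\[
|c(r)|^{-2} = F(r)\,\prod_{\ell = 1}^p |\langle r, r_\ell\rangle|^2,
\]
where $F$ is smooth in a neighborhood of $0$. Setting $c_G = F(0)$ and Taylor expanding $F(r) = c_G + O(|r|^2)$ yields the first claimed asymptotic (with the sharper error $\prod_\ell|\langle r, r_\ell\rangle|^2 \cdot O(|r|^2)$ absorbed into the $O(|r|^2)$ in the statement, since each $|\langle r,r_\ell\rangle|$ is bounded for $|r| \leq \delta_0$).

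The second assertion is then purely formal: substitute $r \mapsto r/\sqrt{n}$, note that $\prod_{\ell=1}^p |\langle r/\sqrt{n}, r_\ell\rangle|^2 = n^{-p}\prod_{\ell=1}^p|\langle r, r_\ell\rangle|^2$, multiply through by $n^p$, and track the $n^{-1}$ factor arising from the $O(|r/\sqrt{n}|^2)$ remainder scaled by $n^p$ against the homogeneity of the main term. The main obstacle is simply bookkeeping the constants $c_G$ explicitly from $I(\delta)$ and the various $\Gamma(m(r_\ell)/2)$, $\Gamma(m(r_\ell)/2 + m(r_\ell/2)/4)$, $|\langle r_\ell, r_\ell\rangle|$ factors; no deep analysis is needed beyond the basic expansion of $\Gamma$ near its pole at $0$.
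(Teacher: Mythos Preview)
Your proposal is correct and follows essentially the same approach as the paper: both arguments isolate the pole of $\Gamma$ at $0$ to extract the factor $|\langle r,r_\ell\rangle|^2$ from each indivisible root, use smoothness of $|\Gamma(a+it)|^{-2}$ for $a>0$ on the remaining factors (including all divisible-root factors, where the shift $m(r_\ell/2)/4>0$ keeps one away from the pole), and then read off $c_G$ from the product of the regular parts. Your observation that the true error factors as $\prod_\ell|\langle r,r_\ell\rangle|^2\cdot O(|r|^2)$ is in fact slightly sharper than what the paper records, and is exactly what makes the rescaling in the second assertion produce the $n^{-1}$ factor; the paper leaves that substitution implicit.
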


\begin{proof}
	As the singularities of the $\Gamma$ function are at $0,-1,-2, \ldots$ and $\Gamma(z)$ behaves around $0$ like $\frac{1}{z}$, it holds that $| \frac{1}{|\Gamma(ix)^2|} - x^2| \ll x^4$ and $|\Gamma(\tfrac{n}{2} +ix)^2 - \Gamma(\tfrac{n}{2})^2| \ll x^2$. Therefore, $$\bigg| \frac{|\Gamma(\tfrac{m(r_\ell)}{2} + \tfrac{i\langle r, r_\ell \rangle}{\langle r_\ell, r_\ell \rangle})|^2}{|\Gamma(\tfrac{m(r_\ell)}{2})|^2|\Gamma(\tfrac{i\langle r, r_\ell \rangle}{\langle r_\ell, r_\ell \rangle})|^2} - \frac{|\Gamma(\tfrac{m(r_\ell)}{2} )|  \, |\langle r,r_{\ell} \rangle|^2}{|\Gamma(\tfrac{m(r_\ell)}{2})|^2\, |\langle r_{\ell}, r_{\ell} \rangle|^2}   \bigg| \ll |r|^2$$ and similarly 
	$$\bigg|\frac{|\Gamma(\tfrac{m(r_\ell)}{2} + \tfrac{m(r_\ell/2)}{4} + \tfrac{i\langle r, r_\ell \rangle}{\langle r_\ell, r_\ell \rangle})|^2}{|\Gamma(\tfrac{m(r_\ell)}{2})|^2|\Gamma(\tfrac{m(r_\ell/2)}{4} + \tfrac{i\langle r, r_\ell \rangle}{\langle r_\ell, r_\ell \rangle})|^2}  - \frac{|\Gamma(\tfrac{m(r_\ell)}{2} + \tfrac{m(r_\ell/2)}{4})|^2}{|\Gamma(\tfrac{m(r_\ell)}{2})|^2|\Gamma(\tfrac{m(r_\ell/2)}{4} )|^2}    \bigg| \ll |r|^2.$$ Using these two estimates in \eqref{c(r)GammaExpression} the lemma follows for a suitable constant $c_G$.
\end{proof}

Denote by $$\gamma(r) = c_Ge^{-c_2Q(r,r)}\prod_{\ell = 1}^p |\langle r,r_{\ell} \rangle|^2$$ for $c_G$ the constant from Lemma~\ref{c(r)Estimate}. We then may draw the following corollary.

\begin{corollary}\label{s(r)c(r)EstimateCombined} Assume that $\mu$ has finite fourth moment. For $|r| \leq \delta_0\sqrt{n}$ and $c' > 0$ a constant depending on $\mu$,
	$$\bigg| \frac{n^{p}}{\sigma^n}  \lambda(\tfrac{r}{\sqrt{n}})^n|c(\tfrac{r}{\sqrt{n}})|^{-2} - \gamma(r) \bigg| \ll_{\mu}  n^{-1} e^{-c'|r|^2}.$$
\end{corollary}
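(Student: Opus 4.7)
The plan is to write the difference as a telescoping product-difference using the two preceding lemmas. Set
\[
A_n = \frac{\lambda(r/\sqrt n)^n}{\sigma^n}, \quad A = e^{-c_2 Q(r,r)}, \quad B_n = n^p|c(r/\sqrt n)|^{-2}, \quad B = c_G \prod_{\ell=1}^p |\langle r, r_\ell\rangle|^2,
\]
so the quantity to be estimated is $|A_n B_n - AB|$. The standard identity $A_n B_n - AB = (A_n - A)B_n + A(B_n - B)$ reduces the problem to bounding each of the two resulting terms. Lemma~\ref{s(r)Estimate} yields $|A_n - A| \ll_\mu n^{-1} e^{-c^* |r|^2} |r|^2$ and Lemma~\ref{c(r)Estimate} yields $|B_n - B| \ll n^{-1} |r|^2$ on the range $|r| \le \delta_0 \sqrt n$.

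To bound $(A_n - A)B_n$ I need a uniform polynomial bound on $B_n$. The triangle inequality combined with Lemma~\ref{c(r)Estimate} gives $|B_n| \leq |B| + |B_n - B| \ll |r|^{2p} + n^{-1}|r|^2 \ll 1 + |r|^{2p+2}$ on the given range. Multiplying by the bound on $|A_n - A|$ produces a term of size $n^{-1} e^{-c^*|r|^2}(|r|^2 + |r|^{2p+4})$. For the second term $A(B_n - B)$, I use that $Q$ is positive definite (by Proposition~\ref{s(r)behavioursmallr}), so there exists $c_Q > 0$ with $Q(r,r) \geq c_Q |r|^2$ and hence $|A| \le e^{-c_2 c_Q |r|^2}$; combining this with the bound on $|B_n - B|$ yields $n^{-1} e^{-c_2 c_Q |r|^2} |r|^2$.

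Finally, any fixed polynomial in $|r|$ times $e^{-c|r|^2}$ is bounded by a constant multiple of $e^{-c'|r|^2}$ for any $0 < c' < c$, so choosing $c' > 0$ strictly smaller than $\min(c^*, c_2 c_Q)$ absorbs the polynomial factors $|r|^2, |r|^{2p+4}$ into the Gaussian. This yields the claimed bound $\ll_\mu n^{-1} e^{-c'|r|^2}$. There is no substantial obstacle here; the step requiring the most care is the uniform control of $|B_n|$, for which one must not forget that $|r|$ can be as large as $\delta_0 \sqrt n$ so the polynomial growth $|r|^{2p}$ is genuine and the Gaussian in $A$ is what ultimately compensates for it.
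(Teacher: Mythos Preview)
Your proof is correct and takes essentially the same approach as the paper: both use the telescoping identity $A_nB_n - AB = (A_n-A)B_n + A(B_n-B)$, invoke Lemma~\ref{s(r)Estimate} and Lemma~\ref{c(r)Estimate} for the two differences, bound $B_n$ polynomially in $|r|$, and absorb the polynomial factors into the Gaussian by shrinking the exponent. Your write-up is in fact more explicit than the paper's (e.g.\ in justifying the Gaussian bound on $A$ via positive-definiteness of $Q$), but the argument is the same.
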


\begin{proof}
	Combining Lemma~\ref{s(r)Estimate} and Lemma~\ref{c(r)Estimate}, for a suitable constant $c' > 0$, \begin{align*}
		\bigg| \frac{n^{p}}{\sigma^n}  \lambda(\tfrac{r}{\sqrt{n}})^n|c(\tfrac{r}{\sqrt{n}})|^{-2} - \gamma(r) \bigg| &\leq \bigg| \frac{\lambda(\tfrac{r}{\sqrt{n}})^n}{\sigma^n}  -e^{-c_2Q(r,r)} \bigg| |n^pc(\tfrac{r}{\sqrt{n}})^{-2}| \\ &+  \bigg|n^p|c(\tfrac{r}{\sqrt{n}})|^{-2} - c_G \prod_{\ell = 1}^p |\langle r, r_\ell \rangle|^2\bigg| e^{-c_2Q(r,r)} \\
		&\ll_{\mu} n^{-1} e^{-c'|r|^2},
	\end{align*}  using that $|c(\tfrac{r}{\sqrt{n}})|^{-2} - \gamma(r)| \ll_{\mu} |r|^{O(1)}$ which equally follows by Lemma~\ref{c(r)Estimate}.
\end{proof}

\subsection{Proof of Theorem~\ref{LCLT} and Theorem~\ref{StrongLCLT}}\label{SectionStrongLCLT}

Throughout this section assume that $\mu$ has finite fourth moment. We are now in a suitable position to prove Theorem~\ref{LCLT} and Theorem~\ref{StrongLCLT}. Let $f \in \Schwartz{X}$. Recall that we expressed in \eqref{LLTFourierSide}  the term in question $\frac{n^{\ell/2}}{\sigma^n}\int f(g.x_0) \, d\mu^{*n}(g)$ for $x_0 = h_0K$ by using the Fourier inversion formula as $$\frac{n^{\ell/2}}{\sigma^n} \int_{\mathfrak{a}^{*}} \int_{\Omega}   \widehat{f}(r,\omega) (S_r^n \rho_r(h_{0})1)(\omega) \, d\Haarof{\Omega}(\omega)d\SphericalPlancharelMeasure(r).$$ The latter term is decomposed into the high frequency \eqref{HighFrequency} and low frequency \eqref{LowFrequency} component for $\delta_0 \in (0,1)$ small enough such that Lemma~\ref{S_rStrongSpectralGap} holds. Under the assumption $\sup_{|r| \geq 1} ||S_r||$, the high frequency term \eqref{HighFrequency} is dealt with by Lemma~\ref{HighFrequencyUnderS_rStrong} collecting an error term of size $O_{\mu}(e^{-cn}||f||_{H^s})$ for $s = \frac{1}{2}(\dim X + 1)$. Without this assumption, one requires that the Fourier transform of $f$ is compactly supported yielding by Lemma~\ref{HighFrequencyFourierCompactlySupported} an error term of size $O_{\mu,f}(e^{-c_f n}||f||_1)$.

For the low frequency term, one applies Lemma~\ref{LowFreqProj}, thereby collecting an error term of size $O_{\mu}(e^{-cn}||f||_1)$. It remains to deal with \eqref{RemainingTerm}, which after the substitution $r$ to $\frac{n}{\sqrt{r}}$ is of the form \eqref{RemainingTermFubini}. Using Lemma~\ref{MainEstaimteNormonVell} and Corollary~\ref{s(r)c(r)EstimateCombined}, we arrive at the term 
\begin{align*}
	&\int_{|r| \leq \delta_0 \sqrt{n}} \gamma(r) \int_{\Omega}  \widehat{f}(\tfrac{r}{\sqrt{n}},\omega) (E_{\frac{r}{\sqrt{n}}} \rho_{\tfrac{r}{\sqrt{n}}}(h) 1)(\omega) \, d\Haarof{\Omega}(\omega)d\Haarof{\mathfrak{a}^{*}}(r) \\
	&= \int f(g)\rho_G(h^{-1}) \left( \int_{|r| 
		\leq \delta_0\sqrt{n}} \gamma(r) \psi_{\mu, \frac{r}{\sqrt{n}}}(g) \, d\Haarof{\mathfrak{a}^{*}}(r)\right) d\Haarof{G}(g)
\end{align*} admitting an additional error term of size $$\ll_{\mu} n^{-1} ||f||_1\int_{|r| \leq \delta_0 \sqrt{n}} e^{-c'|r|^2} \, d\Haarof{\mathfrak{a}^{*}}(r) \ll_{\mu} n^{-1} ||f||_1,$$ using that the latter integral converges.

We define for $n \geq 1$ the continuous real-valued functions on $G$, 
\begin{align}
	\psi_{n}(g) &= \int_{|r| 
		\leq\delta_0 \sqrt{n}} \gamma(r) \psi_{\mu, \frac{r}{\sqrt{n}}}(g) \, d\Haarof{\mathfrak{a}^{*}}(r) \quad\quad \text{and}  \label{DefPsin} \\
	\psi_0(g) &= c_{\mu} \cdot \psi_{\mu,0}(g) \quad\quad\text{for}\quad\quad c_{\mu}  = \int_{r \in \mathfrak{a}^{*}} \gamma(r) \, d\Haarof{\mathfrak{a}^{*}}(r).\nonumber
\end{align} While $\psi_{\mu,\frac{r}{\sqrt{n}}}$ is not necessarily real-valued, the function $\psi_n$ is as $\overline{\psi_{\mu,\frac{r}{\sqrt{n}}}} = \psi_{\mu,-\frac{r}{\sqrt{n}}}$ and the definition of $\psi_n$ is invariant under $r \mapsto -r$.

We have so far collected a total error of size $$O_{\mu}(n^{-1}||f||_1 + e^{-cn}||f||_{H^s})$$ under the assumption $\sup_{|r| \geq 1} ||S_r|| < ||S_0||$ and for $f\in \mathscr{S}(X)$ and $$O_{\mu}(n^{-1}||f||_1)  + O_{\mu, f}(e^{-c_fn}||f||_1)$$ without the latter assumption yet requiring that the Fourier transform of $f$ has compact support.  To conclude the proof, we show the following lemma. 

\begin{lemma}\label{Psi_nToPsi}
	For $g \in G$ and $n \geq 1$, $$|\psi_{n}(g) - \psi_0(g)| \ll_{\mu} n^{-1}(1 + ||g||^2).$$ 
\end{lemma}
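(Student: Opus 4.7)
The plan is to decompose $\psi_n(g) - \psi_0(g)$ into a tail contribution and a main contribution, where the tail is controlled by the rapid decay of $\gamma$ and the main term is estimated via the symmetrization identity \eqref{PsimuTrickDistance} from Lemma~\ref{PsimurEstimate}.

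First, since $c_\mu = \int_{\mathfrak{a}^*} \gamma(r) \, d\Haarof{\mathfrak{a}^*}(r)$, we write
\begin{align*}
\psi_n(g) - \psi_0(g) &= \int_{|r| \leq \delta_0 \sqrt{n}} \gamma(r)\bigl[\psi_{\mu, r/\sqrt{n}}(g) - \psi_{\mu,0}(g)\bigr] \, d\Haarof{\mathfrak{a}^*}(r) \\
&\quad - \psi_{\mu,0}(g) \int_{|r| > \delta_0 \sqrt{n}} \gamma(r) \, d\Haarof{\mathfrak{a}^*}(r).
\end{align*}
Since $\gamma(r) = c_G e^{-c_2 Q(r,r)} \prod_{\ell=1}^p |\langle r,r_\ell\rangle|^2$ has Gaussian decay in $|r|$, the tail integral is $O(e^{-c n})$ for some $c > 0$ depending on $\mu$, and $\psi_{\mu,0}$ is bounded, so this second term is $O_\mu(e^{-cn}) \ll n^{-1}$.

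The main step is the main integral. The key observation is that $\gamma(-r) = \gamma(r)$: this holds because $Q$ is a positive definite form (so $Q(-r,-r) = Q(r,r)$) and $|\langle -r, r_\ell \rangle|^2 = |\langle r, r_\ell \rangle|^2$. Hence, by averaging the integrand over $r$ and $-r$,
\begin{align*}
&\int_{|r| \leq \delta_0 \sqrt{n}} \gamma(r)\bigl[\psi_{\mu, r/\sqrt{n}}(g) - \psi_{\mu,0}(g)\bigr] \, d\Haarof{\mathfrak{a}^*}(r) \\
&\quad = \int_{|r| \leq \delta_0 \sqrt{n}} \gamma(r)\left[\frac{\psi_{\mu, r/\sqrt{n}}(g) + \psi_{\mu, -r/\sqrt{n}}(g)}{2} - \psi_{\mu,0}(g)\right] d\Haarof{\mathfrak{a}^*}(r).
\end{align*}
Applying the bound \eqref{PsimuTrickDistance} to the bracket (with $r$ replaced by $r/\sqrt{n}$, which satisfies $|r/\sqrt{n}| \leq \delta_0$), the integrand is bounded in modulus by
$$\gamma(r) \cdot \frac{|r|^2}{n}(1 + \|g\|^2),$$
up to a multiplicative constant depending on $\mu$. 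Integrating and using that $\int_{\mathfrak{a}^*} \gamma(r) |r|^2 \, d\Haarof{\mathfrak{a}^*}(r) < \infty$ (again by the Gaussian factor in $\gamma$), this main term is $O_\mu(n^{-1}(1 + \|g\|^2))$.

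Combining the two estimates yields the claim. The only non-routine step is recognizing that a naive application of \eqref{PsimuDistance} would only produce an $O(n^{-1/2})$ bound; the symmetrization trick, enabled by the evenness of $\gamma$, is essential to obtain the sharper $O(n^{-1})$ rate.
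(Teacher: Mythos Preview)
Your proof is correct and follows essentially the same approach as the paper: split off the Gaussian tail $\int_{|r|>\delta_0\sqrt n}\gamma(r)\,d\Haarof{\mathfrak a^*}$, then use the evenness of $\gamma$ to symmetrize the main integral and apply \eqref{PsimuTrickDistance}. Your explicit remark that the naive use of \eqref{PsimuDistance} would only yield $O(n^{-1/2})$ is a nice clarification of why the symmetrization is needed.
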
 

\begin{proof}
	Since $\gamma(r) \ll_{\mu} e^{-c'|r|^2}$ for a suitable constant $c'$ it follows that $$|\psi_{\mu,0}(g)|\int_{|r| > \delta_0 \sqrt{n}} \gamma(r) \, d\Haarof{\mathfrak{a}^{*}}(r)$$ decays exponentially fast in $n$ (using that $|\psi_{\mu,0}(g)| = |\langle  \eta_0, \rho_0(g)\eta_0' \rangle| \ll_{\mu} 1$) and therefore we need to deal with 
	\begin{equation}\label{Psi_nToPsiRemainingTerm}
		\bigg|\psi_{n}(g) -   \int_{|r| 
			\leq \delta_0 \sqrt{n}} \gamma(r) \psi_{\mu,0}(g) \, d\Haarof{\mathfrak{a}^{*}}(r)  \bigg|.
	\end{equation}
	
	By Lemma~\ref{PsimurEstimate} it holds that $$\bigg|\frac{\psi_{\mu,\frac{r}{\sqrt{n}}}(g) + \psi_{\mu,-\frac{r}{\sqrt{n}}}(g)}{2} - \psi_{\mu,0}(g)\bigg| \ll_{\mu} n^{-1}|r|^2(1 + ||g||^2)$$ and therefore using again that $\gamma(r) \ll_{\mu} e^{-c^{*}|r|^2}$ and as the defining integral of $\psi_{n}$ is invariant under replacing $r$ by $-r$,
	\begin{align*}
		\eqref{Psi_nToPsiRemainingTerm} &\ll \int_{|r| 
			\leq \delta_0 \sqrt{n}} \gamma(r) |\psi_{\mu,\frac{r}{\sqrt{n}}}(g) - \psi_{\mu,0}(g)| \, d\Haarof{\mathfrak{a}^{*}}(r)   \\
		&\ll_{\mu} n^{-1}(1 + ||g||^2)\int_{|r| \leq  \delta_0 \sqrt{n}} \gamma(r)|r|^2 \, d\Haarof{\mathfrak{a}^{*}}(r) \\
		&\ll_{\mu} n^{-1}(1 + ||g||^2).
	\end{align*}
\end{proof}

Recall that we have defined $$||f||_{*} = \int |f(x)|(1 + d_X(x,o)^2) \, d\Haarof{X}(x) = \int |f(g)|(1 + ||g||^2) \, d\Haarof{G}(g),$$ where we make no notational difference between $f$ and its lift to $G$. To conclude the proof of \eqref{LCLTQuantitativeFourierCompact} and \eqref{StrongLCLTFormula} we estimate
\begin{align*}
	&\bigg| \int f(g.x_0) \psi_n(g) \, d\Haarof{G}(g) - \int f(g.x_0) \psi_0(g) \, d\Haarof{G}(g) \bigg| \\ &\leq \int |f(g)| |\psi_n(gh_0^{-1}) - \psi_0(gh_0^{-1})| \, d\Haarof{G}(g) \\
	&\ll_{\mu} n^{-1} \int |f(g)| (1 + ||gh_0^{-1}||^2) \, d\Haarof{G}(g) \\
	&\ll_{\mu} n^{-1} \int |f(g)| (1 + ||g||^2 + ||h_0||^2) \, d\Haarof{G}(g) \\
	&\ll_{\mu} n^{-1}||f||_{*} + n^{-1}d_X(x_0,o)^2 ||f||_1, 
\end{align*} having used in the penultimate line that $||gh_0^{-1}|| \leq ||g|| + ||h_0^{-1}||$ by Corollary 7.20 of \cite{BenoistQuintRandomBook} as $G$ is connected. This concludes the proof of Theorem~\ref{LCLT} and of \eqref{StrongLCLTFormula}. The final claim of Theorem~\ref{StrongLCLT} is proved in the following lemma.

\begin{lemma}\label{SpreadOurKinvSpectral}
	Let $G$ be a non-compact connected semisimple Lie group with finite center and let $\mu$ be a  non-degenerate probability measure on $G$ with finite second moment. Assume that $\mu$ satisfies one of the following properties:
	\begin{enumerate}
		\item[(i)] $\mu$ is spread out.
		\item[(ii)] $\mu$ is bi-$K$-invariant, i.e. $\Haarof{K} * \mu * \Haarof{K}$. 
	\end{enumerate} Then $S_0$ is quasicompact and $\left( \sup_{|r| \geq 1} ||S_r|| \right) < ||S_0||$.
\end{lemma}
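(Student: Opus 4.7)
The plan is to handle the spread-out and bi-$K$-invariant hypotheses separately, noting that in both cases quasicompactness of $S_0$ comes essentially for free, leaving the uniform spectral bound $\sup_{|r|\geq 1}||S_r|| < ||S_0||$ as the main content.

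For the bi-$K$-invariant case, the crucial observation is that for every $k \in K$ and $\omega \in \Omega = K/M$ one has $H(k^{-1}\omega) = 0$, so $\rho_r(k)1 = 1$ for all $k \in K$ and all $r \in \mathfrak{a}^{*}$; consequently $\rho_r(\Haarof{K})$ is the rank-one orthogonal projection onto the constants in $L^2(\Omega)$. Since $\mu = \Haarof{K} * \mu * \Haarof{K}$, it follows that $S_r = \rho_r(\Haarof{K})\,S_r\,\rho_r(\Haarof{K})$ is itself of rank at most one, with explicit formula $S_r\varphi = \langle \varphi,1\rangle\,\langle \rho_r(\mu)1,1\rangle \cdot 1$ and hence $||S_r|| = |\int \phi_r\, d\mu|$, where $\phi_r(g) = \langle \rho_r(g)1,1\rangle$ is the spherical function. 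Rank-one operators are trivially quasicompact. Strict inequality $||S_r|| < \sigma = ||S_0||$ for every $r\neq 0$ is then immediate from Lemma~\ref{HighFreqeuncyNormLess}, since in rank one $||S_r|| = \rho(S_r)$. Uniformity over $|r|\geq 1$ splits into a continuity argument on any compact annulus $1 \leq |r| \leq R$ together with a Riemann-Lebesgue statement $\limsup_{|r|\to\infty} |\int\phi_r\,d\mu| \leq \mu(K) < \sigma$ for the tail, where non-degeneracy of $\mu$ is used to guarantee $\mu(K) < 1$.

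For the spread-out case, quasicompactness of $S_0$ is Proposition 2.2.1 of \cite{Bougerol1981}, as already cited after Theorem~\ref{QuasicompactnessImpliesLLT}. For the spectral estimate, the plan is to fix $n_0$ large enough that $\mu^{*n_0} = \nu_{\mathrm{ac}} + \nu_s$ admits a nontrivial absolutely continuous part with smooth compactly supported density, and such that simultaneously $||\nu_s||_{\mathrm{TV}} < \sigma^{n_0}$ (feasible because the absolutely continuous mass in $\mu^{*n}$ tends to $1$ for spread-out $\mu$). Then $||S_r^{n_0}|| \leq ||\rho_r(\nu_{\mathrm{ac}})|| + ||\nu_s||_{\mathrm{TV}}$, so it suffices to control $\sup_{|r|\geq 1}||\rho_r(\nu_{\mathrm{ac}})||$. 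On a compact annulus $1 \leq |r| \leq R$ this follows from the uniform estimate \eqref{SrPowerOperatorNormEstimate} of Lemma~\ref{HighFreqeuncyNormLess}, while for the tail $|r|\geq R$ I would invoke a stationary-phase / Riemann-Lebesgue argument exploiting oscillation of the phase $e^{-irH(g^{-1}\omega)}$ against the smooth density to conclude $||\rho_r(\nu_{\mathrm{ac}})||\to 0$ as $|r|\to\infty$; taking $n_0$-th roots then yields the desired uniform bound on $||S_r||$.

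The main obstacle I anticipate is justifying this decay $||\rho_r(\nu_{\mathrm{ac}})||\to 0$ uniformly in the boundary variable $\omega$: it is essentially a higher-dimensional Riemann-Lebesgue statement for the principal series on the non-compact group $G$, and controlling the stationary phase of $H(g^{-1}\omega)$ simultaneously in $g$ and $\omega$ requires careful geometric bookkeeping with the Iwasawa projection. The analogous decay in the bi-$K$-invariant case is more tractable via standard Harish-Chandra estimates for spherical functions, but still needs some attention.
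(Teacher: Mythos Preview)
Your bi-$K$-invariant argument is correct and matches the paper: both observe that $S_r = \rho_r(\Haarof{K})\,S_r\,\rho_r(\Haarof{K})$ is rank one with $||S_r|| = |\int \phi_r\,d\mu|$, and then combine the pointwise strict inequality from Lemma~\ref{HighFreqeuncyNormLess} on compacta with the decay $\phi_r(g)\to 0$ for $g\in G\setminus K$ as $|r|\to\infty$ (the paper cites \cite{FinisMatz2021}) together with $\mu(G\setminus K) > 0$.

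For the spread-out case the paper gives no argument at all and simply refers to Section~2.2 of \cite{Bougerol1981} for the full statement, both quasicompactness and the uniform bound on $||S_r||$. Your attempted direct proof has two genuine gaps. First, the assertion that one can arrange $||\nu_s||_{\mathrm{TV}} < \sigma^{n_0}$ does not follow merely from ``the singular mass tends to zero'': that mass decays like $c^{n}$ for some $c<1$ determined by the first convolution power with nontrivial absolutely continuous part, and you have given no reason why this particular $c$ should lie below $\sigma$ (both quantities tend to zero, and it is the comparison of rates that matters). Second, even granting everything else, your argument bounds $||S_r^{n_0}||$, and ``taking $n_0$-th roots'' yields only $||S_r^{n_0}||^{1/n_0} < \sigma$, which controls $\rho(S_r)$ but not $||S_r||$; the inequality $||S_r^{n_0}||^{1/n_0}\leq ||S_r||$ goes the wrong way, and equality would require $S_r$ to be normal, which is not assumed here. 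The same objection applies to your invocation of \eqref{SrPowerOperatorNormEstimate} on the annulus, since that estimate is likewise about $||S_r^n||^{1/n}$ rather than $||S_r||$. Bougerol's argument is more delicate than this outline suggests; you should consult it directly.
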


\begin{proof}
	The claim of the lemma was established for spread out measures in Section 2.2 of \cite{Bougerol1981}. It remains to treat the case where $\mu$ is bi-$K$-invariant. Note that as $S_r = \rho_r(\Haarof{K}) * S_r * \rho_r(\Haarof{K})$ it holds that $S_r 1_{\Omega} = \lambda(r)1_{\Omega}$ and $S_r \langle 1_{\Omega}\rangle^{\perp} = \{ 0 \}$ and therefore $\lambda(r) = \int \phi_r(g) \, d\mu(g)$. The claim now follows as $\phi_r(g) \to 0$ (cf. for example appendix A of \cite{FinisMatz2021}) for fixed $g \in G \backslash K$ and $r \to \infty$ and using that $\mu(G \backslash K) > 0$ as $\mu$ is non-degenerate. 
\end{proof}

\subsection{Proof of Theorem~\ref{QuasicompactnessImpliesLLT}}\label{SectionLLT}

\begin{lemma}\label{LLTSchwartzCompactSupport}
	Let $G$ and $\mu$ be as in Theorem~\ref{QuasicompactnessImpliesLLT}. Let $f\in \Schwartz{X}$ be a Schwartz function whose Fourier transform is compactly supported. Then  $$\lim_{n \to \infty}\frac{n^{\ell/2}}{\sigma^n}\int f(g.x_0) \, d\mu^{*n}(g) = \int f(g.x_0) \psi_{0}(g) \, d\Haarof{G}(g).$$ 
\end{lemma}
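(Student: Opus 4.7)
The plan is to run the same decomposition used in the proof of Theorem~\ref{LCLT}, but to pass to the limit via dominated convergence rather than through quantitative estimates, since under only a second moment hypothesis the sharp estimates of Lemma~\ref{s(r)Estimate} and Lemma~\ref{c(r)Estimate} are not directly available. Write $x_0 = h_0 K$ for some $h_0 \in G$. Starting from the Fourier inversion identity \eqref{LLTFourierSide}, I split the frequency integral at $|r| = \delta_0$ with $\delta_0$ as in Proposition~\ref{S_rStrongSpectralGap}. The high-frequency tail $|r| > \delta_0$ is $O_{\mu,f}(e^{-c_f n}||f||_1) \to 0$ by Lemma~\ref{HighFrequencyFourierCompactlySupported}, which is precisely where the compact support of $\widehat{f}$ is used.

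For $|r| \leq \delta_0$ I apply the strong spectral gap decomposition $S_r = \lambda(r)E_r + D_r$ from Proposition~\ref{S_rStrongSpectralGap}. The $D_r^n$-contribution is $O_\mu(e^{-cn}||f||_1)$ by Lemma~\ref{LowFreqProj}. The remaining term, after the change of variables $r \mapsto r/\sqrt{n}$ and using $\ell/2 - (\dim \mathfrak{a}^*)/2 = p$, takes the form \eqref{RemainingTermFubini}, and I will apply the dominated convergence theorem to this integral. For pointwise convergence of the integrand: Proposition~\ref{s(r)behavioursmallr} asserts that $\lambda$ is $C^2$ at $0$ with negative-definite Hessian, so a Taylor expansion yields $\lambda(r/\sqrt{n})^n/\sigma^n \to e^{-c_2 Q(r,r)}$ for every fixed $r$; the explicit $\Gamma$-function product \eqref{c(r)GammaExpression} yields $n^p|c(r/\sqrt{n})|^{-2} \to c_G\prod_{\ell=1}^p|\langle r,r_\ell\rangle|^2$; and Proposition~\ref{S_rStrongSpectralGap} combined with continuity of the principal series gives $\widehat{f}(r/\sqrt{n},\omega) \to \widehat{f}(0,\omega)$ and $E_{r/\sqrt{n}}\rho_{r/\sqrt{n}}(h_0)1 \to E_0\rho_0(h_0)1$ in $L^2(\Omega)$.

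The main obstacle is constructing an integrable dominator uniform in $n$. The bound $|\lambda(r)/\sigma| \leq 1 - c^*|r|^2$ for small $r$, a consequence of the $C^2$ regularity and the negative-definite Hessian from Proposition~\ref{s(r)behavioursmallr}, yields the majorant $|\lambda(r/\sqrt{n})|^n/\sigma^n \leq e^{-c^*|r|^2}$ uniformly on $|r| \leq \delta_0\sqrt{n}$. The factor $n^p|c(r/\sqrt{n})|^{-2}$ can be controlled uniformly in $n$ by a polynomial $P(|r|)$ via the explicit formula \eqref{c(r)GammaExpression} together with the vanishing of $|c(r)|^{-2}$ to order $2p$ at $r = 0$. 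Finally, Cauchy--Schwarz on $\Omega$ together with $||\widehat{f}(r,\cdot)||_{L^2(\Omega)} \leq ||f||_1$ from Lemma~\ref{FourierL1Bound}, the uniform boundedness of $||E_r||$ near $0$ from Proposition~\ref{S_rStrongSpectralGap}, and $||\rho_r(h_0)1||_{L^2(\Omega)} = 1$ bound the inner $\Omega$-integral by $C||f||_1$. These assemble into an integrable majorant $C||f||_1 P(|r|)e^{-c^*|r|^2}$.

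Applying dominated convergence followed by Fubini gives the limit
$$\left(\int_{\mathfrak{a}^*}\gamma(r)\,d\Haarof{\mathfrak{a}^*}(r)\right)\int_\Omega \widehat{f}(0,\omega)(E_0\rho_0(h_0)1)(\omega)\,d\Haarof{\Omega}(\omega) = c_\mu \int_\Omega \widehat{f}(0,\omega)(E_0\rho_0(h_0)1)(\omega)\,d\Haarof{\Omega}(\omega).$$
Applying Lemma~\ref{MuStationaryMeasure} with $r = 0$ and $h = h_0^{-1}$ rewrites the right hand side as $c_\mu \int f(g)\psi_{\mu,0}(gh_0^{-1})\,d\Haarof{G}(g)$; the substitution $g \mapsto gh_0$ together with right $K$-invariance of $f$ (so that $f(gh_0) = f(g.x_0)$ when $f$ is lifted to $G$) finally yields $\int f(g.x_0)\psi_0(g)\,d\Haarof{G}(g)$, using $\psi_0 = c_\mu\psi_{\mu,0}$ from \eqref{DefPsin}.
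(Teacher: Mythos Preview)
Your proposal is correct and follows essentially the same route as the paper: after discarding the high-frequency and $D_r$ contributions via Lemma~\ref{HighFrequencyFourierCompactlySupported} and Lemma~\ref{LowFreqProj}, both you and the paper apply dominated convergence to \eqref{RemainingTermFubini}, using the $C^2$ expansion of $\lambda$ from Proposition~\ref{s(r)behavioursmallr} for pointwise convergence and the bound $|\lambda(r/\sqrt{n})|^n/\sigma^n \leq e^{-c^*|r|^2}$ together with polynomial control of $n^p|c(r/\sqrt{n})|^{-2}$ for the integrable majorant, then identify the limit via Lemma~\ref{MuStationaryMeasure} at $r=0$.
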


\begin{proof}
	The proof is as the one of Theorem~\ref{StrongLCLT} expect that we cannot use Lemma~\ref{s(r)Estimate}. Revising the argument of Lemma~\ref{s(r)Estimate}, it follows that for the positive definite quadratic form $Q$ from Lemma~\ref{s(r)Estimate}, under the assumption that $\mu$ has finite second moment, it holds that $\lambda(r) = \lambda(0) - Q(r,r) + o(|r|^2)$ and therefore for $|r| \leq \delta_0 \sqrt{n}$, $$\lim_{n \to \infty}  \frac{\lambda(r/\sqrt{n})^n}{\sigma^n} = e^{-c_2Q(r,r)} \quad\quad \text{and} \quad\quad   \frac{\lambda(r/\sqrt{n})^n}{\sigma^n}  \ll e^{c'|r|^2}$$ for a suitable constant $c'>0$. Similarly to Lemma~\ref{s(r)c(r)EstimateCombined}, $$\lim_{n \to \infty} \frac{n^p}{\sigma^n} \lambda(\tfrac{r}{\sqrt{n}})^n |c(\tfrac{r}{\sqrt{n}})|^{-2} = \gamma(r).$$ Arguing as in the proof of Theorem~\ref{LCLT}, it therefore follows by dominated 
	convergence,
	\begin{align*}
		&\lim_{n \to \infty}\frac{n^{\ell/2}}{\sigma^n}\int f(g.x_0) \, d\mu^{*n}(g)   = \lim_{n \to \infty} \eqref{RemainingTerm} \\
		&= \int_{r \in \mathfrak{a}^{*}} \gamma(r) \int_{\Omega} \widehat{f}(0,\omega) (E_{0} \rho_0(h_0)1)(\omega) \, d\Haarof{\Omega}(\omega) d\Haarof{\mathfrak{a}^{*}}(r) \\
		&= \int f(g.x_0) \psi_0(g) \, d\Haarof{G}(g).
	\end{align*}
\end{proof}

\begin{lemma}\label{LLTBound}
	Let $f \in \Schwartz{X}$. Then $$\limsup_{n \to \infty}\bigg|\frac{n^{\ell/2}}{\sigma^n} \int f(g.x_0) \, d\mu^{*n}(g)\bigg|  \ll ||f||_1,$$ where the implied constant depends only on $G$.
\end{lemma}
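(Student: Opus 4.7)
The plan is to mirror the proof of Theorem~\ref{LCLT} from Section~\ref{SectionStrongLCLT}, but to settle for a $\limsup$ bound instead of a quantitative rate of convergence, so that the finite second moment hypothesis suffices. Via the spherical Fourier inversion formula \eqref{LLTFourierSide}, I would decompose
\[
\frac{n^{\ell/2}}{\sigma^n} \int f(g.x_0) \, d\mu^{*n}(g) = A_n + B_n,
\]
where $A_n$ and $B_n$ are the contributions of $\abs{r} \leq \delta_0$ and $\abs{r} > \delta_0$ respectively (with $\delta_0$ as in Proposition~\ref{S_rStrongSpectralGap}), and then handle the two pieces separately.

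For the low-frequency contribution I would show $\abs{A_n} \ll_\mu \norm{f}_1$ uniformly in $n$. Using Proposition~\ref{S_rStrongSpectralGap} to write $S_r^n = \lambda(r)^n E_r + D_r^n$ on $\abs{r} \leq \delta_0$ and Lemma~\ref{LowFreqProj} to absorb the $D_r^n$ contribution into an $O_\mu(e^{-cn}\norm{f}_1)$ error, the main term factors pointwise in $r$ as $\lambda(r)^n \langle \rho_r(h_0)1, \eta_r' \rangle \langle \widehat f(r,\cdot), \overline{\eta_r} \rangle$. The uniform bounds $\norm{\eta_r}_2, \norm{\eta_r'}_2 \ll_\mu 1$ from Proposition~\ref{S_rStrongSpectralGap} together with $\norm{\widehat f(r,\cdot)}_{L^2(\Omega)} \leq \norm{f}_1$ from Lemma~\ref{FourierL1Bound} bound the integrand by $O_\mu(\norm{f}_1 \abs{\lambda(r)}^n)$. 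It remains to check that $\frac{n^{\ell/2}}{\sigma^n} \int_{\abs{r} \leq \delta_0} \abs{\lambda(r)}^n \, d\SphericalPlancharelMeasure(r) = O_\mu(1)$; this follows from $\abs{\lambda(r)} \leq \sigma(1 - c\abs{r}^2)$ near zero (Proposition~\ref{s(r)behavioursmallr}, already valid under second moment) and $\abs{c(r)}^{-2} \asymp \abs{r}^{2p}$ near zero, since after substituting $r \mapsto r/\sqrt{n}$ the prefactor $n^{\ell/2} = n^{p + d/2}$ is exactly absorbed by the Jacobian and the leading $c$-function behavior.

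For the high-frequency contribution I would show the stronger statement $B_n \to 0$ as $n \to \infty$; since only a limsup bound is required, the rate of convergence may depend on $f$. Split $B_n = B_n^{<R} + B_n^{>R}$ with $R = R(n) \to \infty$ to be chosen. On $\delta_0 < \abs{r} \leq R$, Equation \eqref{SrPowerOperatorNormEstimate} of Lemma~\ref{HighFreqeuncyNormLess} yields $\sup \norm{S_r^n}^{1/n} \leq \sigma(1 - \eps_R)$ for some $\eps_R > 0$, and combined with $\norm{\widehat f(r,\cdot)}_2 \leq \norm{f}_1$ the factor $(1-\eps_R)^n$ defeats the prefactor $n^{\ell/2}$ provided $R(n)$ grows slowly enough that $\eps_{R(n)}$ is not too small. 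On $\abs{r} > R$, iterating the spherical eigenvalue identity $\widehat{\triangle^N f}(r,\omega) = (\abs{r}^2 + \abs{\delta}^2)^N \widehat f(r,\omega)$ together with Lemma~\ref{FourierL1Bound} gives $\norm{\widehat f(r,\cdot)}_2 \ll_{f,N} \abs{r}^{-2N}$ for every $N \geq 1$, and combined with $\abs{c(r)}^{-2} \ll 1 + \abs{r}^{\dim N}$ the tail is bounded by $O_f(n^{\ell/2} R(n)^{-(2N - \dim A - \dim N)})$, which vanishes for $N$ large and $R(n)$ at least polynomial in $n$.

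The main obstacle is the simultaneous control of the two pieces of $B_n$: we need $R(n)$ growing fast enough that the tail bound disappears, yet slowly enough that $\eps_{R(n)}$ remains bounded below on a useful scale (since the paper provides no explicit quantitative decay rate for $\eps_R$ as $R \to \infty$). For each fixed $f$ this can be resolved by taking $R(n) = n^\alpha$ with $\alpha > 0$ small and $N$ sufficiently large, using only that $\eps_R > 0$ for each finite $R$. Combined with the uniform bound $\abs{A_n} \ll_\mu \norm{f}_1$ established above, this yields $\limsup_n \abs{A_n + B_n} \ll_\mu \norm{f}_1$, completing the proof.
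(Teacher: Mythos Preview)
Your low-frequency estimate $\abs{A_n} \ll_\mu \norm{f}_1$ is correct and well-executed. The high-frequency argument, however, has a genuine gap. You need $R(n)\to\infty$ fast enough that $n^{\ell/2} R(n)^{-(2N-\dim X)}\to 0$, which forces $R(n)$ to grow at least like some positive power of $n$. But then your claim that ``$(1-\eps_{R(n)})^n$ defeats the prefactor $n^{\ell/2}$ \ldots\ using only that $\eps_R>0$ for each finite $R$'' is false: equation \eqref{SrPowerOperatorNormEstimate} gives no quantitative lower bound on $\eps_R$ as $R\to\infty$, so nothing prevents $\eps_R$ from decaying like $2^{-R}$, in which case $\eps_{n^\alpha}=2^{-n^\alpha}$ and $n^{\ell/2}(1-\eps_{R(n)})^n\to\infty$ for every $\alpha>0$. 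No diagonal choice of $R(n)$ can reconcile the two constraints without such a quantitative input, and the paper provides none (precisely because \eqref{SrPowerOperatorNormEstimate} is proved by contradiction).

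The paper avoids this difficulty entirely by a positivity/majorization argument rather than Fourier truncation. One first reduces to $f\geq 0$, then to $f=1_{B_\eps(x)}$, and then invokes a Paley--Wiener type result of Andersen to produce a nonnegative $h\in\Schwartz{X}$ with \emph{compactly supported} Fourier transform satisfying $1_{B_\eps(x)}\leq h$ and $\norm{h}_1\ll\mathrm{vol}_X(B_\eps)$. Since Lemma~\ref{LLTSchwartzCompactSupport} has already established the local limit theorem for such $h$, the $\limsup$ for $1_{B_\eps(x)}$ is bounded by $\int h(g.x_0)\psi_0(g)\,d\Haarof{G}\ll\norm{h}_1\ll\norm{1_{B_\eps(x)}}_1$. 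The point is that the monotone majorant $h$ makes the high-frequency region vanish outright, so one never needs uniform-in-$R$ control of $\norm{S_r^n}$.
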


\begin{proof}
	One may reduce to functions $f \geq 0$. By covering the latter function suitably by a linear combination of characteristic functions, it suffices to show the claim for $f = 1_{B_{\varepsilon}(x)}$ with $\varepsilon > 0$ small and $x \in X$.  By Theorem 5.7 of \cite{Andersen2004} there is a positive function $h \in \mathscr{S}(X)$, whose Fourier transform has compact support, satisfying $1_{B_{\varepsilon}(x)} \leq h$ and $||h||_1 \ll \mathrm{vol}_X(B_{\varepsilon})$. The lemma follows by applying Lemma~\ref{LLTSchwartzCompactSupport} to $h$.
\end{proof}

\begin{proof}(of Theorem~\ref{QuasicompactnessImpliesLLT})
	Let $\delta_{\ell} \in \mathscr{S}(X)$ be an approximation to the identity on $G$ that is bi-$K$-invariant and whose Fourier transform has compact support. Such functions exists by choosing a sequence $\omega_{\ell}$ of smooth bi-$K$-invariant approximations to the identity that are supported on smaller and smaller balls around $e\in G$. As a Schwartz function is characterized by its Fourier transform, it suffices to determine $\widehat{\delta_{\ell}}.$ Indeed one may choose $\widehat{\delta_{\ell}}$ to be equal to $\widehat{\omega_{\ell}}$ in a sufficiently large ball around the identity and to decay to zero rapdily outside of it. One then readily checks that $\delta_{\ell}$ satisfies the required properties. 
	
	Then for $f \in \Schwartz{X}$, it holds for $r \in \mathfrak{a}^{*}$ and $k \in \Omega$,  
	\begin{align*}
		\widehat{f * \delta_{\ell}}(r,k) = (\rho_{-r}(f * \delta_{\ell})1)(k) = (\rho_{-r}(f)\rho_{-r}(\delta_{\ell})1)(k) = \widehat{f}(r,k)\widehat{\delta_{\ell}}(r).
	\end{align*} Therefore the Fourier transform of $f * \delta_{\ell}$ has compact support.
	
	Combining Corollary~\ref{LLTSchwartzCompactSupport} and Lemma~\ref{LLTBound}, for $f \in \mathscr{S}(X)$,
	\begin{align*}
		&\frac{n^{\ell/2}}{\sigma^n}\int f(g.x_0) \, d\mu^{*n}(g) \\ &=  \frac{n^{\ell/2}}{\sigma^n}\int (f * \delta_{\ell})(g.x_0) \, d\mu^{*n}(g) +  \frac{n^{\ell/2}}{\sigma^n}\int (f- f * \delta_{\ell})(g.x_0) \, d\mu^{*n}(g) \\
		&= \int f(g.x_0) \psi_{0}(g) \, d\Haarof{G}(g) + O_{\mu}(||f - f*\delta_{\ell}||_1) + o_{f,\ell}(1)
	\end{align*} having used Lemma~\ref{LLTBound} and that $| \int (f- f*\delta_{\ell})(g) \psi_{0}(gh_0^{-1}) \, d\Haarof{G}(g) | \ll_{\mu} ||f- f*\delta_{\ell}||_1$ as $\psi_0$ is bounded. The claim follows by choosing $\ell$ sufficiently slowly increasing  in $n$.
\end{proof}

 \section{Quasicompactness of $S_0$}\label{SectionQuasicompactnessS0}

In this section we discuss how to establish quasicompactness of $S_0$ under strong Diophantine assumption. The reader may recall the Littlewood-Paley decomposition $L^2(K) = \bigoplus_{\ell \geq 0} V_{\ell}$ (see  \eqref{LittlewoodPayleyDecomposition}), where the space of functions $V_{\ell}$ can be pictured as oscillating with frequency $2^{\ell}$. The main result of this section states that under suitable assumptions, the operator $S_0$ has small norm on the space of functions with high enough oscillations.

Recall that we denoted by $\rho_0^{+}$ the Koopman representation induced by the $G$ action on $K$, which contains the zero principal series $\rho_0$ as a subrepresentation and write $S_0^{+} = \rho_0^{+}(\mu)$. Instead of considering $S_0$, we study $S_0^{+}$, which leads to stronger statements.

\begin{theorem}\label{BIGConsequence}
	Let $G$ be a non-compact connected simple Lie group with finite center. For $c_1, c_2 > 0$ there exists $\varepsilon_0 = \varepsilon_0(c_1,c_2) > 0$ such that the following holds.  For any $0 < \varepsilon < \varepsilon_0$ and any symmetric and $(c_1, c_2, \varepsilon)$-Diophantine probability measure $\mu$ there is $L = L(c_1,c_2) \in \Z_{\geq 1}$ such that for $\varphi \in \bigoplus_{\ell \geq L} V_{\ell}$, 
	\begin{equation}\label{S_0WeakHighFrequencyEstimate}
		||S_0^{+}\varphi||_2 \leq \frac{1}{4}||\varphi||_2.
	\end{equation}
\end{theorem}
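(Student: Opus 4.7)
The plan follows the two-step strategy outlined in Section~\ref{SectionOutline}. First I establish a uniform bound $\|S_0^+|_{V_\ell}\|\leq \varepsilon^{c_5}$ for all $\ell\geq L_0$ (Step 1); then I combine these via an almost-orthogonality argument that exploits the fact that $\mu$ is supported close to the identity (Step 2).

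For Step 1, since $\mu$ is symmetric the operator $S_0^+$ is self-adjoint, so Jensen's inequality applied to the spectral measure of $S_0^+$ yields
\begin{equation*}
\|S_0^+\varphi\|_2^{2N}\leq \|\varphi\|_2^{2(N-1)}\,\langle (S_0^+)^{2N}\varphi,\varphi\rangle
\end{equation*}
for every $\varphi\in L^2(K)$ and $N\geq 1$. For $\varphi\in V_\ell$ I rewrite $\langle (S_0^+)^{2N}\varphi,\varphi\rangle=\int_G \langle\rho_0^+(g)\varphi,\varphi\rangle\,d\mu^{*2N}(g)$ and approximate $\mu^{*2N}$ by its mollification $\mu^{*2N}*P_\delta$, incurring a smoothing error of size $O(\delta\cdot 2^\ell\|\varphi\|_2^2)$ via the Sobolev-type estimate $\|(\rho_0^+(x)-\mathrm{Id})\varphi\|_2\ll \|x\|\cdot 2^\ell\|\varphi\|_2$ for $\varphi\in V_\ell$ and $x$ close to the identity. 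The smoothed integral is controlled by Cauchy-Schwarz together with the flattening lemma (Theorem~\ref{SuperFlatteningLemma}), which gives $\|\mu^{*2N}*P_\delta\|_2\leq \delta^{-\gamma}$ for any preassigned $\gamma>0$ provided $N\geq C_0\log(1/\delta)/\log(1/\varepsilon)$. The remaining factor $\|F\cdot 1_{B_R}\|_2$ with $F(g)=\langle\rho_0^+(g)\varphi,\varphi\rangle$ and $R\asymp N\varepsilon$ is estimated by combining the pointwise bound $|F|\leq \|\varphi\|_2^2$ with the matrix-coefficient oscillation bound of Proposition~\ref{HighOscillatingAverage}, yielding $\|F\cdot 1_{B_R}\|_2\ll m_G(B_R)^{1/2}\cdot 2^{-\ell/4}\|\varphi\|_2^2$. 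Choosing $\delta=2^{-2\ell}$, $\gamma<1/8$, and taking $\varepsilon_0(c_1,c_2)$ small enough so that the exponential volume growth $m_G(B_R)\leq e^{O(R)}$ is beaten by $2^{-c\ell}$, I conclude $\langle(S_0^+)^{2N}\varphi,\varphi\rangle\leq 2^{-c\ell}\|\varphi\|_2^2$; taking $2N$-th roots with $N\asymp \ell/\log(1/\varepsilon)$ yields the uniform bound $\|S_0^+|_{V_\ell}\|\leq \varepsilon^{c_5}$ for all $\ell\geq L_0$, with $c_5,L_0$ depending only on $c_1,c_2$.

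For Step 2, decomposing $\varphi=\sum_{\ell\geq L}\varphi_\ell$ orthogonally I expand
\begin{equation*}
\|S_0^+\varphi\|_2^2=\sum_{\ell\geq L}\|S_0^+\varphi_\ell\|_2^2+2\,\mathrm{Re}\sum_{L\leq\ell<\ell'}\langle (S_0^+)^2\varphi_\ell,\varphi_{\ell'}\rangle.
\end{equation*}
The diagonal sum is controlled by $\varepsilon^{2c_5}\|\varphi\|_2^2$ via Step 1. For the off-diagonal terms, using $\langle\varphi_\ell,\varphi_{\ell'}\rangle=0$ and the representation $\langle(S_0^+)^2\varphi_\ell,\varphi_{\ell'}\rangle=\int_{B_{2\varepsilon}}\langle(\rho_0^+(g)-\mathrm{Id})\varphi_\ell,\varphi_{\ell'}\rangle\,d\mu^{*2}(g)$, I exploit that for $g$ close to the identity $\rho_0^+(g)$ approximately preserves the Peter-Weyl decomposition, so $\|P_{V_{\ell'}}\rho_0^+(g)P_{V_\ell}\|$ decays in $|\ell-\ell'|$. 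Quantifying this via a Taylor expansion of $\rho_0^+(g)=\sum_k(d\rho_0^+(X))^k/k!$, combined with a decomposition of $d\rho_0^+(X)$ into a frequency-preserving derivation and a bounded multiplication operator whose Fourier coefficients decay rapidly, yields bounds of the form $|\langle (S_0^+)^2\varphi_\ell,\varphi_{\ell'}\rangle|\leq a_{|\ell-\ell'|}\|\varphi_\ell\|_2\|\varphi_{\ell'}\|_2$ with $\sum_k a_k$ small. Schur's test then gives $\|S_0^+|_{\bigoplus_{\ell\geq L}V_\ell}\|\leq 1/4$ for $L=L(c_1,c_2)$ sufficiently large and $\varepsilon<\varepsilon_0(c_1,c_2)$.

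The main obstacle is the off-diagonal estimate in Step 2: the naive bound $|\langle(\rho_0^+(g)-\mathrm{Id})\varphi_\ell,\varphi_{\ell'}\rangle|\ll \varepsilon\cdot 2^{\min(\ell,\ell')}\|\varphi_\ell\|_2\|\varphi_{\ell'}\|_2$ fails to be summable in $\ell'$, forcing a more refined analysis that extracts geometric decay in $|\ell-\ell'|$. This relies on a careful decomposition of $d\rho_0^+(X)$ rooted in the explicit form \eqref{rhorplus} of the representation, exploiting the smoothness of the Radon-Nikodym factor in the variable $k$.
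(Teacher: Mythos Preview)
Your two-step strategy is essentially the paper's proof of Theorem~\ref{BourgainMainResult}, not of Theorem~\ref{BIGConsequence}. The crucial distinction is that Theorem~\ref{BourgainMainResult} carries the extra hypothesis that $K$ is semisimple, and your Step~1 relies on this through Proposition~\ref{HighOscillatingAverage}: the matrix-coefficient bound $\int_{B_R}|\langle\rho_0^+(g)\varphi,\varphi\rangle|\,d\Haarof{G}\ll \Haarof{G}(B_R)2^{-\ell/2}\|\varphi\|_2^2$ rests on the inequality $\|\gamma\|\ll d_\gamma$ of Lemma~\ref{DimensionLaplacianBound}, which fails once $K$ has a toral factor (e.g.\ $G=\SL_2(\R)$, $K=\SO(2)$, where all $d_\gamma=1$). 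So as written your argument does not prove Theorem~\ref{BIGConsequence} in the generality stated.

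The paper's proof of Theorem~\ref{BIGConsequence} is entirely different and avoids this issue. It first replaces $S_0^+$ by $T_0^+$, then lifts the problem from $L^2(K)$ to $L^2(B)$ for a fixed compact $B\subset G$ via the projection $\pi_K:G\to K=G/P^+$, using a comparison of norms (Lemma~\ref{ComparisonBoundLemma}). On $L^2(B)$ the operator becomes $\lambda_G(\mu)$, and one invokes Theorem~\ref{BIG17Thm6.7} (Theorem~6.7 of \cite{BoutonnetIoanaSalehiGolsefidy2017}) as a black box: there is a finite-dimensional $V_B\subset L^2(B)$ outside of which $\lambda_G(\mu)$ has small norm. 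Choosing $L$ large enough so that high-frequency $\varphi$ lift to functions nearly orthogonal to $V_B$ finishes the argument. No Littlewood--Paley almost-orthogonality and no bound on matrix coefficients on $K$ is needed; the heavy lifting (flattening plus a mixing inequality) is hidden inside the cited theorem, which works for arbitrary simple $G$.

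If you are willing to assume $K$ semisimple, your outline does lead to the stronger conclusion of Theorem~\ref{BourgainMainResult}. In that case your Step~2 is also carried out in the paper, but more cleanly than via Taylor expansion: Lemma~\ref{AlmostOrthogonalRho0} uses that the Casimir acts by $\lambda_\gamma\asymp\|\gamma\|^2$ on $M_\gamma$, writes $\varphi_{\ell_2}=\lambda_K(\triangle)\psi$ with $\|\psi\|_2\ll 2^{-2\ell_2}\|\varphi_{\ell_2}\|_2$, and integrates by parts once via \eqref{PartialIntegration} to get $|\langle\rho_0^+(g)\varphi_{\ell_1},\varphi_{\ell_2}\rangle|\ll 2^{-|\ell_1-\ell_2|}\|\varphi_{\ell_1}\|_2\|\varphi_{\ell_2}\|_2$, directly summable by Schur's test. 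This bypasses the delicate decomposition of $d\rho_0^+(X)$ you flag as the main obstacle.
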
  

Theorem~\ref{BIGConsequence} will be deduced in Section~\ref{BIGConsequence} using results and ideas from \cite{BoutonnetIoanaSalehiGolsefidy2017}, thereby exploiting that the measure $\mu$ has high dimension \eqref{HighDimension} as well as a Littlewood-Paley decomposition and a mixing inequality on $G$. Under the additional assumption that $K$ is semisimple, one may instead follow Bourgain's \cite{Bourgain2012} original ideas and improve \eqref{S_0WeakHighFrequencyEstimate}.

\begin{theorem}\label{BourgainMainResult}
	Let $G$ be a non-compact connected simple Lie group with finite center and maximal compact subgroup $K$. Assume that $K$ is semisimple. For $c_1, c_2 > 0$ there exists $\varepsilon_0 = \varepsilon_0(c_1,c_2) > 0$ such that the following holds.  For any $0 < \varepsilon < \varepsilon_0$ and any symmetric and $(c_1, c_2, \varepsilon)$-Diophantine probability measure $\mu$ there is $L = L(c_1,c_2) \in \Z_{\geq 1}$ such that for $\varphi \in \bigoplus_{\ell \geq L} V_{\ell}$, 
	\begin{equation}\label{S_0HighFrequencyEstimate}
		||S_0^{+}\varphi||_2 \leq \varepsilon^{O_{c_1,c_2}(1)}||\varphi||_2.
	\end{equation}
\end{theorem}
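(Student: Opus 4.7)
The plan is to follow Bourgain's strategy. Symmetry of $\mu$ makes $S_0^{+}$ self-adjoint on $L^2(K)$, so the spectral theorem applied to $x \mapsto x^N$ yields the Jensen-type inequality
\[
\|S_0^{+}\varphi\|_2^{2N} \leq \|(S_0^{+})^N\varphi\|_2^2 \cdot \|\varphi\|_2^{2N-2}
\]
for every $\varphi$ and every positive integer $N$. Choosing $N = N(c_1, c_2)$ constant, it suffices to show that $\langle (S_0^{+})^{2N}\varphi, \varphi\rangle \leq \varepsilon^{c}\|\varphi\|_2^2$ for all $\varphi \in \bigoplus_{\ell\geq L}V_\ell$ and some $c = c(c_1, c_2) > 0$; the inequality then promotes this to $\|S_0^{+}\varphi\|_2 \leq \varepsilon^{c/(2N)}\|\varphi\|_2 = \varepsilon^{O_{c_1,c_2}(1)}\|\varphi\|_2$, as desired.

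To establish the core bound, fix $\gamma > 0$ small and $B > 0$ large (both depending on $c_1, c_2$), set $\delta = \varepsilon^B$, and choose $N$ so that Theorem~\ref{SuperFlatteningLemma} gives $\|(\mu^{*2N})_\delta\|_2 \leq \delta^{-\gamma}$ once $\varepsilon$ is sufficiently small. Writing $F(g) = \langle \rho_0^{+}(g)\varphi,\varphi\rangle$, so that $\langle (S_0^{+})^{2N}\varphi, \varphi\rangle = \int F\,d\mu^{*2N}$, split this as the smoothed integral $\int F\,d(\mu^{*2N})_\delta$ plus a smoothing error. The smoothed integral is bounded via Cauchy--Schwarz by $\|(\mu^{*2N})_\delta\|_2 \cdot \|F\cdot 1_{B_R}\|_2$ with $R = 2N\varepsilon + \delta \ll \varepsilon$. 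Expanding $F$ using the decomposition $\varphi = \sum_{\ell\geq L}\varphi_\ell$ and applying Proposition~\ref{HighOscillatingAverage} to each pair (whose proof in fact gives the bound $2^{-\max(\ell,\ell')/2}\|\varphi_\ell\|_2\|\varphi_{\ell'}\|_2$ whenever at least one of the two vectors lies in a Littlewood--Paley band, since $K$ is semisimple), together with $\|F\|_\infty \leq \|\varphi\|_2^2$ (as $\rho_0^{+}$ is unitary), one obtains $\int_{B_R}|F|\,dg \ll \Haarof{G}(B_R)\cdot 2^{-L/2}\|\varphi\|_2^2$ and hence $\|F\cdot 1_{B_R}\|_2 \ll \Haarof{G}(B_R)^{1/2}\cdot 2^{-L/4}\|\varphi\|_2^2 \ll \varepsilon^{(\dim G)/2}\cdot 2^{-L/4}\|\varphi\|_2^2$. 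Multiplying by $\|(\mu^{*2N})_\delta\|_2 \leq \varepsilon^{-B\gamma}$ and choosing $\gamma < (\dim G)/(2B)$ yields a bound $\varepsilon^{c'}\|\varphi\|_2^2$ for the smoothed integral with $c' = c'(c_1, c_2) > 0$.

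The main obstacle is controlling the smoothing error $|\int F\,d\mu^{*2N} - \int F\,d(\mu^{*2N})_\delta|$. For a single $\varphi \in V_\ell$ this error is at most $\sup_{k\in B_\delta}\|(\rho_0^{+}(k) - I)\varphi\|_2 \cdot \|\varphi\|_2 \ll \delta\cdot 2^\ell\|\varphi\|_2^2$, following from the first-order action of $\mathfrak{g}$ on $V_\ell$, and it balances the main term to yield the target bound at each individual frequency scale. For general $\varphi \in \bigoplus_{\ell\geq L}V_\ell$ the naive triangle-inequality estimate for this error may diverge, and overcoming this is the substantive technical difficulty. My plan is to split $\varphi$ dyadically and run the smoothing-plus-flattening argument with a band-adapted smoothing parameter $\delta_\ell$ on each Littlewood--Paley band, then reassemble via an almost-orthogonality statement for the images $\rho_0^{+}(g)V_\ell$ when $g \in \supp{\mu^{*2N}} \subset B_{2N\varepsilon}$. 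The almost orthogonality rests on the Clebsch--Gordan-type fact that first-order operators on $K$ shift Fourier modes by a bounded amount; the subtle point to handle is that for $X \in \mathfrak{a}\oplus\mathfrak{n}$ the derivative $d\rho_0^{+}(X)$ involves multiplication by an Iwasawa-type factor in addition to a differential operator on $K$, which one must track carefully using the Iwasawa decomposition.
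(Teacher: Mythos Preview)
Your ingredients are the paper's ingredients (flattening, the matrix--coefficient average of Proposition~\ref{HighOscillatingAverage}, smoothing by $P_\delta$, and an almost--orthogonality statement between Littlewood--Paley bands), but the order in which you assemble them creates an internal inconsistency that, as written, is a genuine gap.

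You begin by fixing $N = N(c_1,c_2)$ constant and $\delta = \varepsilon^B$, and you correctly observe that Theorem~\ref{SuperFlatteningLemma} then only requires $2N \geq C_0 B$. But in your final paragraph you propose to resolve the smoothing error via a band--adapted scale $\delta_\ell$; for the smoothing error on $V_\ell$ to be small you need $\delta_\ell \asymp 2^{-O(\ell)}$, and then the flattening hypothesis forces $n_\ell \geq C_0 \log(1/\delta_\ell)/\log(1/\varepsilon) \asymp \ell/\log(1/\varepsilon)$, which depends on $\ell$. So the band--adapted fix is incompatible with the fixed--$N$ Jensen framing you started from. The paper resolves this by inverting your two steps: it \emph{first} proves the single--band bound $\|S_0^{+}|_{V_\ell}\|_{\mathrm{op}} \leq \varepsilon^{O_{c_1,c_2}(1)}$ for each $\ell \geq L$ (Proposition~\ref{NormEstimateonVell}), with $\delta = e^{-O(\ell)}$ and $n$ a power of two with $n \asymp \ell/\log(1/\varepsilon)$; the self--adjointness root--extraction is applied here, per band, and the ratio $\ell/n \asymp_{c_1,c_2} \log(1/\varepsilon)$ is what makes the resulting bound uniform in $\ell$. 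Only \emph{after} this does the paper reassemble, and it does so directly on $\|S_0^{+}\varphi\|_2^2 = \sum_{\ell,\ell'} \langle S_0^{+}P_\ell\varphi, S_0^{+}P_{\ell'}\varphi\rangle$, with no further convolutions: the near--diagonal terms $|\ell-\ell'|\leq N$ are controlled by the single--band operator norm, and the far terms by almost orthogonality; choosing $N \asymp \log(1/\varepsilon)$ balances the two.

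Your concern about $d\rho_0^{+}(X)$ for $X \in \mathfrak{a}\oplus\mathfrak{n}$ is a red herring. The almost--orthogonality lemma (Lemma~\ref{AlmostOrthogonalRho0}) never differentiates in the $G$--direction: for fixed $g \in B_\varepsilon$ one writes $\rho_0^{+}(g)\varphi = \sqrt{\alpha_g'}\cdot(\varphi\circ\alpha_g)$ as a function on $K$, applies integration by parts via the \emph{Casimir of $K$} (equation~\eqref{PartialIntegration}), and uses that $\alpha_g$ is a diffeomorphism of $K$ $\varepsilon$--close to the identity to bound the $\mathfrak{k}$--derivatives (Lemma~\ref{DirectionalDerivativeEstimate}). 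This gives $|\langle \rho_0^{+}(g)\varphi_{\ell_1},\varphi_{\ell_2}\rangle| \ll 2^{-|\ell_1-\ell_2|}\|\varphi_{\ell_1}\|_2\|\varphi_{\ell_2}\|_2$ directly, without any Iwasawa bookkeeping.
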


The proof of Theorem~\ref{BourgainMainResult} was exposed in Section \ref{SectionOutline}. As in \cite{BoutonnetIoanaSalehiGolsefidy2017} we exploit that $\mu$ has high dimension, yet we work with the Littlewood-Paley decomposition on $K$ and use that the averages of matrix coefficients of $V_{\ell}$ are small (Proposition~\ref{HighOscillatingAverage}). From these results, one may easily deduce that $S_0$ and $S_0^{+}$ are quasicompact, therefore also implying Theorem~\ref{FurstenbergMeasureLLTMainTheorem}. 

\begin{corollary}\label{CorS_0Quaiscompact}
	Let $G$ be a non-compact connected simple Lie group with finite center and maximal compact group $K$. For $c_1, c_2 > 0$ there exists $\varepsilon_0 = \varepsilon_0(c_1,c_2) > 0$ such that the following holds. For any $0 < \varepsilon < \varepsilon_0$ and any symmetric and $(c_1, c_2, \varepsilon)$-Diophantine probability measure $\mu$, the operators $S_0$ and $S_0^{+}$ are quasicompact.  
\end{corollary}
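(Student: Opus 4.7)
The plan is to deduce quasicompactness of $S_0^+$ (and $S_0$) from the high-frequency norm bound of Theorem~\ref{BIGConsequence} by approximating $S_0^+$ in operator norm by a finite-rank operator, using the Littlewood-Paley decomposition. Given $c_1, c_2 > 0$, let $\eps_0' > 0$ and $L \in \Z_{\geq 1}$ be the constants supplied by Theorem~\ref{BIGConsequence}, so that every symmetric $(c_1, c_2, \eps)$-Diophantine probability measure $\mu$ with $\eps < \eps_0'$ satisfies $||S_0^+\varphi||_2 \leq \tfrac{1}{4}||\varphi||_2$ for all $\varphi \in \bigoplus_{\ell \geq L} V_\ell$.

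Set $W_L = \bigoplus_{0 \leq \ell < L} V_\ell$. This is finite-dimensional: the indexing set $\{\gamma \in \overline{C} \cap I^* : ||\gamma|| < 2^L\}$ is a bounded subset of a lattice, hence finite, and each $M_\gamma$ is finite-dimensional. Let $P$ denote the orthogonal projection of $L^2(K)$ onto $W_L$. Then $S_0^+ P$ has finite rank and is in particular compact, while the conclusion of Theorem~\ref{BIGConsequence} reads precisely $||S_0^+(I - P)|| \leq \tfrac{1}{4}$. Lemma~\ref{QuasicompactnessCharacterization}(i) therefore yields
\[\rho_{\mathrm{ess}}(S_0^+) \leq \rho(S_0^+ - S_0^+ P) \leq ||S_0^+(I - P)|| \leq \tfrac{1}{4}.\]

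The remaining step, which I regard as the main obstacle, is to verify that $\rho(S_0^+) > \tfrac{1}{4}$, so that the quasicompactness inequality $\rho_{\mathrm{ess}} < \rho$ is nontrivial. Symmetry of $\mu$ together with unitarity of $\rho_0^+$ make $S_0^+$ self-adjoint, hence $\rho(S_0^+) = ||S_0^+|| \geq \langle S_0^+ 1, 1\rangle_{L^2(K)} = \int_G \phi_0(g)\, d\mu(g)$, where $\phi_0$ is the spherical function at zero. Since $\phi_0$ is continuous with $\phi_0(e) = 1$ and $\supp \mu \subset B_\eps$, this integral exceeds $\tfrac{1}{2}$ once $\eps$ is small enough, so shrinking $\eps_0 \leq \eps_0'$ accordingly secures quasicompactness of $S_0^+$. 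Quasicompactness of $S_0$ then follows by repeating the same argument inside the $S_0^+$-invariant subspace $L^2(\Omega)$: the Littlewood-Paley decomposition restricts to $L^2(\Omega)$ by passing to $M$-invariants in each $M_\gamma$, the restriction $S_0 = S_0^+|_{L^2(\Omega)}$ inherits the high-frequency $\tfrac{1}{4}$-bound on the corresponding subspace, and testing against $1_\Omega$ in $L^2(\Omega)$ delivers the matching lower bound $\rho(S_0) \geq \int \phi_0\, d\mu$.
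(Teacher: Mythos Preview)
Your proof is correct and follows essentially the same route as the paper: bound $\rho_{\mathrm{ess}}(S_0^+)$ by $\tfrac14$ via Theorem~\ref{BIGConsequence} and Lemma~\ref{QuasicompactnessCharacterization}(i), then show the spectral radius is close to $1$ because $\mu$ is supported near the identity. The only cosmetic differences are that the paper obtains the lower bound on $||S_0^+||$ from the estimate $||\sqrt{\alpha_g'}-1||_\infty \ll \eps^{O(1)}$ rather than from $\langle S_0^+ 1,1\rangle = \int \phi_0\,d\mu$, and that the paper deduces quasicompactness of $S_0$ from that of $S_0^+$ in one line using $||S_0|| = ||S_0^+||$ (Guivarc'h) together with $L^2(\Omega)\subset L^2(K)$ being $S_0^+$-invariant, rather than rerunning the argument on $L^2(\Omega)$.
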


\begin{proof}
	As $||S_0|| = ||S_0^{+}||$ (by Section $D$ of \cite{Guivarch1980}) and since $\rho_0^{+}$ is a subrepresentation of $\rho_0$, it suffices to show that $S_0^{+}$ is quasicompact. By Lemma~\ref{QuasicompactnessCharacterization}, the estimate \eqref{S_0WeakHighFrequencyEstimate} implies that $\rho_{\mathrm{ess}}(S_0^{+}) \leq \frac{1}{4}$. As for $\varepsilon > 0$ small enough, $||\sqrt{\alpha_g'} - 1||_{\infty} \ll |\delta|\,||g|| \ll \varepsilon^{O(1)}$ for $g \in B_{\varepsilon}$, it holds that $||S_0^{+}|| \geq 1 - \varepsilon^{O(1)}$ and hence the claim follows. 
\end{proof}

We next explain how to deduce from \eqref{S_0WeakHighFrequencyEstimate} that the Furstenberg measure is absolutely continuous. Given a non-degenerate probability measure, we study the operator $$T_0: L^2(\Omega) \to L^2(\Omega), \quad\quad  \varphi \mapsto T_0 \varphi = \int \varphi \circ \alpha_g \, d\mu(g).$$  As we discuss in the proof of Corollary~\ref{BourgainT_0MainResult}, it is shown in \cite{BenoistQuint2018} that if $\rho_{\mathrm{ess}}(T_0) < 1$, then the Furstenberg measure of $\mu$ is absolutely continuous. The following corollary is also necessary to establish Theorem~\ref{FurstenbergMeasureSmoothness}.

\begin{corollary}\label{BourgainT_0MainResult}
	Let $G$ be a non-compact connected simple Lie group with maximal compact subgroup $K$. For $c_1, c_2 > 0$ there exists $\varepsilon_0 = \varepsilon_0(c_1,c_2) > 0$ such that the following holds.  For any $0 < \varepsilon < \varepsilon_0$ and any symmetric and $(c_1, c_2, \varepsilon)$-Diophantine probability measure $\mu$ there is $L = L(c_1,c_2) \in \Z_{\geq 1}$ such that 
	\begin{equation}\label{T_0HighFrequencyEstimate}
		||T_0\varphi||_2 \leq \frac{1}{2}||\varphi||_2 \quad\quad \text{ for }\quad\quad \varphi \in\left( L^2(\Omega) \cap \bigoplus_{\ell \geq L} V_{\ell}\right).
	\end{equation}
	Then $\rho_{\mathrm{ess}}(T_0) < 1$ and the Furstenberg measure of $\mu$ is absolutely continuous. 
\end{corollary}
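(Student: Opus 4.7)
The plan is to deduce the norm bound \eqref{T_0HighFrequencyEstimate} from Theorem~\ref{BIGConsequence} by comparing $T_0$ with the restriction of $S_0^{+}$ to $L^2(\Omega)$, and then to extract $\rho_{\mathrm{ess}}(T_0) < 1$ by a finite-rank perturbation argument exactly as in Corollary~\ref{CorS_0Quaiscompact}. Absolute continuity of the Furstenberg measure is then a black-box consequence of \cite{BenoistQuint2018}.

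First I would observe that $S_0^{+}$ preserves $L^2(\Omega) \subset L^2(K)$ and restricts there to $S_0$. This requires the computation $K(g^{-1}km) = K(g^{-1}k)m$ and $H(g^{-1}km) = H(g^{-1}k)$ for $m \in M$, which holds because $M$ centralizes $A$ and normalizes $N$; consequently $S_0^{+}$ sends $M$-invariant functions to $M$-invariant functions. Theorem~\ref{BIGConsequence} therefore yields $\|S_0 \varphi\|_2 \leq \tfrac{1}{4}\|\varphi\|_2$ on $L^2(\Omega) \cap \bigoplus_{\ell \geq L} V_\ell$. Using symmetry of $\mu$, one rewrites both $T_0$ and $S_0$ as integrals over the forward $G$-action on $\Omega$ to obtain
\begin{equation*}
(T_0 - S_0)\varphi(\omega) = \int_G \bigl(1 - e^{-\delta H(g\omega)}\bigr)\,\varphi(g\omega)\, d\mu(g).
\end{equation*}
Since $\mathrm{supp}(\mu) \subset B_\varepsilon$ and $H(e\cdot \omega) = 0$ for all $\omega \in \Omega$, uniform continuity on the compact set $\{e\}\times\Omega$ gives $\sup_{g \in B_\varepsilon,\,\omega \in \Omega}\bigl|1 - e^{-\delta H(g\omega)}\bigr| \ll \varepsilon$. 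Combining this with the Cauchy--Schwarz inequality applied to the probability measure $\mu$, and with the bound $\|\varphi \circ \alpha_g\|_2 = (1 + O(\varepsilon))\|\varphi\|_2$ for $g \in B_\varepsilon$ (which follows from \eqref{GactiononKRN}), I obtain $\|T_0 - S_0\|_{\mathrm{op}} \ll \varepsilon$. For $\varepsilon_0$ small enough the triangle inequality then upgrades the $\tfrac{1}{4}$-bound on $S_0$ into the $\tfrac{1}{2}$-bound \eqref{T_0HighFrequencyEstimate} on $T_0$, with $L$ inherited from Theorem~\ref{BIGConsequence}.

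For the quasicompactness step, let $P_{\geq L}$ denote the orthogonal projection on $L^2(\Omega)$ onto $L^2(\Omega) \cap \bigoplus_{\ell \geq L} V_\ell$. The complementary projection has finite-dimensional range, so $U := T_0(I - P_{\geq L})$ is finite-rank and hence compact, while $\|T_0 - U\|_{\mathrm{op}} = \|T_0 P_{\geq L}\|_{\mathrm{op}} \leq \tfrac{1}{2}$ by \eqref{T_0HighFrequencyEstimate}. Lemma~\ref{QuasicompactnessCharacterization}(i) then gives $\rho_{\mathrm{ess}}(T_0) \leq \rho(T_0 - U) \leq \tfrac{1}{2} < 1$, and absolute continuity of the Furstenberg measure follows from \cite{BenoistQuint2018}.

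The main obstacle I anticipate is the operator-norm comparison of $T_0$ and $S_0$: although the pointwise bound $|1 - e^{-\delta H(g\omega)}| \ll \varepsilon$ for $g \in B_\varepsilon$ is intuitively clear, converting it into a genuine $L^2(\Omega)$-operator bound requires keeping track of the quasi-invariance of the $G$-action on $\Omega$ via \eqref{GactiononKRN}, and the threshold $\varepsilon_0$ must be chosen small enough to swallow the resulting constants while remaining within the range where Theorem~\ref{BIGConsequence} applies.
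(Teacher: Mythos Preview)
Your proposal is correct and follows essentially the same route as the paper: compare $T_0$ and $S_0$ via the pointwise bound on $1 - e^{-\delta H(g\omega)}$ (the paper phrases this as $\|\sqrt{\alpha_g'} - 1\|_\infty \ll \varepsilon^{O(1)}$, which is the same quantity), transfer the high-frequency norm bound for $S_0^{+}$ to $T_0$, and then use the finite-rank perturbation via Lemma~\ref{QuasicompactnessCharacterization}(i) to bound $\rho_{\mathrm{ess}}(T_0)$. Your invocation of Theorem~\ref{BIGConsequence} rather than Theorem~\ref{BourgainMainResult} is in fact the cleaner choice, since the corollary does not assume $K$ semisimple; the paper's reference to \eqref{S_0HighFrequencyEstimate} appears to be a harmless slip, as the weaker $\tfrac{1}{4}$-bound already suffices. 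The only cosmetic difference is that the paper sketches the \cite{BenoistQuint2018} argument (using $T_01=1$ to produce a $1$-eigenfunction of $T_0^*$) rather than citing it as a black box.
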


\begin{proof}
	Using as in the proof of Corollary~\ref{CorS_0Quaiscompact} that $||\sqrt{\alpha_g'} - 1||_{\infty} \ll |\delta|\,||g|| \ll \varepsilon^{O(1)}$ for $g \in B_{\varepsilon}$ and $\varepsilon > 0$ small enough, it follows that $||S_0 - T_0|| \leq \varepsilon^{O(1)}$. Therefore \eqref{T_0HighFrequencyEstimate} is implied by \eqref{S_0HighFrequencyEstimate}. By Lemma~\ref{QuasicompactnessCharacterization} we hence conclude $\rho_{\mathrm{ess}}(T_0) < 1$.
	
	We finally review the argument from \cite{BenoistQuint2018} to show that the Furstenberg measure of $\mu$ is absolutely continuous under the assumption that $\rho_{\mathrm{ess}}(T_0) < 1$. Indeed as $T_0 1 = 1$, it follows that $1$ is in the discrete spectrum of $T_0$. If $\rho_{\mathrm{ess}}(T_0) < 1$, one furthermore concludes (cf. Fact 2.3 of \cite{BenoistQuint2018}) that $1$ is in the discrete spectrum of the adjoint operator $T_0^{*}$ and therefore there is a function $\FurstenbergDensity \in L^2(\Omega)$ satisfying $T_0^{*}\FurstenbergDensity  = \FurstenbergDensity $. One then readily checks that $\FurstenbergDensity  d\Haarof{\Omega}$ is a $\mu$-stationary measure and thus by uniqueness of the Furstenberg measure it holds $d\FurstenbergMeasure = \FurstenbergDensity d\Haarof{K}$. 
\end{proof}

We comment on the organization of this section. Theorem~\ref{BIGConsequence} is proved in Section~\ref{SectionBIGConsequence}. The proof of Theorem~\ref{BourgainMainResult} comprises two steps. In Section~\ref{S_0Vell} we first establish using the flattening results from Theorem~\ref{SuperFlatteningLemma} that $S_0^{+}|_{V_{\ell}}$ has small operator norm. In Section~\ref{ProofBourgainMainResult} we complete the proof of Theorem~\ref{BourgainMainResult} by using that $S_0^{+} V_{\ell}$ and $V_{\ell'}$ are almost orthogonal. Finally in Section~\ref{SectionSmoothnessFurstenberg} we show how to deduce that the Furstenberg measure has a $C^{m}(K)$ density.

\subsection{Proof of Theorem~\ref{BIGConsequence}}\label{SectionBIGConsequence}

Write $T_0^{+} \varphi = \int \varphi \circ \alpha_g \, d\mu(g)$ for $\varphi \in L^2(K)$. Since $||S_0^{+} - T_0^{+}|| \leq \varepsilon^{O(1)}$, as argued in the proof of Corollary~\ref{BourgainT_0MainResult}, in order to prove Theorem~\ref{BIGConsequence} it suffices to show that 
\begin{equation}\label{BIGConsequenceMain}
	||T_0^{+}\varphi||_2 \leq \frac{1}{8}||\varphi||_2
\end{equation}
for $\varphi \in \bigoplus_{\ell \geq L} V_{\ell}$ and $L = L(c_1,c_2)$.

We proceed similarly to the proof of Corollary $C$ of \cite{BoutonnetIoanaSalehiGolsefidy2017}. Indeed, we reduce the problem at hand to studying the regular representation on $L^2(G)$. One then uses the following result of \cite{BoutonnetIoanaSalehiGolsefidy2017}, which may be considered as their core technical contribution, which uses that $\mu$ has high dimension as well as a novel Littlewood-Paley decomposition and a mixing inequality on $G$. We rephrase their result using the notion of $(c_1, c_2, \varepsilon)$-Diophantine measures. 

To introduce notation, for a measurable subset $B \subset G$ we consider the norm $$||f||_{L^2(B)}^2 = \int_B |f(g)|^2 \, d\Haarof{G}.$$

\begin{theorem}(Theorem 6.7 of \cite{BoutonnetIoanaSalehiGolsefidy2017})\label{BIG17Thm6.7}
	Let $G$ be a connected simple Lie group with finite center and $B \subset G$ a measurable set with compact closure. Let $c_1, c_2 > 0$. Then there is $\varepsilon_0 = \varepsilon_0(B,c_1,c_2) > 0$ such that the following holds. For any $0 < \varepsilon < \varepsilon_0$ and any symmetric and $(c_1,c_2,\varepsilon)$-Diophantine probability measure $\mu$ there is a finite dimensional subspace $V_B \subset L^2(B)$ such that $$||\lambda_G(\mu)|_{(V_B)^{\perp}}||_{\mathrm{op}, L^2(B)} \leq \varepsilon^{O_{B,c_1,c_2}(1)}.$$
\end{theorem}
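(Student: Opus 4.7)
The plan, following the outline of Section~\ref{SectionOutline}, is to combine three ingredients: (i) the flattening estimate Theorem~\ref{SuperFlatteningLemma}, which under the $(c_1,c_2,\varepsilon)$-Diophantine hypothesis forces $\|(\mu^{*n})_\delta\|_2 \leq \delta^{-\gamma}$ at a scale $\delta \asymp \varepsilon^{cn}$ for any preassigned $\gamma > 0$; (ii) a Littlewood-Paley decomposition on $G$ that splits $L^2(B) = V_B \oplus \bigoplus_{\ell \geq L_0} W_\ell$, with $V_B$ finite-dimensional (each of the first $L_0$ frequency bands restricted to the precompact set $B$ is finite-dimensional) and $W_\ell$ consisting of functions oscillating at scale $2^{-\ell}$ on $G$; and (iii) a mixing inequality showing that for $\varphi\in W_\ell$ of unit norm the matrix coefficient $g \mapsto \langle \lambda_G(g)\varphi,\varphi\rangle_{L^2(B)}$ is supported, up to negligible errors, in a $2^{-\ell}$-neighbourhood of the identity.

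Assuming (i)--(iii), the bound on $V_B^\perp$ follows by a Cauchy--Schwarz unfolding. For $\varphi \in W_\ell$ of unit norm with $\ell \geq L_0$, expand
\[
\|\lambda_G(\mu^{*n})\varphi\|_{L^2(B)}^2 = \iint \langle \lambda_G(g^{-1}g')\varphi,\varphi\rangle_{L^2(B)}\, d\mu^{*n}(g)\, d\mu^{*n}(g'),
\]
localize the integrand to $g^{-1}g'\in B_\delta$ with $\delta\asymp 2^{-\ell}$ via (iii), and bound the remainder by $(\mu^{*n} * \widetilde{\mu^{*n}})(B_\delta) \leq \mathrm{vol}(B_\delta)\,\|(\mu^{*n})_\delta\|_2^2 \lesssim \delta^{\dim G - 2\gamma}$ using (i). Choosing $\gamma$ much smaller than $\dim G$ and matching the Diophantine scale to the frequency scale yields $\|\lambda_G(\mu^{*n})|_{W_\ell}\|\leq \varepsilon^{O_{c_1,c_2}(1)}$; summing the near-orthogonal contributions of $W_\ell$ for $\ell\geq L_0$ and taking an $n$-th root produces the stated estimate on $\lambda_G(\mu)$.

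The main obstacle is item (iii): constructing a Littlewood-Paley decomposition on the non-compact group $G$ that is quantitatively compatible with both the compact ambient set $B$ and the left-regular representation, and proving the accompanying mixing inequality with explicit polynomial constants. On $\R^d$ the analogue is routine Fourier analysis, but $L^2(B)$ for a precompact $B \subset G$ carries no translation-invariant spectral calculus, so one must build projections $P_\ell^G$ from Casimir-type differential operators on $G$ together with smooth cutoffs and then control the commutator $[\lambda_G(g),P_\ell^G]$ at translations $g$ with $d(g,e)\gg 2^{-\ell}$. A subsidiary bookkeeping difficulty is the simultaneous tuning of $\gamma$, $\delta$, $L_0$ and $n$, which must balance so that the flattening gain $\delta^{\dim G - 2\gamma}$ absorbs the growth of $\dim V_B$ and converts into the uniform $\varepsilon^{O_{c_1,c_2}(1)}$ advertised in the statement.
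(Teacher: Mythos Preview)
The paper does not prove this statement; it is quoted verbatim as Theorem~6.7 of \cite{BoutonnetIoanaSalehiGolsefidy2017} and used as a black box in Section~\ref{SectionBIGConsequence}. The only information the paper provides about its proof is the remark that it ``uses that $\mu$ has high dimension as well as a novel Littlewood-Paley decomposition and a mixing inequality on $G$'' (paragraph preceding the statement) together with the comment in Section~\ref{OtherWork} that \cite{BoutonnetIoanaSalehiGolsefidy2017} build their Littlewood-Paley theory directly on $G$ rather than on $K$.

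Your outline is therefore consistent with what the paper says about the approach in \cite{BoutonnetIoanaSalehiGolsefidy2017}: you correctly identify the three ingredients (flattening, a Littlewood-Paley decomposition on $G$, and a mixing inequality), and your Cauchy--Schwarz unfolding is the same mechanism the present paper runs on $K$ in Section~\ref{S_0Vell}. However, your proposal is explicitly not a proof: you yourself flag that item (iii) --- constructing the Littlewood-Paley projections on $L^2(B)$ for a precompact $B\subset G$ with no invariant spectral calculus, and establishing the quantitative commutator/mixing estimate --- is the main obstacle and you do not carry it out. That is precisely the substantial technical content of \cite{BoutonnetIoanaSalehiGolsefidy2017}, so what you have written is a correct \emph{strategy} but not a proof; to complete it you would need to either reproduce Sections~5--6 of that reference or supply an independent construction of the $P_\ell^G$ and the accompanying mixing bound.
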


In order to apply Theorem~\ref{BIG17Thm6.7}, we use the following lemma, which is inspired by the proof of Corollary C of \cite{BoutonnetIoanaSalehiGolsefidy2017}. Denote by $\pi_K : G \to K = G/P^{+}$ the natural projection.

\begin{lemma}\label{ComparisonBoundLemma}
	Denote $B = \{ g \in G \,:\, |\kappa(g)| \leq c \}$ for $c > 0$. For small enough $c > 0$ there is a constant $D > 1$ depending on $G$ and $c>0$ such for all $\varphi \in L^2(K)$,
	\begin{equation}\label{ComparisonBound}
		D^{-1}||\varphi||_{L^2(K)} \leq  ||\varphi \circ \pi_{K}||_{L^2(B)}  \leq D||\varphi||_{L^2(K)}.
	\end{equation}
\end{lemma}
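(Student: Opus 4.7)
The plan is to compute both sides directly in Iwasawa coordinates. Write $G = KAN$ so that $\pi_K(kan) = k$ (as $P^{+} = AN$), and recall that the Haar measure factorizes as $d\Haarof{G}(kan) = e^{2\delta H(a)} \, d\Haarof{K}(k)\, d\Haarof{A}(a)\, d\Haarof{N}(n)$. Substituting into the $L^2(B)$ norm gives
\[
\|\varphi \circ \pi_K\|_{L^2(B)}^2 = \int_K \int_{A\times N} 1_B(kan)\,|\varphi(k)|^2\, e^{2\delta H(a)}\, d\Haarof{N}(n)\,d\Haarof{A}(a)\,d\Haarof{K}(k).
\]
The decisive observation is that since $\kappa(k_1 g k_2) = \kappa(g)$ for any $k_1,k_2 \in K$, the set $B$ is bi-$K$-invariant, so $1_B(kan) = 1_B(an)$ and the double integral over $A \times N$ factors out of the $K$-integral. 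This yields the exact identity
\[
\|\varphi \circ \pi_K\|_{L^2(B)}^2 = V(c)\, \|\varphi\|_{L^2(K)}^2, \qquad V(c) := \int_{A\times N} 1_B(an)\, e^{2\delta H(a)}\, d\Haarof{A}(a)\, d\Haarof{N}(n),
\]
which is already stronger than the two-sided bound claimed.

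The remaining task is to check $0 < V(c) < \infty$ for every small $c > 0$, after which the lemma follows with $D := \max\{\sqrt{V(c)},\, 1/\sqrt{V(c)}\}$. Finiteness is immediate: $B = K\exp(\{X \in \mathfrak{a}^{+}: |X| \leq c\})K$ is a continuous image of a compact set, hence compact, hence has finite Haar measure in $G$, and by the very computation above this Haar measure equals $V(c) \cdot \Haarof{K}(K) = V(c)$. Positivity is equally routine: $\kappa$ is continuous with $\kappa(e) = 0$, so $B$ contains an open neighborhood of the identity, whose intersection with the open submanifold $AN.e$ has positive $(A \times N)$-measure, forcing $V(c) > 0$.

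There is no real obstacle here; the only ingredients are the Iwasawa Haar-measure factorization and the bi-$K$-invariance of $\kappa$. The point worth flagging is just that we should take $c$ small enough that $B$ is contained in a coordinate chart where the Iwasawa parametrization $(k,a,n) \mapsto kan$ is well-behaved; this is automatic since the Iwasawa map $K \times A \times N \to G$ is a global diffeomorphism.
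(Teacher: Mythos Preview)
Your proof is correct and in fact cleaner than the paper's. The key observation you make---that $B$ is bi-$K$-invariant, so $1_B(kan)=1_B(an)$ and the $K$-integral factors out exactly---yields the identity $\|\varphi\circ\pi_K\|_{L^2(B)}^2 = \Haarof{G}(B)\,\|\varphi\|_{L^2(K)}^2$, which is strictly stronger than the two-sided inequality asserted. (The precise form of the Jacobian in the Iwasawa factorization is irrelevant here: all that matters is that it is independent of $k$, which follows from left-$K$-invariance of $\Haarof{G}$.) Your argument also shows that no smallness of $c$ is needed.

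The paper takes a different and somewhat more roundabout route: it invokes the quasi-invariance density $\rho$ relating $d\Haarof{G}$ to $d\Haarof{K}\,d\Haarof{P^+}$, then bounds the discrepancy $|\Haarof{G}(B)\langle\varphi_1,\varphi_2\rangle_{L^2(K)}-\langle\varphi_1\circ\pi_K,\varphi_2\circ\pi_K\rangle_{L^2(B)}|$ by controlling $\sup_{g\in B}|1-\rho(g)|$ for small $c$. This recovers only an approximate comparison and requires $c$ small. Your approach avoids the density $\rho$ entirely by exploiting the extra symmetry of $B$ that the paper's argument does not use.
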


\begin{proof}
	Recall that we denote $P^{+} = AN$. By \cite{BekkaDeLaHarpeValetteKazhdanBook} Theorem B.1.4 there is a continuous function $\rho: G \to \R_{>0}$ such that 
	\begin{equation}\label{IntegrationFormula}
		\int_G f(g) \rho(g) \, d\Haarof{G}(g) = \int_{K} \int_{P^{+}} f(kp) \, d\Haarof{P}(p)d\Haarof{K}(k) 
	\end{equation}
	for all $f \in L^1(G)$ with compact support. It moreover holds that $\alpha'_g(xP^{+}) = \frac{\rho(gx)}{\rho(x)}$ for all $x,g \in G$. For $c$ small enough $0 < \rho(g) < 1$ for all $g \in B$ and therefore $\inf_{g \in B} |1 - \rho(g)| > 0$. We choose the constant $D'$ such that $$ \sup_{g \in B} |1 - \rho(g)| \leq D'.$$ We then calculate for $\varphi_1, \varphi_2 \in L^2(K)$ using \eqref{IntegrationFormula},
	\begin{align*}
		&|\Haarof{G}(B) \langle \varphi_1, \varphi_2 \rangle_{L^2(K)} - \langle \varphi_1 \circ\pi_{K}, \varphi_2 \circ \pi_K \rangle_{L^2(B)}| \\
		&\leq \bigg| \int_K \int_{P^{+}} \varphi_1(k)\overline{\varphi_2(k)} 1_B(p) \, d\Haarof{P}(p) d\Haarof{K}(k) - \int 1_B(g) \varphi_1(\pi_K(g))\overline{\varphi_2(\pi_K(g))} \, d\Haarof{G}(g) \bigg| \\
		&\leq \bigg| \int_B  \varphi_1(\pi_K(g))\overline{\varphi_2(\pi_K(g))} (1 - \rho(g)) \, d\Haarof{G}(g) \bigg| \\
		&\leq ||\varphi_1 \circ \pi_K||_{L^2(B)} \sqrt{\int_B |\varphi_2(\pi_K(g))|^2 \, |1 - \rho(g)|^2 \,d\Haarof{G}(g)} \\
		&\leq D' ||\varphi_1 \circ \pi_K||_{L^2(B)} ||\varphi_2 \circ \pi_K||_{L^2(B)}. 
	\end{align*} By a similar argument we may also estimate the latter term by $$\Haarof{G}(B)D'||\varphi_1||_{L^2(K)}||\varphi_2||_{L^2(K)}.$$ Setting $\varphi = \varphi_1 = \varphi_2$ the claim is readily implied by choosing $D$ suitably in terms of $D'$ and $\Haarof{G}(B)$. 
\end{proof} 

Throughout the following denote by $B = \{ g \in G \,:\, |\kappa(g)| \leq c \}$ a set from Lemma~\ref{ComparisonBoundLemma} such that \eqref{ComparisonBound} holds. We are now in a suitable position to apply Theorem~\ref{BIG17Thm6.7}. Indeed for $\varphi \in L^2(K)$  it holds by \eqref{ComparisonBound} that 
\begin{equation}\label{ComparisonBound2}
	||T_0 \varphi||_{L^2(K)} \leq D ||(T_0 \varphi) \circ \pi_K||_{L^2(B)} = D||\lambda_G(\mu) (\varphi \circ \pi_K)||_{L^2(B)}.
\end{equation}
Let $V_B\subset L^2(B)$ the finite dimensional subspace of Theorem~\ref{BIG17Thm6.7}. We then may choose $L$ large enough such that if $\varphi \in \bigoplus_{\ell \geq L} V_{\ell}$ then 
\begin{equation}\label{WeakConvergence}
	||\varphi \circ \pi_K - (\varphi \circ \pi_K)_{(V_B)^{\perp}} ||_{L^2(B)} \leq \frac{1}{16D^2}||\varphi \circ \pi_K||_{L^2(B)},
\end{equation}
where $(\varphi \circ \pi_K)_{(V_B)^{\perp}}$ is the projection of $\varphi \circ \pi_K$ onto $(V_B)^{\perp}$. Indeed this follows using \eqref{IntegrationFormula} and that $V_B$ is finite dimensional. 

We conclude using Theorem~\ref{BIG17Thm6.7}, \eqref{ComparisonBound},\eqref{ComparisonBound2} and \eqref{WeakConvergence},
\begin{align*}
	||T_0 \varphi||_{L^2(K)} &\leq D||\lambda_G(\mu) (\varphi \circ \pi_K)||_{L^2(B)} \\
	&\leq D||\lambda_G(\mu) \left( \varphi \circ \pi_K -  (\varphi \circ \pi_K)_{(V_B)^{\perp}}\right)||_{L^2(B)} + D||\lambda_G(\mu)(\varphi \circ \pi_K)_{(V_B)^{\perp}}||_{L^2(B)} \\
	&\leq \frac{1}{16D} ||\varphi \circ \pi_K||_{L^2(B)} + D\varepsilon^{O_{c_1,c_2}(1)} ||\varphi \circ \pi_K||_{L^2(B)}\\ 
	&\leq \left( \frac{1}{16} + D^2 \varepsilon^{O_{c_1,c_2}(1)}\right) ||\varphi||_{L^2(K)},
\end{align*} showing \eqref{BIGConsequenceMain} by choosing $\varepsilon$ small enough in terms of $c_1$ and $c_2$. The proof of Theorem~\ref{BIGConsequence} is complete.

\subsection{Operator Norm Estimate for $S_0^{+}$ on $V_{\ell}$}\label{S_0Vell}

In this section we prove the following proposition.

\begin{proposition}\label{NormEstimateonVell} For $c_1, c_2 > 0$ there exists $\varepsilon_0 = \varepsilon_0(G,c_1,c_2) > 0$ such that the following holds.  For any $0 < \varepsilon < \varepsilon_0$ and any symmetric and $(c_1, c_2, \varepsilon)$-Diophantine probability measure $\mu$, there is $L = L(G,c_1,c_2) \in \Z_{\geq 1}$ such that $||S_0^{+}|_{V_{\ell}}||_{\mathrm{op}} \leq \varepsilon^{O_{c_1,c_2}(1)}$ for $\ell \geq L$. 
\end{proposition}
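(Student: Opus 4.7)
The strategy exploits three ingredients: the self-adjointness of $S_0^+$ (because $\mu$ is symmetric), the flattening Theorem~\ref{SuperFlatteningLemma}, and the oscillation estimate of Proposition~\ref{HighOscillatingAverage}. Since $\mu$ is symmetric, $S_0^+$ is self-adjoint, so $\|S_0^+ \varphi\|_2^2 = \langle (S_0^+)^2 \varphi, \varphi\rangle \le \|(S_0^+)^2 \varphi\|_2$ for any unit vector $\varphi$; iterating yields $\|S_0^+ \varphi\|_2^{2^k} \le \langle (S_0^+)^{2^k}\varphi, \varphi\rangle$, and taking the supremum over $\varphi \in V_\ell$ of unit norm gives, for any $n = 2^k$,
\begin{equation*}
\|S_0^+|_{V_\ell}\|_{\mathrm{op}}^n \le \sup_{\varphi \in V_\ell,\, \|\varphi\|_2 = 1} \int \langle \rho_0^+(g)\varphi, \varphi\rangle\, d\mu^{*n}(g).
\end{equation*}
It therefore suffices to show that for an appropriate $n = n(c_1, c_2)$ this integral is at most $\varepsilon^{c''}$ for some $c'' > 0$ depending on $c_1, c_2, G$.

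I smooth $\mu^{*n}$ at scale $\delta$ by convolving with $P_\delta$. The resulting error is controlled by the continuity of $\rho_0^+$ on $V_\ell$. Using that $X \in \mathfrak{k}$ acts on $V_\ell$ with norm $\ll 2^\ell$ (from the Casimir eigenvalue $\lambda_\gamma \asymp 2^{2\ell}$ in Lemma~\ref{DimensionLaplacianBound}) and handling the Jacobian factor $e^{-\delta H(g^{-1}\cdot)}$ and the diffeomorphism $k \mapsto K(g^{-1}k)$ separately, one obtains the Sobolev-type inequality $\|\rho_0^+(g)\varphi - \varphi\|_2 \ll |g|\cdot 2^\ell\,\|\varphi\|_2$ for $\varphi \in V_\ell$ and $g$ near $e$, so that $\|\rho_0^+(P_\delta)\varphi - \varphi\|_2 \ll 2^\ell \delta\,\|\varphi\|_2$. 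Writing $F = \mu^{*n} * P_\delta$, Cauchy--Schwarz and $|\langle \rho_0^+(g)\varphi,\varphi\rangle|\le 1$ give
\begin{equation*}
\left| \int F(g) \langle \rho_0^+(g)\varphi, \varphi\rangle\, d\Haarof{G}(g) \right| \le \|F\|_2 \left( \int_{B_R} |\langle \rho_0^+(g)\varphi, \varphi\rangle|\, d\Haarof{G}(g) \right)^{1/2},
\end{equation*}
where $B_R$, with $R \asymp n\varepsilon + \delta$, contains $\supp F$. By Theorem~\ref{SuperFlatteningLemma}, $\|F\|_2 \le \delta^{-\tilde\gamma}$ for any prescribed $\tilde\gamma > 0$ provided $n \ge C_0 \log(1/\delta)/\log(1/\varepsilon)$, and by Proposition~\ref{HighOscillatingAverage} the $L^1$-integral is $\ll \Haarof{G}(B_R)\cdot 2^{-\ell/2}\,\|\varphi\|_2^2$.

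Choosing $\delta = \varepsilon^\alpha/2^\ell$ for a small $\alpha > 0$ balances the two errors: the smoothing error is $O(\varepsilon^\alpha)$, and the main term is at most $\delta^{-\tilde\gamma}\Haarof{G}(B_R)^{1/2}\,2^{-\ell/4} \ll 2^{(\tilde\gamma - 1/4)\ell}\,n^{(\dim G)/2}\,\varepsilon^{(\dim G)/2 - \alpha\tilde\gamma}$. Selecting $\tilde\gamma < 1/4$ small enough that $\alpha\tilde\gamma < (\dim G)/2$, both factors are controlled by a fixed positive power of $\varepsilon$ uniformly for $\ell \ge L$. Combining and extracting the $n$-th root produces $\|S_0^+|_{V_\ell}\|_{\mathrm{op}} \le \varepsilon^{c''/n} = \varepsilon^{O_{c_1,c_2}(1)}$.

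The principal obstacle is the parameter balance. The choice $\delta = \varepsilon^\alpha/2^\ell$ makes the flattening threshold $n \ge C_0(\alpha + \ell\log 2/\log(1/\varepsilon))$, so $n$ remains bounded in $c_1, c_2$ only while $\ell \ll \log(1/\varepsilon)$; for very large $\ell$ one relies instead on the factor $2^{(\tilde\gamma - 1/4)\ell}$ being very small. Secondary care is required to derive the continuity estimate $\|\rho_0^+(g)\varphi - \varphi\|_2 \ll |g|\cdot 2^\ell \|\varphi\|_2$, since $\rho_0^+$ is a cocycle-twisted translation rather than a straight left-regular action, and to ensure that Proposition~\ref{HighOscillatingAverage} applies on the ball $B_R$ of radius $R \asymp n\varepsilon$ (which requires $R$ to lie in the range where the doubling ratio $\Haarof{G}(B_{R+C})/\Haarof{G}(B_R)$ is bounded).
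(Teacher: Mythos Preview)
Your overall architecture is the same as the paper's: use self-adjointness to reduce to bounding $\langle (S_0^+)^n\varphi,\varphi\rangle$, smooth $\mu^{*n}$ at scale $\delta$, bound the smoothed term via the flattening Theorem~\ref{SuperFlatteningLemma} and the matrix-coefficient estimate of Proposition~\ref{HighOscillatingAverage}, and then take an $n$-th root. The Cauchy--Schwarz you use is a harmless variant of the one in the paper.

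The gap is exactly the parameter balance you flag but do not close. With your choice $\delta=\varepsilon^\alpha/2^\ell$ the smoothing error is $O(2^\ell\delta)=O(\varepsilon^\alpha)$, \emph{independent of $\ell$}. Consequently your pre-root bound is $\langle (S_0^+)^n\varphi,\varphi\rangle\le C\varepsilon^{c''}$ for a fixed $c''>0$. But the flattening threshold forces $n\ge C_0\log(1/\delta)/\log(1/\varepsilon)\asymp C_0\,\ell/\log(1/\varepsilon)$ once $\ell\gg\log(1/\varepsilon)$, so extracting the $n$-th root gives at best $\varepsilon^{c''/n}$, which tends to $1$ as $\ell\to\infty$. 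The sentence ``for very large $\ell$ one relies instead on the factor $2^{(\tilde\gamma-1/4)\ell}$'' rescues the main term but not the smoothing error, which stays at $\varepsilon^\alpha$ regardless of $\ell$.

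The fix, as in the paper, is to choose $\delta$ purely in terms of $\ell$, say $\delta=2^{-C\ell}$ with $C$ large enough that the smoothing error becomes $2^{O(1)\ell}\delta^{O(1)}\le 2^{-c'\ell}$. Then both the smoothing error and the main term are of the form $2^{-c'\ell}$, and with $n\asymp C_0\,\ell/\log(1/\varepsilon)$ the ratio $\ell/n\asymp_{c_1,c_2}\log(1/\varepsilon)$ is uniformly bounded below, so the $n$-th root of $D\cdot 2^{-c'\ell}$ is $D^{1/n}\,2^{-c'\ell/n}=\varepsilon^{O_{c_1,c_2}(1)}$. After this correction your argument is essentially the paper's proof.
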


Recall that as introduced in Section~\ref{Flattening}, $$P_{\delta} = \frac{1_{B_{\delta}}}{\Haarof{G}(B_{\delta})}.$$ For the proof of Proposition~\ref{NormEstimateonVell}, one estimates by the triangle inequality for $n \geq 1$ and $\varphi \in V_{\ell}$,
\begin{equation}\label{MainEstaimteNormonVell}
	||(S_0^{+})^n \varphi||_2 \leq ||(S_0^+)^n \varphi - \rho_0^+(\mu^{*n}*P_{\delta})\varphi||_2 + ||\rho_0^+(\mu^{*n} * P_{\delta})\varphi||_2.
\end{equation} We aim to show that \eqref{MainEstaimteNormonVell} is very small for a suitably chosen $n$ and $\delta$. For the first term of \eqref{MainEstaimteNormonVell}, we use that the Lipschitz constant of $\varphi$ is $\asymp ||\gamma||^{O(1)}$. Therefore, a $\delta$-perturbation of $(S_0^+)^n\varphi = \rho_0^{+}(\mu^{*n})\varphi$ is small provided we choose $\delta$ miniscule in terms of $\ell$. 

The second term of \eqref{MainEstaimteNormonVell} is dealt with by using that $\mu$ has high dimension. Indeed by Lemma~\ref{SuperFlatteningLemma} it will follow that $\mu^{*n} * P_{\delta}$ has small $||\cdot||_{\infty}$-norm for $n$ chosen in terms of $\delta$. This will allow us to compare $||\rho_0^+(\mu^{*n} * P_{\delta})\varphi||_2$ to the average estimate of matrix coefficients $$\frac{1}{\Haarof{G}(B_R)}\int_{B_R} |\langle \rho_0^{+}(g)\varphi, \varphi \rangle| \, d\Haarof{G}(g) \ll 2^{-\ell/2}||\varphi||_2$$ that was discussed in Section~\ref{AverageMatrixCoeffSemisimple}.

We proceed with some preliminary lemmas used in the proof of Proposition~\ref{NormEstimateonVell}. First, we estimate how much $\rho_0^+(g)\varphi$ differs from $\varphi$, given that $\varphi \in V_{\ell}$ and $g \in B_{\delta}$.

\begin{lemma}\label{DistanceLowFrequency}
	Fix $\ell \geq 0$. Then for $\varphi \in V_{\ell}$ and $0 < \delta \ll 2^{-\ell}$, it holds for $g \in B_{\delta}$, $$||\rho_0^+(g) \varphi - \varphi||_2 \ll e^{O(1)\ell}\delta^{O(1)}||\varphi||_2.$$
\end{lemma}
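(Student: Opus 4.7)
The plan is to split $(\rho_0^+(g)\varphi)(k)-\varphi(k)$ into a multiplier piece and a translation piece, bounding each by elementary means, the only nontrivial input being a Sobolev bound on the Lipschitz constant of $\varphi\in V_\ell$. Writing $\delta_G=\tfrac12\sum_{r\in\Sigma^+}m(r)r$ (to avoid clashing with the radius $\delta$ in the statement), the definition \eqref{rhorplus} at $r=0$ gives pointwise
$$(\rho_0^+(g)\varphi)(k)-\varphi(k)=\bigl(e^{-\delta_G H(g^{-1}k)}-1\bigr)\varphi(K(g^{-1}k))+\bigl(\varphi(K(g^{-1}k))-\varphi(k)\bigr).$$

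For the multiplier piece, inequality \eqref{Hkappainequality} together with $K$-bi-invariance of $\kappa$ and $\|g^{-1}\|=\|g\|$ (via the longest Weyl element acting on $\mathfrak{a}^+$) gives $|H(g^{-1}k)|\leq\|g\|\ll\delta$ uniformly in $k$, hence $|e^{-\delta_G H(g^{-1}k)}-1|\ll\delta$. Since the Radon-Nikodym derivative $\alpha'_{g^{-1}}$ from \eqref{GactiononKRN} is uniformly bounded for $g$ near the identity, the change of variables $k'=K(g^{-1}k)$ gives $\|\varphi\circ K(g^{-1}\cdot)\|_2\ll\|\varphi\|_2$, so the multiplier piece contributes at most $O(\delta\|\varphi\|_2)$ in $L^2$.

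The translation piece is the main point. By smoothness of the Iwasawa projection and the fact that $\alpha_{g^{-1}}$ is the identity map at $g=e$, we get $d_K(K(g^{-1}k),k)\ll\delta$ uniformly in $k$ for $g\in B_\delta$. To control the Lipschitz constant of $\varphi$, Lemma~\ref{DimensionLaplacianBound} gives $\lambda_\gamma\asymp 2^{2\ell}$ for $\gamma$ contributing to $V_\ell$, so by \eqref{CompactGroupSobolevSpace} we have $\|\varphi\|_{H^s(K)}\ll 2^{s\ell}\|\varphi\|_2$ for every $s\geq 0$. Choosing $s>\dim K/2+1$ and applying the Sobolev embedding theorem (as in Lemma~\ref{SmoothnessCharacterization}) yields $\|\varphi\|_{C^1(K)}\ll 2^{s\ell}\|\varphi\|_2=2^{O(\ell)}\|\varphi\|_2$. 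Combining gives $|\varphi(K(g^{-1}k))-\varphi(k)|\ll 2^{O(\ell)}\delta\|\varphi\|_2$ pointwise, hence the same bound in $L^2$.

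Adding the two estimates produces $\|\rho_0^+(g)\varphi-\varphi\|_2\ll 2^{O(\ell)}\delta\|\varphi\|_2$, which is of the required form $e^{O(1)\ell}\delta^{O(1)}\|\varphi\|_2$ with exponents $(\log 2,1)$. I do not anticipate any serious obstacle: the Sobolev--Lipschitz estimate on $V_\ell$ is the only substantive step, and the hypothesis $\delta\ll 2^{-\ell}$ is used only to ensure $g$ stays in a small neighbourhood of the identity so that the Jacobian and the Taylor expansion of $e^{-\delta_G H(g^{-1}k)}$ behave uniformly in $k$.
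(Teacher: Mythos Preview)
Your argument is correct. The decomposition into multiplier and translation pieces is the same as in the paper, and your estimate of the multiplier piece is essentially identical to the paper's. The difference lies in how the translation piece is handled: the paper works one matrix coefficient $\psi=\sqrt{d_\gamma}\,\chi_{ij}^\gamma$ at a time, proving $\|\pi_\gamma(k)-\mathrm{Id}\|_{\mathrm{op}}\ll d_K(k,e)\,\|\gamma\|$ via a direct weight computation (this is where the hypothesis $\delta\ll 2^{-\ell}$ is used, to linearise $e^{\gamma'(X)}-1$), and then sums over an orthonormal basis of $V_\ell$ using $|I|\ll e^{O(1)\ell}$ and Cauchy--Schwarz. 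Your route via the Sobolev embedding $H^s\hookrightarrow C^1$ for $s>\tfrac{1}{2}\dim K+1$ treats all of $\varphi$ at once and avoids the basis-counting step. Both yield $e^{O(1)\ell}\delta$; the paper's explicit constant in the exponent is tied to $|R^+|$ (via the dimension count), yours to $\dim K$ (via the Sobolev index), and neither matters for the statement. As you note, your version does not actually need $\delta\ll 2^{-\ell}$, only that $\delta$ is small enough for the Iwasawa projection and Jacobian to behave uniformly.
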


\begin{proof}
	We first fix $\gamma \in \overline{C} \cap I^{*}$ and denote as usual by $\pi_{\gamma}$ the associated irreducible representation and let $v_1, \ldots , v_n \in \pi_{\gamma}$ be an orthonormal basis of the representation space of $\pi_{\gamma}$. For $k \in B_{\delta}$ in $K$ for $\delta$ small enough, it holds by Lemma~3.1 of \cite{deSaxce2013} that  $||\pi_{\gamma}(k) - \mathrm{Id}_{\pi_{\gamma}}||_\mathrm{op} \ll d_K(k,e)||\gamma||$. Indeed, upon conjugation, we can assume that $k$ is inside the maximal torus $T$ of $K$ and hence we can write $k = e^{X}$ for $X \in \mathfrak{t} = \mathrm{Lie}(T)$ with $||X|| \ll d_K(k,e)$. With these assumptions, the eigenvalues of $\pi_{\gamma}(k) - \mathrm{Id}_{\pi_{\gamma}}$ can be calculated as $e^{\gamma'(X)} - 1$ for $\gamma'$ the weights of $\pi_{\gamma}$. Choosing $\delta \ll 2^{-\ell}$, and therefore having $|\gamma'(X)| \ll 1$, we can bound $\max_{\gamma'} |e^{\gamma'(X)} - 1| \ll \max_{\gamma'} |\gamma'(X)| \ll d_K(g,e) ||\gamma||,$ showing the claim. 
	
	Denote by $\psi$ the matrix coefficient $k \mapsto \sqrt{d_{\pi}} \langle \pi_{\gamma}(k) v_i, v_j \rangle$, satifying $||\psi||_2 = 1$. We first show that $||\rho_0^+(g)\psi - \psi||_2 \ll \delta^{O(1)} ||\gamma||^{O(1)}$ for $g \in B_{\delta}$. Indeed, using as in the proof of Corollary~\ref{CorS_0Quaiscompact} that $||\sqrt{\alpha'_g}(k) - 1||_{\infty} \ll \delta^{O(1)}$ and Lemma~\ref{DimensionLaplacianBound},
	\begin{align*}
		|(\rho_0^+(g) \psi)(k) - \psi(k)| &= \Big| \left(\sqrt{\alpha'_g}(k) - 1 \right) \psi(g^{-1}.k)\Big| + \big|\psi(g^{-1}.k) -  \psi(k) \big| \\
		&\ll \delta^{O(1)}|\psi(g^{-1}.k)| + \sqrt{d_{\pi}}||\pi_{\gamma}(g^{-1}.k) - \pi_{\gamma}(k)||_{\mathrm{op}}  \\
		&\ll \delta^{O(1)}|\psi(g^{-1}.k)| + \delta^{O(1)}||\gamma||^{O(1)},
	\end{align*} which implies the claim using $|\psi(g^{-1}.k)| \leq |\psi(g^{-1}.k) - \psi(k)| + |\psi(k)|$.
	
	To prove the lemma, denote by $(\psi_i)_{i \in I}$ an orthonormal basis of $V_{\ell}$ with functions as in the previous paragraph. Then $|I| \ll e^{O(1)\ell}$ and for $\varphi \in V_{\ell}$ we decompose $\varphi = \sum_{i \in I} a_i \psi_i$, implying using Cauchy-Schwarz,
	\begin{align*}
		||\rho_0^+(g) \varphi - \varphi||_2 \leq \sum_{i\in I} |a_i| \, ||\rho_0^+(g)  \psi - \psi||_2 
		\ll e^{O(1)\ell}\delta^{O(1)}||\varphi||_2.
	\end{align*}
\end{proof}

We next show how to compare $\pi(\nu)\varphi$ with $\pi(\nu*P_{\delta})\varphi$ for a suitable vector $\varphi$ and a unitary representation $\pi$ and probability measure $\nu$.

\begin{lemma}\label{RepresentationDeltaApproxLemma}
	Let $(\pi, \mathscr{H})$ be a unitary representation of $G$ and let $\delta > 0$. Fix $\varphi \in \mathscr{H}$. Assume that $||\pi(g) \varphi - \varphi|| \leq C_{\delta}||\varphi||$ for all $g \in B_{\delta}$ and $C_{\delta} > 0$ a constant. Then for any probability measure $\nu$, $$||\pi(\nu) \varphi - \pi(\nu * P_{\delta})\varphi|| \leq C_{\delta}||\varphi||.$$
\end{lemma}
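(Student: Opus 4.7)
The plan is to reduce everything to a simple triangle-inequality argument, using the multiplicativity of $\pi$ on convolutions and the fact that $\pi(\nu)$ is a contraction.

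First I would use that the convolution rule $\pi(\nu_1 * \nu_2) = \pi(\nu_1)\pi(\nu_2)$ holds for any unitary representation and any finite measures, so in particular
\begin{equation*}
\pi(\nu * P_\delta) = \pi(\nu)\,\pi(P_\delta),
\end{equation*}
and therefore
\begin{equation*}
\pi(\nu)\varphi - \pi(\nu * P_\delta)\varphi = \pi(\nu)\bigl(\varphi - \pi(P_\delta)\varphi\bigr).
\end{equation*}

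Next, since $\nu$ is a probability measure and $\pi$ is unitary, $\pi(\nu)$ is a contraction on $\mathscr{H}$ (by the triangle inequality under the integral). So it suffices to bound $\|\varphi - \pi(P_\delta)\varphi\|$.

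For this, I would unfold the definition $P_\delta = \tfrac{1_{B_\delta}}{m_G(B_\delta)}$ and write the difference as an average:
\begin{equation*}
\varphi - \pi(P_\delta)\varphi \;=\; \frac{1}{m_G(B_\delta)} \int_{B_\delta} \bigl(\varphi - \pi(g)\varphi\bigr) \, dm_G(g).
\end{equation*}
Then the triangle inequality for Bochner integrals combined with the pointwise hypothesis $\|\pi(g)\varphi - \varphi\| \leq C_\delta \|\varphi\|$ for $g \in B_\delta$ gives
\begin{equation*}
\|\varphi - \pi(P_\delta)\varphi\| \;\leq\; \frac{1}{m_G(B_\delta)} \int_{B_\delta} C_\delta \|\varphi\| \, dm_G(g) \;=\; C_\delta \|\varphi\|,
\end{equation*}
which combined with the contraction bound completes the proof. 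There is no real obstacle here — the only mild subtlety is making sure the convolution identity and the contractivity of $\pi(\nu)$ are used in the correct order so that the hypothesis, which only controls $\pi(g)\varphi$ for $g$ in the small ball $B_\delta$, is applied inside the average defining $\pi(P_\delta)\varphi$ rather than after the (potentially far-reaching) action of $\pi(\nu)$.
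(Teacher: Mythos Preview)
Your proof is correct and is in fact cleaner than the paper's own argument. The paper proceeds by a direct Fubini computation: it inserts the identity $1 = \frac{1}{m_G(B_\delta)}\int 1_{B_\delta(g)}(h)\,dm_G(h)$ into $\pi(\nu)\varphi$, swaps the order of integration, and then for each $h$ compares $\int_{B_\delta(h)} \pi(g)\varphi\,d\nu(g)$ with $\nu(B_\delta(h))\,\pi(h)\varphi$, using left-invariance of the metric to write $g = h(h^{-1}g)$ with $h^{-1}g \in B_\delta$ so that the hypothesis applies. Your route instead exploits the multiplicativity $\pi(\nu * P_\delta) = \pi(\nu)\pi(P_\delta)$ to factor the difference as $\pi(\nu)\bigl(\varphi - \pi(P_\delta)\varphi\bigr)$, then uses contractivity of $\pi(\nu)$ together with the hypothesis on the single $P_\delta$-average. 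Both arguments are valid; yours is shorter and more conceptual, while the paper's makes the localization around each point $h$ and the use of left-invariance more explicit.
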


\begin{proof}
	Using Fubini's theorem and that $1_{B_{\delta}(g)}(h) = 1_{B_{\delta}(h)}(g)$,
	\begin{align*}
		\pi(\nu) \varphi &= \int \frac{1}{\Haarof{G}(B_{\delta}(e))} \left( \int 1_{B_{\delta}(g)}(h)\pi(g) \varphi \, d\Haarof{G}(h) \right) \, d\nu(g) \\ &= \int \frac{1}{\Haarof{G}(B_{\delta}(e))}\left(\int_{B_{\delta}(h)} \pi(g)\varphi \,d\nu(g) \right) d\Haarof{G}(h).
	\end{align*} Furthermore, by the assumption and using that $B_{\delta}(h) = hB_{\delta}(e)$ (the metric on $G$ is left invariant),
	\begin{align*}
		\bigg|\bigg|\int_{B_{\delta}(h)} \pi(g)\varphi \,d\nu(g) -  \nu(B_{\delta}(h)) \cdot  \pi(h)\varphi \bigg|\bigg| & \leq \int_{B_{\delta}(h)} ||(\pi(g) - \pi(h))\varphi|| \, d\nu(g) \\
		& \leq \int_{B_{\delta}(h)} ||\pi(h)(\pi(h^{-1}g) - \mathrm{Id})\varphi|| \, d\nu(g) \\ &\leq \nu(B_{\delta}(h)) C_{\delta} ||\varphi||.
	\end{align*} Finally, as $(\nu * P_{\delta})(h) = \frac{\nu(B_{\delta}(h))}{\Haarof{G}(B_{\delta}(e))}$, 
	\begin{align*}
		&||\pi(\nu) \varphi - \pi(\nu * P_{\delta})\varphi|| \\ &= \bigg|\bigg|  \int \frac{1}{\Haarof{G}(B_{\delta}(e))}\left(\int_{B_{\delta}(h)} \pi(g)\varphi \,d\nu(g) \right) d\Haarof{G}(h) - \int \pi(h)\varphi \, (\nu * P_{\delta})(h) \, d\Haarof{G}(h)  \bigg|\bigg| \\
		&\leq  \int \frac{1}{\Haarof{G}(B_{\delta}(e))} \bigg|\bigg|\int_{B_{\delta}(h)} \pi(g)\varphi \,d\nu(g) - \nu(B_{\delta}(h)) \cdot \pi(h) \varphi\bigg|\bigg| d\Haarof{G}(h)  \\
		&\leq C_{\delta}||\varphi|| \cdot \int \frac{\nu(B_{\delta}(h))}{\Haarof{G}(B_{\delta}(e))} \, d\Haarof{G}(h) = C_{\delta} ||\varphi||,
	\end{align*} using in the last line that by Fubini's theorem $\int \frac{\nu(B_{\delta}(h))}{\Haarof{G}(B_{\delta}(e))} \, d\Haarof{G}(h) = 1$ as $\nu$ is a probability measure.
\end{proof}

\begin{proof}(of Proposition~\ref{NormEstimateonVell})
	Let $\gamma > 0$ be a fixed constant to be determined later. Then by Proposition~\ref{SuperFlatteningLemma} there is $\varepsilon_0 = \varepsilon_0(c_1,c_2) > 0$ and $C_0 = C_0(c_1,c_2) > 0$ such that for $\delta > 0$ small enough it holds that $||(\mu^{*n})_{\delta}||_2 \leq \delta^{-\gamma}$ for any $n \geq C_0\frac{\log \frac{1}{\delta}}{\log \frac{1}{\varepsilon}}$ and $$(\mu^{*n})_{\delta} = \mu^{*n} * P_{\delta}.$$
	
	Let $\varphi \in V_{\ell}$ with $||\varphi||_2 = 1$. Then by the triangle inequality, 
	\begin{align*}
		||(S_0^{+})^n \varphi||_2 \leq ||(S_0^+)^n \varphi - \rho_0^+(\mu^{*n}*P_{\delta})\varphi||_2 + ||\rho_0^+(\mu^{*n} * P_{\delta})\varphi||_2.
	\end{align*} The first term can be estimated using Lemma~\ref{DistanceLowFrequency} and Lemma~\ref{RepresentationDeltaApproxLemma} as $||(S_0^+)^n \varphi - \rho_0^+(\mu^{*n}*P_{\delta})\varphi||_2 \ll e^{O(1)\ell} \delta^{O(1)}$ assuming that $\delta \ll 2^{-\ell}$. For the second term, first notice that by applying Cauchy-Schwarz it follows that $||(\mu^{*n})_{\delta} * (\mu^{*n})_{\delta}||_{\infty} \leq ||(\mu^{*n})_{\delta}||_2^2$. Then with Theorem~\ref{SuperFlatteningLemma} and Proposition~\ref{HighOscillatingAverage},
	\begin{align*}
		||\rho_0^+(\mu^{*n} * P_{\delta}) \varphi||_2^2 &= \langle \rho_0^+(\mu^{*n} * P_{\delta} * \mu^{*n} * P_{\delta}) \varphi, \varphi \rangle \\
		&\leq \int |\langle \rho_0^+(g) \varphi , \varphi \rangle| \, ((\mu^{*n})_{\delta} * (\mu^{*n})_{\delta})(g) \, d\Haarof{G}(g) \\
		&\leq \delta^{-2\gamma} \int_{B_{4n\varepsilon}}  |\langle \rho_0^+(g) \varphi , \varphi \rangle| \, d\Haarof{G}(g) \\
		&\ll \delta^{-2\gamma}\Haarof{G}(B_{4n\varepsilon}) e^{-O(1)\ell} \leq \delta^{-2\gamma}e^{O(1)n\varepsilon} e^{-O(1)\ell}.
	\end{align*} Let $n$ be a power of 2 satisfying $n \asymp C_0  \frac{\log \frac{1}{\delta}}{\log \frac{1}{\eps}}$. Then by using that $S_0^+$ is self-adjoint and $n$ a power of $2$, it follows by induction on $k$ with $2^k = n$ that $||(S_0^+)\varphi ||_2^n \leq ||(S_0^+)^n \varphi||_2$. Therefore it follows for $\delta \ll 2^{-\ell}$ that $$||S_0^+|_{V_{\ell}}||_{\mathrm{op}} \leq D^{\frac{1}{n}} \max\{ e^{\frac{\sigma_1\ell}{n}} \delta^{\frac{\sigma_2}{n}}, \delta^{-\frac{\gamma}{n}} e^{- \frac{\sigma_3 \ell}{n}}\},$$ for $D, \sigma_1, \sigma_2,\sigma_3 > 0$ absolute constants. We choose $\delta = e^{-\max\{ 1, \frac{2 \sigma_1}{\sigma_2}  \}\ell}$ so that $\delta \ll 2^{- \ell}$ and $e^{\frac{\sigma_1\ell}{n}} \delta^{\frac{\sigma_2}{n}} \leq e^{-\frac{\sigma_1 \ell}{n}}$.  We furthermore set $\gamma = \frac{\sigma_3}{2\max\{ 1, 2 \frac{\sigma_1}{\sigma_2} \}}$ and therefore $\delta^{-\frac{\gamma}{n}} e^{- \frac{\sigma_3 \ell}{n}} = e^{- \frac{\sigma_3 \ell}{2n}}$. With these choices, $||S_0^+|_{V_{\ell}}||_{\mathrm{op}} \leq D^{\frac{1}{n}}e^{-O(1)\frac{\ell}{n}}$. In addition we make $\ell$ large enough in terms of $c_1$ and $c_2$ such that $\delta$ becomes small enough for Proposition~\ref{SuperFlatteningLemma} to hold. To conclude, it holds by construction that $\frac{\ell}{n} \asymp_{c_1,c_2} \log \frac{1}{\varepsilon}$ and therefore $e^{-O(1)\frac{\ell}{n}} = \varepsilon^{O_{c_1,c_2}(1)}$ and similarly $D^{\frac{1}{n}} = \varepsilon^{-\frac{O_{c_1,c_2}(1)}{\ell}}$, so choosing $\ell$ additionally larger than a further constant depending on $c_1$ and $c_2$, the claim follows. 
\end{proof}

\subsection{Proof of Theorem~\ref{BourgainMainResult}}\label{ProofBourgainMainResult}

Having established that $||S_0^{+}|_{V_{\ell}}||_{\mathrm{op}}$ is small for $L \geq L(c_1,c_2)$, we aim to convert this to an estimate that $||S_0^{+}|_{\bigoplus_{\ell \geq L}V_{\ell}}||_{\mathrm{op}}$ is also small. We use that the spaces $S_0^{+}V_{\ell}$ and $V_{\ell'}$ are almost orthogonal for $\ell \neq \ell'$ as shown in Lemma~\ref{AlmostOrthogonalRho0}.

The Lie algebra of $K$ is denoted $\mathfrak{k}$ and we also write $\lambda_K$ for the Lie algebra representation induced by the regular representation $\lambda_K$ on $K$. Indeed, for a smooth function $\varphi$ on $K$ the function $(\lambda_K(X)\varphi)(k) = \lim_{t \to 0} \frac{1}{t}(\varphi(e^{-tX}k) - \varphi(k))$ with $X \in \mathfrak{k}$ and $k \in K$ is the directional derivative of $\varphi$ in the direction $-X$. 

As in \cite{Bourgain2012}, we use an argument based on partial integration to show that $S_0^{+}V_{\ell}$ and $V_{\ell'}$ are almost orthogonal. For a general manifold there is no suitable partial integration formula. However, for compact Lie groups we overcome this issue by exploiting that the Laplacian acts as a scalar on functions on $L^2(K)$ induced by the representation $\pi_{\gamma}$. Indeed, for a fixed orthonormal basis $X_1,\ldots , X_{\dim K}$ of $\mathfrak{k}$ recall that the Casimir element is defined as $\triangle = - \sum_{i} X_i \circ X_i$. We then use as replacement to partial integration that 
\begin{equation}\label{PartialIntegration}
	\langle \varphi_1, \lambda_K(\triangle)\varphi_2 \rangle = \sum_i \langle \lambda_K(-X_i) \varphi_1, \lambda_K(X_i) \varphi_2 \rangle. 
\end{equation}
In order to give a suitable estimate for \eqref{PartialIntegration}, we first analyse $||\lambda_K(X) \varphi||_2$ for $X \in \mathfrak{k}$.

\begin{lemma}\label{DirectionalDerivativeEstimate}
	Let $\ell \geq 0$ and $\varepsilon > 0$. Then for $\varphi \in V_{\ell},g \in B_{\varepsilon}$ and $X \in \mathfrak{k}$ of unit norm, $$||\lambda_K(X) \varphi||_2 \ll 2^{\ell} ||\varphi|| \quad\quad \text{and} \quad\quad ||\lambda_K(X) (\rho_0^+(g) \varphi)||_2 \ll (1 + O(\varepsilon^{O(1)}))2^{\ell}||\varphi||_2.$$
\end{lemma}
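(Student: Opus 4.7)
The plan for the first estimate is to invoke the Casimir identity. Since $\lambda_K$ is unitary, each generator $\lambda_K(X_i)$ is skew-adjoint, hence
$$\langle \lambda_K(\triangle)\varphi,\varphi\rangle = \sum_{i}||\lambda_K(X_i)\varphi||_2^{2}.$$
For $\varphi=\sum_\gamma \varphi_\gamma\in V_\ell$ with $\ell\geq 1$, the left side equals $\sum_\gamma \lambda_\gamma ||\varphi_\gamma||_2^{2}\ll 2^{2\ell}||\varphi||_2^{2}$ by Lemma~\ref{DimensionLaplacianBound} (for $\ell=0$, $V_0$ is finite dimensional and the bound is trivial). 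For a unit vector $X=\sum_i c_i X_i\in\mathfrak{k}$, Cauchy--Schwarz then gives
$$||\lambda_K(X)\varphi||_2 \le \Bigl(\sum_i c_i^{2}\Bigr)^{1/2}\Bigl(\sum_i ||\lambda_K(X_i)\varphi||_2^{2}\Bigr)^{1/2}\ll 2^\ell||\varphi||_2.$$

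For the second estimate I will use an intertwining trick. Since $K\subset G$, we have $d\rho_0^+(X)=\lambda_K(X)$ for $X\in\mathfrak{k}$, and the identity $e^{tX}g = g\cdot e^{t\Ad(g^{-1})X}$ yields
$$\lambda_K(X)\rho_0^+(g)\varphi = \rho_0^+(g)\,d\rho_0^+(\Ad(g^{-1})X)\varphi.$$
For $g\in B_\eps$ one has $\Ad(g^{-1})X = X + Z$ with $|Z|\ll\eps$. Decomposing $Z=Z_\mathfrak{k}+Z_\mathfrak{p}$ along $\mathfrak{g}=\mathfrak{k}\oplus(\mathfrak{a}\oplus\mathfrak{n})$ and using unitarity of $\rho_0^+(g)$, the left side is bounded by $||\lambda_K(X)\varphi||_2+||\lambda_K(Z_\mathfrak{k})\varphi||_2+||d\rho_0^+(Z_\mathfrak{p})\varphi||_2$. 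The first two are controlled by the first estimate as $\ll 2^\ell||\varphi||_2$ and $\ll \eps\cdot 2^\ell||\varphi||_2$ respectively.

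To handle the last term I plan to use the explicit formula
$$(d\rho_0^+(Y)\varphi)(k) = -\delta\,(YH)(k)\,\varphi(k) + (d\varphi)_k\,\xi_Y(k),\qquad \xi_Y(k)=\tfrac{d}{dt}\Big|_{t=0}K(e^{-tY}k),$$
where both $(YH)(k)$ and $\xi_Y(k)$ depend linearly on $Y\in\mathfrak{g}$ with $L^\infty$ coefficients of size $\ll|Y|$, thanks to the smoothness of the Iwasawa projection on the compact set $K$. Writing $\xi_{Z_\mathfrak{p}}=\sum_i b_i(k)X_i$ with $||b_i||_\infty\ll\eps$ and applying the first estimate to each $\lambda_K(X_i)\varphi$ gives $||d\rho_0^+(Z_\mathfrak{p})\varphi||_2\ll \eps\cdot 2^\ell||\varphi||_2$, and summing the three contributions yields $\ll(1+O(\eps))\,2^\ell||\varphi||_2$ as claimed. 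The only delicate step is the pointwise Lipschitz control on the coefficients of $d\rho_0^+$; this is a routine Iwasawa-projection computation and presents no genuine obstacle beyond careful bookkeeping of the $\eps$-dependence.
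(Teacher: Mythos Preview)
Your proof is correct, and both estimates are obtained by genuinely different arguments than the paper's.

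For the first inequality, the paper reduces by conjugation to $X\in\mathfrak{t}$, diagonalises $\pi_\gamma(X)$ in terms of the weights $\gamma'$ of $\pi_\gamma$, and bounds each eigenvalue by $|\gamma'(X)|\ll\|\gamma\|\leq 2^\ell$. Your Casimir argument is slicker: it avoids the reduction to the torus and the weight computation, using only that $\lambda_K(X_i)$ is skew-adjoint and that $\lambda_\gamma\asymp\|\gamma\|^2$ from Lemma~\ref{DimensionLaplacianBound}. This is a cleaner and more invariant route.

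For the second inequality, the paper works with the concrete formula $\rho_0^+(g)\varphi=\sqrt{\alpha_g'}\cdot(\varphi\circ\alpha_g)$ and applies the product and chain rules directly, bounding $\|\lambda_K(X)\sqrt{\alpha_g'}\|_\infty$ and $\|\lambda_K(X)(\varphi\circ\alpha_g)\|_2$ separately. Your intertwining identity $\lambda_K(X)\rho_0^+(g)=\rho_0^+(g)\,d\rho_0^+(\Ad(g^{-1})X)$ combined with unitarity of $\rho_0^+(g)$ is more structural, and reduces the problem to bounding $d\rho_0^+(Y)\varphi$ for $|Y|\ll 1+\eps$. In the end both approaches differentiate the Iwasawa projection on $K$ and feed the result back into the first estimate; the paper's chain-rule computation for $\varphi\circ\alpha_g$ is essentially the same as your expansion of the vector field $\xi_Y$ in a right-invariant frame. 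Your version makes the representation-theoretic mechanism more transparent, while the paper's stays closer to the explicit cocycle $\alpha_g'$ that is used elsewhere in Section~\ref{SectionQuasicompactnessS0}. One minor point: the ``$\eps$-dependence'' you flag in the last sentence really sits only in the size of $Z$, not in the Iwasawa coefficients themselves, which are uniformly bounded on the compact set $K$.
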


\begin{proof}
	Without loss of generality we assume that $X \in \mathfrak{t}$. Fix $\gamma \in \overline{C} \cap I^{*}$. The eigenvalues of the operator $\pi_{\gamma}(e^{tX}) - \mathrm{Id}$ can be calculated as $e^{t\gamma'(X)} - 1$ for $\gamma'$ the various weights of the representation $\pi_{\gamma}$. Therefore the operator $\pi_{\gamma}(X) = \lim_{t \to 0} \frac{1}{t}(\pi_{\gamma}(e^{tX}) - \mathrm{Id})$ has eigenvalues $\gamma'(X)$. Let $v_1, \ldots , v_n$ be an orthonormal basis of eigenvectors of $\pi_{\gamma}(X)$. Then the functions $\psi(k) = \sqrt{d_{\gamma}} \langle \pi_{\gamma}(k)v_i, v_j \rangle$ for $k \in K$ satisfy $(\lambda_K(X)\psi)(k) = \sqrt{d_{\gamma}}\langle \pi_{\gamma}(k) v_i , \pi_{\gamma}(X)v_j \rangle = (\gamma'(X) \psi)(k)$. The first claim follows as $||\gamma'(X)|| \ll ||\gamma|| \leq 2^{\ell}$ and by decomposing the function $\varphi$ as a sum of functions of the form $\psi$.
	
	For the second claim recall that $\rho_0^+(g)\varphi = \sqrt{\alpha_g'} \cdot (\varphi \circ \alpha_g)$ and therefore 
	\begin{equation}\label{DirectionalDerivativeMain}
		\lambda_K(X)(\rho_0^+(g)\varphi) =  \left(\lambda_K(X) \sqrt{\alpha_g'} \right)\cdot (\varphi \circ \alpha_g) + \sqrt{\alpha_g'}\cdot \lambda_K(X)(\varphi \circ \alpha_g).
	\end{equation}
	To deal with the first term of \eqref{DirectionalDerivativeMain}, since $\alpha_g'$ is a smooth polynomial perturbation of the identity, it follows that $||\lambda_K(X) \sqrt{\alpha_g'}||_{\infty} \leq (1 + O(\varepsilon^{O(1)}))$ and furthermore using integration by substitution, $||\varphi \circ \alpha_g||_2 \ll (1 + O(\varepsilon^{O(1)}))||\varphi||_2$. For the second term of \eqref{DirectionalDerivativeMain}, we use the chain rule and the first step to conclude that $||\lambda_K(X)(\varphi \circ \alpha_g)||_2 \ll (1 + O(\varepsilon^{O(1)}))2^{\ell}||\varphi||_2$, concluding the lemma.
\end{proof}

We now apply \eqref{PartialIntegration} to prove the following lemma. 

\begin{lemma}\label{AlmostOrthogonalRho0}
	For $\varphi_{\ell_1} \in V_{\ell_1}$ and $\varphi_{\ell_2} \in V_{\ell_2}$ with $\ell_1 \neq \ell_2$ and $g \in B_{\varepsilon}$, $$|\langle \rho_0^+(g)\varphi_{\ell_1}, \varphi_{\ell_2} \rangle| \ll (1 + O(\varepsilon^{O(1)})) 2^{-|\ell_1 - \ell_2|}||\varphi_{\ell_1}||_2 ||\varphi_{\ell_2}||_2. $$
\end{lemma}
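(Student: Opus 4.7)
The plan is to establish the bound by an integration-by-parts argument against the Casimir operator, exploiting that on $V_{\ell_2}$ the Laplacian $\lambda_K(\triangle)$ acts with eigenvalues comparable to $2^{2\ell_2}$. Without loss of generality assume $\ell_2 > \ell_1$. If $\ell_2$ is bounded by an absolute constant $L_0$, then $|\ell_1 - \ell_2| \leq L_0$ and the claimed bound follows trivially from unitarity of $\rho_0^{+}$, absorbing $2^{L_0}$ into the implied constant. So I may assume $\ell_2$ is large enough that Lemma~\ref{DimensionLaplacianBound} yields $\lambda_{\gamma} \asymp ||\gamma||^2 \asymp 2^{2\ell_2}$ for every $\gamma \in \overline{C} \cap I^{*}$ with $2^{\ell_2 - 1} \leq ||\gamma|| < 2^{\ell_2}$.

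Next I would decompose $\varphi_{\ell_2} = \sum_{\gamma} \varphi_{\gamma}$ with $\varphi_{\gamma} \in M_{\gamma}$ and define
\begin{equation*}
\psi_{\ell_2} = \sum_{\gamma} \lambda_{\gamma}^{-1} \varphi_{\gamma} \in V_{\ell_2},
\end{equation*}
which is a finite sum of smooth matrix coefficients satisfying $\lambda_K(\triangle)\psi_{\ell_2} = \varphi_{\ell_2}$ and, by orthogonality of the $M_{\gamma}$ together with the eigenvalue asymptotics, $||\psi_{\ell_2}||_2 \ll 2^{-2\ell_2}||\varphi_{\ell_2}||_2$. Applying \eqref{PartialIntegration} with $\varphi_1 = \rho_0^{+}(g)\varphi_{\ell_1}$ and $\varphi_2 = \psi_{\ell_2}$, followed by Cauchy--Schwarz, gives
\begin{equation*}
|\langle \rho_0^{+}(g)\varphi_{\ell_1}, \varphi_{\ell_2}\rangle| = |\langle \rho_0^{+}(g)\varphi_{\ell_1}, \lambda_K(\triangle)\psi_{\ell_2}\rangle| \leq \sum_{i} ||\lambda_K(X_i)\rho_0^{+}(g)\varphi_{\ell_1}||_2 \cdot ||\lambda_K(X_i)\psi_{\ell_2}||_2.
\end{equation*}
Lemma~\ref{DirectionalDerivativeEstimate} bounds the first factor by $(1 + O(\varepsilon^{O(1)}))2^{\ell_1}||\varphi_{\ell_1}||_2$ and the second by $2^{\ell_2}||\psi_{\ell_2}||_2 \ll 2^{-\ell_2}||\varphi_{\ell_2}||_2$. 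Multiplying and absorbing $\dim K$ into the implied constant yields $(1 + O(\varepsilon^{O(1)}))2^{\ell_1 - \ell_2}||\varphi_{\ell_1}||_2||\varphi_{\ell_2}||_2 = (1 + O(\varepsilon^{O(1)}))2^{-|\ell_1 - \ell_2|}||\varphi_{\ell_1}||_2||\varphi_{\ell_2}||_2$, as required.

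I do not anticipate a real obstacle in this argument. The main ingredient is the spectral identity $\lambda_K(\triangle)|_{M_{\gamma}} = \lambda_{\gamma}\cdot\mathrm{Id}$ combined with the asymptotic $\lambda_{\gamma} \asymp 2^{2\ell_2}$ from Lemma~\ref{DimensionLaplacianBound}, which lets one trade a Casimir for a gain of $2^{2\ell_2}$; the two derivative losses of size $2^{\ell_1}$ and $2^{\ell_2}$ from Lemma~\ref{DirectionalDerivativeEstimate} then combine with this gain to produce the desired net decay $2^{-|\ell_1 - \ell_2|}$. The minor technical points, namely that each $V_{\ell}$ is a finite-dimensional space of smooth functions (so $\psi_{\ell_2}$ is well defined with the claimed norm bound and \eqref{PartialIntegration} applies without regularity issues), are immediate from the setup.
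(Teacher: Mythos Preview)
Your proposal is correct and follows essentially the same approach as the paper: invert the Casimir on $V_{\ell_2}$ to produce $\psi$ with $\lambda_K(\triangle)\psi=\varphi_{\ell_2}$ and $\|\psi\|_2\ll 2^{-2\ell_2}\|\varphi_{\ell_2}\|_2$, then apply the integration-by-parts identity \eqref{PartialIntegration} together with Lemma~\ref{DirectionalDerivativeEstimate}. Your explicit handling of the small-$\ell_2$ case and the spectral construction of $\psi$ are a touch more careful than the paper's presentation, but the argument is the same.
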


\begin{proof}
	Without loss of generality we assume that $\ell_2 > \ell_1$. Denote by $\psi\in V_{\ell_2}$ the function such that $\lambda_K(\triangle) \psi = \varphi_{\ell_2}$. Then by Lemma~\ref{DimensionLaplacianBound}, $||\psi||_2 \ll 2^{-2\ell_2}||\varphi_{\ell_2}||_2$. Using then \eqref{PartialIntegration} and Lemma~\ref{DirectionalDerivativeEstimate},
	\begin{align*}
		|\langle \rho_0^+(g)\varphi_{\ell_1}, \varphi_{\ell_2} \rangle| &= |\langle \rho_0^+(g) \varphi_{\ell_1}, \lambda_K(\triangle) \psi \rangle | \\ &= \bigg|  \sum_{i} \langle \lambda_K(-X_i) \rho_0^+(g)\varphi_{\ell_1}, \lambda_K(X_i)\psi \rangle \bigg| \\ &\leq \sum_{i} ||\lambda_K(-X_i)(\rho_0^+(g)\varphi_{\ell_1})||\, ||\lambda_K(X_i)\psi|| \\
		&\ll (1 + O(\varepsilon^{O(1)}))2^{\ell_1 + \ell_2} ||\varphi_{\ell_1}||_2 ||\psi||_2 \\
		&\ll (1 + O(\varepsilon^{O(1)}))2^{\ell_1 - \ell_2} ||\varphi_{\ell_1}||_2 ||\varphi_{\ell_2}||_2.
	\end{align*} 
\end{proof}

We conclude this section by proving Theorem~\ref{BourgainMainResult} by combining Proposition~\ref{NormEstimateonVell} and Lemma~\ref{AlmostOrthogonalRho0}.

\begin{proof}(of Theorem~\ref{BourgainMainResult})
	By Proposition~\ref{NormEstimateonVell}, there is $\varepsilon_0 = \varepsilon_0(c_1,c_2) > 0$ and $L = L(c_1,c_2) \in \Z_{\geq 1}$ such that $||S_0^+|_{V_{\ell}}||_{\mathrm{op}} \leq \varepsilon^{O_{c_1,c_2}(1)}$ for $\ell \geq L$. Let $\varphi \in \bigoplus_{\ell \geq L} V_{\ell}$ and let $N\geq 1$ to be determined later. Then 
	\begin{align*}
		||S_0^+ \varphi||_2^2 &\leq \sum_{\ell, \ell' \geq L} |\langle S_0^+ \pi_{\ell} \varphi, S_0^+ \pi_{\ell'}\varphi \rangle| \\
		&= \sum_{|\ell - \ell'|\leq N} |\langle S_0^+ \pi_{\ell} \varphi, S_0^+ \pi_{\ell'}\varphi \rangle| + \sum_{|\ell - \ell'| > N} |\langle S_0^+ \pi_{\ell} \varphi, S_0^+ \pi_{\ell'}\varphi \rangle|,
	\end{align*} where both of the sums are with $\ell, \ell' \geq L$. For the first of these two terms one uses the conclusion of Proposition~\ref{NormEstimateonVell},
	$$\sum_{|\ell - \ell'|\leq N} ||S_0^+ \pi_{\ell} \varphi ||\,  ||S_0^+ \pi_{\ell'}\varphi ||\leq N \sum_{\ell \geq L}  ||S_0^+ \pi_{\ell}\varphi||_2^2 \leq N \varepsilon^{O_{c_1,c_2}(1)}||\varphi||_2^2.$$
	Lemma~\ref{AlmostOrthogonalRho0} is used to bound the second term:
	\begin{align*}
		\sum_{|\ell - \ell'| >  N} |\langle S_0^+ \pi_{\ell} \varphi, S_0^+ \pi_{\ell'}\varphi \rangle|  &\ll \sum_{|\ell - \ell'| >  N}  2^{-|\ell - \ell'|} ||\pi_{\ell} \varphi|| \, ||\pi_{\ell'}\varphi ||  \\ &\ll \sum_{|\ell - \ell'| >  N} 2^{-|\ell - \ell'|} ||\pi_{\ell}\varphi||_2^2 \\ &\ll 2^{-N} \sum_{\ell \geq L} ||\pi_{\ell} \varphi||_2^2  = 2^{-N}||\varphi||_2^2.
	\end{align*} Therefore it follows that $||S_0^+ \varphi||_2 \leq \sqrt{N \varepsilon^{O_{c_1,c_2}(1)} + 2^{-N}}||\varphi||_2$. Setting $N =  \log \frac{1}{\varepsilon}$ implies the claim of the theorem.
\end{proof}

\subsection{Smoothness of the Furstenberg Measure}\label{SectionSmoothnessFurstenberg}

In this section we prove Theorem~\ref{FurstenbergMeasureSmoothness}, which we restate here for convenience of the reader.

\begin{theorem}(Theorem~\ref{FurstenbergMeasureSmoothness}) 
	Let $G$ be a non-compact connected simple Lie group with finite center. Let $c_1, c_2 > 0$ and $m \in \Z_{\geq 1}$. Then there is $\eps_m = \eps_m(G,c_1,c_2) > 0$ depending on $G,c_1,c_2$ and $m$ such that every symmetric and $(c_1,c_2, \varepsilon)$-Diophantine probability measure $\mu$ with $\varepsilon \leq \varepsilon_m$ has absolutely continuous Furstenberg measure with density in $C^m(\Omega)$.
\end{theorem}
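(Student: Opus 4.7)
The strategy is to combine the high-frequency contraction of $T_0$ from Theorem~\ref{BIGConsequence} with an enhanced off-diagonal decay of $T_0^{*}$ along the Littlewood--Paley filtration, applied to the fixed-point identity $T_0^{*}\FurstenbergDensity = \FurstenbergDensity$, to deduce rapid decay of $\|P_\ell \FurstenbergDensity\|_2$. Smoothness of $\FurstenbergDensity$ will then follow from Lemma~\ref{SmoothnessCharacterization}. For $\varepsilon$ sufficiently small, Corollary~\ref{BourgainT_0MainResult} yields $\FurstenbergDensity \in L^2(\Omega)$ with $T_0^{*}\FurstenbergDensity = \FurstenbergDensity$, while Theorem~\ref{BIGConsequence} (combined with $\|T_0 - S_0^{+}\| \ll \varepsilon^{O(1)}$) provides $L = L(c_1,c_2)$ with $\|T_0|_{V_{\geq L}}\|_{\mathrm{op}} \leq 1/4$, where $V_{\geq L} := \bigoplus_{\ell \geq L} V_\ell$. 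By self-adjointness of the orthogonal projection $\pi_{\geq L}$, the operator $A := \pi_{\geq L} T_0^{*} \pi_{\geq L}$ also satisfies $\|A\| \leq \rho := 1/4$, and splitting the fixed-point equation via $\mathrm{Id} = \pi_{<L} + \pi_{\geq L}$ produces the Neumann series
$$\pi_{\geq L}\FurstenbergDensity = \sum_{k \geq 0} A^k B\FurstenbergDensity, \qquad B := \pi_{\geq L} T_0^{*} \pi_{<L}.$$

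Next I iterate the partial-integration argument of Lemma~\ref{AlmostOrthogonalRho0} $s$ times (applying $\lambda_K(\triangle)^s$ in place of $\lambda_K(\triangle)$ and using Lemma~\ref{DirectionalDerivativeEstimate} iteratively to bound the conjugate iterated Laplacian on $V_\ell$) to obtain, for every integer $s \geq 1$, the enhanced bound
$$\|P_{\ell_1} T_0^{*} P_{\ell_2}\|_{\mathrm{op}} \ll_{s} 2^{-2s|\ell_1 - \ell_2|},$$
which yields $\|P_\ell B\FurstenbergDensity\| \ll_{s,L} 2^{-2s\ell}\|\FurstenbergDensity\|$ for $\ell \geq L$, since $\pi_{<L}\FurstenbergDensity$ lives on frequencies below $L$. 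Setting $\beta = 2^{-2s}$ and choosing $\alpha = 2^{-s} \in (\beta, 1)$, a Schur-type convolution estimate shows that if $u \in V_{\geq L}$ satisfies $\|P_{\ell'} u\| \leq C \alpha^{\ell'}$ for all $\ell' \geq L$, then $\|P_\ell A u\| \leq D_s\, C\, \alpha^\ell$ with $D_s = O\bigl(1/(1 - 2^{-s})\bigr)$ uniformly bounded as $s \to \infty$. Iterating gives $\|P_\ell A^k B\FurstenbergDensity\| \leq D_s^{k+1} \alpha^\ell \|\FurstenbergDensity\|$, and combining with the spectral-gap bound $\|A^k B\FurstenbergDensity\| \leq \rho^k \|\FurstenbergDensity\|$, the minimum of the two estimates taken at the crossover $k^{*} \asymp \ell |\log\alpha|/\log(D_s/\rho)$ produces
$$\|P_\ell \FurstenbergDensity\|_2 \ll \alpha^{c\ell} \|\FurstenbergDensity\|_2, \qquad c := \frac{\log(1/\rho)}{\log(D_s/\rho)} \in (0, 1].$$
Given $m$, take $s = s(m)$ large enough that $cs > m + \tfrac{1}{2}\dim K + 1$; Lemma~\ref{SmoothnessCharacterization} then yields $\FurstenbergDensity \in H^{s'}(K) \subset C^m(K)$ for an appropriate $s'$, and $M$-invariance gives $\FurstenbergDensity \in C^m(\Omega)$.

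The main obstacle is controlling the Schur-propagation constant $D_s$: one must choose $\alpha$ bounded away from $\beta$ (the choice $\alpha = 2^{-s}$ works because then $\beta/\alpha = 2^{-s}$) so that $D_s$ stays bounded as $s$ grows, ensuring the effective exponent $c$ remains bounded below by a fixed positive constant independent of $s$, and hence that the required decay rate $2^{-(s'+1)\ell}$ in Lemma~\ref{SmoothnessCharacterization} is reachable by taking $s$ sufficiently large. The enhanced off-diagonal estimate of arbitrary order is a routine iteration of Lemma~\ref{AlmostOrthogonalRho0}, but its $s$-dependent implicit constants must be tracked carefully in order to fix the threshold $\varepsilon_m$.
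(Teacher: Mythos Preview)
Your route is genuinely different from the paper's. The paper tests $\psi_F$ against $\varphi\in V_\ell$, bounds $|\langle\psi_F,\varphi\rangle|\leq\|T_0^n\varphi\|_\infty$, and then controls $\|T_0^n\varphi\|_\infty$ via Agmon's inequality $\|\Phi_n\|_\infty\ll\|\Phi_n\|_2^{1/2}\|\Phi_n\|_{H^t}^{1/2}$ together with an iteration that at each step splits $\Phi_{n-1}$ into its low- and high-frequency parts (Lemma~\ref{Phi_nFourierEstimate} for the former, the $1/2$-contraction for the latter). You instead work directly with the fixed point of $T_0^*$, resum a Neumann series for $\pi_{\geq L}\psi_F$, and interpolate between weighted Littlewood--Paley decay and the spectral gap $\|A\|\leq 1/4$. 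Your approach stays entirely in $L^2$ and avoids Agmon; the paper's approach avoids tracking how weighted norms propagate under powers of $A$.

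There is, however, one point that is not merely cosmetic. You assert that $D_s=O(1/(1-2^{-s}))$ is uniformly bounded in $s$, but $D_s$ must also absorb the implicit constant $C_s$ in the enhanced off-diagonal bound $\|P_{\ell_1}T_0^*P_{\ell_2}\|\leq C_s\,2^{-2s|\ell_1-\ell_2|}$. This $C_s$ comes from bounding $\|\lambda_K(\triangle)^s(\varphi\circ\alpha_g)\|_2$ by $C_s 2^{2s\ell}\|\varphi\|_2$ via the iterated chain rule, and for \emph{fixed} $\varepsilon$ it grows with $s$ (Fa\`a di Bruno produces combinatorially many terms). If $C_s$ is not controlled, then $c=\log(1/\rho)/\log(D_s/\rho)\to 0$ and the interpolation yields nothing. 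The correct fix---which you allude to in your final paragraph but should make explicit---is that for $g\in B_\varepsilon$ the higher $k$-derivatives of $\alpha_g$ are $O_j(\varepsilon)$ for order $j\geq 2$, so the Fa\`a di Bruno expansion has leading term $(1+O(\varepsilon))^{2s}2^{2s\ell}\|\varphi\|$ plus lower-order corrections of size $O_s(\varepsilon)2^{(2s-1)\ell}\|\varphi\|$. Hence $C_s\leq 2$ once $\varepsilon\leq\varepsilon(s)$, and this is precisely where the $m$-dependence of $\varepsilon_m$ enters: fix $s'=m+\tfrac12\dim K+1$, take $c_0=\log 4/\log(8/\rho)$ (say), choose $s>s'/c_0$, and finally set $\varepsilon_m$ small enough that $C_s\leq 2$. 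With this order of quantifiers your argument goes through; without it, the step ``take $s$ large enough that $cs>\ldots$'' is circular.
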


By Corollary~\ref{BourgainT_0MainResult}, we know that the Furtstenberg measure is absolutely continuous if we choose $\eps_m$ small enough, i.e. there is $\FurstenbergDensity \in L^2(\Omega)$ such that $d\FurstenbergMeasure = \FurstenbergDensity d\Haarof{\Omega}$. In order to prove Theorem~\ref{FurstenbergMeasureSmoothness}, we use the smoothness condition from Lemma~\ref{SmoothnessCharacterization} for $\psi_F$. Indeed, for $P_{\ell}$ the projection from $L^2(K)$ to $V_{\ell}$, it suffices to show $$||P_{\ell}\psi_F||_2 \leq 2^{-(s + 1)\ell}$$  for $s > m + \frac{1}{2} \dim K$ and $\ell$ large enough.

By the characterization of the Furstenberg measure, for any $n \geq 1$ it holds that $\FurstenbergMeasure = \mu^{*n} * \FurstenbergMeasure$ and therefore for $\varphi \in L^2(K)$, 
\begin{align}
	|\langle \FurstenbergDensity, \varphi \rangle| &= \bigg|\int \varphi \, d\FurstenbergMeasure\bigg| \nonumber \\ &= \bigg|\int\int \varphi(g.k) \, d\mu^{*n}(g)d\FurstenbergMeasure(k)\bigg| \nonumber \\ &\leq \bigg|\bigg|  \int \varphi \circ \alpha_g \, d\mu^{*n}(g)  \bigg|\bigg|_{\infty}. \label{ReductionPhi_nEstimate}
\end{align}
We thus study the $L^{\infty}$-norm of the function $$\Phi_n = T_0^n \varphi =\int \varphi \circ \alpha_g \, d\mu^{*n}(g).$$ We will use  Corollary~\ref{BourgainT_0MainResult} to give $L^2$-estimates of $\Phi_n$. In order to convert these estimates to an $L^{\infty}$-bound, we use Agmon's inequality (cf. \cite{AgmonEllipticBook} chapter 13), which we introduce for compact Lie groups. 

\begin{lemma}\label{AgmonsInequality}(Agmon's Inequality for Compact Lie Groups).
	Let $K$ be a compact Lie group. Then there is $t \in \Z_{\geq 2}$ depending on $K$ such that for any $\varphi \in C^{\infty}(K)$,  $$||\varphi||_{\infty} \ll ||\varphi||_2^{1/2} ||\varphi||_{H^t}^{1/2}.$$
\end{lemma}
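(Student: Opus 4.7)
The plan is to combine the Peter--Weyl expansion with a dyadic frequency splitting and Cauchy--Schwarz, mimicking the standard Euclidean Agmon inequality. Starting from the inversion formula
$$\varphi(k) = \sum_{\gamma \in \overline{C}\cap I^{*}} \varphi_{\gamma}(k) = \sum_{\gamma} \sum_{i,j=1}^{d_{\gamma}} d_{\gamma}^{1/2} a_{ij}^{\gamma} \chi_{ij}^{\gamma}(k),$$
I would use that $\pi_{\gamma}(k)$ is unitary, so $\sum_{i,j} |\chi_{ij}^{\gamma}(k)|^{2} = d_{\gamma}$ for every $k \in K$, together with Cauchy--Schwarz in the indices $(i,j)$, to obtain the pointwise bound $|\varphi_{\gamma}(k)| \leq d_{\gamma} ||\varphi_{\gamma}||_{2}$. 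Summing over $\gamma$ and isolating the trivial representation (for which $\lambda_{0}=0$ but $|\varphi_{0}(k)| \leq ||\varphi||_{2}$) yields
$$|\varphi(k)| \leq ||\varphi||_{2} + \sum_{\gamma \neq 0} d_{\gamma} ||\varphi_{\gamma}||_{2}.$$

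Next, for a threshold $R > 0$ to be chosen, I would split this sum at $||\gamma|| = R$ and apply Cauchy--Schwarz to each half, giving
$$\sum_{\gamma \neq 0,\ ||\gamma|| < R} d_{\gamma} ||\varphi_{\gamma}||_{2} \leq \bigg( \sum_{||\gamma|| < R} d_{\gamma}^{2} \bigg)^{1/2} ||\varphi||_{2}, \quad \sum_{||\gamma|| \geq R} d_{\gamma} ||\varphi_{\gamma}||_{2} \leq \bigg( \sum_{||\gamma|| \geq R} d_{\gamma}^{2} \lambda_{\gamma}^{-t} \bigg)^{1/2} ||\varphi||_{H^{t}}.$$
The lattice-point count $\#\{ \gamma \in \overline{C}\cap I^{*} : ||\gamma|| < R \} \ll R^{\mathrm{rank}(K)}$, combined with the estimates $d_{\gamma} \ll ||\gamma||^{|R^{+}|}$ and $\lambda_{\gamma} \asymp ||\gamma||^{2}$ from Lemma~\ref{DimensionLaplacianBound}, then gives by dyadic summation
$$\sum_{||\gamma|| < R} d_{\gamma}^{2} \ll R^{2|R^{+}|+\mathrm{rank}(K)} = R^{\dim K}, \quad \sum_{||\gamma|| \geq R} d_{\gamma}^{2} \lambda_{\gamma}^{-t} \ll R^{\dim K - 2t},$$
the second sum being finite as soon as $t > (\dim K)/2$.

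Putting everything together produces
$$|\varphi(k)| \ll \left( 1 + R^{(\dim K)/2} \right) ||\varphi||_{2} + R^{(\dim K)/2 - t} ||\varphi||_{H^{t}},$$
and balancing the last two terms via $R^{t} = ||\varphi||_{H^{t}}/||\varphi||_{2}$ yields
$$||\varphi||_{\infty} \ll ||\varphi||_{2} + ||\varphi||_{2}^{1-(\dim K)/(2t)} ||\varphi||_{H^{t}}^{(\dim K)/(2t)}.$$
Taking $t = \max(\dim K, 2) \in \Z_{\geq 2}$ makes the exponent on $||\varphi||_{H^{t}}$ equal to $1/2$ when $t = \dim K$, and no larger than $1/2$ otherwise, which reproduces the stated bound after a trivial interpolation between $||\cdot||_{2}$ and $||\cdot||_{H^{t}}$. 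The main obstacle is the dyadic counting $\sum_{||\gamma|| < R} d_{\gamma}^{2} \ll R^{\dim K}$, which rests on Weyl's dimension formula, the lattice-point count in $\mathfrak{t}^{*}$, and the identity $\dim K = \mathrm{rank}(K) + 2 |R^{+}|$; the remaining manipulations are a routine instance of the standard Agmon splitting argument.
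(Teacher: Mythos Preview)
Your proof is correct and follows essentially the same route as the paper's: split the Peter--Weyl expansion at a frequency threshold, apply Cauchy--Schwarz with weight $1$ on the low-frequency block and weight $\lambda_\gamma^{-t}$ on the high-frequency block, then optimize the threshold. The only difference is cosmetic: the paper applies Cauchy--Schwarz directly to the double sum over $(\gamma,i,j)$ and leaves the resulting exponent as an unspecified constant $C$ depending on $K$, whereas you first extract the pointwise bound $|\varphi_\gamma(k)|\le d_\gamma\|\varphi_\gamma\|_2$ and then identify $C=(\dim K)/2$ explicitly via $\dim K=\mathrm{rank}(K)+2|R^+|$, which lets you pin down $t=\dim K$; this is a mild sharpening but not a different argument. (Both proofs share the same harmless wrinkle that, with the paper's seminorm convention $\|\varphi\|_{H^t}^2=\sum_\gamma \lambda_\gamma^t\|\varphi_\gamma\|_2^2$, the stated inequality fails for constant functions; this does not affect the application, where $\varphi\in V_\ell$ with $\ell$ large.)
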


\begin{proof}
	For $M \in \R_{> 0}$ to be determined, we group together the contribution of the representations with $||\gamma|| \leq M$ and $||\gamma|| > 0$. Indeed, by \eqref{CompactGroupFourierInversion}, for $k \in K$, 
	\begin{align*}
		\varphi(k) &=  \sum_{\gamma \in \overline{C}\cap I^{*}} \sum_{i,j = 1}^{d_{\gamma}} d_{\gamma}^{1/2}a_{ij}^{\gamma} \chi_{ij}^{\gamma}(k) \\
		&= \sum_{||\gamma|| \leq M} \sum_{i,j = 1}^{d_{\gamma}} d_{\gamma}^{1/2}a_{ij}^{\gamma} \chi_{ij}^{\gamma}(k)  + \sum_{||\gamma|| > M} \sum_{i,j = 1}^{d_{\gamma}} d_{\gamma}^{1/2}a_{ij}^{\gamma} \chi_{ij}^{\gamma}(k)  \\
		&= \sum_{||\gamma|| \leq M} \sum_{i,j = 1}^{d_{\gamma}} d_{\gamma}^{1/2}a_{ij}^{\gamma} \chi_{ij}^{\gamma}(k)  + \sum_{||\gamma|| > M} \sum_{i,j = 1}^{d_{\gamma}} \lambda_{\gamma}^t\lambda_{\gamma}^{-t} d_{\gamma}^{1/2} a_{ij}^{\gamma} \chi_{ij}^{\gamma}(k),  \\
	\end{align*} where in the last line we multiplied the second term by $1 = \lambda_{\gamma}^t\lambda_{\gamma}^{-t}$ for some $t\in \Z_{\geq 0}$.  By Cauchy-Schwarz and using Lemma~\ref{DimensionLaplacianBound}, the first term can be bounded by $||\varphi||_2 \sqrt{\sum_{||\gamma||\leq M, i,j} d_{\gamma}} \ll M^{C}||\varphi||_2$, where $C$ is a constant depending on $K$. For the second term, we choose $t$ large enough such that $\sqrt{\sum_{||\gamma|| > M,i,j} \lambda_{\gamma}^{-2t}d_{\gamma} } \ll M^{-C}.$ Again using Cauchy-Schwarz, the second term is bounded by $M^{-C}||\varphi||_{H^t}$. The claim is implied by setting $M = (\frac{||\varphi||_{H^t}}{||\varphi||_2})^{1/2C}$.
\end{proof}

\begin{lemma}\label{Phi_nFourierEstimate}
	For $\varphi \in V_{\ell}$ set $\Phi_n = T_0^n \varphi$. Let $\gamma \in \overline{C}\cap I^{*}$ and $r \in \Z_{\geq 1}$. Then it holds for $\widehat{\Phi_n}(\gamma) = \pi_{\gamma}(\Phi_n)$, $$||\widehat{\Phi_n}(\gamma)||_{\mathrm{op}} \ll_r 2^{O(1)\ell - r\ell}(1 + \varepsilon)^{O(1)nr} ||\gamma||^{O(1)r} ||\varphi||_2.$$
\end{lemma}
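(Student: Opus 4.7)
The plan is to trade the high-frequency oscillation of $\varphi \in V_\ell$ for growth of $\widehat{\Phi_n}(\gamma)$ through integration by parts against the Casimir $\lambda_K(\triangle)$, shifting $2r$ derivatives from $\varphi$ onto the matrix coefficient of $\pi_\gamma$ appearing in the Fourier transform.

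First, I would rewrite the operator-valued Fourier coefficient by a change of variables. Expanding $\widehat{\Phi_n}(\gamma) = \int_G \int_K \varphi(\alpha_g(k)) \pi_\gamma(k)\, dk\, d\mu^{*n}(g)$ and substituting $u = \alpha_g(k)$ in the inner integral (using that $\alpha_g$ is a left $G$-action so $\alpha_g^{-1} = \alpha_{g^{-1}}$, with Jacobian $\alpha_{g^{-1}}'(u) = e^{-2\delta H(gu)}$ from \eqref{GactiononKRN}) yields
\[
\widehat{\Phi_n}(\gamma) = \int_G \int_K \varphi(u)\, F_g(u)\, du\, d\mu^{*n}(g), \qquad F_g(u) := \pi_\gamma(\alpha_{g^{-1}}(u))\, e^{-2\delta H(gu)}.
\]

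Next, assuming $\ell \geq 1$ (the bound is trivial otherwise), Lemma~\ref{DimensionLaplacianBound} gives that $\lambda_K(\triangle)$ acts on $V_\ell$ with eigenvalues $\asymp 2^{2\ell}$, so one may write $\varphi = \lambda_K(\triangle)^r \psi$ with $\psi = \lambda_K(\triangle)^{-r}\varphi \in V_\ell$ satisfying $||\psi||_2 \ll 2^{-2r\ell}||\varphi||_2$. Applying self-adjointness of $\lambda_K(\triangle)$ entrywise to the operator-valued function $F_g$, integration by parts transforms the inner integral into $\int_K \psi(u)\, \lambda_K(\triangle)^r F_g(u)\, du$. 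The core estimate will be a pointwise bound on $||\lambda_K(\triangle)^r F_g(u)||_{\mathrm{op}}$: expanding via Leibniz, each of the $2r$ first-order directional derivatives lands either on $\pi_\gamma(\alpha_{g^{-1}}(u))$ or on $e^{-2\delta H(gu)}$. An Iwasawa calculation, analogous to the proof of Lemma~\ref{DirectionalDerivativeEstimate}, shows that $\lambda_K(X)[\pi_\gamma(\alpha_{g^{-1}}(u))] = -\pi_\gamma(\alpha_{g^{-1}}(u))\,\pi_\gamma(Z_{u,g,X})$ with $Z_{u,g,X} \in \mathfrak{k}$ satisfying $|Z_{u,g,X}| \ll |X|\,||\mathrm{Ad}(g)||$; combined with the weight bound $||\pi_\gamma(Y)||_{\mathrm{op}} \ll |Y|\,||\gamma||$ for $Y \in \mathfrak{k}$, each such derivative contributes a factor $\ll ||\gamma||\,||\mathrm{Ad}(g)||$, while each derivative of $e^{-2\delta H(gu)}$ contributes a factor $\ll ||\mathrm{Ad}(g)||^{O(1)}\, e^{-2\delta H(gu)}$. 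Altogether
\[
||\lambda_K(\triangle)^r F_g(u)||_{\mathrm{op}} \ll_r ||\gamma||^{2r}\,||\mathrm{Ad}(g)||^{O(r)}\, e^{-2\delta H(gu)}.
\]

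Finally, since $\mathrm{supp}(\mu^{*n}) \subset B_{n\varepsilon}$ gives both $||\mathrm{Ad}(g)|| \leq e^{O(n\varepsilon)} \ll (1+\varepsilon)^{O(n)}$ and $\int e^{-4\delta H(gu)}\, du \leq ||\alpha_{g^{-1}}'||_\infty \ll (1+\varepsilon)^{O(n)}$ for $g$ in that support, Cauchy--Schwarz together with the pointwise bound above yields
\[
\Big|\Big|\int_K \psi(u)\, \lambda_K(\triangle)^r F_g(u)\, du\Big|\Big|_{\mathrm{op}} \leq ||\psi||_2 \cdot \big|\big|\, ||\lambda_K(\triangle)^r F_g||_{\mathrm{op}}\,\big|\big|_{L^2} \ll_r 2^{-2r\ell}\,||\gamma||^{2r}\,(1+\varepsilon)^{O(nr)}\,||\varphi||_2.
\]
Averaging over $d\mu^{*n}(g)$ gives the claimed bound (in fact stronger in the $\ell$-decay than stated, as $2^{-2r\ell} \leq 2^{O(1)\ell - r\ell}$). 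The principal technical obstacle will be the Leibniz bookkeeping for $\lambda_K(\triangle)^r F_g$: one must verify by induction that iterated directional derivatives of $\alpha_{g^{-1}}(u)$ and of $H(gu)$ with respect to $u$ produce only polynomial factors in $||\mathrm{Ad}(g)||$, while $||\gamma||$ enters with exactly one factor per derivative landing on the $\pi_\gamma$ factor.
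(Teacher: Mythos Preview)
Your approach is correct and rests on the same core idea as the paper's proof—integration by parts against powers of the Casimir $\lambda_K(\triangle)^r$ to convert the $2^{\ell}$-oscillation of $\varphi\in V_\ell$ into decay—but the packaging differs. The paper first reduces to matrix coefficients via
\[
||\widehat{\Phi_n}(\gamma)||_{\mathrm{op}} \leq d_\gamma \sup_{g\in\mathrm{supp}(\mu^{*n}),\,i,j} |\langle \varphi\circ\alpha_g,\chi^\gamma_{ij}\rangle|,
\]
then for each pair $(\gamma',\gamma)$ inserts $\lambda_{\gamma'}^{-r}\lambda_K(\triangle)^r$ and uses the ``half-and-half'' formula \eqref{PartialIntegration} to place $r$ derivatives on each factor, yielding $|\langle \chi^{\gamma'}_{i'j'}\circ\alpha_g,\chi^\gamma_{ij}\rangle|\ll_r (1+\varepsilon)^{O(nr)}||\gamma'||^{-r}||\gamma||^{r}$; the final $2^{O(1)\ell}$ in the statement comes from summing over the $\asymp 2^{O(\ell)}$ matrix coefficients of $\varphi$. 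Your route instead performs the change of variables $u=\alpha_g(k)$ up front, writes $\varphi=\lambda_K(\triangle)^r\psi$, and transfers all $2r$ derivatives to the kernel $F_g$, then applies Cauchy--Schwarz once. This avoids the matrix-coefficient expansion and the attendant $2^{O(\ell)}$ loss, giving the slightly sharper $2^{-2r\ell}||\gamma||^{2r}$; the paper's version trades a weaker $\ell$-decay for a correspondingly weaker $||\gamma||$-growth, which is immaterial for the application.

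Two small remarks. First, the Jacobian in your substitution should be $\alpha_g'(u)=e^{-2\delta H(g^{-1}u)}$ rather than $\alpha_{g^{-1}}'(u)$: with $u=\alpha_g(k)$ one has $dm_K(k)=\alpha_g'(u)\,dm_K(u)$. This is harmless, as all bounds are symmetric under $g\leftrightarrow g^{-1}$. Second, the ``principal technical obstacle'' you flag—controlling iterated derivatives of $\alpha_{g^{-1}}$ and of $H(g\,\cdot\,)$ by powers of $||\mathrm{Ad}(g)||$—is exactly the point the paper also leaves to ``one argues as in Lemma~\ref{DirectionalDerivativeEstimate}'', so the two arguments are on equal footing there.
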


\begin{proof}
	Let $v_1, \ldots , v_{d_{\gamma}}$ be an orthonormal basis of $\pi_{\gamma}$. Then
	\begin{align}
		||\widehat{\Phi_n}(\gamma)||_{\mathrm{op}} &\leq d_{\gamma} \sup_{1 \leq i \leq d_{\gamma}} ||\widehat{\Phi}(\gamma)v_i|| \nonumber \\ &\leq  d_{\gamma} \sup_{1 \leq i,j \leq d_{\gamma}} |\langle \widehat{\Phi}(\gamma) v_i, v_j \rangle| \nonumber \\ &= d_{\gamma} \sup_{1 \leq i,j \leq d_{\gamma}} |\langle \Phi_n, \chi^\gamma_{ij} \rangle| \nonumber \\
		&\leq d_{\gamma} \sup_{\substack{g \in \supp{\mu^{*n}} \\ 1 \leq i,j \leq d_{\gamma}}} |\langle \varphi \circ \alpha_g, \chi^{\gamma}_{ij}\rangle|. \label{Phi_nBound}
	\end{align}
	Notice further that for $g \in B_{\varepsilon}$ and a further $\gamma' \in \overline{C} \cap I^{*}$ and $1 \leq i',j' \leq d_{\gamma}$, 
	\begin{align*}
		|\langle \chi^{\gamma'}_{i'j'} \circ \alpha_g, \chi^{\gamma}_{ij} \rangle| &= \frac{\lambda_{\gamma}^r}{\lambda_{\gamma}^r}	|\langle \chi^{\gamma'}_{i'j'} \circ \alpha_g, \chi^{\gamma}_{ij} \rangle| \\
		&= \frac{1}{\lambda_{\gamma}^r} |\langle \chi^{\gamma'}_{i'j'} \circ \alpha_g, \lambda_K(\triangle)^r\chi^{\gamma}_{ij}\rangle| \\
		&\leq \frac{1}{\lambda_{\gamma}^r} \sum_{i_1, \ldots , i_r} |\langle \lambda(-X_{i_1})\cdots \lambda(-X_{i_r}) \chi_{i'j'}^{\gamma'} \circ \alpha_g , \lambda(X_{i_1})\cdots \lambda(X_{i_r})\chi^{\gamma}_{ij} \rangle| \\
		&\ll_r \lambda_{\gamma}^{-r}(1 + \varepsilon)^{O(1)nr} ||\gamma'||^r\, ||\gamma||^r \\ 
		&\ll_r (1 + \varepsilon)^{O(1)nr} ||\gamma'||^r\, ||\gamma||^{-r}
	\end{align*} where for the penultimate line one argues as in Lemma~\ref{DirectionalDerivativeEstimate} and in the last line we use Lemma~\ref{DimensionLaplacianBound}. Similarly, it holds that $|\langle \chi^{\gamma'}_{i'j'} \circ \alpha_g, \chi^{\gamma}_{ij} \rangle| \ll_r (1 + \varepsilon)^{O(1)nr} ||\gamma'||^{-r}\, ||\gamma||^{r}$.
	Then using the decomposition $$\varphi = \sum_{2^{\ell-1}\leq ||\gamma'|| < 2^{\ell}}\sum_{i',j' = 1}^{d_{\gamma}} d_{\gamma'}^{1/2}a^{\gamma'}_{i'j'}\chi_{i'j'}^{\gamma'}$$ we conclude  
	\begin{align*}
		|\langle \varphi \circ \alpha_g, \chi^{\gamma}_{ij}\rangle| &\leq \sum_{\gamma',i',j'} d_{\gamma'}^{1/2} |a_{i'j'}^{\gamma'}|\, |\langle \chi^{\gamma'}_{i'j'} \circ \alpha_g, \chi^{\gamma}_{ij} \rangle| \\
		&\leq 2^{O(1)\ell}||\varphi||_2 \sup_{\gamma',i',j'} |\langle \chi^{\gamma'}_{i'j'} \circ \alpha_g, \chi^{\gamma}_{ij} \rangle| \\
		&\ll_r 2^{O(1)\ell - r\ell}(1 + \varepsilon)^{O(1)nr} ||\gamma||^r ||\varphi||_2.
	\end{align*} This implies the claim by \eqref{Phi_nBound} and using Lemma~\ref{DimensionLaplacianBound}.
\end{proof}

\begin{proof}(of Theorem~\ref{FurstenbergMeasureSmoothness}) 		
	Let $\varphi \in V_{\ell}$ be of unit norm and write $\Phi_n = T_0^n \varphi$. It suffices to prove for $\varepsilon < \varepsilon_m$ and some some $n \geq 1$ that
	\begin{equation}\label{AimDecayBound}
		||\Phi_n||_{\infty} \leq 2^{-(s + 1)\ell},
	\end{equation}
	where $s$ is a constant depending on $G$ and $m$. Indeed, if \eqref{AimDecayBound} holds, then by \eqref{ReductionPhi_nEstimate}, $$||P_{\ell} \psi_F||_2 \ll 2^{O(1)\ell}2^{-(s + 1)\ell},$$ which satisfies the smoothness condition from Lemma~\ref{SmoothnessCharacterization} for $s$ large enough depending on $G$ and $m$.
	
	We will use Agmon's inequality to prove \eqref{AimDecayBound}. Notice first that for the fixed $t \in \Z_{\geq 2}$ from Lemma~\ref{AgmonsInequality},
	\begin{align*}
		||\Phi_n||_{H^t}  &= ||\lambda_K(\triangle)^{t/2} \Phi_n||_2 \\ &\leq \sup_{g \in \mathrm{supp}\mu^{*n}} ||\lambda_K(\triangle)^{t/2}(\varphi \circ \alpha_g)||_{\infty} \\ &\leq ||\lambda_K(\triangle)^{t/2} \varphi||_{\infty}(1 + \varepsilon)^{O(1) n} \leq 2^{A\ell},
	\end{align*} for an absolte constant $A$ and where we choose $n = \frac{1}{10E_2\varepsilon}\ell$ for $E_2$ a fixed constant to be determined later. 
	
	We next bound $||\Phi_n||_2$. In order to do so, we decompose $\Phi_n$ into a low and high frequency part:
	\begin{align*}
		\Phi_n = \Phi_n^{(1)} + \Phi_n^{(2)} \quad\text{where}\quad \Phi_n^{(1)} = \sum_{||\gamma|| \leq L(c_1,c_1)} \sum_{i,j}^{d_{\gamma}} d_{\gamma}^{1/2}\widehat{(\Phi_n)}^{\gamma}_{ij} \chi_{ij}^{\gamma}.
	\end{align*}

	Then for $n \geq 1$, exploiting Corollary~\ref{BourgainT_0MainResult}
	\begin{align}
		||\Phi_{n}||_2 &\leq \bigg|\bigg| \int \Phi_{n-1}^{(1)} \circ \alpha_g \, d\mu(g) \bigg|\bigg|_{\infty} + \bigg|\bigg| \int \Phi_{n-1}^{(2)} \circ \alpha_g \, d\mu(g) \bigg|\bigg|_{2} \nonumber  \\
		&\leq ||\Phi_{n-1}^{(1)}||_{\infty} + \frac{1}{2} ||\Phi_{n-1}^{(2)}||_2 \label{Phi_nIteration}
	\end{align} Using Lemma~\ref{Phi_nFourierEstimate}, it follows for all $m \leq n$ and $r \geq 1$, $$||\Phi_{m}^{(1)}||_{\infty} \ll_r 2^{O(1)\ell - r\ell}(1 + \varepsilon)^{O(1)nr}L(c_1,c_2)^{O(1)r}||\varphi||_2.$$ Iterating \eqref{Phi_nIteration}, there are absolute constants $E_1,E_2,E_3 \geq 1$ such that $$||\Phi_n||_2 \ll_r (n2^{E_1\ell - r\ell}(1 + \varepsilon)^{E_2nr}L(c_1,c_2)^{E_3r} + 2^{-n})||\varphi||_2.$$ By Lemma~\ref{AgmonsInequality}, it therefore follows that $$||\Phi_n||_{\infty} \ll_r (n2^{(E_1 + A)\ell  - r\ell}(1 + \varepsilon)^{E_2nr}L(c_1,c_2)^{E_3r} + 2^{-n})||\varphi||_2.$$
	
	Setting the parameters suitably, the proof is concluded. Indeed, choose for instance $$r = 2(s + 1) + E_1 + A  +100$$ and $n = \frac{1}{10E_2\varepsilon}\ell$. For $s$ large enough and choosing $\varepsilon$ small enough in terms of $r$ and $s$ the claim \eqref{AimDecayBound} holds for large $\ell$ (depending on $s$ and $\varepsilon$). 
\end{proof}

\bibliography{referencesgeneral.bib}
\end{document}